 \newtheorem{theorem}{Theorem}[section]
\newtheorem{lemma}[theorem]{Lemma}
\newtheorem{proposition}[theorem]{Proposition}
\title{Pattern formation study of an eco-epidemiological model with cannibalism and disease in predator population}
\author{
Nitu Kumari \\
  School of Basic Sciences\\ 
  Indian Institute of Technology Mandi\\
  175005, India\\
  \texttt{nitu@iitmandi.ac.in}
  \And
 Vikas Kumar \\
  School of Basic Sciences\\ 
  Indian Institute of Technology Mandi\\
  175005, India\\
  \texttt{vikaskharwar51@gmail.com}
    \And
 Ravi P. Agarwal\\
 Department of Mathematics, Texas A \& M University - Kingsville,\\
  \texttt{Ravi.Agarwal@tamuk.edu} 
 }
\begin{document}
\maketitle

\begin{abstract}
Pattern formation analysis of eco-epidemiological models with cannibalism and disease has been less explored in the literature. Therefore, motivated by this, we have proposed a diffusive eco-epidemiological model and performed pattern formation analysis in the model system. Sufficient conditions for local asymptotic stability and global asymptotic stability for the constant positive steady state are obtained by linearization and Lyapunov function technique.  A priori estimate for the positive steady state is obtained for the nonexistence of the nonconstant positive solution using Cauchy and Poincar\'e inequality. The existence of the nonconstant positive steady states is studied using Leray-Schauder degree theory. The importance of the diffusive coefficients which are responsible for the appearance of stationary patterns is observed. Pattern formation is done using numerical simulation. Further, the effect of the cannibalism and disease are observed on the dynamics of the proposed model system. The movements of prey and susceptible predator plays a significant role in pattern formation. These movements cause stationary and non-stationary patterns. It is observed that an increment in the movement of the susceptible predator as well as cannibalistic attack rate converts non-Turing patterns to Turing patterns. Lyapunov spectrum is calculated for quantification of stable and unstable dynamics. Non-Turing patterns obtained with parameter set having unstable limit cycle are more interesting and realistic than stationary patterns. Stationary and non-stationary non-Turing patterns are obtained.
\end{abstract}

\keywords{Cannibalism  \and  Disease \and Nonconstant positive steady state \and Priori estimates \and Turing patterns \and Non-Turing patterns \and Lyapunov exponents}

\section{Introduction}\label{intro}
 Cannibalism is a phenomenon which takes place under scarce resources and high population densities. The process involves killing and consumption of full or a part of an individual of the same species (conspecifics). The classical work of Polis \cite{polis1981evolution} cites more than 1300 species amongst which bacteria, protozoans, invertebrates, and vertebrates, including humans, practised cannibalism. Cannibalism as a phenomenon has been found mostly in structured population models because individual organisms differ in size and have different life cycle stages including birth, growth, development and death. These structured populations can be modeled using ordinary differential equations, partial differential equations, delay differential equations and integro-differential equations  \cite{metz2014dynamics,cushing1992size,m1925applications,foerster1959some,frauenthal1983some,peng1997state}. However, these population models are difficult to analyze due to their mathematical and dynamical complexity. Cannibalism has also been observed in unstructured population models where all individuals are subject to the same general ecological pressures. That is, the rate of growth, reproduction, and mortality is roughly the same for all individuals in the population. Kohlmeier and Ebenhoh \cite{kohlmeier1995stabilizing} were first to propose an unstructured ODE model after introducing cannibalism in the classical Rosenzweig-McArthur model \cite{rosenzweig1963graphical} and found that cannibalism has a strong stabilizing effect on the population. Chakraborty and  Chattopadhyay  \cite{chakraborty2011effect} proposed a prey-predator model by incorporating the nutritional values in Kohlmeier and Ebenhoh model \cite{kohlmeier1995stabilizing} and stated cannibalism as one of the possible candidates for resolving the paradox of enrichment. 

Predator cannibalism is abundant in the natural ecosystem. Research work of Basheer et al. \cite{basheer2016prey,al2018exploring} showed significant effects of prey cannibalism and predator cannibalism and their joint effect on the dynamics of the model system. They observed that prey cannibalism could alter the spatial dynamics of the model and also can lead to the pattern forming Turing dynamics. The joint effect of prey cannibalism and predator cannibalism stabilize the unstable equilibrium and can also form spatial patterns. Cannibalism has also been studied in epidemic and eco-epidemiological models. Research work of Biswas et al. \cite{biswas2015cannibalism,biswas2015model,biswas2015cannibalistic,biswas2018cannibalistic} showed the significant effect of cannibalism in prey-predator systems with the disease in predator population and also observed the impact of cannibalism with the disease in both predator and prey population. They observed that cannibalism could control the disease in the model system. The above studies show that cannibalism has a strong stabilizing effect and also led to the pattern forming dynamics. Therefore motivated by this, for a more realistic scenario, we extend the cannibalistic eco-epidemiological model proposed by Biswas et al. \cite{biswas2018cannibalistic}. The effect of cannibalism in diffusive systems has been less explored in the literature. Therefore we have performed pattern formation analysis to understand the role of cannibalism and disease in the population fluctuation.

Diffusion plays an important role in population biology and gives rise to pattern formation. The mathematical analysis and simulation of diffusive models provide an idea of how the species are distributed in two-dimensional space (landscape or aquatic environment). In reality, the population distributed into localized patches can be fixed, or it can change with time. The fixed or time-invariant patches correspond to stationary patterns, while the patches changing with time corresponds to non-stationary patterns. Non-stationary patterns can be oscillatory, quasi-periodic or chaotic \cite{banerjee2015turing}. Starting from the Turing seminal work \cite{turing1990chemical}, Brusselator model \cite{brown1995global}, Gierer-Meinhardt model \cite{iron2001stability,wei2002spikes}, Sel'kov model \cite{davidson2000priori,wang2003non}, Lotka-Volterra predator-prey model \cite{ni1998diffusion,du2001qualitative} and the references therein, diffusion (self-diffusion) has been observed as one of the causes for stationary patterns. The per capita diffusion rate of each species, influenced only by its population density is called self-diffusion. These patterns arise due to diffusion-driven instability also known as Turing instability studied by many authors \cite{medvinsky2002spatiotemporal,upadhyay2008wave,sun2012pattern,banerjee2011self,kumari2013pattern,sun2016mathematical,ghorai2020dispersal}. 

Sun et al. \cite{sun2009predator} first explored the relation between pattern formation and cannibalism in the Rosenzweig-McArthur diffusive model. They showed that spatial patterns are greatly influenced by cannibalism and can not emerge without it. Fasani and Rinaldi \cite{fasani2012remarks} also studied the more general form of the model of Sun et al. and concluded that spatial patterns arise for the highly cannibalistic and dispersive predator. Spatial patterns not only emerge with predator cannibalism but can also emerge with prey cannibalism.  Basheer et al. \cite{basheer2016prey} showed that prey cannibalism could also lead to pattern-forming instabilities. In our proposed diffusive eco-epidemiology system (\ref{eq1}), we studied pattern formation to understand how cannibalism and disease influences the distribution of populations and the structure of communities. 

 Qualitative analysis of the diffusive system helps us to understand how dispersal and spatial effects influence populations and communities. Pang and Wang \cite{pang2003qualitative} studied the qualitative properties of a diffusive ratio-dependent prey-predator model system. They established the existence of nonconstant positive steady states and showed stationary patterns as a result of diffusion. The assumption of homogeneous distribution of species in space yields constant positive solutions for the temporal model system (ODE). In spatially inhomogeneous case, the existence of nonconstant time-independent positive solutions, also called as stationary patterns, indicate the rich dynamics of the corresponding spatiotemporal model system (PDE) \cite{peng2007stationary}. The study of nonconstant steady state solutions is based on the analysis of steady state system (see (\ref{nonhomo})). There are two methods to establish the existence of such non-trivial positive solutions, one is singular perturbation \cite{kan1998singular}, and the other is bifurcation technique \cite{peng2007stationary}. Leray-Schauder degree theory \cite{nirenberg1974topics} is a powerful tool for establishing the existence of nonconstant positive steady states. Many authors have established the existence of nonconstant positive steady states with diffusion \cite{peng2007stationary,gakkhar2011non,zha2015non,gao2018positive,ko2018pattern,tiwari2019qualitative}.
  
  Existence and nonexistence results of nonconstant positive steady state ensure the appearance of stationary patterns. Peng and Wang \cite{peng2005positive} studied the existence and nonexistence of nonconstant positive steady states in a diffusive Holling-Tanner model. Further, in \cite{peng2007global} they have studied the local and global stability of constant positive steady state for the same model. The local and global stability helps us to understand that when species will be homogeneously distributed spatially. From the local stability of constant positive steady state we mean a diffusive system has no nonconstant positive steady states in the neighbourhood of the constant positive solution \cite{wang2004stationary}. This means no stationary patterns arise, and this result is equivalent to diffusion-driven instability or Turing instability. The global stability of constant positive steady state ensures that the species will be homogeneously distributed as time goes to infinity no matter how they diffuse \cite{pang2004strategy}.
  
Some authors have established the existence and nonexistence of nonconstant steady states in diffusive cannibalistic population models. Wang \cite{biao2017positive} studied the model of Sun et al. \cite{sun2009predator} and derived the conditions for existence and nonexistence of nonconstant positive steady states using topological degree theory. Zhang et al. \cite{zhang2019diffusive} proposed and analyzed a diffusive prey-predator system by incorporating prey refuge in Chakraborty and Chattopadhyay model \cite{chakraborty2011effect}. They have investigated global existence, dissipation and persistence of the model system and concluded that the predator cannibalism could stabilize the system and prevent the paradox of enrichment. In this present work, we have observed the conditions for the local and global stability of the constant positive steady state of the model system (\ref{eq1}) as done in \cite{peng2007global}. Next, we have analyzed the corresponding steady state system (\ref{nonhomo}), to obtain the condition for the existence and nonexistence of nonconstant positive steady states as reported in \cite{peng2005positive}. Through numerical simulations, we observe the effect of cannibalism and disease on pattern formation and system dynamics. To this end, we have made the following contributions:

\begin{itemize}
\item To understand the effect of cannibalism and disease on pattern formation, a diffusive eco-epidemiological model system is formulated and analyzed. 
\item To know that how the species are homogeneously distributed, local and global stability of the constant positive steady state of the diffusive system has been investigated.  
\item Existence and nonexistence of nonconstant steady states are discussed using Leray-Schauder degree theory, Cauchy inequality and Poincar\'e inequality.  
\item The condition for Turing instability is discussed, and Turing patterns are obtained. Further stationary and non-stationary non-Turing patterns are also obtained for the proposed system. 
\item The effect of cannibalism and the disease on the dynamics of the proposed model system has been studied. 
\end{itemize}

\noindent The organization of the paper is as follows. In Section \ref{formulation}, spatiotemporal model system has been proposed. In Section \ref{locnglob}, we have proved the local and global stability of the constant positive steady state of model proposed. Linearization technique has been used for local stability and Lyapunov function method for global stability. In Section \ref{nonexist}, we have established a priori upper and lower bounds for nonconstant positive solution of the model. Further, we studied the nonexistence and global existence of the nonconstant positive steady state of spatiotemporal model system. Turing instability is studied in Section \ref{tstability}. In Section \ref{simulation}, we have perfomed the numerical simulation to illustrate the effect of disease transmission rate and cannibalistic attack rate on system dynamics. Further, we have obtained Turing, stationary and non-stationary non-Turing patterns by solving the spatiotemporal model system. In the last section, a detailed discussion of results obtained is presented.
\section{Model Formulation} \label{formulation}
 Here we first consider the temporal model system proposed by Biswas et al. \cite{biswas2018cannibalistic}. It assumes that disease spreads into predator species only. Therefore, the predator species is divided into susceptible and infected populations. Since the predators have cannibalistic behavior, the disease spreads among them via cannibalism. Biswas et al. \cite{biswas2018cannibalistic} proposed the following model system:
 \begin{equation}\label{main}
\begin{cases}\vspace{0.3cm}
\dfrac{d u}{d t}   =ru\left(1-\dfrac{u}{k} \right) - \dfrac{(\alpha_1 v + \alpha_2 w)u}{\gamma + u} \triangleq G_1 (u,v,w), \\
\dfrac{d v}{d t}    = \dfrac{\alpha(\alpha_1 v + \alpha_2 w)u}{\gamma + u} + c_1 \sigma (\beta v +w)v  + c_2 \sigma (\beta v + w)w\\
 ~~~~~~~~~ - \sigma (\beta v + w )v - \sigma lfvw -\lambda v w -dv\triangleq G_2 (u,v,w), \\
\dfrac{d w}{d t}   =\lambda vw + \sigma lfvw-\sigma(\beta v + w)w -(d+e)w  \triangleq G_3 (u,v,w)
\end{cases}
\end{equation}
with positive initial conditions
\begin{equation}
u(0)>0,v(0)>0,w(0)>0.~~~~~~~~~~~~~~~~~~~~~~~~~~~~~~~~~~~~~~~~~
\end{equation}
$u(t)$ is prey population density, $v(t)$ is susceptible predator population density and $w(t)$ is infected predator population density. Predator population follows Holling type II functional response with a half-saturation constant $\gamma$. The biological meaning of the other parameters is given in Table \ref{parametervalues}.
 
 $r$ is intrinsic growth rate, and $k$ is carrying capacity of prey. In the absence of predation, prey population grows according to the logistic law. The susceptible predator becomes infected due to the effect of cannibalism and spreads disease in the predator population, by following a simple law of mass action as the spread is not inherent. For the additional properties of equilibria of the temporal model (\ref{main}), we refer to \cite{biswas2018cannibalistic}. 

In \cite{biswas2018cannibalistic}, it is observed that the temporal system (\ref{main}) has a constant positive solution or steady state ${\textbf{u}}^{*} = (u^{*},v^{*},w^{*})^T$, where $w^{*} =  -\frac{1}{\sigma}((d+e) - (\lambda + \sigma l f - \sigma \beta)v^*)$, $u^{*}$ and $v^{*}$ are obtained by solving this system of equations
\begin{equation}\label{eq2}
\begin{array}{ccl}
&&m_1 u^2 + m_2 u + m_3 v + m_4 =0,\\
&&n_1 v^2 + n_2 \dfrac{uv}{\gamma +u} + n_3 v+n_4 \dfrac{u}{\gamma + u }+ n_5 = 0,\\
\end{array}
\end{equation}
where, 
$m_1 = \frac{r}{k}, m_2 = r\left(\frac{\gamma}{k}-1 \right), m_3 = \alpha_1 +\alpha_2 \left( lf-\beta + \frac{\lambda}{\sigma}\right), m_4 = -\{r \gamma + \frac{\alpha_2}{\sigma}(d+e)\},n_1 = c_2(\lambda + \sigma l  f - \sigma \beta )^2 + (c_1\sigma - \sigma -\sigma l f - \lambda)(\lambda + \sigma l f - \sigma \beta ) + (c_1 \sigma \beta - \sigma \beta ) \sigma , n_2 = \alpha\{\alpha_1 \sigma + \alpha_2 (\lambda + \sigma l f - \sigma \beta )\}, n_3 = -[(c_1 \sigma - \sigma -\sigma l f - \lambda)(d+e)+2c_2 (d+e) (\lambda + \sigma l f  - \sigma \beta)+ \sigma d],n_4 = -\alpha \alpha_2(d+e),~~\text{and}~~ n_5 = c_2(d+e)^2.$\\
  The constant positive steady state ${\textbf{u}}^{*}$ exists if and only if
 \begin{equation}\label{existenceequi}
{(\lambda + \sigma l f - \sigma \beta)} v^{*} > {(d+e)}.
 \end{equation}
 
 \begin{table*}[h!]
\caption{List of parameters}
\centering
\begin{tabular}{@{}cl@{}}
\toprule
\multicolumn{1}{l}{Parameter} & Biological meaning                                                  \\ \midrule
$r$                                    & Intrinsic growth rate of prey                            \\
$k$                                    & Carrying capacity of prey                                           \\
$\lambda$                              & Disease transmission rate                                           \\
 
$\alpha_1$                             & Predation rate of susceptible predator                              \\

$\gamma$                               & Half-saturation constant                                            \\
$\alpha$                               & Conversion efficiency of predator                                   \\
$\alpha_2$                             & Predation rate of infected predator                                 \\
$d$                                    & Natural death rate of predator                                      \\
$e$                                    & Additional disease-related mortality rate                           \\
$\sigma$                               & Attack rate due to cannibalism                                      \\
$c_1$                                  & Conversion rate of susceptible predator for cannibalism\\
$\beta$                                & Dimensionless quantity                             \\
$l$                                    & Probability of transmission from a cannibalistic interaction           \\
$c_2$                                  & Conversion rate of infected predator for cannibalism\\
$f$                                    & Number of predators sharing one conspecific predator                 \\ \bottomrule
\end{tabular}
\label{parametervalues}
\end{table*}


 In this present work, we introduce spatial variations to the model system (\ref{main}). For this purpose, we assume that the species (prey and predators) perform active movements in both $x$ and $y$ directions (in two-dimensional space). Movement is possible due to various reasons such as: searching food, social interactions and finding mates. Therefore we introduce diffusion terms in our model system (\ref{main}) and assume that the prey, mid-predator and top predator species diffuse with the diffusive rates $d_1,d_2$ and $d_3$ respectively. If we assume that the domain $X=(x,y) \in \Omega \subset {\mathbb{R}}^{2}$ is fixed and bounded, then reaction-diffusion model system can be obtained from the model (\ref{main}). The spatially extended model takes the form as:

\begin{equation}\label{eq1}
\begin{cases}\vspace{0.3cm}
\dfrac{\partial u}{\partial t} =ru\left(1-\dfrac{u}{k} \right) - \dfrac{(\alpha_1 v + \alpha_2 w)u}{\gamma + u} + d_1 \Delta u,  \\
\dfrac{\partial v}{\partial t}    = \dfrac{\alpha(\alpha_1 v + \alpha_2 w)u}{\gamma + u} + c_1 \sigma (\beta v +w)v + c_2 \sigma (\beta v + w)w \\
 ~~~~~~~~~- \sigma (\beta v + w )v - \sigma lfvw -\lambda v w -dv + d_2 \Delta v,  ~~~~~~~~ \\
\dfrac{\partial w}{\partial t}   =\lambda vw + \sigma lfvw-\sigma(\beta v + w)w -(d+e)w + d_3 \Delta w.\vspace{0.2cm}
\end{cases}
\end{equation}
The boundary and initial conditions are given by
\begin{equation}
\begin{cases}\vspace{0.2cm}
\dfrac{\partial u }{\partial \nu} =\dfrac{\partial v }{\partial \nu}=\dfrac{\partial w }{\partial \nu} = 0~~~~~~~~~~~~~~~~~~~~~~~~~~~~~~~~~~~~~~~~~~~~~~~~~~~~x \in \partial \Omega,\\
u(x,0)>0,v(x,0)>0,w(x,0)>0~~~~~~~~~~~~~~~~~~~~~~~~~~~~~~~~~x \in\Omega.
\end{cases}
\end{equation}
 Here, $\nu$ is unit outward normal vector of the boundary $\partial \Omega$, $\Delta$ denotes the Laplacian operator as $(\partial^2 /\partial x^2 + \partial^2 / \partial y^2)$ in two-dimensional spatial domain. The initial conditions being continuous functions and the boundary conditions assure that there is no external factor from outside.

 For simplification, taking $\textbf{u}=(u,v,w)^T$ and $\textbf{G}(\textbf{u}) = (G_1(\textbf{u}),G_2(\textbf{u}),G_3(\textbf{u}))^T$, then problem (\ref{eq1}) can be written in this form 
\begin{equation}
\dfrac{\partial \textbf{u} }{\partial t} = \mathcal{D} \Delta \textbf{u} + \textbf{G}(\textbf{u}),
\end{equation}
where $\mathcal{D}=\text{Diag}(d_1,d_2,d_3)$ is a diffusion coefficient matrix.
 
\section{Stability of constant positive steady state}\label{locnglob}
 In this section, we shall discuss local and global stability of constant positive steady state ${\textbf{u}}^{*} = (u^{*},v^{*},w^{*})^T$ of the spatiotemporal system (\ref{eq1}). But before that, we will set up these useful notations.
 
Let $0 = \mu_0< \mu_1 < \mu_2 <\mu_3 < \cdots$ be the eigenvalues of the operator $-\Delta$ on $\Omega$ with the homogeneous Neumann boundary condition, and $\mathbf{E}(\mu_i)$ be the eigenspace corresponding to $\mu_i$ in $C^{1}(\bar{\Omega})$. Let
\[\mathbf{X} = \{\textbf{u}\in [C^{1}(\bar{\Omega})]^3)~|~\partial_{\nu} \textbf{u} = 0~~\text{on}~~\partial \Omega\},\]
$\{\phi_{ij},j=1,...,\dim \mathbf{E}(\mu_i)\}$ be an orthonormal basis of $\mathbf{E}(\mu_i)$, and $\mathbf{X}_{ij} = \{\textbf{c} \phi_{ij} | \textbf{c} \in {\mathbb{R}}^3\}$. Then,
\[{\mathbf{X}} = \bigoplus_{i=1}^{\infty} \mathbf{X}_i ~~~\text{and}~~~\mathbf{X}_i = \bigoplus_{j=1}^{\dim \mathbf{E}(\mu_i)} \mathbf{X}_{ij}.\]
\subsection{Local stability of ${\textbf{u}}^{*}$}
If the constant positive steady state ${\textbf{u}}^{*}$ is locally asymptotically stable then the spatiotemporal system (\ref{eq1}) has no nonconstant positive steady states in the neighbourhood of ${\textbf{u}}^{*}$, that is, no stationary patterns arise if (\ref{existenceequi}) and (\ref{locondition}) hold. We obtain the results in the next theorem.
\begin{theorem}\label{localtheorem}
If the conditions 
\begin{equation}\label{locondition}
\begin{array}{l}
M_1 > D_1, M_2 > E_1, \frac{r}{k} > \frac{A}{B^2}  \\
u^{*} > \max \bigg\{ \frac{1}{w^{*}},\frac{\sigma}{(M_1 - D_1) + \sigma w^{*}}, \frac{-(M_1 -D_1) + \sqrt{(M_1 - D_1)^2 + 4 \left( \frac{r}{k} - \frac{A}{B^2} \right) \sigma}}{2\left( \frac{r}{k} - \frac{A}{B^2} \right)}\bigg\}
\end{array}
\end{equation}
where, $A,B,C,M_1,M_2,E_1,D_1$ and $P_1$ are given in (\ref{notations}),

 hold, then the constant positive solution $\textbf{u}^{*}$ of the spatiotemporal system (\ref{eq1}) is locally asymptotically stable.
\end{theorem}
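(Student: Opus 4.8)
The plan is to follow the standard linearization route for reaction--diffusion systems under homogeneous Neumann boundary conditions. First I would linearize the system (\ref{eq1}) at the constant steady state $\textbf{u}^{*}$, so that a perturbation evolves according to $\partial_t \textbf{u} = (\mathcal{D}\Delta + J)\textbf{u}$, where $J = \textbf{G}_{\textbf{u}}(\textbf{u}^{*})$ is the $3\times 3$ Jacobian of the reaction vector field evaluated at $\textbf{u}^{*}$. The entries of $J$ are routine partial derivatives of $G_1,G_2,G_3$, and the quantities $A,B,C,M_1,M_2,E_1,D_1,P_1$ of (\ref{notations}) are exactly the relevant entries and their combinations (the off-diagonal minors and trace/determinant pieces) that will appear in what follows.

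Next I would exploit the orthogonal decomposition $\mathbf{X} = \bigoplus_{i} \mathbf{X}_i$ set up above. Since $-\Delta$ acts as multiplication by $\mu_i$ on each invariant subspace $\mathbf{X}_i$, the operator $\mathcal{D}\Delta + J$ leaves each $\mathbf{X}_i$ invariant and restricts there to the matrix $J_i := J - \mu_i \mathcal{D}$. Hence $\textbf{u}^{*}$ is locally asymptotically stable if and only if, for every $i \geq 0$, all three eigenvalues of $J_i$ have negative real part. For fixed $i$ I would write the characteristic polynomial as $\lambda^3 + A_1(\mu_i)\lambda^2 + A_2(\mu_i)\lambda + A_3(\mu_i)$, with $A_1 = -\mathrm{tr}\,J_i$, $A_2$ the sum of the principal $2\times 2$ minors of $J_i$, and $A_3 = -\det J_i$, and then apply the Routh--Hurwitz criterion for cubics: it suffices to show $A_1(\mu_i) > 0$, $A_3(\mu_i) > 0$, and $A_1(\mu_i)A_2(\mu_i) - A_3(\mu_i) > 0$ for all $\mu_i \geq 0$. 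Because $A_1$ is affine in $\mu_i$ with positive slope $d_1+d_2+d_3$, its positivity at $\mu_i = 0$ (the purely temporal stability requirement) already forces $A_1 > 0$ throughout, so the real work lies with $A_3$ and the Hurwitz determinant.

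The strategy for the remaining two conditions is to expand them as polynomials in $\mu_i$ (of degree three and four respectively) and to show that every coefficient is nonnegative, with the leading and constant terms strictly positive, under the hypotheses (\ref{locondition}). This yields positivity for all $\mu_i \geq 0$ simultaneously, without having to locate any roots. It is precisely here that the stated inequalities enter: $M_1 > D_1$, $M_2 > E_1$ and $\tfrac{r}{k} > \tfrac{A}{B^2}$ guarantee the right signs for the higher-order coefficients, while the three lower bounds on $u^{*}$ are tuned to dominate the competing negative contributions in the lower-order coefficients. In particular, the third bound is exactly the positive root of $\big(\tfrac{r}{k}-\tfrac{A}{B^2}\big)(u^{*})^2 + (M_1-D_1)u^{*} - \sigma = 0$, so requiring $u^{*}$ to exceed it is equivalent to making that quadratic in $u^{*}$ positive, which is what forces the relevant coefficient to have the correct sign; the first two bounds play the same role for the remaining coefficients.

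The main obstacle I expect is the Hurwitz determinant $A_1 A_2 - A_3$: as a quartic in $\mu_i$ built from products of the coefficients, its coefficient-by-coefficient sign analysis is the delicate bookkeeping step, and it is where the auxiliary quantities $C$ and $P_1$ of (\ref{notations}) surface inside the minors. Once every coefficient of this quartic (and of the cubic $A_3$) is verified nonnegative under (\ref{existenceequi}) and (\ref{locondition}), all three Routh--Hurwitz inequalities hold uniformly in $i$, and local asymptotic stability of $\textbf{u}^{*}$ follows at once.
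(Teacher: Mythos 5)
Your proposal takes essentially the same route as the paper's proof: linearize at $\textbf{u}^{*}$, use the invariance of each $\mathbf{X}_i$ so the problem reduces to the matrices $-\mu_i \mathcal{D} + G_{\textbf{u}}(\textbf{u}^{*})$, verify the Routh--Hurwitz inequalities mode-by-mode through a coefficient sign analysis driven by (\ref{locondition}) (your identification of the third lower bound on $u^{*}$ as the positive root of $\left(\tfrac{r}{k}-\tfrac{A}{B^2}\right)(u^{*})^2+(M_1-D_1)u^{*}-\sigma=0$ is exactly how that hypothesis functions), and finish by the linearization principle, which the paper makes precise by citing Theorem 5.1.1 of Henry. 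One minor correction that does not affect the argument: since $\mathcal{B}_{1i}$ is linear and $\mathcal{B}_{2i}$ quadratic in $\mu_i$, the Hurwitz determinant $\mathcal{B}_{1i}\mathcal{B}_{2i}-\mathcal{B}_{3i}$ is a cubic in $\mu_i$, written in the paper as $\mathcal{M}_1\mu_i^3+\mathcal{M}_2\mu_i^2+\mathcal{M}_3\mu_i+\mathcal{A}_1\mathcal{A}_2-\mathcal{A}_3$, not a quartic as you state.
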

\begin{proof}
Linearization of (\ref{eq1}) at constant positive solution $\textbf{u}^{*}$ is 
\begin{equation}
\textbf{U}_t =   \mathcal{L}\textbf{ U},
\end{equation}
where 
$\textbf{U}=(u_1,v_1,w_1)^{T},$ and $\mathcal{L} = \mathcal{D} \Delta + G_{\textbf{u}}(\textbf{u}^{*})$. The jacobian matrix $G_{\textbf{u}}(\textbf{u}^{*})$ is given by
\begin{equation}\label{jacobian}
G_{\textbf{u}}(\textbf{u}^{*})= \begin{pmatrix}
a_{11} & a_{12}&a_{13}\\
a_{21}& a_{22} &a_{23} \\
a_{31} & a_{32} & a_{33}
\end{pmatrix}
\end{equation}
 where,
\begin{equation*}
\begin{array}{l}\vspace{0.2cm}
 a_{11}=-\dfrac{ru^*}{k} + \dfrac{(\alpha_1 v^* + \alpha_2 w^*)u^*}{(\gamma +u^*)^2},~a_{12}=-\dfrac{\alpha_1 u^*}{\gamma + u^*},~a_{13}=-\dfrac{\alpha_2 u^*}{\gamma + u^*}, \\\vspace{0.2cm}
 a_{21}=\dfrac{\alpha(\alpha_1 v^* + \alpha_2 w^*)}{\gamma +u^*} - \dfrac{\alpha (\alpha_1 v^* + \alpha_2 w^*)u^*}{(\gamma +u^*)^2},\\\vspace{0.2cm}
 a_{22}=\dfrac{\alpha \alpha_1 u^*}{\gamma + u^*} + c_1\sigma (2 \beta v^* + w^*) + c_2 \sigma \beta w^* -\sigma (2\beta v^* + w^*)-\sigma l f w^* - \lambda w^* -d ,\\\vspace{0.2cm}
 a_{23}=\dfrac{\alpha \alpha_2 u^*}{\gamma + u^*} + c_1 \sigma v^* + c_2 \sigma (\beta v^* + 2w^*)-\sigma v^* - \sigma l f v^* - \lambda v^*, \\\vspace{0.2cm}
 a_{31}=0,~ a_{32}= \lambda w^* + \sigma l f w^* - \sigma \beta w^*,~ a_{33}= -\sigma w^*.
 \end{array}
\end{equation*}

  For each $i \geq 0$, $\mathbf{X}_i$ is invariant under the operator $\mathcal{L}$, and $\eta$  is an eigenvalue of $\mathcal{L}$ on $\mathbf{X}_i$, if and only if it is an eigenvalue of the matrix $-\mu_i \mathcal{D} + G_\textbf{u}(\textbf{u}^{*})$.

The characteristic polynomial of the matrix $G_{\textbf{u}}({\textbf{u}}^{*})$ can be written as
\begin{equation}
\psi(\xi) = \xi^3 + \mathcal{A}_1~ \xi^2 + \mathcal{A}_2~ \xi + \mathcal{A}_3, 
\end{equation}
 
where,
\begin{eqnarray}\label{3.5}
\mathcal{A}_1 &=& (M_1-D_1) + \left( \dfrac{r}{k} - \dfrac{A}{B^2} \right) u^{*},\\
 \mathcal{A}_2 &=& u^{*}\left( \dfrac{r}{k} - \dfrac{A}{B^2} \right) (M_1 - D_1 + \sigma w^{*})  + \sigma w^{*}(M_1 - D_1) \\\nonumber
              &&+ P_1w^{*}(M_2-E_1) + \dfrac{\alpha_1 u^{*}C}{B}      , \\ \label{3.7}\mathcal{A}_3 & =& \left( \dfrac{r}{k} - \dfrac{A}{B^2} \right) \left\{ \sigma (M_1-D_1) + P_1(M_2-E_1)\right\} + \dfrac{\sigma \alpha_1 C}{B},
\end{eqnarray}
 
\begin{eqnarray}\nonumber
\mathcal{A}_1 \mathcal{A}_2 - \mathcal{A}_3 &=& \left\{(M_1-D_1) + \left( \dfrac{r}{k} - \dfrac{A}{B^2} \right) u^{*} \right\} \times \bigg\{u^{*}\left( \dfrac{r}{k} - \dfrac{A}{B^2} \right) (M_1 - D_1 + \sigma w^{*}) \\\nonumber
&& + \sigma w^{*}(M_1 - D_1) + P_1w^{*}(M_2-E_1) + \dfrac{\alpha_1 u^{*}C}{B} \bigg\} - \bigg\{ \left( \dfrac{r}{k} - \dfrac{A}{B^2} \right)\\ 
&& \times \left\{ \sigma (M_1-D_1) + P_1(M_2-E_1)\right\} + \dfrac{\sigma \alpha_1 C}{B} \bigg\}.
\end{eqnarray}
 Where,
\begin{equation}\label{notations}
\begin{array}{lcl}
 A=\alpha_1 v^{*} + \alpha_2 w^{*}, B= \gamma + u^{*}, C=\frac{\alpha(\alpha_1 v^* + \alpha_2 w^*)}{\gamma +u^*} - \frac{\alpha (\alpha_1 v^* + \alpha_2 w^*)u^*}{(\gamma +u^*)^2},\\D_1 =\frac{\alpha \alpha_1 u^*}{\gamma + u^*} + c_1\sigma (2 \beta v^* + w^*)+ c_2 \sigma \beta w^*, M_1 = \sigma (2\beta v^* + w^*)\\+\sigma l f w^* + \lambda w^* +d, E_1 = \frac{\alpha \alpha_2 u^*}{\gamma + u^*} + c_1 \sigma v^* + c_2 \sigma (\beta v^* + 2w^*),\\ M_2 = \sigma v^* + \sigma l f v^* + \lambda v^* ~~\text{and}~~P_1=\lambda + \sigma l f - \sigma \beta.
 \end{array}
\end{equation}
By using the conditions stated in the Theorem \ref{localtheorem}, it is easy to verify that 
\begin{equation}\label{local1}
\mathcal{A}_1 > 0,\mathcal{A}_2 > 0,\mathcal{A}_3 > 0~~\text{and}~~ \mathcal{A}_1 \mathcal{A}_2 - \mathcal{A}_3 > 0.
\end{equation}
The characteristic polynomial of  $-\mu_i \mathcal{D} + G_\textbf{u}(\textbf{u}^{*})$ is  
 \begin{equation}
 \phi_i(\eta) = \eta^3 + \mathcal{B}_{1i}~ \eta^2 + \mathcal{B}_{2i} ~\eta + \mathcal{B}_{3i},
 \end{equation}
 with
 \begin{eqnarray}\nonumber
  \mathcal{B}_{1i} &=& \mu_i (d_1 + d_2 + d_3 ) + \mathcal{A}_1 ,\\\nonumber
  \mathcal{B}_{2i} &=& \mu^2_i (d_1 d_2 + d_2 d_3 + d_1 d_3) - \mu_i \{ d_1 (a_{22} + a_{33}) \\\label{local2}
 && + d_2 (a_{11} + a_{33})  + d_3 (a_{11} + a_{22}) \} + \mathcal{A}_2,\\\nonumber
  \mathcal{B}_{3i} &=& \mu^3_i d_1 d_2 d_3 - \mu^2_i\{d_1 d_2 a_{33} + d_2 d_3 a_{11} + d_1 d_3 a_{22} \} \\\nonumber
  &&+ \mu_i\{d_1(a_{22}a_{33} - a_{23}a_{32}) + d_2 a_{11}a_{33} + d_3 (a_{11}a_{22} - a_{12}a_{21})\}+\mathcal{A}_3,\nonumber
 \end{eqnarray}
 where $a_{ij}$ and $\mathcal{A}_i$ are given in (\ref{jacobian}) and (\ref{3.5})-(\ref{3.7}) respectively. From (\ref{local1}) and (\ref{local2}), it follows that $\mathcal{B}_{1i},\mathcal{B}_{2i},\mathcal{B}_{3i}>0$. A series of calculations yield
 \[\mathcal{B}_{1i}\mathcal{B}_{2i}-\mathcal{B}_{3i} = \mathcal{M}_1 \mu^3_i+ \mathcal{M}_{2} \mu^2_i + \mathcal{M}_3\mu_i + \mathcal{A}_1 \mathcal{A}_2 -\mathcal{A}_3, \]
 in which:
 \begin{eqnarray*}
 \mathcal{M}_1 &=& (d_1 d_2 + d_2 d_3 + d_1 d_3)(d_1 +  d_2 +  d_3) - d_1 d_2 d_3,\\
 \mathcal{M}_2 &=& -\{(a_{11} + a_{22})[  d_3 (d_1 + d_2 +d_3)+d_1 d_2 ] + (a_{22} + a_{33})[ d_2 (d_1 + d_2 +d_3) + d_2 d_3 ] \\
 && + (a_{11} + a_{33})[ d_2 (d_1 + d_2 +d_3) + d_1 d_3 ]\},\\
 \mathcal{M}_3 &=& d_1[\mathcal{A}_2 - \mathcal{A}_1(a_{22} + a_{33}) - (a_{22}a_{33} - a_{23}a_{32})] + d_2 [\mathcal{A}_2 - \mathcal{A}_1(a_{11} + a_{33}) - a_{11}a_{33}]\\
 &&+ d_3[\mathcal{A}_2 - \mathcal{A}_1(a_{11} + a_{22}) - (a_{11}a_{22} - a_{12}a_{21}).
 \end{eqnarray*}

 From the above we can conclude that $\mathcal{B}_{1i}\mathcal{B}_{2i}-\mathcal{B}_{3i} >0,$ for all $i \geq 0$. Therefore by using Routh-Hurwitz criterion, for each $i \geq 0$, the three roots $\eta_{i,1},\eta_{i,2},\eta_{i,3}$ of $\phi_i (\eta)=0$ and all have negative real parts. 
 
Finally, Theorem 5.1.1 of Dan Henry \cite{henry2006geometric} concludes the results.
\end{proof} 
\subsection{Global stability of $\textbf{u} ^{*}$}
If the constant positive steady state $\textbf{u} ^{*}$ is globally asymptotically stable for the spatiotemporal system (\ref{eq1}), then all species will be spatially homogeneously distributed as time tends to infinity. This means, the spatiotemporal system (\ref{eq1}) has no nonconstant positive steady states i.e. no stationary patterns arise if (\ref{existenceequi}) and (\ref{global}) hold, the diffusive rates have no role to play in the nonexistence of nonconstant steady states. We obtain the results in the next theorem.

\begin{theorem}\label{globaltheorem}
If the conditions
\begin{eqnarray}
\begin{cases}
 \dfrac{\alpha_1 v^{*} + \alpha_2 w^{*}}{\gamma + u^{*} }+ \dfrac{\alpha \gamma(\alpha_1 + \alpha_2 w^{*})}{2(\gamma + u^{*})} \leq \dfrac{r}{k},\\
 \\\label{global}
 \dfrac{\alpha \gamma(\alpha_1 + \alpha_2 w^{*})}{2(\gamma + u^{*})} +  c_2 \sigma(w' +w^{*} + \beta)  + \alpha \alpha_2 k \leq \sigma \beta + \sigma(1-c_1).
 \end{cases}
\end{eqnarray}
   satisfy then the constant positive solution $\textbf{u} ^{*}$ of the system (\ref{eq1}) is globally asymptotically stable.
\end{theorem}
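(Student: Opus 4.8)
The plan is to construct a Lyapunov functional of Volterra (logarithmic) type, following the strategy of Peng and Wang \cite{peng2007global}, and to show that its time derivative along trajectories of (\ref{eq1}) is nonpositive and vanishes only at $\textbf{u}^{*}$. Concretely, I would set
\[
V(t) = \int_{\Omega} \left[ \left( u - u^{*} - u^{*}\ln\dfrac{u}{u^{*}} \right) + \theta_1 \left( v - v^{*} - v^{*}\ln\dfrac{v}{v^{*}} \right) + \theta_2 \left( w - w^{*} - w^{*}\ln\dfrac{w}{w^{*}} \right) \right] dx,
\]
where $\theta_1,\theta_2>0$ are weights fixed later. Each integrand is nonnegative and vanishes only at the equilibrium, so $V\ge 0$ with $V=0$ iff $\textbf{u}\equiv \textbf{u}^{*}$.

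Differentiating and substituting (\ref{eq1}) gives $\dot V = \int_\Omega [\frac{u-u^*}{u}(d_1\Delta u + G_1) + \theta_1 \frac{v-v^*}{v}(d_2\Delta v + G_2) + \theta_2 \frac{w-w^*}{w}(d_3\Delta w + G_3)]\,dx$. I would integrate the diffusion terms by parts; the homogeneous Neumann conditions kill the boundary contributions and leave
\[
-\int_\Omega \left( d_1 u^{*}\dfrac{|\nabla u|^2}{u^2} + \theta_1 d_2 v^{*}\dfrac{|\nabla v|^2}{v^2} + \theta_2 d_3 w^{*}\dfrac{|\nabla w|^2}{w^2} \right) dx \le 0 .
\]
Since this piece is already nonpositive for every $d_1,d_2,d_3>0$, the diffusion coefficients never enter the sufficient conditions, which is exactly why (\ref{global}) is diffusion-independent.

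It then remains to control the reaction integrand $R := \frac{u-u^*}{u}G_1 + \theta_1\frac{v-v^*}{v}G_2 + \theta_2\frac{w-w^*}{w}G_3$. Using $G_i(\textbf{u}^{*})=0$ to subtract the equilibrium values, I would rewrite $R$ as a quadratic-type form in the deviations $(u-u^{*}),(v-v^{*}),(w-w^{*})$. The logistic term contributes $-\frac{r}{k}(u-u^{*})^2$, while the Holling differences are handled through the identity
\[
\dfrac{u}{\gamma+u} - \dfrac{u^{*}}{\gamma+u^{*}} = \dfrac{\gamma (u-u^{*})}{(\gamma+u)(\gamma+u^{*})},
\]
which is where the factor $\gamma$ in (\ref{global}) originates. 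A natural choice such as $\theta_1=1/\alpha$ is made to balance the predation flux appearing with opposite sign in $G_1$ and $G_2$, and $\theta_2$ is then fixed to neutralise the dominant $v$–$w$ cross terms generated by the disease-transmission and cannibalism contributions $\lambda vw$, $\sigma l f vw$ and the $c_1,c_2$ conversion terms. The residual cross terms are estimated with the Cauchy/Young inequality $2ab\le a^2+b^2$; the symmetric $\tfrac12$ splitting is precisely the source of the halved coefficients $\frac{\alpha\gamma(\alpha_1+\alpha_2 w^{*})}{2(\gamma+u^{*})}$ appearing in both lines of (\ref{global}). Collecting the diagonal coefficients, conditions (\ref{global}) are exactly what force the quadratic form to be negative semidefinite, so that $R\le 0$.

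The main obstacle is the bookkeeping of the nonlinear terms: the rational Holling response must be bounded uniformly, and the bilinear cannibalism terms generate several $vw$, $w^2$ and $v^2$ pieces whose signs are delicate. To close the estimates I would first invoke a priori upper bounds for the positive solution (the constants $k$ and $w'$ in (\ref{global}) are such bounds, with $k$ the carrying capacity controlling $u$ and $w'$ an upper bound for $w$), obtained from the maximum principle and dissipativity of (\ref{eq1}). Once $R\le 0$ is established with equality only at $\textbf{u}^{*}$, combining it with the nonpositive diffusion contribution yields $\dot V\le 0$, and the LaSalle invariance principle together with the standard Lyapunov argument for reaction–diffusion systems gives global asymptotic stability of $\textbf{u}^{*}$.
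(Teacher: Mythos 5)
Your proposal follows essentially the same route as the paper's proof: the identical logarithmic (Volterra-type) Lyapunov functional, integration by parts under the homogeneous Neumann conditions to discard the automatically nonpositive diffusion contribution (which is indeed why (\ref{global}) is diffusion-independent), the identity $\frac{u}{\gamma+u}-\frac{u^{*}}{\gamma+u^{*}}=\frac{\gamma(u-u^{*})}{(\gamma+u)(\gamma+u^{*})}$, the Cauchy inequality $a^{2}+b^{2}\geq 2ab$ for the cross terms, and the a priori bounds $u\leq k$ and $w\leq w'$ from Theorem \ref{theoremupper}. One caveat: the paper takes the unweighted functional, i.e.\ $\theta_{1}=\theta_{2}=1$, and the specific constants in (\ref{global}) --- the factor $\alpha$ inside $\frac{\alpha\gamma(\alpha_{1}+\alpha_{2}w^{*})}{2(\gamma+u^{*})}$ and the term $\alpha\alpha_{2}k$ --- are artifacts of that choice, so your suggested weight $\theta_{1}=1/\alpha$ would yield a variant result with different sufficient conditions rather than the stated inequalities (\ref{global}) verbatim; to recover the theorem as written you must fix $\theta_{1}=\theta_{2}=1$.
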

\begin{proof}
Define the Lyapunov function 
\[E(t)= \int_{\Omega} \left( u - u^{*} - u^{*} \log \dfrac{u}{u^{*}}\right) + \int_{\Omega} \left( v - v^{*} - v^{*} \log \dfrac{v}{v^{*}}\right) +\int_{\Omega} \left( w - w^{*} - w^{*} \log \dfrac{w}{w^{*}}\right).\]
We note that $ E(t)\geq 0$ for all $t>0$. Furthermore, referring to (\ref{eq1}), we compute
\begin{eqnarray*}
E'(t) &=& \int_{\Omega} \left\{\dfrac{ u- u^{*}}{u} G_1 (\textbf{u}) + \dfrac{ v- v^{*}}{v} G_2 (\textbf{u}) + \dfrac{ w- w^{*}}{w} G_3 (\textbf{u})\right\}dx\\
&&+ \int_{\Omega} \left\{\dfrac{ u- u^{*}}{u} d_1 \Delta u + \dfrac{ v- v^{*}}{v} d_2 \Delta v + \dfrac{ w- w^{*}}{w} d_3 \Delta w \right\}dx 
\end{eqnarray*}
\begin{eqnarray*}
&=& \int_{\Omega}\bigg\{(u - u^{*})\left( r\left(1-\dfrac{u}{k} \right) - \dfrac{\alpha_1 v + \alpha_2 w}{\gamma + u}\right) + (v-v^{*})\bigg(\dfrac{\alpha(\alpha_1 v + \alpha_2 w)u}{(\gamma + u)v}  \\
&&+ c_1 \sigma (\beta v +w)+ c_2 \sigma \left( \beta + \dfrac{w}{v} \right)w - \sigma(\beta v +w)-\sigma lf w - \lambda w  - d \bigg)\\
&& + (w-w^{*})(\lambda v + \sigma lfv-\sigma(\beta v + w) -(d+e) ) \bigg\}dx\\
&&- \int_{\Omega} \left\{ \dfrac{d_1 u^{*}}{u^2}|\bigtriangledown u|^2  +  \dfrac{d_2 v^{*}}{v^2}|\bigtriangledown v|^2  + \dfrac{d_3 w^{*}}{w^2}|\bigtriangledown w|^2 \right\}dx \triangleq I_1(t) + I_2 (t).
\end{eqnarray*}
Because of Neumann boundary condition, it is obvious that:
\[I_2(t) = - \int_{\Omega} \left\{ \dfrac{d_1 u^{*}}{u^2}|\bigtriangledown u|^2  +  \dfrac{d_2 v^{*}}{v^2}|\bigtriangledown v|^2  + \dfrac{d_3 w^{*}}{w^2}|\bigtriangledown w|^2 \right\}dx \leq 0.\]
Now
\begin{eqnarray*}
I_1(t) &=& \int_{\Omega}\bigg[ -\dfrac{r}{k} (u - u^{*})^2  - \dfrac{\alpha_1}{(\gamma + u)}(u - u^{*})(v-v^{*})  - \dfrac{\alpha_2}{(\gamma + u)}(u - u^{*})(w-w^{*})\\
&&+  \dfrac{(\alpha_1 v^{*} + \alpha_2 w^{*})}{(\gamma + u)(\gamma + u^{*})}(u - u^{*})^2  + \bigg( \dfrac{\alpha \alpha_1 \gamma}{(\gamma + u)(\gamma + u^{*})}  + \dfrac{\alpha \alpha_2 \gamma w^{*}}{(\gamma + u)(\gamma + u^{*})v}\bigg)(u-u^{*})(v-v^{*})\\
&&+ \bigg[ c_2 \sigma \dfrac{w}{v} +c_2 \sigma \dfrac{w^{*}}{v}  - \{\sigma(1-c_1)-c_2 \sigma \beta + \sigma l f + \lambda\} + \dfrac{\alpha \alpha_2 u}{(\gamma + u)v}  \bigg](v-v^{*})(w-w^{*})\\
&&- \bigg\{ \sigma(1-c_1) \beta + \dfrac{c_2 \sigma {w^{*}}^2}{v v^{*}} + \dfrac{ \alpha \alpha_2 u^{*}w^{*}} {(\gamma + u^{*})v v^{*}} \bigg\} (v-v^{*})^2 + (\lambda + \sigma l f - \sigma \beta)(v-v^{*})(w-w^{*})\\
&&- \sigma (w-w^{*})^2 \bigg]dx\\
 &\leq &\int_{\Omega}\bigg[ \bigg\{-\dfrac{r}{k}+  \dfrac{(\alpha_1 v^{*} + \alpha_2 w^{*})}{(\gamma + u)(\gamma + u^{*})}\bigg\}(u - u^{*})^2 + (\lambda + \sigma l f - \sigma \beta)(v-v^{*})(w-w^{*})\\&&- \sigma (w-w^{*})^2  + \bigg( \dfrac{\alpha \alpha_1 \gamma}{(\gamma + u)(\gamma + u^{*})}  + \dfrac{\alpha \alpha_2 \gamma w^{*}}{(\gamma + u)(\gamma + u^{*})v}\bigg)(u-u^{*})(v-v^{*})\\
&&+ \bigg\{ c_2 \sigma \dfrac{w}{v} +c_2 \sigma \dfrac{w^{*}}{v}- \{\sigma(1-c_1)-c_2 \sigma \beta + \sigma l f + \lambda\}+ \dfrac{\alpha \alpha_2 u}{(\gamma + u)v}  \bigg\}(v-v^{*})(w-w^{*})
\bigg]dx
\end{eqnarray*}

 Using the inequality $a^2 +b^2 \geq 2ab$ and by Theorem \ref{theoremupper}, we derive for $t>T$ that: 
\begin{eqnarray*}
I_1 (t) & \leq & \int_{\Omega} \bigg[ \bigg\{ \dfrac{\alpha_1 v^{*} + \alpha_2 w^{*}}{\gamma + u^{*} }+ \dfrac{\alpha \gamma(\alpha_1 + \alpha_2 w^{*})}{2(\gamma + u^{*})} - \dfrac{r}{k} \bigg\}(u-u^{*})^2\\
&&+ \dfrac{1}{2}\bigg\{\dfrac{\alpha \gamma(\alpha_1 + \alpha_2 w^{*})}{2(\gamma + u^{*})} +  c_2 \sigma(w' +w^{*} + \beta)  + \alpha \alpha_2 k - \sigma \beta - \sigma(1-c_1)\bigg\}(v-v^{*})^2\\
&&+\dfrac{1}{2}\bigg\{c_2 \sigma(w' +w^{*} + \beta)  + \alpha \alpha_2 k - \sigma \beta - \sigma(1-c_1)-\sigma\bigg\}(w-w^{*})^2 \bigg] dx
\end{eqnarray*}
~~~~~~~~~~~~~~~~~~~~~~~~~~~~~~~~~~~~~~~~~~~~~~~~~~~~~~~~~~~~~~~~~~~~~~~~~~~~~~~~here, $w' = \max\limits_{\substack{\bar{\Omega}}} w,$\\
under the assumptions (\ref{global}), we obtain that $I_1(t) \leq 0$, and thus 
\[E'(t) = I_{1}(t) + I_2 (t) \leq 0,\]
 and the equality holds if $(u,v,w)=(u^{*},v^{*},w^{*})$. The proof is completed. 
\end{proof}

 The conditions imposed in Theorems \ref{localtheorem} and \ref{globaltheorem} are independent of diffusive rates as $d_1,d_2$ and $d_3$. It shows that if the conditions stated in theorems satisfy then, no nonconstant positive steady states exist and hence no stationary patterns arise. Here we are not able to understand the importance of diffusive rates in the distribution of populations and the structure of communities. Therefore, in the next section, we show that the large diffusivity may be reasonable for the existence and nonexistence of nonconstant positive steady states, i.e. stationary patterns.

 \section{Nonconstant positive steady states}\label{nonexist}
 In this section, we have obtained the conditions for the nonexistence and existence of the nonconstant positive steady states of the following steady state system:
\begin{eqnarray} 
 \begin{cases} 
  - d_1 \Delta u = ru\left(1-\dfrac{u}{k} \right) - \dfrac{(\alpha_1 v + \alpha_2 w)u}{\gamma + u},\\
 - d_2 \Delta v = \dfrac{\alpha(\alpha_1 v + \alpha_2 w)u}{\gamma + u} + c_1 \sigma (\beta v +w)v\\
 ~~~~~~~~~~~~~~+ c_2 \sigma (\beta v + w)w - \sigma (\beta v + w )v - \sigma lfvw -\lambda v w -dv,\\
 - d_3 \Delta w  = \lambda vw + \sigma lfvw-\sigma(\beta v + w)w -(d+e)w,\\\label{nonhomo}
 \dfrac{\partial u }{\partial \nu} =\dfrac{\partial v }{\partial \nu}=\dfrac{\partial w }{\partial \nu} = 0.~~~~~x \in \partial \Omega.
  \end{cases}
   \end{eqnarray}
 
For the purpose, we are going to establish a priori upper and lower bounds for positive solutions of the problem (\ref{nonhomo}).

\subsection{A priori estimations}
The main purpose of this subsection is to give a priori upper and lower bounds for the positive solutions of (\ref{nonhomo}). For that, we have used two important results: Harnack Inequality and Maximum Principle. Which are due to Lin et al. \cite{lin1988large}, and Lou and Ni \cite{lou1996diffusion}. For our convenience, let us denote the constants collectively by $\Lambda.$
 
\begin{lemma}\label{lemma1}
Assume that $c(x) \in C(\bar{\Omega})$ and let $\omega(x) \in C^{2}(\Omega) \cap C^{1}(\bar{\Omega})$ be a positive solution to:
 $\Delta \omega + c(x) \omega =0,~x \in \Omega$, where $c(x)$, satisfying the homogeneous Neumann boundary condition, then there exists a positive constant $C_{*} = C_{*}(N,\Omega,||c||_{\infty})$ such that
\[\max_{\substack{\bar{\Omega}}} \omega \leq C_{*} \min_{\substack{\bar{\Omega}}} \omega. \] 
\end{lemma}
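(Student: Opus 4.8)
The plan is to obtain the global Harnack estimate by combining the classical \emph{interior} Harnack inequality with a boundary argument adapted to the homogeneous Neumann condition, and then to pass from these local estimates to the global one by a covering and chaining argument on the compact set $\bar{\Omega}$. The starting observation is that the equation $\Delta \omega + c(x)\omega = 0$ is a uniformly elliptic equation whose zeroth-order coefficient $c(x)$ is bounded by $\|c\|_{\infty}$, so all the constants produced below will depend only on the dimension $N$, on the geometry of $\Omega$, and on $\|c\|_{\infty}$, exactly as the statement requires.

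First I would treat the interior. For any point $y \in \Omega$ and any radius $R$ with $\overline{B_{4R}(y)} \subset \Omega$, the positive solution $\omega$ satisfies the De Giorgi--Nash--Moser Harnack inequality (for instance Theorem~8.20 of Gilbarg--Trudinger), which yields $\sup_{B_R(y)} \omega \leq C\, \inf_{B_R(y)} \omega$ with $C = C(N,R,\|c\|_{\infty})$. This controls the oscillation of $\omega$ on any ball staying away from $\partial\Omega$, and the constant is already of the right form.

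Next I would handle a neighborhood of the boundary. Since $\partial\Omega$ is smooth, near each boundary point I would straighten the boundary by a local diffeomorphism and then extend $\omega$ by even reflection across the flattened boundary. Because $\omega$ satisfies $\partial_\nu \omega = 0$, this even reflection is a weak solution of a uniformly elliptic equation on the reflected full neighborhood, with coefficients still controlled by the fixed geometry of $\Omega$ and by $\|c\|_{\infty}$; applying the interior Harnack inequality to the extended solution then gives a Harnack inequality on a neighborhood of the boundary point. Finally, using the compactness of $\bar{\Omega}$ I would cover $\bar{\Omega}$ by finitely many balls of the interior and boundary type on each of which a local Harnack estimate holds, and, since $\bar{\Omega}$ is connected, chain a bounded number of these local inequalities along a path joining the point where $\omega$ attains its maximum to the point where it attains its minimum. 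The product of the finitely many local constants produces the global constant $C_{*} = C_{*}(N,\Omega,\|c\|_{\infty})$.

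The main obstacle is the boundary step: one must justify carefully that the even reflection of the Neumann solution is a genuine weak solution of an elliptic equation after flattening, and that the transformed leading and lower-order coefficients remain uniformly elliptic and uniformly bounded in terms of $\|c\|_{\infty}$ and the fixed geometry of $\Omega$; this uniformity is precisely what keeps $C_{*}$ independent of the particular solution $\omega$. Equivalently, and far more economically, the statement is exactly the Harnack inequality established for Neumann boundary value problems by Lin, Ni and Takagi \cite{lin1988large}, which may simply be invoked.
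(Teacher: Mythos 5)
Your proposal is correct, but it does substantially more than the paper does: the paper offers no proof of this lemma at all, stating it as a known result and attributing it to Lin, Ni and Takagi \cite{lin1988large} (it is the Harnack inequality for positive Neumann solutions of $\Delta \omega + c(x)\omega = 0$ with $c \in L^{\infty}$, proved there by Moser iteration). So your closing sentence --- that one may simply invoke \cite{lin1988large} --- coincides exactly with the paper's route, while the body of your argument supplies the standard machinery behind that citation. Your three-step outline (interior De Giorgi--Nash--Moser Harnack, boundary flattening with even reflection exploiting $\partial_{\nu}\omega = 0$, then a finite covering and Harnack chain on the compact connected set $\bar{\Omega}$) is the correct and classical way to localize and then globalize the estimate, and you rightly identify the only delicate point: after flattening, the Neumann condition becomes a conormal condition, and the even extension (with the mixed coefficients $a^{iN}$, $i<N$, extended oddly) is a weak solution of a uniformly elliptic divergence-form equation whose coefficients are merely bounded and measurable across the reflecting hyperplane --- which is harmless precisely because Moser's Harnack inequality requires no coefficient continuity. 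Since $\omega \in C^{2}(\Omega)\cap C^{1}(\bar{\Omega})$ is a classical, hence weak, solution, the reflection step is legitimate, and the resulting constant depends only on $N$, the geometry of $\Omega$ (through the flattening maps and the covering number), and $\|c\|_{\infty}$, as required. In short: your sketch is a sound self-contained proof strategy, and its economical fallback is exactly what the authors did.
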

\begin{lemma}\label{lemma2}
Suppose that $g \in C(\Omega \times {\mathbb{R}}^{1})$ and $b_j \in C(\bar{\Omega}),$~$j=1,2,...,N$.
\begin{enumerate}
\item[(i)] If $\omega(x) \in C^{2}(\Omega) \cap C^{1}(\bar{\Omega})$ satisfies $\Delta \omega + \sum\limits_{j=1}^{N} b_j(x) \omega_{x_j} + g(x,\omega(x)) \geq 0,~x \in \Omega,~\frac{\partial \omega}{\partial \nu} \leq 0,~x \in \partial \Omega$, and $\omega(x_0) =  \max\limits_{\substack{\bar{\Omega}}} \omega (x)$ then $g(x_0, \omega (x_0)) \geq 0$.
\item[(ii)] If $\omega(x) \in C^{2}(\Omega) \cap C^{1}(\bar{\Omega})$ satisfies $\Delta \omega + \sum\limits_{j=1}^{N} b_j(x) \omega_{x_j} + g(x,\omega(x)) \leq 0,~x \in \Omega,~\frac{\partial \omega}{\partial \nu} \geq 0,~x \in \partial \Omega$, and $\omega(x_0) =  \min\limits_{\substack{\bar{\Omega}}} \omega (x)$ then $g(x_0, \omega (x_0)) \leq 0$.
\end{enumerate}
\end{lemma}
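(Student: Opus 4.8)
The plan is to prove both statements by a single contradiction argument, and to deduce (ii) from (i) by the substitution $\omega \mapsto -\omega$. Throughout I would use that $\Omega$ is a bounded domain with smooth boundary, so that the interior sphere condition holds at every boundary point. For (i), suppose the conclusion fails, so that $g(x_0,\omega(x_0)) < 0$. Since $g$ and $\omega$ are continuous, there is a relatively open neighbourhood $U \subset \bar{\Omega}$ of $x_0$ on which $g(x,\omega(x)) < 0$. Writing $L\omega := \Delta \omega + \sum_{j=1}^{N} b_j(x)\,\omega_{x_j}$, the hypothesis $\Delta\omega + \sum_j b_j \omega_{x_j} + g(x,\omega(x)) \geq 0$ then forces the strict inequality $L\omega \geq -g(x,\omega(x)) > 0$ throughout $U \cap \Omega$.

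Now I split into cases according to the location of $x_0$. If $x_0 \in \Omega$, I shrink $U$ so that $U \subset \Omega$; at the interior maximum the first-order conditions give $\omega_{x_j}(x_0) = 0$ for every $j$, so $\sum_j b_j(x_0)\,\omega_{x_j}(x_0) = 0$, while the Hessian being negative semidefinite gives $\Delta\omega(x_0) \leq 0$. Hence $L\omega(x_0) \leq 0$, contradicting $L\omega(x_0) > 0$. If instead $x_0 \in \partial\Omega$, I would choose a ball $B \subset U \cap \Omega$ internally tangent to $\partial\Omega$ at $x_0$. On $B$ we have $L\omega > 0$, so $\omega$ cannot attain its global maximum $\omega(x_0)$ at any interior point of $B$ --- otherwise the strong maximum principle (valid since $L$ carries no zeroth-order term) would force $\omega$ to be constant on $B$, which is impossible under $L\omega > 0$. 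Therefore $\omega(x) < \omega(x_0)$ for all $x \in B$, and Hopf's boundary point lemma yields $\partial\omega/\partial\nu\,(x_0) > 0$, contradicting the boundary hypothesis $\partial\omega/\partial\nu \leq 0$. In either case we reach a contradiction, so $g(x_0,\omega(x_0)) \geq 0$.

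Part (ii) then follows by applying (i) to $\tilde{\omega} := -\omega$ with $\tilde{g}(x,s) := -g(x,-s)$ and the same $b_j$: the reversed differential and boundary inequalities become exactly the hypotheses of (i), the minimiser of $\omega$ is the maximiser of $\tilde{\omega}$, and the conclusion $\tilde{g}(x_0,\tilde{\omega}(x_0)) \geq 0$ rewrites as $g(x_0,\omega(x_0)) \leq 0$. I expect the boundary case to be the only delicate point: the elementary extremum test is unavailable when $x_0 \in \partial\Omega$, and one must instead pass through Hopf's lemma together with the interior sphere condition in order to convert the strict subsolution inequality into a sign condition on the outward normal derivative that clashes with the prescribed Neumann-type bound.
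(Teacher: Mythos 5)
Your proof is correct, but there is no in-paper argument to compare it against: the paper states this lemma without proof, quoting it as a known result (the maximum principle of Lou and Ni \cite{lou1996diffusion}, introduced alongside the Harnack inequality of Lin, Ni and Takagi \cite{lin1988large}). What you have written is essentially the standard proof of that cited result, and it is sound. The interior case is the elementary extremum test; incidentally, in the boundary case you do not need the strong maximum principle to rule out $\omega$ attaining $\omega(x_0)$ inside the tangent ball $B$ --- any interior point of $B$ where $\omega$ equals the global maximum is an interior maximum of $\omega$ over $\Omega$, so your Case 1 computation $L\omega \leq 0$ already contradicts $L\omega > 0$ there. The boundary case itself is handled correctly: Hopf's boundary point lemma applies because $L$ has continuous (hence bounded) first-order coefficients and no zeroth-order term, $\omega \in C^{1}(\bar{\Omega})$ makes $\partial\omega/\partial\nu\,(x_0)$ meaningful, and the strict positivity of the normal derivative clashes with the hypothesis $\partial\omega/\partial\nu \leq 0$.

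Two small points deserve flagging. First, your construction of a relatively open neighbourhood $U \subset \bar{\Omega}$ of $x_0$ on which $g(x,\omega(x)) < 0$ tacitly uses continuity of $g$ up to the boundary; as printed, the lemma assumes only $g \in C(\Omega \times \mathbb{R}^{1})$, under which $g(x_0,\omega(x_0))$ is not even defined when $x_0 \in \partial\Omega$. This is a typo in the paper --- the source result assumes $g \in C(\bar{\Omega} \times \mathbb{R}^{1})$ --- and your reading is the intended one. Second, the interior sphere condition you invoke requires some boundary regularity (say $\partial\Omega$ of class $C^{2}$); the paper never states this explicitly, but it is implicitly assumed throughout for the Neumann problem, so this is a fair hypothesis rather than a gap. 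The reduction of (ii) to (i) via $\tilde{\omega} = -\omega$ and $\tilde{g}(x,s) = -g(x,-s)$ checks out exactly.
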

By using above lemmas the result of upper bounds can be stated as follows:
\begin{theorem} \label{theoremupper}
For any positive solution $\textbf{u} = (u,v,w)^{T}$ of (\ref{nonhomo}), it can be drawn that
\begin{eqnarray} 
\max_{\substack{\bar{\Omega}}} u  &\leqslant & k,\\\label{upper}
\max_{\substack{\bar{\Omega}}} v &\leqslant & \dfrac{-B + \sqrt{B^2 - 4AC}}{2A},\\
\max_{\substack{\bar{\Omega}}} w &\leqslant &\dfrac{1}{\sigma}\bigg(\dfrac{(\lambda + \sigma l f - \sigma \beta )(-B + \sqrt{B^2 - 4AC})}{2A} - (d+e)\bigg),
\end{eqnarray}

 where $A>0,B<0$ and $4AC<B^2$, given in (\ref{noto}).
\end{theorem}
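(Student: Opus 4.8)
The plan is to obtain all three estimates from the Maximum Principle of Lemma~\ref{lemma2}(i), applied to each scalar equation of (\ref{nonhomo}) at an interior maximum point, processing the components in the order $u$, then $w$, then $v$, because of the way the nonlinearities couple.

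First I would treat the $u$-component. Writing the first equation of (\ref{nonhomo}) as $\Delta u + \tfrac{1}{d_1}G_1 \ge 0$ and letting $x_0\in\bar{\Omega}$ be a point where $u$ attains its maximum, Lemma~\ref{lemma2}(i) forces $G_1(u(x_0),v(x_0),w(x_0))\ge 0$. Since $v,w\ge 0$ make the predation term nonnegative, this reduces to $r\,u(x_0)\,(1-u(x_0)/k)\ge 0$, whence $u(x_0)\le k$ and therefore $\max_{\bar{\Omega}}u\le k$. Next I would extract an auxiliary relation from the $w$-equation: at a maximizer $x_2$ of $w$, Lemma~\ref{lemma2}(i) gives $G_3\ge 0$, and dividing by $w(x_2)>0$ yields $(\lambda+\sigma lf-\sigma\beta)\,v(x_2)-\sigma w(x_2)\ge d+e$, i.e.
\[
\sigma\max_{\bar{\Omega}}w \;\le\; P_1\,\max_{\bar{\Omega}}v-(d+e),\qquad P_1=\lambda+\sigma lf-\sigma\beta .
\]
This already gives the third estimate once $\max_{\bar{\Omega}}v$ is controlled.

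The crux is the bound on $v$. Let $x_1$ be a maximizer of $v$; Lemma~\ref{lemma2}(i) applied to the second equation gives $G_2(u(x_1),v(x_1),w(x_1))\ge 0$. Into this inequality I would insert $u(x_1)\le k$ in the Holling gain term (using also $u/(\gamma+u)<1$), and replace $w(x_1)$, wherever it enters with a net positive contribution, by the just-established bound $w(x_1)\le \max_{\bar{\Omega}}w\le (P_1\max_{\bar{\Omega}}v-(d+e))/\sigma$, while discarding the genuinely negative $w$-terms via $w(x_1)\ge 0$. Collecting powers of $\max_{\bar{\Omega}}v$ then produces a scalar quadratic inequality $A(\max_{\bar{\Omega}}v)^2+B\,\max_{\bar{\Omega}}v+C\le 0$, with $A,B,C$ the constants of (\ref{noto}); under the stated hypotheses $A>0$, $B<0$, $4AC<B^2$ the admissible values lie between the two real roots, so $\max_{\bar{\Omega}}v\le(-B+\sqrt{B^2-4AC})/(2A)$. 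Substituting this into the auxiliary relation above delivers the stated bound on $\max_{\bar{\Omega}}w$.

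The main obstacle I anticipate is precisely the bookkeeping in the $v$-step: because $G_2$ depends on $w$ non-monotonically (the term $c_2\sigma(\beta v+w)w$ is quadratic in $w$ with positive sign, whereas $-\sigma lf\,vw-\lambda vw$ are negative), one must insert the linear-in-$v$ bound for $w$ only where it increases $G_2$ and retain $w\ge 0$ elsewhere, so that after substitution the coefficient of $(\max_{\bar{\Omega}}v)^2$ is exactly $-A<0$ and the resulting quadratic carries the required sign pattern. Verifying that these choices reproduce the constants $A,B,C$ of (\ref{noto}), and that $A>0$, $B<0$, $4AC<B^2$ are the natural conditions making the larger root the effective bound, is the part demanding care; the estimates for $u$ and $w$ are then immediate.
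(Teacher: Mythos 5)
Your scaffolding coincides with the paper's proof: evaluate each equation of (\ref{nonhomo}) at a maximizer via Lemma~\ref{lemma2}(i), obtain $\max_{\bar{\Omega}}u\le k$ from the logistic term, extract $\sigma\,\max_{\bar{\Omega}}w\le P_1\max_{\bar{\Omega}}v-(d+e)$ with $P_1=\lambda+\sigma lf-\sigma\beta$ from the third equation, and then reduce the second equation to a quadratic inequality in $\max_{\bar{\Omega}}v$. The gap is in the very step you flagged as delicate. The constants $A,B,C$ of (\ref{noto}) arise in the paper by substituting $u=k$ and $w=W:=\frac{1}{\sigma}\left(P_1v-(d+e)\right)$ into \emph{every} occurrence of $w$ in $G_2$, the negative terms included: the terms $-\sigma vw-\sigma lf\,vw-\lambda vw$ with $w=W$ contribute $+\frac{P_1}{\sigma}(\sigma+\sigma lf+\lambda)$ to $A$. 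Your prescription (insert $W$ only where $w$ enters with positive coefficient, use $w\ge 0$ elsewhere) yields instead the leading coefficient $A'=(1-c_1)\sigma\beta-(c_1+c_2\beta)P_1-\frac{c_2}{\sigma}P_1^{2}=A-\frac{P_1}{\sigma}(\sigma+\sigma lf+\lambda)$, not $A$, so the claim that your bookkeeping reproduces (\ref{noto}) is false. Worse, in the regime of interest $P_1>0$ (cf.\ (\ref{existenceequi})) one can have $A'\le 0$; for instance $c_1=1$, exactly the value used in the paper's simulations (\ref{paraeq}), forces $A'<0$, and then the inequality $A'(\max_{\bar{\Omega}}v)^2+\cdots\le 0$ imposes no upper bound on $\max_{\bar{\Omega}}v$ whatsoever, so the stated estimates for $v$ and $w$ do not follow by your route.

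The directionality worry that motivated your modification is legitimate (the paper glosses over it when it replaces $w(x_v)$ by $w(x_w)$ and then by $W$ in negative terms), but the sign-safe repair is different from yours: at fixed $u,v$, $G_2$ is a \emph{convex} quadratic in $w$ (leading term $c_2\sigma w^2$), so over $w\in[0,W]$ it is maximized at an endpoint; hence $G_2\ge 0$ at the maximizer of $v$ forces $G_2|_{w=W}\ge 0$ or $G_2|_{w=0}\ge 0$. The first branch is exactly the paper's substitution and yields $A(\max_{\bar{\Omega}}v)^2+B\max_{\bar{\Omega}}v+C\le 0$, hence the root bound; the second branch yields the additional linear alternative $(1-c_1)\sigma\beta\,v\le\frac{\alpha\alpha_1 k}{\gamma+k}-d$, which the paper tacitly discards. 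With that dichotomy in place of your term-by-term truncation, your ordering $u\to w\to v\to w$ and the final substitution into the auxiliary relation are correct and agree with the paper's proof of Theorem~\ref{theoremupper}.
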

\begin{proof}Let $x_u, x_v, x_w \in \Omega,$ and 
\[u(x_u)= \max_{\substack{x \in \bar{\Omega}}} u(x),~~~~v(x_v)= \max_{\substack{x \in \bar{\Omega}}} v(x),~~~~w(x_w)= \max_{\substack{x \in \bar{\Omega}}} w(x),\]

 then \[-\Delta u(x_u) \geqslant 0,~~-\Delta v(x_v) \geqslant 0,~~-\Delta w(x_w) \geqslant 0.\]
From the first equation of (\ref{eq1}),
\[ru(x_u)\left(1-\dfrac{u(x_u)}{k} \right) - \dfrac{(\alpha_1 v(x_u) + \alpha_2 w(x_u))u(x_u)}{\gamma + u(x_u)} \geqslant 0 \]
\[ru(x_u)\left(1-\dfrac{u(x_u)}{k} \right)  \geqslant 0\]
\begin{equation}\label{boundofu}
\implies \max_{\substack{x \in \bar{\Omega}}} u(x)  \leqslant  k.
\end{equation}
 
 From the third equation of (\ref{eq1}),
 
 \[\lambda v(x_w) w (x_w)+ \sigma l f v(x_w) w(x_w) - \sigma (\beta v (x_w)+w(x_w))w(x_w)-(d+e)w(x_w) \geqslant 0, \]
\begin{equation}\label{boundofw}
w(x_w) \leqslant  \dfrac{1}{\sigma}\bigg((\lambda + \sigma l f - \sigma \beta )v(x_w) - (d+e)\bigg)\leqslant  \dfrac{1}{\sigma}\bigg((\lambda + \sigma l f - \sigma \beta )v(x_v) - (d+e)\bigg). 
\end{equation}

 From the second equation of (\ref{eq1}),
 
 \[\dfrac{\alpha(\alpha_1 v(x_v)  + \alpha_2 w(x_v))u(x_v)}{\gamma + u(x_v)} + c_1 \sigma (\beta v(x_v) +w(x_v))v(x_v) + c_2 \sigma (\beta v(x_v) + w(x_v))w(x_v)\]
 \[ - \sigma (\beta v(x_v) + w(x_v) )v(x_v) - \sigma lfv(x_v)w(x_v) -\lambda v(x_v) w(x_v) -dv(x_v) \geqslant 0,\]
  \[\dfrac{\alpha(\alpha_1 v(x_v)  + \alpha_2 w(x_w))u(x_u)}{\gamma + u(x_u)} + c_1 \sigma (\beta v(x_v) +w(x_w))v(x_v) + c_2 \sigma (\beta v(x_v) + w(x_w))w(x_w)\]
 \[ - \sigma (\beta v(x_v) + w(x_w) )v(x_v) - \sigma lfv(x_v)w(x_w) -\lambda v(x_v) w(x_w) -dv(x_v) \geqslant 0,\]
 after putting the upper bound of $u$ and $w$ from (\ref{boundofu}) and (\ref{boundofw}), respectively. We obtain 
 \[A v^2(x_v) + B v(x_v) + C \leqslant 0, \]
 where
\begin{equation}\label{noto}
\begin{array}{l}
A=(1-c_1)\sigma \beta - \dfrac{c_2}{\sigma}(\lambda  + \sigma lf - \sigma \beta)^2 + \dfrac{1}{\sigma}(\lambda  + \sigma lf - \sigma \beta)(\sigma + \sigma l f  + \lambda - c_1 \sigma - c_2 \sigma \beta),\\
 B=- \dfrac{\alpha \alpha_1 k}{\gamma + k} + \dfrac{1}{\sigma}(\lambda  + \sigma lf - \sigma \beta)\left(2 c_2 (d+e) - \dfrac{\alpha \alpha_2 k}{\gamma + k}  \right) + (d+e)(c_1 \sigma + c_2 \sigma \beta - \sigma -\sigma l f  - \lambda),\\
  ~~\text{and}~~C= \dfrac{(d+e)}{\sigma} \left( \dfrac{\alpha \alpha_2 k}{\gamma + k} - c_2 (d+e)  \right).

\end{array}
\end{equation}

 Then 
 \[v(x_v) = \max_{\substack{x \in \bar{\Omega}}} v(x)  \leqslant \dfrac{-B + \sqrt{B^2 - 4AC}}{2A}, \]
 where $A>0,B<0$ and $4AC<B^2$. 
 
  Therefore from (\ref{boundofw})
 \[w(x_w) =  \max_{\substack{x \in \bar{\Omega}}} w(x) \leqslant \dfrac{1}{\sigma}\bigg(\dfrac{(\lambda + \sigma l f - \sigma \beta )(-B + \sqrt{B^2 - 4AC})}{2A} - (d+e)\bigg). \]
\end{proof}
\begin{theorem}\label{theoremlower}
Let $\Lambda$ and  $\underline{d}_1,\underline{d}_2,\underline{d}_3$ be fixed positive constants. Assume that
\[(d_1,d_2,d_3) \in [ \underline{d}_1,\infty)\times[ \underline{d}_2,\infty)\times [ \underline{d}_3,\infty),\]
and 
\begin{equation}\label{condition}
(\lambda + \sigma l f - \sigma \beta)v'<{\sigma {w'}+(d+e)},
\end{equation}
where, $v'=\max \limits_{\substack{\bar{\Omega}}} v(x)~~\text{and}~~w'=\max \limits_{\substack{\bar{\Omega}}} w(x)$, as is given in Theorem \ref{theoremupper}. Then there exist a positive constant $\underline{C} = \underline{C}(\Lambda,\underline{d}_1,\underline{d}_2,\underline{d}_3)$, such that any positive solution $(u,v,w)$ of (\ref{nonhomo}) satisfies
\begin{equation}\label{minimumcond}
\min_{\substack{\bar{\Omega}}} u > \underline{C},~~\min_{\substack{\bar{\Omega}}} v > \underline{C},~~\min_{\substack{\bar{\Omega}}} w > \underline{C}.
\end{equation}
\end{theorem}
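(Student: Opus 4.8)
The plan is to argue by contradiction, combining the Harnack inequality (Lemma \ref{lemma1}) with the maximum principle (Lemma \ref{lemma2}), the uniform upper bounds of Theorem \ref{theoremupper}, and elliptic compactness. Suppose no such constant $\underline{C}$ exists. Then there is a sequence of diffusion triples $(d_1^n,d_2^n,d_3^n)\in[\underline{d}_1,\infty)\times[\underline{d}_2,\infty)\times[\underline{d}_3,\infty)$ and corresponding positive solutions $(u_n,v_n,w_n)$ of (\ref{nonhomo}) for which $\min_{\bar\Omega}u_n\to 0$, or $\min_{\bar\Omega}v_n\to 0$, or $\min_{\bar\Omega}w_n\to 0$. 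By Theorem \ref{theoremupper} these sequences are uniformly bounded in $L^\infty(\Omega)$, so the reaction terms of (\ref{nonhomo}), after division by the diffusion rates (which are bounded below by $\underline{d}_i$), have uniformly bounded $L^\infty$ norms; standard $L^p$ and Schauder estimates then furnish a subsequence with $(u_n,v_n,w_n)\to(u,v,w)\geq 0$ in $[C^2(\bar\Omega)]^3$ and $d_i^n\to d_i\in[\underline{d}_i,\infty]$.

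Next I would apply Harnack only to $u$ and $w$. Writing the first and third equations of (\ref{nonhomo}) as $\Delta u+c^u(x)u=0$ and $\Delta w+c^w(x)w=0$ with $c^u=\frac{1}{d_1}\big[r(1-u/k)-(\alpha_1 v+\alpha_2 w)/(\gamma+u)\big]$ and $c^w=\frac{1}{d_3}\big[(\lambda+\sigma lf-\sigma\beta)v-\sigma w-(d+e)\big]$, the bracketed terms are uniformly bounded precisely because these reactions factor through $u$ and $w$; since $d_1^n\geq\underline{d}_1$ and $d_3^n\geq\underline{d}_3$, Lemma \ref{lemma1} yields a uniform $C_*$ with $\max_{\bar\Omega}u_n\leq C_*\min_{\bar\Omega}u_n$ and $\max_{\bar\Omega}w_n\leq C_*\min_{\bar\Omega}w_n$. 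Hence $\min u_n\to 0$ forces $u_n\to 0$ uniformly, and $\min w_n\to 0$ forces $w_n\to 0$ uniformly, so the corresponding limit component vanishes identically; the case $d_i^n\to\infty$ is consistent with this, since dividing the $i$-th equation by $d_i^n$ shows the limit component is a spatial constant.

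For $v$ I would \emph{not} seek a Harnack inequality directly but recover its lower bound from those of $u$ and $w$. Evaluating the second equation of (\ref{nonhomo}) at a minimum point $x_0$ of $v$ and applying Lemma \ref{lemma2}(ii) (the Neumann condition giving $\partial v/\partial\nu=0\geq 0$), the reaction term is $\leq 0$ at $x_0$; because the sources $\alpha\alpha_2 wu/(\gamma+u)$ and $c_2\sigma(\beta v+w)w$ are nonnegative and bounded below by a positive constant once $u$ and $w$ are bounded below, while every remaining term is bounded in modulus by a constant multiple of $v(x_0)$, this rearranges to $\min_{\bar\Omega}v\geq\delta>0$. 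Consequently $\min v_n\to 0$ cannot occur unless $\min u_n\to 0$ or $\min w_n\to 0$, reducing matters to ruling out those two. To close the $w$-case, I normalize $\hat w_n=w_n/\max_{\bar\Omega}w_n\in[C_*^{-1},1]$, divide the third equation by $\max_{\bar\Omega}w_n$, integrate over $\Omega$ (the Neumann condition killing the Laplacian), and let $n\to\infty$: using $w_n\to 0$ together with hypothesis (\ref{condition}), whose right-hand side $\sigma w'+(d+e)$ tends to $(d+e)$ along this sequence, the limiting integrand $(\lambda+\sigma lf-\sigma\beta)v-(d+e)$ is forced to be nonpositive against the positive weight $\hat w$, so $(\lambda+\sigma lf-\sigma\beta)v\equiv(d+e)$; substituting this back into the limiting $v$- and $u$-equations collapses the limit to a constant semi-trivial state excluded by the existence condition (\ref{existenceequi}). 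The $u$-case is treated analogously by normalizing $u_n$ and integrating the first equation.

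The hard part will be the second (susceptible-predator) equation: its reaction contains the sources $\alpha\alpha_2 wu/(\gamma+u)$ and $c_2\sigma(\beta v+w)w$, which are not proportional to $v$, so Harnack is unavailable for $v$. I expect the genuine care to lie in exploiting the nonnegativity of those sources to transfer the lower bounds of $u$ and $w$ to $v$ through Lemma \ref{lemma2}, and in verifying that the limiting integral identities, under (\ref{condition}) and (\ref{existenceequi}), truly exclude \emph{every} vanishing-component scenario — in particular the degenerate possibility $d_i^n\to\infty$, where the limit component is a constant and the sign argument must be run on the normalized profile rather than the profile itself.
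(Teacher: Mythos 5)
Your skeleton is the paper's: contradiction via a sequence $(d_1^n,d_2^n,d_3^n)$ with a vanishing minimum, the uniform bounds of Theorem \ref{theoremupper}, Harnack plus elliptic compactness, and integral identities resolved by a case analysis driven by hypothesis (\ref{condition}). Two of your refinements are genuinely sounder than the paper's own steps: restricting Lemma \ref{lemma1} to $u$ and $w$, whose reactions factor as $u\cdot[\cdots]$ and $w\cdot[(\lambda+\sigma lf-\sigma\beta)v-\sigma w-(d+e)]$ with uniformly bounded brackets, sidesteps the paper's unexamined application of Harnack to $v$ (the per-capita coefficient there contains $\alpha\alpha_2 wu/((\gamma+u)v)$ and $c_2\sigma w^2/v$, which are not a priori bounded as $\min v\to 0$); and your minimum-point argument via Lemma \ref{lemma2}(ii), using the nonnegative sources $\alpha\alpha_2 wu/(\gamma+u)$ and $c_2\sigma(\beta v+w)w$ to transfer lower bounds from $u,w$ to $v$, is correct and cleanly replaces the paper's Case 2.

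The endgame, however, has two genuine gaps. First, the $u$-case is \emph{not} "analogous": normalizing and integrating the first equation yields in the limit $\int_\Omega \hat u\,\bigl(r-(\alpha_1 v+\alpha_2 w)/\gamma\bigr)\,dx=0$, whose integrand has no definite sign — with $u\equiv 0$ the pair $(v,w)$ solves the predator subsystem, and nothing forces $\alpha_1 v+\alpha_2 w$ above or below $r\gamma$, so no contradiction follows from the prey equation. The paper closes this case (its Case 1) through the \emph{third} equation: a maximum-principle bound $\max_{\bar\Omega} v\leq \sigma w'/(\lambda+\sigma lf)$ on the limiting $v$-equation, after which (\ref{condition}) makes the $w_n$-reaction strictly negative pointwise, contradicting $0=\int_\Omega G_3(u_n,v_n,w_n)\,dx$ at finite $n$. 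Second, in your $w$-case the passage to the limit degrades the strict inequality (\ref{condition}) to the non-strict $(\lambda+\sigma lf-\sigma\beta)\max_{\bar\Omega} v\leq d+e$, and the terminal state $v\equiv(d+e)/(\lambda+\sigma lf-\sigma\beta)$, $w\equiv 0$ is then perfectly self-consistent: condition (\ref{existenceequi}) constrains the coexistence value $v^*$, not this limit profile, so it excludes nothing. The paper avoids this trap by never taking the sign argument to the limit — it applies (\ref{condition}) at finite $n$ to get $\lambda v_nw_n+\sigma lf v_nw_n-\sigma(\beta v_n+w_n)w_n-(d+e)w_n<0$ pointwise, while the Neumann integration of the $w_n$-equation forces this integral to vanish, an immediate contradiction. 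To repair your proof, run both vanishing-component cases on the sequence itself rather than on the limit, as the paper does in its Cases 1 and 3.
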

\begin{proof} Let 
\begin{eqnarray*}
c_1(x) &\triangleq & \dfrac{1}{d_1} \left( ru\left(1-\dfrac{u}{k} \right) - \dfrac{(\alpha_1 v + \alpha_2 w)}{\gamma + u} \right),\\
c_2(x) &\triangleq & \dfrac{1}{d_2}\bigg(\dfrac{\alpha(\alpha_1 v + \alpha_2 w)}{\gamma + u} + c_1 \sigma (\beta v +w)v +  c_2 \sigma (\beta v + w)w\\
&&~~~~~~~- \sigma (\beta v + w )v - \sigma lfvw -\lambda v w -dv \bigg),\\
c_3(x) &\triangleq & \dfrac{1}{d_3} \left(\lambda vw + \sigma lfvw-\sigma(\beta v + w)w -(d+e)w \right).
\end{eqnarray*}
Then, in view of (\ref{upper}), there exists a positive constant $\bar{C}=\bar{C}(\Lambda , \bar{d})$ such that 
\[||c_1(x)||_{\infty},~~||c_2(x)||_{\infty},~~||c_3(x)||_{\infty} \leq {\bar{C}},~~\text{if}~~d_1,d_2,d_3 \geq \bar{d}.\]
 Lemma \ref{lemma1} shows that there exists a positive constant $C^{*} =C^{*}(\Lambda , \bar{d})$ such that
\[\max_{\substack{\bar{\Omega}}} u \leq C^{*}\min_{\substack{\bar{\Omega}}} u, ~~~\max_{\substack{\bar{\Omega}}} v \leq C^{*} \min_{\substack{\bar{\Omega}}} v,~~~\max_{\substack{\bar{\Omega}}} w \leq C^{*}\min_{\substack{\bar{\Omega}}} w.\]
Presently, assuming, on the contrary, that (\ref{minimumcond}) does not hold, there will be a sequence $\{(d_{1n},d_{2n},d_{3n})\}_{n=1}^{\infty}$ with $(d_{1n},d_{2n},d_{3n}) \in [ \underline{d}_1,\infty)\times[ \underline{d}_2,\infty)\times [ \underline{d}_3,\infty)$  such that the corresponding positive solutions $(u_n, v_n, w_n)$ of (\ref{nonhomo}) satisfy
\[\max_{\substack{\bar{\Omega}}} u_n \rightarrow 0 ~~~~\text{or}~~~ \max_{\substack{\bar{\Omega}}} v_n \rightarrow 0 ~~~~\text{or}~~~~\max_{\substack{\bar{\Omega}}} w_n \rightarrow 0. \]
The standard regularity theorem for the elliptic equations and $d_{1n} \geq \underline{d}_1,d_{2n} \geq \underline{d}_2,d_{3n} \geq \underline{d}_3$, yields that there exists a subsequence of $\{(u_n,v_n,w_n)\}_{n=1}^{\infty}$ which is still denoted by $\{(u_n,v_n,w_n)\}_{n=1}^{\infty}$, and non-negative functions $u,v,w \in C^{2}(\bar{\Omega})$, such that $(u_n,v_n,w_n) \rightarrow (u,v,w)$ as $n \rightarrow \infty$. By (\ref{minimumcond}), it's noted that $u \equiv 0$ or $v \equiv 0$ or  $w \equiv 0$.\\
For all $n \geq 1$, integrating by parts, it is obtained that 
\begin{eqnarray}\nonumber
&&\int_\Omega \left( r{u_n}\left(1-\dfrac{{u_n}}{k} \right) - \dfrac{(\alpha_1 {v_n} + \alpha_2 {w_n})u_n}{\gamma + {u_n}} \right)dx=0,\\\label{sequence}
&&\int_\Omega \bigg(\dfrac{\alpha(\alpha_1 {v_n} + \alpha_2 {w_n})u_n}{\gamma + {u_n}} + c_1 \sigma (\beta {v_n} +{w_n}){v_n} +  c_2 \sigma (\beta {v_n} + {w_n}){w_n}\\\nonumber
&&~~~~~~~- \sigma (\beta {v_n} + {w_n} ){v_n} - \sigma lf{v_n}{w_n} -\lambda {v_n} {w_n} -d{v_n} \bigg)=0, \\\nonumber
&&\int_\Omega \left(\lambda {v_n}{w_n} + \sigma lf{v_n}{w_n}-\sigma(\beta {v_n} + {w_n}){w_n} -(d+e){w_n} \right)dx = 0.
\end{eqnarray}
Letting $n \rightarrow \infty$ in (\ref{sequence}), it can be obtained that
\begin{eqnarray}\nonumber
&&\int_\Omega \left( ru\left(1-\dfrac{u}{k} \right) - \dfrac{(\alpha_1 v + \alpha_2 w)u}{\gamma + u} \right)dx=0,\\\label{sequence1}
&&\int_\Omega \bigg(\dfrac{\alpha(\alpha_1 v + \alpha_2 w)u}{\gamma + u} + c_1 \sigma (\beta v +w)v +  c_2 \sigma (\beta v + w)w\\\nonumber
&&~~~~~~~- \sigma (\beta v + w )v - \sigma lfvw -\lambda v w -dv \bigg)=0, \\\nonumber
&&\int_\Omega \left(\lambda vw + \sigma lfvw-\sigma(\beta v + w)w -(d+e)w \right)dx = 0.
\end{eqnarray}
Here, we have mentioned all the three cases as follows:
\begin{enumerate}
\item[\textbf{Case 1.}]$u \equiv 0, ~v \not\equiv 0,~ w \not\equiv 0$ on $\bar{\Omega}$. 

From the Hopf boundary lemma $v>0,w>0$ on $\bar{\Omega}$, then we have
\begin{equation}\
 \begin{cases}
 -\bar{d}_2\Delta v = c_1 \sigma (\beta v +w)v +  c_2 \sigma (\beta v + w)w - \sigma (\beta v + w )v\\
 ~~~~~~~~~ - \sigma lfvw -\lambda v w -dv,~~~~~~~~~~~~~~~~~~~~~~~~~~~~~~~~~~~~~~~~~~~x \in \Omega,\\\nonumber
 \dfrac{\partial v}{\partial \nu} = 0,~~~~~~~~~~~~~~~~~~~~~~~~~~~~~~~~~~~~~~~~~~~~~~~~~~~~~~~~~~~~~~~~~~~~~x \in \partial \Omega.
\end{cases}
\end{equation}
Let $v(y_0) = \max\limits_{\substack{\bar{\Omega}}} v(x) >0$. Applying Maximum principle and third inequality of (\ref{upper})
\[c_1 \sigma (\beta v(y_0) +w(y_0))v(y_0) +  c_2 \sigma (\beta v(y_0) + w(y_0))w(y_0) - \sigma (\beta v(y_0) + w(y_0) )v(y_0) \]
\[- \sigma lfv(y_0)w (y_0)-\lambda v(y_0) w(y_0) -dv(y_0) \geq 0,\]

 which implies,
 \[v(y_0)  \leq  \dfrac{c_2 \sigma w' }{(\lambda + \sigma lf )} \leq \dfrac{ \sigma w' }{(\lambda + \sigma lf )},\]

~~~~~~~~~~~~~~~~~~~~~~~~~~~~~~~~~~~~~~~~~~~~~~~~~~~~~~~~provided that $c_1<1,c_2<1~~\text{and}~~c_1 + c_2 \beta <1$. 

Based on assumption (\ref{condition}) stated above, it is easy to see that 
\[\lambda v_n w_n + \sigma lfv_n w_n-\sigma(\beta v_n + w_n)w_n -(d+e)w_n <0.\]
Integrating the differential equation for $ w_n$ over $\Omega$ by parts, the result is that 
\[0 = d_{3n} \int_{\Omega} \partial_{\nu} w_n dS = -d_{3n} \int_{\Omega} \Delta w_n dx \]
\[=\int_{\Omega} \left(\lambda v_n w_n + \sigma lfv_n w_n-\sigma(\beta v_n + w_n)w_n -(d+e)w_n \right) dx <0, \]
which is a contradiction.

\item[\textbf{Case 2.}] $v\equiv0,~u\not\equiv0$ on $\bar{\Omega}$. 

Since $v_n \rightarrow v \equiv 0$ as $n \rightarrow \infty$, then 
\[\dfrac{\alpha(\alpha_1 {v_n} + \alpha_2 {w_n})u_n}{\gamma + {u_n}} + c_1 \sigma (\beta {v_n} +{w_n}){v_n} +  c_2 \sigma (\beta {v_n} + {w_n}){w_n} \] 
\[- \sigma (\beta v + w )v - \sigma lfvw -\lambda v w -dv> 0,\]
as $n \gg 1$. Next, integrating, the differential equation by parts for $ v_n$ over $\Omega$, we have 
\[0 = d_{2n} \int_{\Omega} \partial_{\nu} v_n dS = -d_{2n} \int_{\Omega} \Delta v_n dx  \]
\[=\int_{\Omega}\bigg( \dfrac{\alpha(\alpha_1 {v_n} + \alpha_2 {w_n})u_n}{\gamma + {u_n}} + c_1 \sigma (\beta {v_n}+{w_n}){v_n}\]
\[ +  c_2 \sigma (\beta {v_n} + {w_n}){w_n} - \sigma (\beta v_n + w_n )v_n - \sigma lfv_n w_n -\lambda v_n w_n -dv_n \bigg) dx >0,\]
which is a contradiction.
\item[\textbf{Case 3.}] $w \equiv 0, ~u \not\equiv 0,~ v \not\equiv 0$ on $\bar{\Omega}$. 

Then by Hopf boundary lemma, $u>0,v>0$ on $\bar{\Omega}$. And $u$ and $v$ satisfy
\begin{equation}\
 \begin{cases}
 -\bar{d}_2\Delta v = \dfrac{\alpha \alpha_1 u v}{\gamma + u} + c_1 \sigma \beta v^2 - \sigma \beta v^2 -dv,~~~~~x \in \Omega,\\\nonumber
 \dfrac{\partial v}{\partial \nu} = 0,~~~~~~~~~~~~~~~~~~~~~~~~~~~~~~~~~~~~~~~~~~~~x \in \partial \Omega.
\end{cases}
\end{equation}
Let $v(y_1) = \max\limits_{\substack{\bar{\Omega}}} v(x) >0$. Applying Lemma \ref{lemma2} and the first inequality of (\ref{upper}),
\[\dfrac{\alpha \alpha_1 u (y_1) v(y_1)}{\gamma + u(y_1)} + c_1 \sigma \beta {v^2(y_1)} - \sigma \beta {v^2(y_1)} -dv(y_1)\geq 0,\] 
which implies,
\[  v(y_1) \leq \dfrac{\alpha \alpha_1 k}{ \sigma \beta \gamma(1-c_1)} \leq \dfrac{\alpha k}{d_2}\left(d_1 + \dfrac{d_2 r }{d} + \dfrac{d_3 r }{d+e} \right).\]
Based on assumption (\ref{condition}) stated above, it is easy to see that 
\[\lambda v_n w_n + \sigma lfv_n w_n-\sigma(\beta v_n + w_n)w_n -(d+e)w_n <0.\]
Now, integrating the differential equation by parts, for $ w_n$ over $\Omega$, then we get 
\[0 = d_{3n} \int_{\Omega} \partial_{\nu} w_n dS = -d_{3n} \int_{\Omega} \Delta w_n dx \]
\[=\int_{\Omega} \left(\lambda v_n w_n + \sigma lfv_n w_n-\sigma(\beta v_n + w_n)w_n -(d+e)w_n \right) dx <0, \]
which is a contradiction. Hence, proof of the theorem is completed.
\end{enumerate}
\end{proof}
\subsection{Nonexistence of nonconstant positive steady states}
This subsection deals with the nonexistence of nonconstant positive steady states of  (\ref{nonhomo}). Here, we vary the diffusion coefficient $d_3$ and fix the other parameters $d_1, d_2,$ and $\Lambda$. 
 
\begin{theorem}\label{theoremnon}
Let $d^{*}_1$ and $d^{*}_2$ be fixed positive constants such that
\begin{equation}\label{noncons}
d^{*}_1 \geq \dfrac{r}{\mu_1},~~d^{*}_2 \geq \dfrac{\sigma w'(c_1 + c_2 \beta)}{\mu_1}.
\end{equation}
Where $w'=\max_{\substack{\bar{\Omega}}} w(x)$ and $\mu_1$ is the smallest positive eigenvalue of the operator $-\Delta$ on $\Omega$ with homogeneous Neumann boundary condition. Then there exists a positive constant ${D}_3 = D_3 (\Lambda,d^{*}_1,d^{*}_2)$, such that, when $d_3 >D_3,d_1 \geq d^{*}_1$ and $d_2 \geq d^{*}_2$, problem (\ref{nonhomo}) has no nonconstant positive steady state.  
\end{theorem}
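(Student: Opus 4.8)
The plan is to use the standard \emph{energy method} built on the Poincar\'e inequality, fed by the uniform a priori bounds from Theorems \ref{theoremupper} and \ref{theoremlower}. For a positive solution $(u,v,w)$ of (\ref{nonhomo}) write the spatial averages $\bar{u}=\frac{1}{|\Omega|}\int_\Omega u\,dx$, and likewise $\bar v,\bar w$. The objective is to show that under the hypotheses $d_1\ge d_1^{*}$, $d_2\ge d_2^{*}$ and $d_3>D_3$ every such solution satisfies $u\equiv\bar u$, $v\equiv\bar v$, $w\equiv\bar w$, i.e.\ it is constant. First I would multiply the three equations of (\ref{nonhomo}) by $u-\bar u$, $v-\bar v$, $w-\bar w$ respectively and integrate over $\Omega$. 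Integration by parts, using the homogeneous Neumann boundary condition and $\nabla\bar u=0$, turns the diffusion terms into the Dirichlet energies $d_1\int_\Omega|\nabla u|^2$, $d_2\int_\Omega|\nabla v|^2$, $d_3\int_\Omega|\nabla w|^2$. Since $\int_\Omega(u-\bar u)\,dx=0$ (and similarly for $v,w$), on each right-hand side I may subtract the constant $G_i(\bar u,\bar v,\bar w)$ at no cost, so the reaction contribution becomes $\int_\Omega\big[G_i(u,v,w)-G_i(\bar u,\bar v,\bar w)\big]\big(\,\cdot\,-\overline{\cdot}\,\big)\,dx$.

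Second, I would estimate these reaction differences. By Theorems \ref{theoremupper} and \ref{theoremlower} the components $u,v,w$, together with the denominators $\gamma+u$, lie in fixed compact positive intervals depending only on $\Lambda$; hence every reaction term is uniformly Lipschitz there, and each difference $G_i(u,v,w)-G_i(\bar u,\bar v,\bar w)$ can be expanded and bounded by a constant (depending only on $\Lambda$) times $|u-\bar u|+|v-\bar v|+|w-\bar w|$. The key is to extract the \emph{diagonal} self-interaction terms with their sharp coefficients rather than absorb them into the Lipschitz bound: the $u$-equation produces the logistic self-term whose controlling magnitude is $r$, which is exactly what the threshold $d_1^{*}\ge r/\mu_1$ is designed to dominate, while the $v$-equation produces the cannibalistic self-interaction of size $\sigma w'(c_1+c_2\beta)$, matching $d_2^{*}\ge \sigma w'(c_1+c_2\beta)/\mu_1$.

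Third, I would apply the Cauchy (Young) inequality $2ab\le \varepsilon a^2+\varepsilon^{-1}b^2$ to every cross term, choosing the weights $\varepsilon$ so that all off-diagonal contributions containing $w-\bar w$ are pushed onto the $\int_\Omega(w-\bar w)^2$ term, to be swallowed by the large factor $d_3\mu_1$. Then I would invoke the Poincar\'e inequality $\int_\Omega|\nabla\phi|^2\ge\mu_1\int_\Omega(\phi-\bar\phi)^2$ on each of the three energies. Collecting everything yields an inequality of the schematic form
\[
\mu_1(d_1-C_1)\!\int_\Omega(u-\bar u)^2+\mu_1(d_2-C_2)\!\int_\Omega(v-\bar v)^2+\mu_1(d_3-C_3)\!\int_\Omega(w-\bar w)^2\le 0,
\]
where $C_1,C_2$ depend only on $\Lambda$ (and reduce to the thresholds $d_1^{*},d_2^{*}$ after dividing by $\mu_1$) and $C_3=C_3(\Lambda,d_1,d_2)$. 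Setting $D_3:=C_3$ and using $d_1\ge d_1^{*}$, $d_2\ge d_2^{*}$, $d_3>D_3$ makes all three bracketed coefficients strictly positive, which forces each integral to vanish; hence $u,v,w$ are spatially constant, completing the proof.

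The main obstacle will be the bookkeeping in the Young-inequality step: one must choose the weights so that the off-diagonal couplings into the $w$-component are absorbable by the freely enlargeable $d_3\mu_1$, whereas the diagonal $u$- and $v$-self-terms cannot be shrunk by increasing $d_3$ and must instead be dominated directly by $d_1\mu_1$ and $d_2\mu_1$. This asymmetry is precisely why only $d_3$ may be taken large while $d_1,d_2$ are required to exceed the fixed thresholds in (\ref{noncons}), and getting the explicit constants $C_1,C_2$ to coincide with $d_1^{*},d_2^{*}$ is the delicate part of the estimate.
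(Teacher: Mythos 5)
Your proposal follows essentially the same route as the paper's own proof: multiply each equation of (\ref{nonhomo}) by the deviation from the spatial mean, integrate by parts (Green's identity) to obtain the Dirichlet energies, bound the reaction differences using the a priori estimates of Theorems \ref{theoremupper} and \ref{theoremlower}, absorb the cross terms via the Cauchy--Young inequality with a small parameter $\epsilon_0$, apply the Poincar\'e inequality with $\mu_1$, and then choose $D_3$ large so that all three quadratic coefficients are positive, forcing $u\equiv\bar u$, $v\equiv\bar v$, $w\equiv\bar w$. Your identification of the diagonal thresholds (the logistic coefficient $r$ against $d_1\mu_1$ and the cannibalistic self-term $\sigma w'(c_1+c_2\beta)$ against $d_2\mu_1$, with the $w$-couplings pushed onto the freely enlargeable $d_3\mu_1$) matches the paper's choice of $d_1^{*}$, $d_2^{*}$ and its final selection of $D_3$ exactly.
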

\begin{proof}
Let $(\bar{u},\bar{v},\bar{w})$ be a positive solution of (\ref{nonhomo}), where
\[ \bar{u} \cong \dfrac{1}{|\Omega|}\int_{\Omega} u~ dx,~~\bar{v} \cong \dfrac{1}{|\Omega|}\int_{\Omega} v~ dx,~~\bar{w} \cong \dfrac{1}{|\Omega|}\int_{\Omega} w~ dx.\]
Now we multiply first equation of (\ref{nonhomo}) by $(u-\bar{u})$ second equation by $(v-\bar{v})$ and third equation by $(w-\bar{w})$ then,
\begin{eqnarray} \nonumber
  - d_1 (u-\bar{u})\triangledown^2 u &=& (u - \bar{u})~G_1 (u,v,w),\\\label{non1}
 - d_2 (v-\bar{v})\triangledown^2 v &=& (v-\bar{v})~ G_2 (u,v,w),\\
 - d_3 (w-\bar{w})\triangledown^2 w &=& (w-\bar{w})~G_3 (u,v,w).\nonumber
  \end{eqnarray}
Now we integrate (\ref{non1}) over $\Omega$ and apply Green's first identity, then
we have
\begin{eqnarray*}
d_1 \int_{\Omega}| \bigtriangledown u |^2 dx + d_2 \int_{\Omega}| \bigtriangledown v |^2dx + d_3 \int_{\Omega}| \bigtriangledown w |^2 dx
\end{eqnarray*}
\begin{eqnarray*}
&=& \int_{\Omega}(u - \bar{u})(G_1 (u,v,w) - G_1 (\bar{u},\bar{v},\bar{w}))dx + \int_{\Omega}(v - \bar{v})(G_2 (u,v,w) - G_2 (\bar{u},\bar{v},\bar{w}))dx \\
&&+ \int_{\Omega}(w - \bar{w})(G_3 (u,v,w) - G_3 (\bar{u},\bar{v},\bar{w}))dx,\\
&=&\int_{\Omega}(u-\bar{u})^2 \left(r \left(1 - \dfrac{u + \bar{u}}{k}\right) - \dfrac{\gamma (1-\alpha)(\alpha_1 v + \alpha_2 w) }{(\gamma + \bar{u})(\gamma + u)}\right)dx\\
&&+\int_{\Omega}(v - \bar{v})^2  (\sigma(c_1 + c_2 \beta)w -(1-c_1)\sigma \beta (v + \bar{v})-(\sigma + \lambda + \sigma l f)w -d)dx \\
&& + \int_{\Omega} (w - \bar{w})^2 ( (\lambda + \sigma l f -  \sigma \beta ) \bar{v} - \sigma(w + \bar{w}) - (d+e))dx\\
&&-\int_{\Omega}(u-\bar{u})(v-\bar{v})\left( \dfrac{(1 - \alpha) \alpha_1 \bar{u} }{\gamma + \bar{u}}\right) dx- \int_{\Omega}(u-\bar{u})(w-\bar{w})\left( \dfrac{(1-\alpha) \alpha_2 \bar{u} }{\gamma + \bar{u}}\right)dx\\
&&+ \int_{\Omega} (v-\bar{v})(w-\bar{w})((\lambda + \sigma l f - \sigma \beta)w + c_2 \sigma (w + \bar{w}) -((1 - c_1 - c_2 \beta)\sigma + \lambda + \sigma l f) \bar{v})dx. \\
 \end{eqnarray*}
By Cauchy inequality, we have
\begin{eqnarray*}
&& \int_{\Omega}\{ d_1 |\bigtriangledown u |^2  + d_2 |\bigtriangledown  v |^2 + d_3| \bigtriangledown w |^2\}dx \\
&&\\ 
&&\leq \int_{\Omega} (u-\bar{u})^2 \left(r \left(1 - \dfrac{u + \bar{u}}{k}\right) - \dfrac{\gamma (1-\alpha)(\alpha_1 v + \alpha_2 w) }{(\gamma + \bar{u})(\gamma + u)} \right )dx\\
&&~ + \int_{\Omega} (v - \bar{v})^2  (\sigma w'(c_1 + c_2 \beta)-(1-c_1)\sigma \beta (v + \bar{v})-(\sigma + \lambda + \sigma l f)w -d + \epsilon )dx \\
&&~+ \int_{\Omega}(w - \bar{w})^2  ( (\lambda + \sigma l f -  \sigma \beta ) \bar{v} - \sigma(w + \bar{w}) - (d+e))dx	 \\
&&~+ \dfrac{1}{4 \epsilon}\int_{\Omega}(w-\bar{w})^2 \bigg((v-\bar{v})^2 ( (\lambda + \sigma l f -  \sigma \beta ) \bar{v} - \sigma(w + \bar{w}) - (d+e))^2 \bigg)dx,
\end{eqnarray*} 
 where, $w'=\max \limits_{\substack{\bar{\Omega}}} w(x)$ and $\epsilon$ is an arbitrary positive constant. 
 
 \indent Using Poincar\'e Inequality, we have
 \begin{eqnarray*}
&& \int_{\Omega}\{ d_1 |\bigtriangledown u |^2  + d_2 |\bigtriangledown  v |^2 + d_3| \bigtriangledown w |^2\}dx \\
&&~\geq \int_{\Omega} d_1 \mu_1 (u - \bar{u})^2 dx+ \int_{\Omega} d_2 \mu_1(v - \bar{v})^2dx + \int_{\Omega}d_3 \mu_1 (w - \bar{w})^2 dx,
\end{eqnarray*}
a sufficiently small $\epsilon_0$ can be chosen such that
\begin{eqnarray*}
d_1 \mu_1 &> & r \left(1 - \dfrac{u + \bar{u}}{k}\right) - \dfrac{\gamma (1-\alpha)(\alpha_1 v + \alpha_2 w) }{(\gamma + \bar{u})(\gamma + u)},\\
d_2 \mu_1 &>&\sigma w'(c_1 + c_2 \beta)-(1-c_1)\sigma \beta (v + \bar{v})-(\sigma + \lambda + \sigma l f)w -d + \epsilon_0.\\
\end{eqnarray*}
Lastly, by taking ${D}_3 > \dfrac{1}{\mu_1}\bigg[ ((\lambda + \sigma l f -  \sigma \beta ) \bar{v} - \sigma(w + \bar{w}) - (d+e)) + \dfrac{1}{4 \epsilon_0}(v-\bar{v})^2 ( (\lambda + \sigma l f -  \sigma \beta ) \bar{v} - \sigma(w + \bar{w}) - (d+e))^2\bigg] $, 

then we can conclude that  $u=\bar{u},v=\bar{v},w=\bar{w}$, which establishes proof of the theorem.
\end{proof}

From the above theorem, we conclude that if the top predator diffuses with a sufficiently large rate, then the model system (\ref{nonhomo}) has no nonconstant steady state, and hence no stationary patterns emerge.
\subsection{Existence of Nonconstant Positive Steady States}\label{existenceofnon}
 This subsection derives the conditions for the existence of nonconstant positive steady states of (\ref{nonhomo}). That means selecting parameters that exhibit stationary patterns. For this purpose, the diffusion coefficient $d_2$ is varied while the other parameters $\Lambda,d_1$ and $d_3$ are kept fixed.

To obtain the results, first we study the linearization of (\ref{nonhomo}) at constant positive solution ${\textbf{u}}^{*}$. Let's denote
\[\mathbf{X} = \{ \textbf{u} \in [C^2(\bar{\Omega})]^3~ |~ \partial_{\nu} \textbf{u} = 0~~\text{on}~~\partial \Omega\},~~~~{\mathbf{X}}^{+} = \{ \textbf{u} \in \mathbf{X}~|~u>0,v>0,w>0,~x \in \bar{\Omega}\},\]
and
\[\mathbb{B}(\Theta)  = \{\textbf{u} \in \textbf{X}~|~\Theta^{-1} <u,v,w<\Theta,x \in \bar{\Omega}\}.\]

Where $\Theta$ is a positive constant guaranteed
to obtain from Theorems \ref{theoremupper} and \ref{theoremlower}. The elliptic problem (\ref{nonhomo}) can be written in this form: 
\begin{equation}\label{converted}
\begin{cases}
-\mathcal{D}\Delta \textbf{u} = \textbf{G}(\textbf{u}),~~~~x \in \Omega\\
\partial_{\nu} \textbf{u}=0, ~~~~~~~~~~~~~x \in \partial \Omega. 
\end{cases}
\end{equation}
Here, $\textbf{u}$ be a positive solution of (\ref{converted}) if and only if
\[\textbf{F} (\textbf{u}) \triangleq \textbf{u} - (\textbf{I} - \Delta )^{-1}\{\mathcal{D}^{-1}\textbf{G}(\textbf{u}) + \textbf{u}\} =0 ~~\text{in}~~\mathbf{X}^{+}, \]

where $(\textbf{I} - \Delta )^{-1}$ is the inverse of $\textbf{I} - \Delta$ in $\mathbf{X}$. As $\textbf{F}(\cdot)$ is a compact perturbation of the identity operator, for any $\mathbb{B} = \mathbb{B}(\Theta)$. The Leray-Schauder degree deg$(\textbf{F}(\cdot), 0,\mathbb{B})$ is well-defined if $F(\textbf{u}) \neq 0$ on $\partial \mathbb{B}$. Further, we notice that
\[\mathbf{D}_{\textbf{u}} \textbf{F}({\textbf{u}}^{*}) = \textbf{I} - (\textbf{I} - \Delta)^{-1}\{\mathcal{D}^{-1} {\textbf{G}}_{\textbf{u}}({\textbf{u}}^{*}) + \textbf{I}\} ,\]
and if $\mathbf{D}_{\textbf{u}} \textbf{F}({\textbf{u}}^{*})$ is invertible, then the index of $\textbf{F}$ at ${\textbf{u}}^{*}$ is denoted as \\index$(\textbf{F}(\cdot),{\textbf{u}}^{*}) = (-1)^{\rho}$, where $\rho$ is counting multiplicities of eigenvalues with negative real parts of $\mathbf{D}_{\textbf{u}} \textbf{F}({\textbf{u}}^{*})$.

 We notice that, $\mathbf{X}_{ij}$ is invariant under $\mathbf{D}_{\textbf{u}} \textbf{F}({\textbf{u}}^{*})$ for each integer $i \geq 1$ and each integer $1  \leq j \leq  \dim \mathbf{E}(\mu_i)$. Thus, $\lambda$ is an eigenvalue of the matrix
\[\textbf{I} - \dfrac{1}{1 + \mu_i}[\mathcal{D}^{-1}{\textbf{G}}_{\textbf{u}}({\textbf{u}}^{*}) + \textbf{I}] = \dfrac{1}{1 + \mu_i}[\mu_i \textbf{I} -\mathcal{D}^{-1}{\textbf{G}}_{\textbf{u}}({\textbf{u}}^{*}). ]\]

Thus the matrix $\mathbf{D}_\textbf{u}{\textbf{F}}({\textbf{u}}^{*})$ is invertible if and only if, for all $i \geq 1,$ the matrix $\textbf{I} - \dfrac{1}{1 + \mu_i}[\mathcal{D}^{-1}{\textbf{G}}_{\textbf{u}}({\textbf{u}}^{*}) + \textbf{I}]$ is non-singular. Write
\begin{equation}\label{hfunction}
\mathbb{H}(\mu) = \mathbb{H}({\textbf{u}}^{*};\mu) \triangleq \det\{\mu \textbf{I} - \mathcal{D}^{-1}{\textbf{G}}_{\textbf{u}}({\textbf{u}}^{*})\} = \dfrac{1}{d_1 d_2 d_3}\det\{\mu \mathcal{D} - {\textbf{G}}_{\textbf{u}}({\textbf{u}}^{*})\}
\end{equation}
Furthermore, we note that if $\mathbb{H}(\mu_i) \neq 0$ then for each $1 \leq j \leq \dim \mathbf{E}(\mu_i),$ the number of negative eigenvalues of $\mathbf{D}_\textbf{u}\textbf{F}({\textbf{u}}^{*})$ on ${\mathbf{X}}_{ij}$ is odd iff $\mathbb{H}(\mu_i)<0$. From this, we can conclude the following result.
\begin{proposition}\label{propos1}
Suppose that for all $i \geq 0,$ the matrix $\mu_i \textbf{I} - \mathcal{D}^{-1}{\textbf{G}}_\textbf{u}({\textbf{u}}^{*})$ is non-singular. Then
\[\text{index}(\textbf{F}(\cdot), {\textbf{u}}^{*}) = (-1)^{\rho}~~~~~\text{where}~~\rho = \sum_{\substack{
i \geq 0,\mathbb{H}(\mu_i)<0
}}
\dim \mathbf{E}(\mu_i)\] 
\end{proposition}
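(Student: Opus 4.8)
The plan is to read off $\mathrm{index}(\textbf{F}(\cdot),\textbf{u}^{*})$ from the classical Leray--Schauder index formula for a compact perturbation of the identity. Since $\textbf{F}(\cdot)$ has the form $\textbf{I}-K$ with $K$ compact and, by hypothesis, $\mathbf{D}_\textbf{u}\textbf{F}(\textbf{u}^{*})$ is invertible (this invertibility is precisely the assumption that $\mu_i\textbf{I}-\mathcal{D}^{-1}\textbf{G}_\textbf{u}(\textbf{u}^{*})$ is nonsingular for every $i\geq 0$, as noted above), the point $\textbf{u}^{*}$ is a nondegenerate zero and its index equals $(-1)^{\rho}$, where $\rho$ counts, with multiplicity, the eigenvalues of $\mathbf{D}_\textbf{u}\textbf{F}(\textbf{u}^{*})$ with negative real part. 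Thus the whole task reduces to computing the parity of $\rho$, which I would do by exploiting the invariant decomposition $\mathbf{X}=\bigoplus_{i,j}\mathbf{X}_{ij}$ established earlier.

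Next I would restrict $\mathbf{D}_\textbf{u}\textbf{F}(\textbf{u}^{*})$ to each invariant block $\mathbf{X}_{ij}\cong\mathbb{R}^3$, where it acts as the $3\times 3$ matrix
\[
M_i=\textbf{I}-\frac{1}{1+\mu_i}\big[\mathcal{D}^{-1}\textbf{G}_\textbf{u}(\textbf{u}^{*})+\textbf{I}\big]=\frac{1}{1+\mu_i}\big[\mu_i\textbf{I}-\mathcal{D}^{-1}\textbf{G}_\textbf{u}(\textbf{u}^{*})\big].
\]
Because $1/(1+\mu_i)>0$ for all $i\geq 0$, the eigenvalues of $M_i$ carry the same signs as those of $\mu_i\textbf{I}-\mathcal{D}^{-1}\textbf{G}_\textbf{u}(\textbf{u}^{*})$, and in particular $\mathrm{sign}\big(\det M_i\big)=\mathrm{sign}\big(\mathbb{H}(\mu_i)\big)$ by definition (\ref{hfunction}) of $\mathbb{H}$. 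So the sign of $\mathbb{H}(\mu_i)$ is the only datum needed from each block.

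The heart of the argument is the parity bookkeeping on a single block. For the real matrix $M_i$ the determinant equals the product of its three eigenvalues; any non-real eigenvalues occur in complex-conjugate pairs whose product $|z|^2$ is positive, so they neither change the sign of $\det M_i$ nor, being genuinely complex, contribute to the count $\rho$. Consequently the number of negative eigenvalues of $\mathbf{D}_\textbf{u}\textbf{F}(\textbf{u}^{*})$ on $\mathbf{X}_{ij}$ is odd exactly when $\det M_i<0$, i.e. exactly when $\mathbb{H}(\mu_i)<0$ --- the statement recorded just before the proposition. Summing these contributions over the $\dim\mathbf{E}(\mu_i)$ identical copies $\mathbf{X}_{ij}$, $1\leq j\leq\dim\mathbf{E}(\mu_i)$, and then over all $i\geq 0$, the parity of $\rho$ equals that of $\sum_{i\geq 0,\ \mathbb{H}(\mu_i)<0}\dim\mathbf{E}(\mu_i)$, which gives the claimed formula.

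The step I expect to be the main obstacle --- really the only delicate point --- is stating the index formula with the correct count. One must recall that the Leray--Schauder index is governed by the real eigenvalues of the compact part exceeding $1$, equivalently the \emph{real} negative eigenvalues of $\mathbf{D}_\textbf{u}\textbf{F}(\textbf{u}^{*})$, and then observe that, since complex eigenvalues of a real matrix are conjugate-paired, replacing ``real negative'' by ``negative real part'' leaves the parity of $\rho$ unchanged. Once this parity invariance is justified, the blockwise determinant computation closes the proof; the nonsingularity hypothesis ensures $\mathbb{H}(\mu_i)\neq 0$ for every $i$, so no block is ambiguous.
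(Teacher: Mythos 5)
Your proposal is correct and takes essentially the same route as the paper, which derives the proposition from exactly the preparation you reproduce: the Leray--Schauder index formula $(-1)^{\rho}$ for the compact perturbation of the identity at the nondegenerate zero $\textbf{u}^{*}$, the invariant decomposition into the blocks $\mathbf{X}_{ij}$ on which the derivative acts as $\frac{1}{1+\mu_i}\bigl[\mu_i \textbf{I} - \mathcal{D}^{-1}\textbf{G}_{\textbf{u}}(\textbf{u}^{*})\bigr]$, and the observation that the number of negative eigenvalues on each block is odd precisely when $\mathbb{H}(\mu_i)<0$. Your explicit justification of the parity step (conjugate pairs affecting neither the determinant sign nor the parity of the count, so ``real negative'' may be replaced by ``negative real part'') is a point the paper leaves implicit, but it is the same argument, not a different one.
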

According to this proposition, in order to calculate $\text{index}(\textbf{F}(\cdot), {\textbf{u}}^{*})$, we ought to consider cautiously the sign of $\mathbb{H}(\mu_i)$. Therefore we calculate
\begin{equation}\label{amu}
\mathcal{A}(\mu) \triangleq \det\{\mu \mathcal{D} - {\textbf{G}}_u({\textbf{u}}^{*})\} = \mathcal{A}_3(d_2) \mu^3 + \mathcal{A}_2(d_2) \mu^2 + \mathcal{A}_1 (d_2)\mu -\det\{{\textbf{G}}_\textbf{u}({\textbf{u}}^{*})\}
\end{equation}
with
\begin{eqnarray*}
 \mathcal{A}_3(d_2) &=& d_1 d_2 d_3,~ \mathcal{A}_2(d_2) = -(d_1 d_2 a_{33} + d_2 d_3 a_{11} + d_1 d_3 a_{22}),\\
\mathcal{A}_1(d_2) &=& d_1 (a_{22}a_{33} - a_{23}a_{32}) + d_2 a_{11}a_{33} + d_3(a_{11}a_{22} - a_{12}a_{21}), 
\end{eqnarray*}
where $a_{ij}$ are given in (\ref{jacobian}).\

Here, we have considered the dependence of $\mathcal{A}$ on $d_2$. If $\tilde{\mu}_1(d_2),\tilde{\mu}_2(d_2),\tilde{\mu}_3(d_2)$ be the roots of $\mathcal{A}(d_2;\mu)=0$ with $\text{Re}\{\tilde{\mu}_1\} \leq \text{Re}\{\tilde{\mu}_2\}\leq \text{Re}\{\tilde{\mu}_3\}$, then  
\[\tilde{\mu}_1 (d_2)\tilde{\mu}_2(d_2) \tilde{\mu}_3 (d_2)= \det\{\textbf{G}_{\textbf{u}}({\textbf{u}}^{*})\}.\]
Note that $\det\{ {\textbf{G}}_u({\textbf{u}}^{*}) \}< 0 $ and $\mathcal{A}_3(d_2)>0.$ Thus, one of $\tilde{\mu}_1(d_2),\tilde{\mu}_2(d_2),\tilde{\mu}_3(d_2)$ is real and negative, and the product of two is positive.\\
Now, performing the following limits:
\begin{eqnarray*}
\lim_{d_2 \to \infty} \dfrac{\mathcal{A}(\mu)}{d_2} &=& \lim_{d_2 \to \infty}  \dfrac{\mathcal{A}_3 \mu^3 + \mathcal{A}_2 \mu^2 + \mathcal{A}_1 \mu}{d_2}\\
&=& \mu[d_1 d_3 \mu^2  - (d_1 a_{33} + d_3 a_{11}) \mu  + a_{11}a_{33}].
\end{eqnarray*}
 If the parameters $\Lambda,d_1, d_3$ satisfy $a_{11} d_3 +a_{33} d_1 > 0$, then we can establish the following result:
 
 \begin{proposition}\label{propo2}
 Assume the condition (\ref{existenceequi}) holds, and $a_{11} >0$. Then there exists a positive constant $D_2$, such that when $d_2 \geq D_2$, then the three roots \\$\tilde{\mu}_1(d_2),\tilde{\mu}_2(d_2),\tilde{\mu}_3(d_2)$ of $A(d_2;\mu) = 0$, all are real and satisfy
 
\begin{equation}
\begin{cases}
\lim \limits_{d_2 \to \infty}\tilde{\mu}_1(d_2)  = \dfrac{d_3 a_{11} + d_1 a_{33} - \sqrt{(d_3 a_{11} + d_1 a_{33})^2 - 4 d_1 d_3 a_{11}a_{33}}}{2d_1 d_3} = \dfrac{a_{33}}{d_3}<0,\\
\lim \limits_{d_2 \to \infty} \tilde{\mu}_2(d_2) = 0,\\
\lim \limits_{d_2 \to \infty}\tilde{\mu}_3(d_2)  = \dfrac{d_3 a_{11} + d_1 a_{33} + \sqrt{(d_3 a_{11} + d_1 a_{33})^2 - 4 d_1 d_3 a_{11}a_{33}}}{2d_1 d_3} = \dfrac{a_{11}}{d_1}>0.
\end{cases}
\end{equation}
Moreover, 
\begin{equation}\label{moreover}
\begin{cases}
-\infty < \tilde{\mu}_1 (d_2) <0 < \tilde{\mu}_2 (d_2)<\tilde{\mu}_3 (d_2),\\
A (d_2;\mu)<0 ~~if~~\mu \in (-\infty ,\tilde{\mu}_1 (d_2)) \cup (\tilde{\mu}_2 (d_2),\tilde{\mu}_3 (d_2)), \\
A (d_2;\mu)>0 ~~if~~\mu \in (\tilde{\mu}_1 (d_2),\tilde{\mu}_2 (d_2)) \cup (\tilde{\mu}_3(d_2),\infty).
 \end{cases}
\end{equation}
\end{proposition}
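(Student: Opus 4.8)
The plan is to treat $\mathcal{A}(d_2;\mu)$ as a cubic in $\mu$ whose coefficients are affine in $d_2$, and to read off the behaviour of its roots as $d_2\to\infty$ by first normalising. Dividing through by $d_2$, the computation already displayed above gives
\[
\lim_{d_2\to\infty}\frac{\mathcal{A}(d_2;\mu)}{d_2} = \mu\big[d_1 d_3\mu^2 - (d_1 a_{33}+d_3 a_{11})\mu + a_{11}a_{33}\big],
\]
which I denote $P_\infty(\mu)$ and factor as $P_\infty(\mu)=d_1 d_3\,\mu\,(\mu-a_{33}/d_3)(\mu-a_{11}/d_1)$. Since $a_{33}=-\sigma w^{*}<0$ (here (\ref{existenceequi}) guarantees $w^{*}>0$) and $a_{11}>0$ by hypothesis, the three roots of $P_\infty$ — namely $a_{33}/d_3$, $0$ and $a_{11}/d_1$ — satisfy $a_{33}/d_3<0<a_{11}/d_1$ and are pairwise distinct.

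First I would note that after normalisation the leading coefficient $d_1 d_3$ is a fixed nonzero constant, so $\mathcal{A}(d_2;\mu)/d_2$ is a genuine cubic whose coefficients converge, as $d_2\to\infty$, to those of $P_\infty$. The continuous dependence of the roots of a polynomial on its coefficients then forces the three roots $\tilde{\mu}_1(d_2)\le\tilde{\mu}_2(d_2)\le\tilde{\mu}_3(d_2)$ of $\mathcal{A}(d_2;\mu)=0$ to converge to the three roots of $P_\infty$, giving exactly the displayed limits $a_{33}/d_3$, $0$ and $a_{11}/d_1$. Because these three limits are distinct, a complex-conjugate pair (whose members necessarily share a common real part) cannot converge to two distinct reals; hence for all $d_2$ large enough the three roots are in fact real and simple, which produces the threshold $D_2$ asserted in the statement.

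Next I would pin down the signs in (\ref{moreover}). For $d_2$ large, $\tilde{\mu}_1(d_2)$ lies near $a_{33}/d_3<0$ and $\tilde{\mu}_3(d_2)$ near $a_{11}/d_1>0$, so $\tilde{\mu}_1<0<\tilde{\mu}_3$. The middle root tends to $0$, so its sign does not follow from the limit alone; instead I would invoke Vieta's formula, that the product $\tilde{\mu}_1\tilde{\mu}_2\tilde{\mu}_3$ carries the sign of $\det\{\textbf{G}_{\textbf{u}}({\textbf{u}}^{*})\}<0$. Since $\tilde{\mu}_1\tilde{\mu}_3<0$, a negative product forces $\tilde{\mu}_2>0$, establishing $-\infty<\tilde{\mu}_1(d_2)<0<\tilde{\mu}_2(d_2)<\tilde{\mu}_3(d_2)$. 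Finally, because $\mathcal{A}(d_2;\mu)$ has positive leading coefficient $\mathcal{A}_3(d_2)=d_1 d_2 d_3>0$ and the three real roots just found, I would write $\mathcal{A}(d_2;\mu)=d_1 d_2 d_3(\mu-\tilde{\mu}_1)(\mu-\tilde{\mu}_2)(\mu-\tilde{\mu}_3)$ and read the sign directly from the factorisation: it is negative on $(-\infty,\tilde{\mu}_1)\cup(\tilde{\mu}_2,\tilde{\mu}_3)$ and positive on $(\tilde{\mu}_1,\tilde{\mu}_2)\cup(\tilde{\mu}_3,\infty)$, which is precisely (\ref{moreover}).

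The step I expect to be the main obstacle is the passage from the limiting values to strict inequalities for finite (large) $d_2$, specifically showing the middle root approaches $0$ from the positive side. A direct attack would expand $\tilde{\mu}_2(d_2)$ to first order in $1/d_2$, which is unpleasant; the product-sign argument via $\det\{\textbf{G}_{\textbf{u}}({\textbf{u}}^{*})\}<0$ avoids this entirely. The only other point needing care is the legitimacy of the root-continuity argument: one must normalise by $d_2$ so that the degree is preserved and the leading coefficient stays bounded away from zero, since the unnormalised leading coefficient $d_1 d_2 d_3$ diverges.
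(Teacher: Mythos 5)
Your proposal is correct and takes essentially the same route as the paper, which states Proposition \ref{propo2} without a separate formal proof but assembles exactly your ingredients in the preceding text: the normalised limit $\lim_{d_2\to\infty}\mathcal{A}(\mu)/d_2=\mu\left[d_1 d_3\mu^2-(d_1 a_{33}+d_3 a_{11})\mu+a_{11}a_{33}\right]$, the observation that $\det\{\textbf{G}_{\textbf{u}}({\textbf{u}}^{*})\}<0$ while $\mathcal{A}_3(d_2)=d_1d_2d_3>0$, and the consequent sign of the product of the roots. The details you supply---root continuity after normalisation, the conjugate-pair argument forcing realness of the roots for large $d_2$, and the Vieta sign argument giving $\tilde{\mu}_2(d_2)>0$---are precisely the steps the paper leaves implicit, so your write-up is a faithful (and more rigorous) rendering of the paper's argument rather than a different one.
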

 
Next, we will prove the existence of nonconstant positive steady states of (\ref{nonhomo}), when $d_2$ is sufficiently large and the other parameters are fixed.
\begin{theorem}\label{theoremconst}
Assume the parameters $d_1$ and $d_3$ are fixed, $a_{11} >0$ and $a_{11}a_{33}<0$ holds. If $\bar{\mu} \in (\mu_n , \mu_{n+1})$ for some $n \geq 1$, and the sum $\sigma_n =  \sum \limits_{i=1}^{ n} \dim \mathbf{E}(\mu_i)$ is odd, then there exists a positive constant $D_2$ such that, if $d_2 \geq D_2$,
then the problem (\ref{nonhomo}) has at least one nonconstant positive steady state to generate stationary patterns.
\end{theorem}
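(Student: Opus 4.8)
The plan is to argue by contradiction using the Leray--Schauder degree, exactly along the scheme already set up in Propositions \ref{propos1} and \ref{propo2}. Suppose, to the contrary, that for $d_2 \geq D_2$ the problem (\ref{nonhomo}) admits no nonconstant positive steady state, so that $\textbf{u}^{*}$ is the only zero of the operator $\textbf{F}$ in the ball $\mathbb{B} = \mathbb{B}(\Theta)$, where $\Theta$ is fixed once and for all by the a priori bounds of Theorems \ref{theoremupper} and \ref{theoremlower}. Then $\deg(\textbf{F}(\cdot),0,\mathbb{B}) = \text{index}(\textbf{F}(\cdot),\textbf{u}^{*})$, and the aim is to show that this common value is forced to equal both $-1$ and $+1$, which is absurd.

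First I would compute $\text{index}(\textbf{F}(\cdot),\textbf{u}^{*})$ for the target diffusion matrix $\mathcal{D}$. Recall from (\ref{hfunction}) that $\mathbb{H}(\mu_i)$ has the same sign as $\mathcal{A}(d_2;\mu_i)$ in (\ref{amu}). Since $a_{11}>0$ and $a_{11}a_{33}<0$ (so $a_{33}<0$), Proposition \ref{propo2} applies and gives, for $d_2$ large, three real roots with $\tilde{\mu}_1(d_2)<0<\tilde{\mu}_2(d_2)<\tilde{\mu}_3(d_2)$, together with the sign pattern (\ref{moreover}); moreover $\tilde{\mu}_2(d_2)\to 0$ and $\tilde{\mu}_3(d_2)\to\bar{\mu}=a_{11}/d_1$. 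Enlarging $D_2$ if necessary I can ensure $\tilde{\mu}_2(d_2)<\mu_1$, and since $\bar{\mu}\in(\mu_n,\mu_{n+1})$ lies in an open interval, also $\tilde{\mu}_3(d_2)\in(\mu_n,\mu_{n+1})$. Because every eigenvalue satisfies $\mu_i\geq 0>\tilde{\mu}_1$, the only $\mu_i$ with $\mathbb{H}(\mu_i)<0$ are those lying in $(\tilde{\mu}_2,\tilde{\mu}_3)$, namely $\mu_1,\dots,\mu_n$; at $\mu_0=0$ one has $\mathbb{H}(0)>0$ because $\mathcal{A}(0)=-\det\{\textbf{G}_{\textbf{u}}(\textbf{u}^{*})\}>0$. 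Hence, by Proposition \ref{propos1}, $\rho=\sum_{i=1}^{n}\dim\mathbf{E}(\mu_i)=\sigma_n$, which is odd by hypothesis, so $\text{index}(\textbf{F}(\cdot),\textbf{u}^{*})=(-1)^{\sigma_n}=-1$.

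Next I would build a homotopy carrying the target problem to a reference problem of known degree $+1$. Fix a reference matrix $\hat{\mathcal{D}}=s\,\textbf{I}$ with $s$ so large that (i) Theorem \ref{theoremnon} guarantees $\textbf{u}^{*}$ is the unique positive solution for $\hat{\mathcal{D}}$, and (ii) $\det\{\mu_i\,\hat{\mathcal{D}}-\textbf{G}_{\textbf{u}}(\textbf{u}^{*})\}>0$ for every eigenvalue $\mu_i$, $i\geq 0$: for $\mu_i\geq\mu_1>0$ the leading term $(\mu_i s)^3$ dominates, while at $\mu_0=0$ the determinant equals $-\det\{\textbf{G}_{\textbf{u}}(\textbf{u}^{*})\}>0$. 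By the index formula of Proposition \ref{propos1} with $\hat{\mathcal{D}}$ in place of $\mathcal{D}$, this gives $\rho=0$ and index $+1$ for the reference. Define $\mathcal{D}_\theta=(1-\theta)\hat{\mathcal{D}}+\theta\mathcal{D}$ and the compact homotopy $\Phi(\textbf{u};\theta)=\textbf{u}-(\textbf{I}-\Delta)^{-1}\{\mathcal{D}_\theta^{-1}\textbf{G}(\textbf{u})+\textbf{u}\}$. All entries of $\mathcal{D}_\theta$ stay bounded below over $\theta\in[0,1]$, so the lower estimate of Theorem \ref{theoremlower} applies uniformly, and combined with the diffusion-independent upper bounds of Theorem \ref{theoremupper} every positive zero of $\Phi(\cdot;\theta)$ remains in the interior of $\mathbb{B}$. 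Hence $\Phi(\cdot;\theta)\neq 0$ on $\partial\mathbb{B}$, and homotopy invariance yields $\deg(\textbf{F}(\cdot),0,\mathbb{B})=\deg(\Phi(\cdot;0),0,\mathbb{B})=+1$, contradicting the value $-1$ obtained above. This contradiction establishes the existence of a nonconstant positive steady state.

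The routine ingredients — the explicit constants in the a priori bounds and the determinant expansion — are already provided by Theorems \ref{theoremupper}--\ref{theoremnon} and (\ref{amu}), so I would not reprove them. The main obstacle is the eigenvalue bookkeeping inside the index computation: one must simultaneously control the limiting positions $\tilde{\mu}_2\to 0$ and $\tilde{\mu}_3\to\bar{\mu}$ from Proposition \ref{propo2}, verify that for all sufficiently large $d_2$ it is \emph{exactly} the eigenvalues $\mu_1,\dots,\mu_n$ (with multiplicities) that fall into $(\tilde{\mu}_2,\tilde{\mu}_3)$, and confirm the reference index is genuinely $+1$. The parity assumption on $\sigma_n$ is precisely the device that converts this eigenvalue count into the sign $-1$ needed to clash with the reference degree.
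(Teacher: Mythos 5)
Your proposal is correct and follows essentially the same route as the paper: a contradiction via Leray--Schauder degree, with the index at the target diffusion matrix computed as $(-1)^{\sigma_n}=-1$ through Propositions \ref{propos1} and \ref{propo2}, a large-diffusion reference problem whose uniqueness comes from Theorem \ref{theoremnon} and whose index is $+1$, a linear homotopy in the diffusion coefficients, and the a priori bounds of Theorems \ref{theoremupper} and \ref{theoremlower} keeping all zeros off $\partial\mathbb{B}(\Theta)$. The only cosmetic difference is your choice of reference matrix $\hat{\mathcal{D}}=s\,\textbf{I}$ instead of the paper's $\mathrm{diag}(\hat{d}_1,\hat{d}_2,\hat{d}_3)$ built from $d^{*}_1$, $d^{*}_2$, $D_3$, and you spell out the reference-index computation that the paper dismisses with ``Similarly.''
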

\begin{proof}
If $a_{11} a_{33}<0,$ by proposition \ref{propo2}, here, we have a positive constant $D_2$, such that when $d_2 \geq D_2 $, (\ref{moreover}) holds and
 \begin{equation}\label{proof}
 0 = \mu_0 < \tilde{\mu}_2(d_2)<\mu_1~~~~\tilde{\mu}_3(d_2) \in (\mu_n , \mu_{n+1}).
 \end{equation}
The motivation of this theorem is to prove the existence of nonconstant positive steady state of (\ref{nonhomo}) for any $d_2 \geq D_2$. This proof is based on the homotopy invariance of the topological degree.
Assume that the attestation on the contrary is not valid for some $d_2 = \tilde{d}_2 \geq D_2$. To the continuation of proof, we fixed $d_2 = \tilde{d}_2$,
\[d^{*}_1 = \dfrac{r}{\mu_1},~~d^{*}_2 = \dfrac{\sigma w'(c_1 + c_2 \beta)}{\mu_1} .\]
By Theorem \ref{theoremnon}, we obtain $D_3 = D_3(\Lambda,d^{*}_1,d^{*}_2)$. Fix $\hat{d}_2 \geq d^{*}_2,\hat{d}_1 \geq \max \{d^{*}_1,d_1\},\hat{d}_3 \geq \max\{D_3,d_3\}$. For $ t \in [0,1] $, define $\mathcal{D}(t) = \text{diag}(d_1(t),d_2(t),d_3(t))$ with $d_{i}(t) = td_i + (1-t)\hat{d}_i$, $i=1,2,3$. Now, consider the problem

\begin{equation}\label{diagonal}
\begin{cases}
-\mathcal{D}(t)\Delta \textbf{u} = \textbf{G}(\textbf{u}),~~~~x \in \Omega,\\
\partial_{\nu} \textbf{u}=0, ~~~~~~~~~~~~~~~~x \in \partial \Omega. 
\end{cases}
\end{equation}
Then \textbf{u} is a nonconstant positive steady state of (\ref{nonhomo}) if it is a positive solution of (\ref{diagonal}) for $t = 1$. It is obvious
that ${\textbf{u}}^{*}$ is the unique constant positive solution of (\ref{diagonal}) for any $0 \leq t \leq 1$. For any $0 \leq t \leq 1$, \textbf{u} is a positive solution of (\ref{diagonal}) if
\[\textbf{F}(t;\textbf{u})   \triangleq \textbf{u} - (\textbf{I} - \Delta)^{-1}\{\mathcal{D}^{-1}(t)\textbf{G}(\textbf{u}) + \textbf{u}\} = 0~~~~\text{in}~~{\mathbf{X}}^{+}.\]
Clearly, $\textbf{F}(1; \textbf{u}) = \textbf{F}(\textbf{u})$, Theorem \ref{theoremnon} shows that $\textbf{F}(0; \textbf{u}) = 0$ has only the positive solution ${\textbf{u}}^{*}$ in ${\mathbf{X}}^{+}$. The direct calculation gives,

\[\textbf{D}_\textbf{u}\textbf{F}_{\textbf{u}}(t;{\textbf{u}}^{*})   \triangleq \textbf{I} - (\textbf{I} - \Delta)^{-1}\{\mathcal{D}^{-1}(t){\textbf{G}}_{\textbf{u}}({\textbf{u}}^{*}) + \textbf{I}\},\]
and in particular,
\[\textbf{D}_\textbf{u}\textbf{F}_{\textbf{u}}(0;{\textbf{u}}^{*})   \triangleq \textbf{I} - (\textbf{I} - \Delta)^{-1}\{\widehat{\mathcal{D}}^{-1} {\textbf{G}}_{\textbf{u}}({\textbf{u}}^{*}) + \textbf{I}\},\]

\[\textbf{D}_\textbf{u}\textbf{F}_{\textbf{u}}(1;{\textbf{u}}^{*})   \triangleq \textbf{I} - (\textbf{I} - \Delta)^{-1}\{\mathcal{D}^{-1} {\textbf{G}}_{\textbf{u}}({\textbf{u}}^{*}) + \textbf{I}\}=\textbf{D}_{\textbf{u}}\textbf{F}({\textbf{u}}^{*}),\]
\\
where $\widehat{\mathcal{D}} = \text{diag}(\hat{d}_1,\hat{d}_2,\hat{d}_3)$. From (\ref{hfunction}) and (\ref{amu})
\begin{equation}\label{hmu_1}
\mathbb{H}(\mu)  = \dfrac{1}{d_1 d_2 d_3 }\mathcal{A}(d_2;\mu)
\end{equation}
In view of (\ref{moreover}) and (\ref{proof}), it follows from (\ref{hmu_1}) that
\begin{equation}
\begin{cases}
\mathbb{H}(\mu_0) = \mathbb{H}(0) >0,\\
\mathbb{H}(\mu_i)<0,~~1 \leq i \leq n,\\
\mathbb{H}(\mu_{i+1}) >0,~~~i \geq n+1
\end{cases}
\end{equation}
Therefore, 0 is not an eigenvalue of the matrix $\mu_i I - \mathcal{D}^{-1}{\textbf{G}}_{\textbf{u}}({\textbf{u}}^{*})$ for all $i \geq 0$, and
\[\sum_{\substack{
i \geq 0,\mathbb{H}(\mu_i)<0
}}\dim \mathbf{E}(\mu_i) = \sum \limits_{i=1}^{n} \dim \mathbf{E}(\mu_i) = \sigma_n,~~~~~~\text{which is odd.}\]
Thanks to Proposition \ref{propos1}, we have
\begin{equation}\label{last3}
\text{index}(\textbf{F}(1;\cdot),{\textbf{u}}^{*}) = (-1)^{\rho} = (-1)^{\sigma_n} = -1
\end{equation}
Similarly, it is possible to prove
\begin{equation}\label{last2}
\text{index}(\textbf{F}(0;\cdot),{\textbf{u}}^{*}) = (-1)^{0} = 1
\end{equation}
 
By Theorems \ref{theoremupper} and \ref{theoremlower}, there exists a positive constant $\Theta$ such that, for all $0 \leq t <1$, the positive solutions of (\ref{diagonal}) satisfy $\Theta^{-1}<u,v,w<\Theta$. Therefore $\textbf{F}(t;\textbf{u})$ on $\partial \mathbb{B}(\Theta)$ for all $\Theta^{-1}<u,v,w<\Theta$. By homotopy invariance of the topological degree,
\begin{equation}\label{final}
\text{deg}(\textbf{F}(1;\cdot),0,\mathbb{B}(\Theta)) = \text{deg}(\textbf{F}(0;\cdot),0,\mathbb{B}(\Theta))
\end{equation}
On the other hand, by our assumption, both equations $\textbf{F}(1;\textbf{u}) = 0$ and $\textbf{F}(0;\textbf{u}) =0$ have only the positive solution ${\textbf{u}}^{*}$ in $\mathbb{B}(\Theta)$, and hence, by (\ref{last3}) and (\ref{last2}),

\begin{eqnarray*}
&&\text{deg}(\textbf{F}(0;\cdot),0,\mathbb{B}(\Theta)) = \text{index}(\textbf{F}(0;\cdot),{\textbf{u}}^{*}) = 1,\\
&& \text{deg}(\textbf{F}(1;\cdot),0,\mathbb{B}(\Theta)) = \text{index}(\textbf{F}(1;\cdot),{\textbf{u}}^{*}) = -1. 
\end{eqnarray*}
This contradicts (\ref{final}) and the proof is completed.

\end{proof}

 From the above theorem, we conclude that if the susceptible predator diffuses with a sufficiently large rate, then the model system (\ref{nonhomo}) has at least one nonconstant steady state, and hence stationary patterns started to appear.

 Analytical results obtained in Theorems \ref{theoremnon} and \ref{theoremconst} are sufficient conditions. These results show that stationary patterns obtained for certain values of diffusion coefficients. For the numerical validation of results, calculation of smallest eigenvalue $\mu_1$ of $-\Delta$ is important but not easy to determine. Hence the conditions for nonexistence and existence of nonconstant positive steady states cannot be verified for the suitable choice of parameter values. Therefore in the next section, we have discussed the Turing instability, which gives parametric conditions for Turing patterns to arise.

\section{Turing Instability}\label{tstability}
 In this section, our primary goal is to understand the diffusion induced instabilities in the system generated by random movement of the species. This instability leads to Turing patterns in the spatiotemporal system. The existence of Turing patterns can be proved by Turing instability. Segel and Jackson \cite{segel1972dissipative} were the first who paid attention to Turing instability in the ecological context. 
 
 We perform a detailed study of the local dynamics of the spatiotemporal model system (\ref{eq1}) in a two-dimensional spatial domain. The conditions for the Turing instability is obtained by perturbing the homogeneous steady-state solution. For this, we linearized the spatiotemporal model about the interior equilibrium point $E^*(u^{*},v^{*},w^{*})$ and perturbed the system with following two-dimensional perturbation \cite{kumari2013pattern} of the form:
\begin{eqnarray}
u&=&u^* + \epsilon_1 \exp(\lambda_k t + i(k_x x + k_y y)) = u^* + u_1\\
v&=&v^* + \epsilon_2 \exp(\lambda_k t + i(k_x x + k_y y))=v^* + v_1\\
w&=&w^* + \epsilon_3 \exp(\lambda_k t + i(k_x x + k_y y))=w^* + w_1
\end{eqnarray}

 where $\epsilon_i,i=1,2,3$, are positive and sufficiently small constants, $k_x$ and $k_y$ are the components of wave number $k=\sqrt{k^2_x + k^2_y}$ along $x$ and $y$ directions respectively and $\lambda_k$ is the wavelength. The system is linearized about the non-trivial interior equilibrium point $E^*(u^*,v^*,w^*)$. The characteristic equation of the linearized version of the spatiotemporal model system (\ref{eq1}) is given by 
 
\begin{equation}\label{eq8}
\begin{pmatrix} J_{s} - \lambda_k I_3  \end{pmatrix} 
\begin{pmatrix} u_1 \\ v_1 \\ w_1  \end{pmatrix}
 = 0,
\end{equation}
 with 
 \begin{equation}\label{eq9}
 J_{xyz}
 =
\begin{pmatrix} a_{11}-d_1 k^2 & a_{12} & a_{13}\\
                a_{21}  &  a_{22}-d_2 k^2     & a_{23}\\
                a_{31}  &  a_{32}      & a_{33} - d_3 k^2
                
                   \end{pmatrix}, 
\end{equation}\\
 where $k$ is the wave number given by $k^2 = k_X^2 + k_Y^2$ and $I_3$ is a $3 \times 3$ identity matrix and $a_{ij}$ are given in (\ref{jacobian}). Our interest is to check the stability properties of the attracting interior equilibrium point $E^*$, leading to the conditions causing Turing instability. From (\ref{eq8}) to (\ref{eq9}), we get the characteristic equation of the form
\begin{equation}\label{char}
\det(J_{xyz}-\lambda_k I_3)=\lambda_k^3 + \rho_1 \lambda_k^2 + \rho_2 \lambda_k + \rho_3 = 0,
\end{equation}

  where,
 \begin{eqnarray*}
 \rho_1 &=& -\text{tr}(J_{xyz}) = - (a_{11}+a_{22}+a_{33}) + (d_1 + d_2 +d_3)k^2,\\
 \rho_2 &=& k^4(d_1 d_2 + d_2 d_3 + d_3 d_1)-\{a_{11}(d_2 + d_3) + a_{22} (d_3 + d_1)\\
 &~& + a_{33} (d_1 + d_2)\} k^2 +(a_{11}a_{33}+a_{22}a_{33}+a_{11}a_{22})-a_{12}a_{21}-a_{23}a_{32},\\
 \rho_3 &=& -\det(J_{xyz})=d_1 d_2 d_3 k^6 - k^4 (a_{11} d_2 d_3 + a_{22} d_1 d_3 + a_{33} d_1 d_2) \\
 &~& + k^2 (a_{11}a_{33} d_2 + a_{22} a_{33} d_1 + a_{11}a_{22} d_3 - d_3 a_{12}a_{21} - d _1 a_{23}a_{32} ) \\
 &~& + (a_{32}a_{11}a_{23}+a_{33}a_{12}a_{21}-a_{21}a_{32}a_{13}-a_{11}a_{22}a_{33}).
 \end{eqnarray*}
 
  \noindent Hence from Routh-Hurwitz criterion, the spatiotemporal model system (\ref{eq1}) is stable if 
\begin{equation}\label{locond}
\rho_1(0) >0 ,\rho_3(0) >0~~\text{and}~~\rho_1(0)\rho_2(0)-\rho_3(0)>0
\end{equation}
\begin{theorem}
If the positive equilibrium point $E^{*}({u}^{*},{v}^{*},{w}^{*})$ is locally asymptomatically stable for temporal model system (\ref{main}), then $E^{*}({u}^{*},{v}^{*},{w}^{*})$ is locally asymptomatically stable for spatiotemporal model system (\ref{eq1}) if condition (\ref{locond}) holds.
\end{theorem}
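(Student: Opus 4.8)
The plan is to apply the Routh--Hurwitz criterion to the $k$-dependent characteristic equation (\ref{char}). Linear asymptotic stability of $E^*$ for the reaction--diffusion system (\ref{eq1}) is equivalent to requiring that every root $\lambda_k$ of (\ref{char}) have negative real part for \emph{every} admissible wave number $k \geq 0$; equivalently, the three Routh--Hurwitz inequalities
\[\rho_1(k) > 0, \qquad \rho_3(k) > 0, \qquad \rho_1(k)\rho_2(k) - \rho_3(k) > 0\]
must hold for all $k \geq 0$. The hypothesis---that $E^*$ is stable for the temporal system (\ref{main}) and that (\ref{locond}) holds---already secures these three inequalities at the single mode $k = 0$. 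The task therefore reduces to propagating the inequalities from $k = 0$ to all $k > 0$.

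First I would dispose of the cheapest inequality. Writing $\rho_1(k) = \rho_1(0) + (d_1 + d_2 + d_3)k^2$ and using $d_i > 0$, the diffusion term is nonnegative, so $\rho_1(0) > 0$ immediately yields $\rho_1(k) \geq \rho_1(0) > 0$ for all $k$. Next I would treat $\rho_3(k)$, which is a cubic in $s = k^2$ with leading coefficient $d_1 d_2 d_3 > 0$ and strictly positive constant term $\rho_3(0)$ guaranteed by (\ref{locond}). Using the sign information on the Jacobian entries $a_{ij}$ in (\ref{jacobian}) furnished by the local-stability hypothesis---in particular $a_{33} = -\sigma w^* < 0$ and $a_{11} = -u^*(r/k - A/B^2) < 0$ under the conditions of Theorem \ref{localtheorem}---I would check that the coefficients of $k^4$ and $k^2$ in $\rho_3(k)$ are nonnegative, controlling both the diagonal contributions $-(a_{11}d_2 d_3 + a_{22}d_1 d_3 + a_{33}d_1 d_2)$ and the cross terms $-d_3 a_{12}a_{21} - d_1 a_{23}a_{32}$. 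Once every coefficient of $\rho_3(s)$ is nonnegative with a strictly positive constant term, $\rho_3(k) > 0$ for all $k$. The same bookkeeping, expanding $\rho_1(k)\rho_2(k) - \rho_3(k)$ as a polynomial in $k^2$, settles the third inequality. Having verified all three Routh--Hurwitz inequalities for every $k \geq 0$, I would conclude---exactly as in the proof of Theorem \ref{localtheorem}---that for each $k$ the three roots of (\ref{char}) have negative real parts, and then invoke Theorem 5.1.1 of Henry \cite{henry2006geometric} to pass from the spectrum of the linearization to local asymptotic stability of $E^*$ for (\ref{eq1}).

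I expect the main obstacle to be the step for $\rho_3(k)$ together with the product condition: $\rho_3$ carries a $k^4$ coefficient that can be negative in general, and a sign change of $\rho_3$ at some intermediate $k$ is \emph{precisely} the Turing-instability mechanism exploited in the rest of the paper. Ruling it out cannot be done from the $k = 0$ data (\ref{locond}) alone; it genuinely requires the ecological sign structure of the entries $a_{ij}$---equivalently the stability conditions of Theorem \ref{localtheorem}---to keep the intermediate coefficients from dragging $\rho_3$ below zero. This is the delicate point where the phrase ``(\ref{locond}) holds'' must be read together with the stability already imposed on the temporal model, and it is where any honest proof must spend its effort.
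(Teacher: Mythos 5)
Your plan founders at exactly the point you flag in your closing paragraph, and the concession you make there is fatal rather than merely delicate. The hypotheses of the theorem are temporal local asymptotic stability of $E^*$ plus (\ref{locond}), i.e., the three Routh--Hurwitz inequalities at the single mode $k=0$; they do \emph{not} furnish the sign structure your $\rho_3$-step needs. The inequality $a_{11}<0$ is equivalent to the condition $r/k>A/B^2$ (with $k$ the carrying capacity), and $a_{22}<0$, $a_{23}<0$ come from $M_1>D_1$ and $M_2>E_1$ --- these are the hypotheses (\ref{locondition}) of Theorem \ref{localtheorem}, which the present theorem does not assume. Temporal stability of $E^*$ is entirely compatible with $a_{11}>0$; indeed Theorem \ref{theoremconst} assumes $a_{11}>0$ precisely in order to obtain nonconstant steady states, i.e., patterns. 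Consequently your proposed verification that the $k^4$ and $k^2$ coefficients of $\rho_3$ are nonnegative cannot be carried out from the stated hypotheses, and in fact \emph{no} propagation argument from the $k=0$ data can exist: the paper's own Turing parameter set ($\sigma=0.026$, Table \ref{turingtable}) has a stable temporal equilibrium with $\rho_1(0)=0.0942$, $\rho_3(0)=0.0024$ and $\rho_1(0)\rho_2(0)-\rho_3(0)=0.0004$ all positive, yet $\rho_3<0$ at $k=15$. Read literally the way you read it, the hypotheses are thus satisfied by a spatiotemporally unstable example; your argument escapes only by silently importing (\ref{locondition}) mid-proof, which means you are proving a different statement --- essentially re-deriving Theorem \ref{localtheorem} --- not the theorem in question.

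The paper's own proof is the one-line remark that the result ``directly follows from the Routh--Hurwitz criterion,'' which is coherent only under the reading the paper actually uses in the proof of Theorem \ref{turingtheorem}: condition (\ref{locond}) is to be understood for the $k$-dependent coefficients, i.e., $\rho_1(k^2)>0$, $\rho_3(k^2)>0$ and $\rho_1(k^2)\rho_2(k^2)-\rho_3(k^2)>0$ for \emph{every} admissible wave number (diffusion-driven instability is there defined by the failure of one of these at some $k\neq 0$). Under that reading the theorem is immediate: for each mode, the Routh--Hurwitz criterion applied to (\ref{char}) places all three roots $\lambda_k$ in the open left half-plane, and one concludes local asymptotic stability of $E^*$ for (\ref{eq1}) exactly as at the end of Theorem \ref{localtheorem}, via Theorem 5.1.1 of \cite{henry2006geometric} --- your final step is correct, but no propagation machinery is needed, and none is possible. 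To repair the write-up, either adopt the all-$k$ reading of (\ref{locond}) and delete the coefficient bookkeeping, or state the extra hypotheses (\ref{locondition}) explicitly and acknowledge that you are then reproving Theorem \ref{localtheorem}.
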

\begin{proof}
The proof of theorem directly follows from Routh-Hurwitz criterion, hence omitted.
\end{proof}
 
\begin{theorem}\label{turingtheorem}
Diffusion-driven instability occurs if one of the following conditions is satisfied:
\begin{enumerate}
\item[(i)]If $q_2<0$ and $q_2^2 - 3q_1q_3>0$ then $k_{d}^2$ is positive and real, where
\[k_{d}^2 =\dfrac{-q_2 + \sqrt{q_2^2 - 3q_1 q_3}}{3q_1} ~\text{and}~\]
\[\rho_3(k_{d}^2) = \dfrac{2q_2^3 - 9q_1 q_2 q_3 + 27 q_1^2 q_4 - 2(q_2^2 -3q_1 q_3 )^{\frac{3}{2}}}{27q_1^2} <0.\]
\item[(ii)]If $r_2<0$ and $r_2^2 - 3r_1r_3>0$ then $k_{f}^2$ is positive and real, where
\[k_{f}^2 =\dfrac{-r_2 + \sqrt{r_2^2 - 3r_1 r_3}}{3r_1} ~\text{and}\]
\[~\Phi(k_{f}^2) = \dfrac{2r_2^3 - 9r_1 r_2 r_3 + 27 r_1^2 r_4 - 2(r_2^2 -3r_1 r_3 )^{\frac{3}{2}}}{27r_1^2} <0.\]
\end{enumerate}
\end{theorem}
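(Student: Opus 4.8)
The plan is to read the Turing (diffusion-driven) instability as the loss, as the wave number varies, of the linear stability that the homogeneous steady state possesses for the kinetic system. Setting $z = k^2$, the coefficients in the characteristic equation (\ref{char}) become polynomials in $z$: $\rho_1$ is affine, $\rho_2$ quadratic, and $\rho_3$ cubic. By the Routh--Hurwitz criterion, $\lambda_k^3 + \rho_1\lambda_k^2 + \rho_2\lambda_k + \rho_3$ has all roots in the left half-plane iff $\rho_1(z)>0$, $\rho_3(z)>0$ and $\rho_1(z)\rho_2(z)-\rho_3(z)>0$. Since $\rho_1(z) = -(a_{11}+a_{22}+a_{33}) + (d_1+d_2+d_3)z$ is increasing in $z$ and positive at $z=0$ by (\ref{locond}), it stays positive for all $z\ge 0$ and can never trigger instability. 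Hence instability for some $z>0$ must arise either from $\rho_3(z)<0$ or from $\Phi(z)\triangleq\rho_1(z)\rho_2(z)-\rho_3(z)<0$, and I would establish these two mechanisms under hypotheses (i) and (ii) respectively.

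For part (i) I would write $\rho_3(z) = q_1 z^3 + q_2 z^2 + q_3 z + q_4$, reading off $q_1 = d_1 d_2 d_3 > 0$ and $q_4 = \rho_3(0) > 0$ (the latter from (\ref{locond})). The stationary points of $\rho_3$ solve $\rho_3'(z) = 3q_1 z^2 + 2q_2 z + q_3 = 0$, with roots $z_{\pm} = \big(-q_2 \pm \sqrt{q_2^2 - 3q_1 q_3}\big)/(3q_1)$; because $q_1>0$ the larger root $z_{+} = k_d^2$ is the \emph{local minimum}. The hypotheses $q_2^2 - 3q_1 q_3 > 0$ and $q_2 < 0$ guarantee precisely that $k_d^2$ is real and strictly positive. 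Substituting $z = k_d^2$ into the cubic and simplifying gives the stated closed form for $\rho_3(k_d^2)$. If this minimal value is negative while $\rho_3(0) = q_4 > 0$, then by the intermediate value theorem $\rho_3$ vanishes and becomes negative on a nonempty band of wave numbers around $k_d^2$; there the condition $\rho_3>0$ fails, so at least one eigenvalue $\lambda_k$ has positive real part and the homogeneous state is destabilised.

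Part (ii) is entirely parallel, applied to $\Phi(z) = \rho_1(z)\rho_2(z) - \rho_3(z)$. As $\rho_1,\rho_2,\rho_3$ have degrees $1,2,3$ in $z$, $\Phi$ is again a cubic $r_1 z^3 + r_2 z^2 + r_3 z + r_4$ with leading coefficient $r_1 = (d_1+d_2+d_3)(d_1 d_2 + d_2 d_3 + d_3 d_1) - d_1 d_2 d_3 > 0$ and constant term $r_4 = \rho_1(0)\rho_2(0) - \rho_3(0) > 0$ by (\ref{locond}). The same minimization locates the interior local minimum at $k_f^2 = \big(-r_2 + \sqrt{r_2^2 - 3r_1 r_3}\big)/(3r_1)$, real and positive exactly when $r_2^2 - 3r_1 r_3 > 0$ and $r_2 < 0$, and the value $\Phi(k_f^2)$ has the stated closed form. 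If $\Phi(k_f^2) < 0$ then $\Phi$ is negative on a band of wave numbers, the third Routh--Hurwitz condition fails there, and a complex-conjugate pair of eigenvalues crosses into the right half-plane, again giving instability.

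The stability bookkeeping via Routh--Hurwitz is routine; the main obstacle is the algebra of the two cubic minimizations. Concretely I must (a) confirm that $k_d^2$ (respectively $k_f^2$) is the larger stationary point and hence the local minimum, which is what forces the $+$ sign of the square root and why $q_1,r_1>0$ matter; (b) verify that the sign conditions $q_2<0$ and $r_2<0$, together with the positive discriminants, really place these minimizers in $z>0$; and (c) carry out the substitution of the critical point into the (depressed) cubic to obtain the displayed expressions for $\rho_3(k_d^2)$ and $\Phi(k_f^2)$ without a sign slip. Once the minimal values are shown negative, the conclusion is immediate from the intermediate value theorem and the positivity of $\rho_3(0)$ and $\Phi(0)$ inherited from (\ref{locond}).
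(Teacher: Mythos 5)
Your proposal follows essentially the same route as the paper's proof: you observe that $\rho_1(k^2)>0$ can never fail, so instability must come from $\rho_3(k^2)<0$ or $\Phi(k^2)=\rho_1\rho_2-\rho_3<0$, and you minimize each cubic in $z=k^2$ at the larger critical point $\left(-q_2+\sqrt{q_2^2-3q_1q_3}\right)/(3q_1)$ (respectively with the $r_i$), exactly as the paper does. Your added touches --- recording $q_4=\rho_3(0)>0$ and $r_4=\Phi(0)>0$ explicitly, justifying the choice of the $+$ root via $q_1,r_1>0$, and invoking the intermediate value theorem to exhibit a band of unstable wavenumbers --- merely make the same argument slightly more careful than the paper's version.
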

\begin{proof}
The spatially homogeneous state will be unstable provided that at least one eigenvalue of the characteristic equation (\ref{char}) is positive. It is clear that the homogeneous steady state $E^{*}$ is asymptotically stable when $\rho_1(0) >0 , \rho_3(0) >0$ and $\rho_1(0)\rho_2(0)-\rho_3(0)>0$. But it will be driven to an unstable state by diffusion if any of the conditions $\rho_1(k^2) >0 , \rho_3(k^2) >0$ and $\rho_1(k^2)\rho_2(k^2)-\rho_3(k^2)>0$ fail to hold. However, it can be easily seen that diffusion-driven instability cannot occur by contradicting $\rho_1(k^2)>0$ because $\delta_1,\delta_2,\delta_3$ and $k^2$ are positive, the inequality $\rho_1(k^2)>0$ always holds as $a_{11}+ a_{22}<0$ from the stability conditions of the interior equilibrium point in homogeneous state. So, for the diffusion driven instability, it is sufficient if any of the following condition holds:
\[\rho_3(k^2)<0 ~\text{and}~ \rho_1(k^2)\rho_2(k^2)-\rho_3(k^2)<0\]
For $\rho_3(k^2)<0$, we can write it from the equation (\ref{char}) and let us suppose $z = k^2$, so we get 
 
\begin{equation}\label{cond2}
\rho_3(z) = q_1 z^3 + q_2 z^2 + q_3 z + q_4, 
\end{equation}
 where,
\begin{eqnarray*}
 q_1&=&d_1 d_2 d_3,\\
 q_2&=&-(a_{11} d_2 d_3 + a_{22} d_1 d_3 + a_{33} d_1 d_2),\\
 q_3&=&a_{11}a_{33} d_2 + a_{22} a_{33} d_1 + a_{11}a_{22} d_3 - d_3 a_{12}a_{21} - d _1 a_{23}a_{32}, \\ 
q_4&=&a_{32}a_{11}a_{23}+a_{33}a_{12}a_{21}-a_{21}a_{32}a_{13}-a_{11}a_{22}a_{33}. 
 \end{eqnarray*}
 For the minimum of $\rho_3(z)$ we have $\dfrac{d \rho_3}{dz}=0$ which gives $3q_1 z^2 + 2q_2 z + q_3 = 0$ has two roots $z_{1,2}=\dfrac{-q_2 \pm \sqrt{q_2^2 - 3q_1 q_3}}{3q_1}$. We find $\dfrac{d^2 \rho}{d z^2} =\pm 2\sqrt{q_2^2 - 3q_1 q_3}$ so minimum at $k_{d}^2 =\dfrac{-q_2 + \sqrt{q_2^2 - 3q_1 q_3}}{3q_1}$ and the minimum value is 
\[\rho_3(k_{d}^2) = \dfrac{2q_2^3 - 9q_1 q_2 q_3 + 27 q_1^2 q_4 - 2(q_2^2 -3q_1 q_3 )^{\frac{3}{2}}}{27q_1^2} <0.\]
So for the instability we need to show that  
$q_2<0$ and $q_2^2 - 3q_1q_3>0$. Now for the third condition $\rho_1(k^2)\rho_2(k^2)-\rho_3(k^2)<0$ we take $ \Phi(k^2)=\rho_1(k^2)\rho_2(k^2)-\rho_3(k^2)$, for some wave number $k\geq 0$. From the equation (\ref{char}) we get,
\begin{equation}\label{cond3}
\Phi(z) = r_1 z^3 + r_2 z^2 + r_3 z + r_4,
\end{equation}
where,
\begin{eqnarray*}
r_1&=&bp_1-q_1,\\
r_2&=&bp_2-ap_1-q_2,\\
r_3&=&bp_3-ap_2-q_3,\\
r_4&=&-(ap_3+q_4).
\end{eqnarray*}
For the suitability in calculation we choose $a$ and $b$ such that $a=a_{11} + a_{22} + a_{33}$ and $b=d_1 + d_2 + d_3$ for the minimum of $\Phi(z)$ we have $\dfrac{d \Phi}{dz}=0$ which gives $3r_1 z^2 + 2r_2 z + r_3 = 0$ has two roots $z_{1,2}=\dfrac{-r_2 \pm \sqrt{r_2^2 - 3r_1 r_3}}{3r_1}$. We find $\dfrac{d^2 \Phi}{d z^2} =\pm 2\sqrt{r_2^2 - 3r_1 r_3}$ so minimum at $k_{f}^2 =\dfrac{-r_2 + \sqrt{r_2^2 - 3r_1 r_3}}{3r_1}$ and the minimum value is $\Phi(k_{f}^2) = \dfrac{2r_2^3 - 9r_1 r_2 r_3 + 27 r_1^2 r_4 - 2(r_2^2 -3r_1 r_3 )^{\frac{3}{2}}}{27r_1^2} <0$. So for the instability we need to show that $r_2<0$ and $r_2^2 - 3r_1r_3>0$.\\
\end{proof}
 \begin{table}[htbp]\caption{Validation of Turing patterns}
 \centering
\begin{tabular}{cccccc}
\hline
Figs. & $k$ & $\rho_1$ & $\rho_2$ & $\rho_3$ & $\rho_1\rho_2-\rho_3$ \\ \hline
\ref{table8}(A),\ref{table8}(B) & 0	& 0.0942	& 0.0297 &	0.0024	& 0.0004 \\
\ref{table8}(A) & 15 &	0.3215 &	0.0455 &	-0.0019 &	0.0165 \\
\ref{table8}(B) & 15 & 0.3194 & 0.0446 & -0.0020 & 0.0162\\\hline
\end{tabular}
\label{turingtable}
\end{table}
The existence of Turing pattern can be checked by planar stability, at wave number $k=0$, $\rho_n(k=0)>0, n=1,2,3,$ and $\rho_1(0) \rho_2(0)>\rho_3(0)$; however there exist atleast one non-zero mode such that one or more of the conditions $\rho_n(k)>0,\rho_1(k) \rho_2(k)>\rho_3(k)$, are violated. Table \ref{turingtable} clearly shows the patterns in Fig. \ref{table8} are Turing patterns because at $k=0$ all $\rho_n >0$ and at non-zero mode $k=15,~\rho_3$ is negative for both \ref{table8}(A) and \ref{table8}(B). Here in two-dimensional case, $k^2 = k^2_x + k^2_y$, we assume $k_y=0$, and obtained the results in Table \ref{turingtable}.
 
 \section{Numerical Simulation}\label{simulation}
Here we present the numerical simulation results for both temporal and spatiotemporal model systems to support the analytical findings. The temporal model system (\ref{main}) has been solved using fourth-order Runga-Kutta method, and spatiotemporal model system (\ref{eq1}) is solved using Finite difference technique. The simulation is done using MATLAB, and for this, we have considered the following set of parameter values:
\begin{equation}\label{paraeq}
\begin{array}{c}\vspace{0.2cm}
r = 0.4, K = 68,\alpha_1 = 0.3,\alpha_2=0.1,\gamma = 10,\alpha = 1,c_2 = 0.2, l = 0.08,\\
f = 10,\lambda = 0.003,\sigma=0.005, d = 0.02, e = 0.01,\beta = 0.5,c_1 = 1.
\end{array}
\end{equation} 
The parameter values given in (\ref{paraeq}) are used to obtain non-Turing patterns, whereas the parameter values for the calculation of Turing patterns are same as given in (\ref{paraeq}) with $\sigma=0.026$. Our goal is to obtain the effect of cannibalism and disease on the dynamics of proposed system. For this, in the subsection \ref{effectofdisease}, we have observed the effect of disease on system dynamics and the effect of cannibalism is shown in subsection
\ref{effectofcannib}.

\subsection{Effects of Disease Transmission Rate, $\lambda$}\label{effectofdisease}
To observe the effect of disease Transmission Rate, $\lambda$, we perform numerical simulations. For this purpose, we choose a parameter value set given in (\ref{paraeq}). It is interesting to note that, if we take disease transmission rate ($\lambda$) as zero, then no positive equilibrium point exists and with any positive initial value we did not find any patterns (see Fig. \ref{nopatterns1}). However, for the biologically feasible case, $\lambda$ should be non-zero. Therefore we plot a bifurcation diagram with bifurcation parameter as $\lambda$. Here we consider non-zero values of $\lambda$. The range of $\lambda$ considered for bifurcation diagram is $0 \leqslant \lambda \leqslant 0.02.$ The bifurcation diagram is presented in Fig. \ref{bifurcation_diagram1}. Here we observe that once $\lambda \geqslant 0.007$ then middle predator settles down to a constant population and shows a stable dynamics for the model system. In particular, we are interested in observing the dynamics of the model system at the parameter set where non-Turing patterns are seen, i.e., $\lambda =0.003$. The  Lyapunov exponents are calculated corresponding to these dynamics. Fig. \ref{spatiodynamics} depicts the phase space, time-series and Lyapunov spectrum. The system (\ref{main}) has two positive and one negative Lyapunov exponent ($L_1 = 0.0132701,L_2 = 0.00165054$ and $L_3 = -0.0454853$). According to Weifeng Shi \cite{shi2007lyapunov}, system with two positive and one negative Lyapunov exponent (+,+,-), has unstable limit cycle. Hence the model system (\ref{main}) has an unstable limit cycle and it can also be verified with the direction field presented in Fig. \ref{spatiodynamics} (b). 

 \subsection{Effect of Cannibalistic Attack Rate, $\sigma$}\label{effectofcannib}
To observe the effect of the cannibalistic attack rate, $\sigma$, we choose a parameter value set given in (\ref{paraeq}). In the absence of cannibalism, i.e., when $\sigma=0$ and other parameters are fixed as given in (\ref{paraeq}), no patterns are observed as can be seen from Fig. \ref{nopatterns}.  The bifurcation diagram is presented in Fig. \ref{bifurcation_diagram}, from which we observe that when $\sigma \geqslant 0.026$, a stable dynamics is seen. When we increase the value of $\sigma$ from 0.005  to 0.026, then model system shows stable focus dynamics at the initial value (15.1342,20.5234,6.3140) as shown in Fig. \ref{stablefocusplot}. We have calculated Lyapunov spectrum corresponding to this stable dynamic and see that all three Lyapunov exponents are negative ($L_1 = -0.00728561,L_2 = -0.00842272$ and $L_3$ = -0.0802341). It shows that the system (\ref{main}) has a stable fixed point.

 \subsection{Turing patterns}
Turing instability leads to stationary patterns which are also called Turing patterns \cite{peng2007stationary}. However, patterns can also be observed due to the effect of Hopf bifurcation and its referred as non-Turing patterns. In Figs. \ref{regiont}(a) - \ref{regiont}(c), the regions are specified where these patterns have emerged. The yellow colored region presents Turing instability, green colored region shows planar stability and brown colored regions are the regions where non-Turing patterns have emerged. Moreover, the red line represents Hopf bifurcation which separates the regions for Turing and non-Turing patterns (Fig. \ref{regiont}(a) and \ref{regiont}(b) ). There is no mechanism developed to classify the non-Turing patterns. Still, these patterns occur for those specific choices of parameter values where the fixed point becomes unstable \cite{huang2019exploring}.

This subsection focuses on understanding the spatial interaction between the species through pattern formation analysis for specific parameter values. To perform pattern formation analysis, we have considered a square domain with a zero-flux boundary condition and applied the FTCS numerical scheme to solve the spatiotemporal model system (\ref{eq1}). For the reaction part, we have used the forward difference scheme while standard five-point explicit finite difference scheme is used for the diffusion part. The initial distribution is considered with small perturbation of the form $ 0.1.\cos^2(10x)$ $\cos^2(10y)$ about the steady state. Here, we have obtained Turing patterns for different parameter values as given in Table \ref{paratable}. 
\begin{table}[!ht]
\centering
\caption{Parameter values used in the simulation}
\begin{tabular}{@{}ccccccc@{}}
\toprule
Figs.                    &  Type of Patterns        & $d_1$     & $d_2$     & $d_3$     & {$\sigma$} & Rest of the parameters                                                                                                                                                                                                                                                                                                                                                       \\ \midrule
\ref{table8}(A) & Turing patterns  & $10^{-5}$ & $10^{-3}$ & $10^{-10}$ & 0.026        & \multirow{4}{*}{\begin{tabular}[c]{@{}c@{}}$r = 0.4, K = 68,\alpha_1 = 0.3,$ \\  $\alpha_2=0.1,\gamma = 10,\alpha = 1,$ \\ $c_2 = 0.2, l = 0.08,f = 10,$ \\  $ \lambda = 0.003,d = 0.02, e = 0.01.$ \\ $\beta = 0.5,c_1 = 1.$\end{tabular}}        \\
\ref{table8}(B) & Turing patterns & $10^{-6}$ & $10^{-3}$ & $10^{-10}$ & 0.026        &                                                                                                                                                                                                                                                                                                                                                                              \\
\ref{table9}-\ref{table91}  & Non-stationary non-Turing patterns & $10^{-6}$ & $10^{-6}$ & $10^{-10}$ & 0.005        &                                                                                                                                                                                                                                                                                                                                                                        \\
\ref{table10}  & Stationary non-Turing patterns & $~10^{-10}$ & $10^{-4}$ & $10^{-10}$ & 0.005        &                                                                                                                                                                                                                                                                                                                                                                        \\
\ref{table11}  & Stationary non-Turing patterns & $~10^{-10}$ & $10^{-6}$ & $10^{-10}$ & 0.005       &                                                                                                                                                                                                                                                                                                                                                                        \\ \bottomrule
\end{tabular}
\label{paratable}
\end{table}

 Turing patterns are presented in Figs. \ref{table8}(A) - \ref{table8}(B). In each snapshot, the red colour represents the high population density, while the blue colour represents the low population density.
The Turing patterns are obtained when we increase the value of cannibalistic attack rate ($\sigma$) from 0.005 to 0.026. Earlier, we have shown the existence of Turing patterns through Turing instability analysis in section \ref{tstability}. The interior equilibrium point $(u^{*},v^{*},w^{*})$ is obtained using the values of the parameters given in (\ref{paraeq}) with $\sigma=0.026$ is (8.1844,19.0716,6.7682).  Here, we perform simulation over $[0,\pi] \times [0,\pi]$ with spatial resolution $\Delta x = \Delta y  = 0.01$ and time step $\Delta t = 0.01$. The snapshots of Fig. \ref{table8}(A), shows distribution of the species in three columns namely prey, susceptible predator and infected predator respectively, with diffusive rates $10^{-5},10^{-3}$ and $10^{-10}$. Fig. \ref{table8}(B) represents Turing patterns at diffusive rates $10^{-6},10^{-3}$ and $10^{-10}$ for $u, v$ and $w$ respectively, in two-dimensional space. The simulation has also been carried out for higher time steps i.e. $t=15000$, however there were no change seen in patterns after $t=1000.$ Hence the Turing patterns obtained at $t=1000$, is presented in Figs. \ref{table8}(A) and \ref{table8}(B).

   \subsection{Stationary and non-stationary non-Turing patterns}
 In this subsection, we obtain the non-Turing patterns, the emergence of which does not satisfy the Turing instability conditions \cite{huang2019exploring}. Moreover, the non-Turing patterns merely appear when the fixed point $E^{*}({u}^{*},{v}^{*},{w}^{*})$ is unstable, i.e., the conditions given in (\ref{locond}) are not satisfied. The set of computations for which two dimensional non-Turing patterns are calculated are given in Table \ref{paratable}. For the set of parameter values, $\rho_1(0)=-0.01156<0 ,\rho_3(0)=0.0005>0$ and $\rho_1(0)\rho_2(0)-\rho_3(0)=-0.0007<0$. Hence the positive equilibrium (1.9756,13.4643,6.1178) is unstable. Here, we have observed two types of non-Turing patterns, stationary and non-stationary patterns. In the spatial distribution, fixed or time-invariant patches correspond to stationary patterns, whereas the patches changing with time corresponds to non-stationary patterns. From the above observations, we can characterize the non-Turing patterns. Patterns obtained in Fig. \ref{table9} to \ref{table91} are non-stationary non-Turing patterns as they are changing continuously with time. In Figs. \ref{table10} to \ref{table11}, the patterns are stationary non-Turing patterns as they being fixed after some time. Patterns are also classified in Table \ref{paratable}. The snapshots of Figs. \ref{table9}-\ref{table91}, shows the distribution of the species in three columns namely prey, susceptible predator and infected predator respectively with their diffusive rates $d_1=10^{-6},d_2=10^{-6}$ and $d_3=10^{-10}$ respectively in two-dimensional space. Each population exhibits different spatial patterns at initial stage, i.e. at $t=200,300,500,1000$ (see Fig. \ref{table9}). However, from $t=2000$ onwards spatial distribution of population at three levels are very similar to each other (see Fig. \ref{table91}). Still the patterns keep changing at different time levels   each after $t=2000$ (as seen from Figs. \ref{table9}-\ref{table91}). At $t=200,300,500,1000$, non-Turing patterns are rectangular and mixed spot patterns while at $t=1500$ prey and susceptible predator species exhibit star and spot patterns as seen from Fig. \ref{table9}. In Figs. \ref{table9}(A) and \ref{table9} (B) the population displays hot spot patterns. As time is increased the patterns change into cold spots. Further in Fig. \ref{table91}, all the parameter values remain same as in Fig. \ref{table9}, however when we increase the simulation time, the non-Turing patterns keep changing. At each stage, pattern changes, which shows instability in the system behavior. Now we increase the diffusive rate of susceptible predator and decrease the diffusive rate of prey as $d_1=10^{-10},d_2=10^{-4}$ and $d_3=10^{-10}$, the spatial pattern hence obtained is shown in Fig. \ref{table10}. At $t=500$ patterns start to appear and changes until $t=5000$. We further decrease the diffusive rate of susceptible predator population as $d_1 = 10^{-10}, d_2 =10^{-6}$ and $d_3 = 10^{-10}$. The corresponding patterns are presented in Fig.\ref{table11}. It is observed that each individual species exhibit different spatial patterns at initial stages, i.e. $t=500,1000,1500$ but after $t=1500$, the population distribution was almost the same.

 The pattern formation analysis has been done rigorously in this section. From this analysis, we observe that movement of the prey and susceptible predator plays a significant role in the spatial distribution of the species. Initially, Turing patterns exist in the proposed eco-epidemiological model, as seen in Figs. \ref{table8}(A) and \ref{table8} (B). However, as the cannibalistic rate increased and the movement of the prey and susceptible predator is decreased then the Turing patterns disappear, and a new type of pattern appear called non-Turing patterns. Also, we observe that as the movement of the prey is substantially decreased from $10^{-6}$ to $10^{-10}$ the patterns converge to a nearly stationary non-Turing pattern (see the transition from Figs. \ref{table91} to \ref{table11}). However, the time taken for patterns to become stationary is relatively less, i.e. it becomes stationary at $t=2000.$ However, when the movement of prey is decreased, and movement of susceptible predator is increased (see the transition from Figs. \ref{table91} to \ref{table10}), then it takes longer time for non-Turing patterns to become stationary, i.e. in this case patterns become stationary at $t=5000.$ This shows that the existence of irregular distribution of population in space largely depends upon the movement of prey and susceptible predator. Self-organized patterns are seen in all the population when the movement of prey and the susceptible predator is controlled.

\section{Conclusion}\label{discuss}
In this work, we have proposed and analyzed a diffusive eco-epidemiological model where the temporal model is already studied in literature \cite{biswas2018cannibalistic}. In this model, it is assumed that disease spreads into predator populations only. Due to this, the predator population has been divided into two classes: susceptible predator $v$ and infected predator $w$. Since the predator species are cannibalistic, the disease also spreads via cannibalism among them. Here we extend the temporal model system (\ref{main}) into spatiotemporal form (\ref{eq1}) to understand the dynamics when the species perform active movements into both $x$ and $y$ directions. Movement is possible due to social interaction, searching for food and finding mates. We have studied the model system (\ref{eq1}), both analytically and numerically. The work done in this paper can be summarized as under:
\begin{enumerate}
\item From Theorems \ref{localtheorem} and \ref{globaltheorem}, sufficient conditions for local asymptotic stability and for global stability for the constant positive solution has been obtained by using linearization and Lyapunov method technique.
\item Conditions on diffusive rates that are responsible for the existence and non-existence of stationary patterns are obtained.
\item From Theorem \ref{theoremnon}, it can be concluded that under certain conditions, there is no nonconstant positive steady state, provided diffusion coefficient $d_1$ and $d_2$ are fixed and $d_3$ is sufficiently large this implies that variation in the diffusive rate of infected predator prevents the emergence of patterns, i.e. self-organization of population.
\item From Theorem \ref{theoremconst}, we can say that at least one constant positive solution exists provided diffusion coefficient $d_1$ and $d_3$, are fixed and $d_2$ is kept sufficiently large. This tells us that the diffusive rates of susceptible predator are responsible for Turing pattern formation. 
\item Turing instability in Theorem \ref{turingtheorem} ensures the existence of Turing patterns. It is obtained by perturbing the homogeneous steady-state solution.
\item There are two sensitive parameters of the model system (\ref{main}), which are the disease transmission rate ($\lambda$) and cannibalistic attack rate ($\sigma$). The model exhibits various nonlinear dynamics, such as stable focus and unstable limit cycle when these sensitive parameters are changed.
\item Lyapunov spectrum has been calculated to understand the dynamical behavior of the temporal system at different parameter values set. We observed unstable and stable dynamics.
\item Non-stationary patterns obtained due to the unstable limit cycle observed in the temporal system. 
\item In the absence of cannibalism and disease, the distribution of the species was not self-organized. That means no spatial patterns were seen.
\item Increment in the cannibalistic attack rate and disease transmission rate promotes Turing patterns.
\item  The movement of the prey and susceptible predator plays a significant role in the spatial distribution of the species. The movements cause stationary and non-stationary patterns.
\end{enumerate} 

\noindent The constant positive steady state is obtained by solving the system of nonlinear equations given in (\ref{eq2}) and the condition for the existence is given in (\ref{existenceequi}). From Theorems \ref{localtheorem} and \ref{globaltheorem}, it is clear that the constant positive solution $\textbf{u}^{*}$ is locally and globally stable under certain parametric restriction which implies the absence of stationary patterns. Further, we obtained the conditions for the nonexistence and existence of the nonconstant positive steady states of the steady state system (\ref{nonhomo}). From Theorems, \ref{theoremnon} and \ref{theoremconst}, we can conclude that large diffusivity may be one of the reasons for the appearance and non-appearance of stationary patterns. The discussion on Turing instability in Theorem \ref{turingtheorem} ensures the existence of stationary patterns.\\
\indent Lyapunov exponents are calculated at different parameter sets, and we observe that the system shows unstable as well as stable dynamics. It is observed that when we increase the movement of the susceptible predator by increasing the diffusivity coefficient, then all the population show spatial self-organization. When we increase the cannibalistic attack rate and also increase the movement of the susceptible predator, then spatiotemporal distribution converges to stationary patterns. In the absence of cannibalism and disease, the system fails to exhibit any stationary patterns while the introduction of the cannibalistic attack rate and disease transmission rate ensure stationary patterns.
 \section*{Acknowledgments}
  The research of first author (VK) is supported by CSIR, India [09/1058(0006)/2016-EMR-I].  The research of second author (NK) is funded by Science Engineering Research Board (SERB), under three separate grants 
[MTR/2018/000727], [EMR/2017/005203] and [MSC/2020000369]. 
 \bibliographystyle{unsrt}  
 
\newpage

 \renewcommand{\thefigure}{\arabic{figure}}
 \begin{figure} [h!]
 \centering
 \includegraphics[scale=0.32]{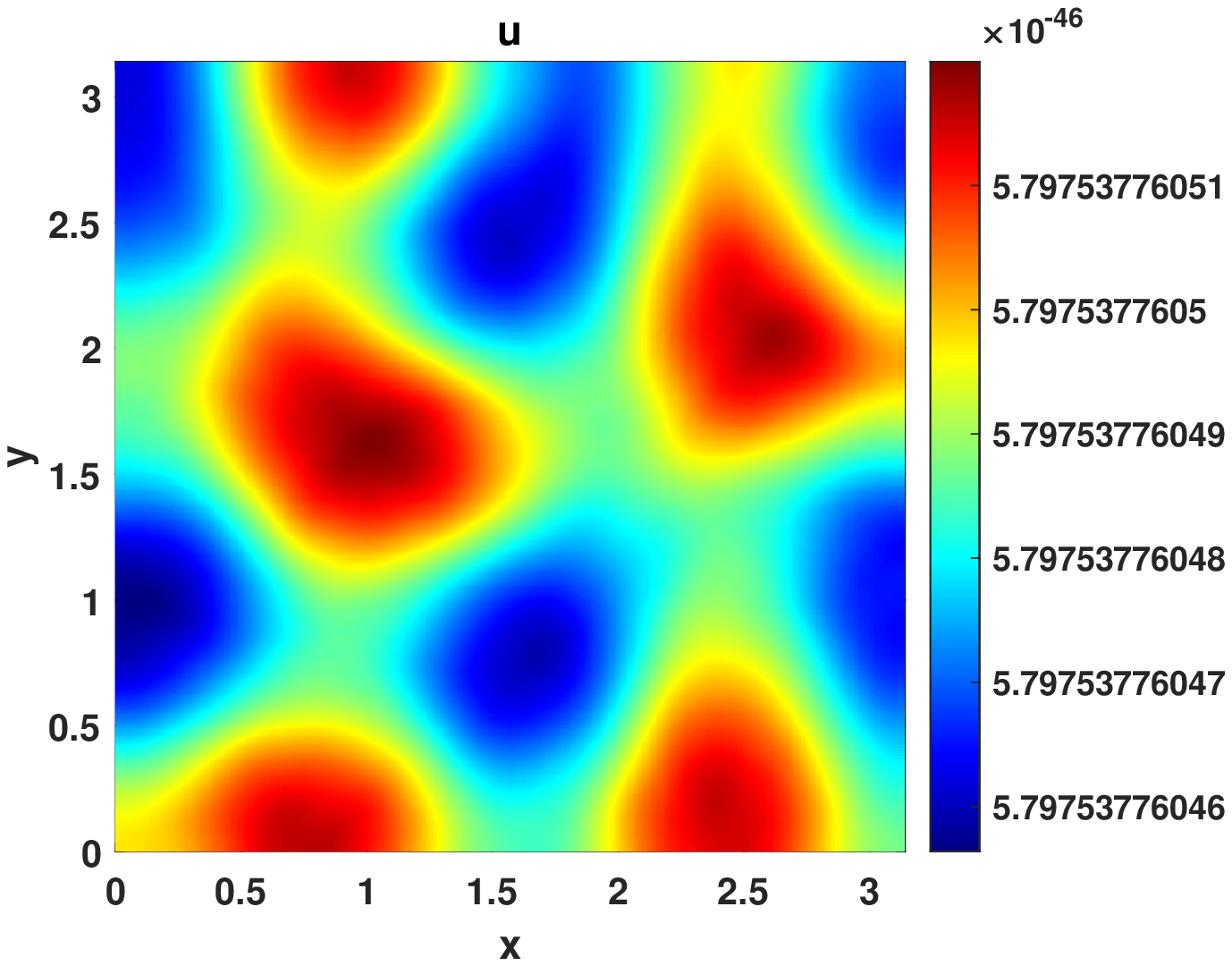}    \includegraphics[scale=0.32]{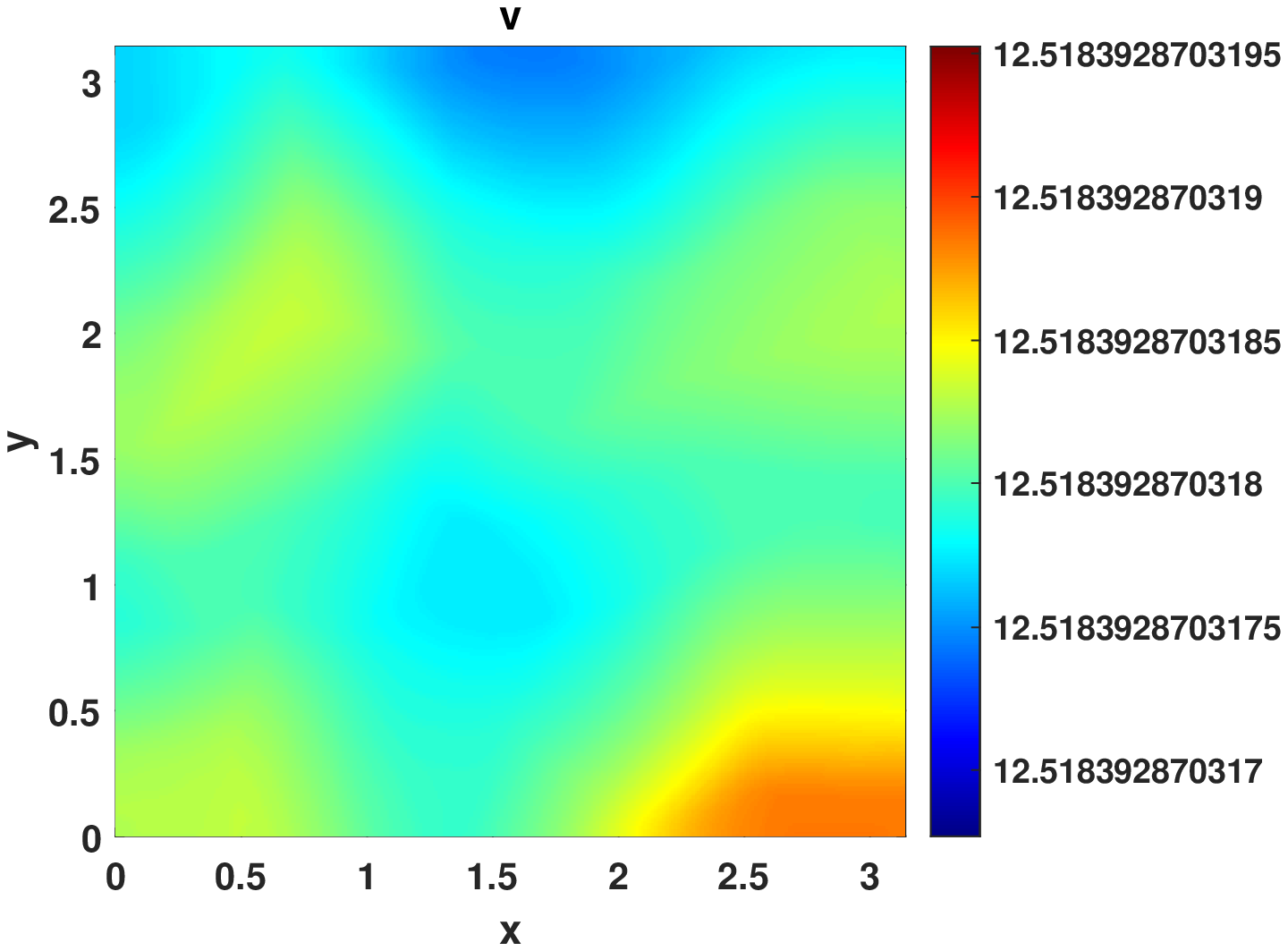}    \includegraphics[scale=0.32]{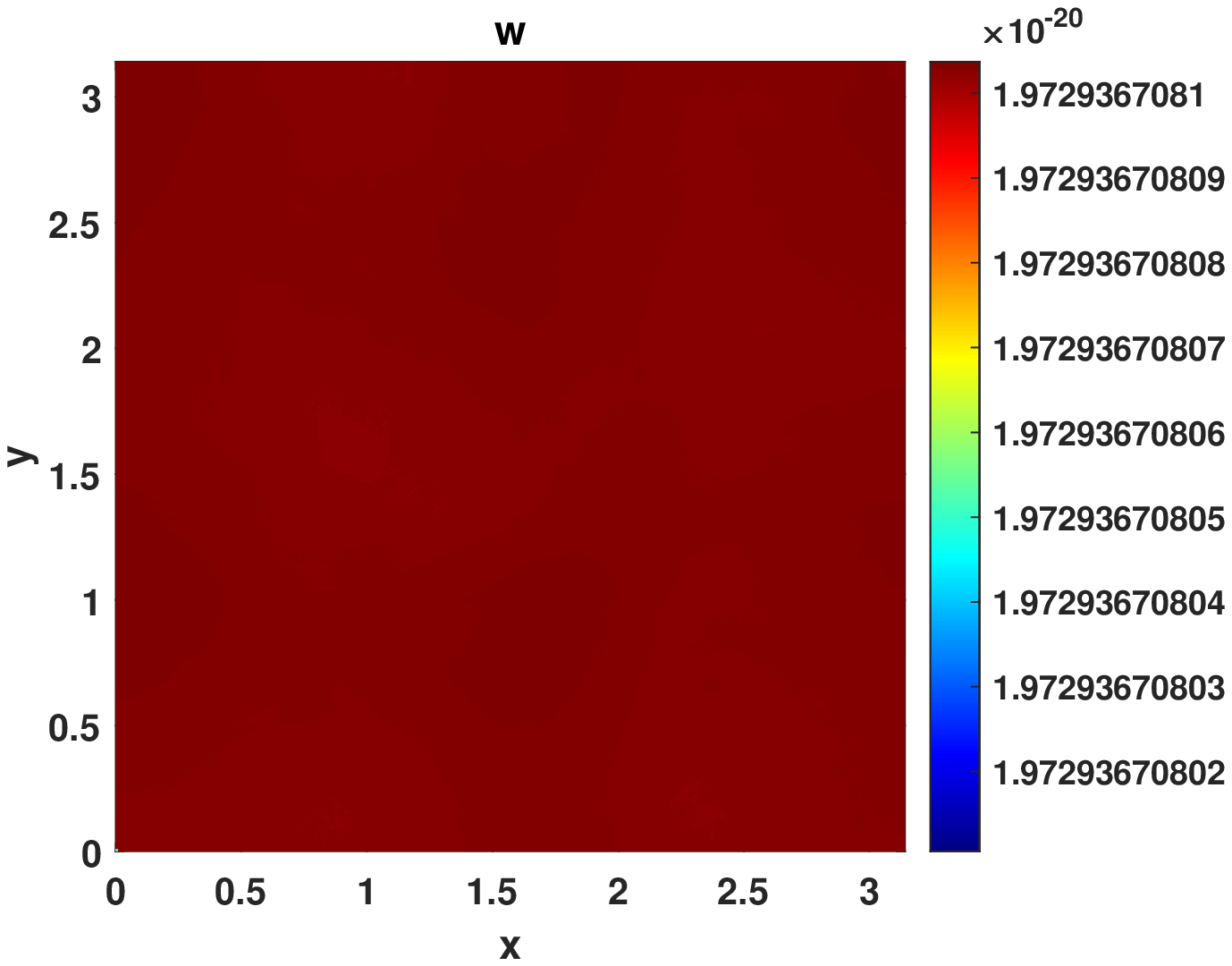}
\caption{Contour plot of the population densities at $t=5000$, when disease transmission rate $\lambda = 0$ and cannibalistic attack rate $\sigma = 0.005$. Other parameters are fixed as given in (\ref{paraeq}).}
\label{nopatterns1}
\end{figure}
\begin{figure}[!ht]
\centering
 {\includegraphics[scale=0.36]{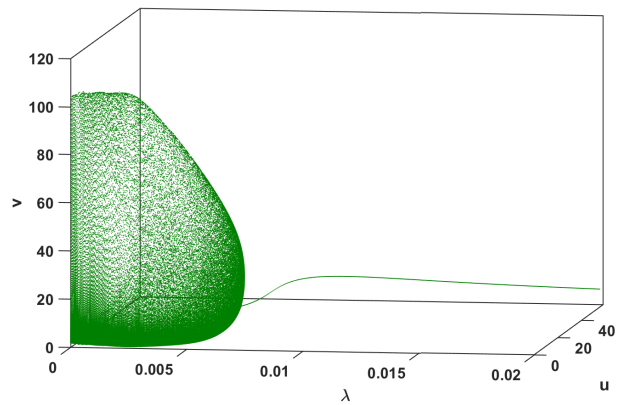}}
\caption{Bifurcation diagram with bifurcation parameter as disease transmission rate ($\lambda$). Here $0\leqslant \lambda \leqslant0.02$ and $\sigma=0.005$. Other parameters are fixed as given in (\ref{paraeq}).}
\label{bifurcation_diagram1}
\end{figure}

\begin{figure}[!ht]
\centering
\subfigure[]{\includegraphics[scale=0.38]{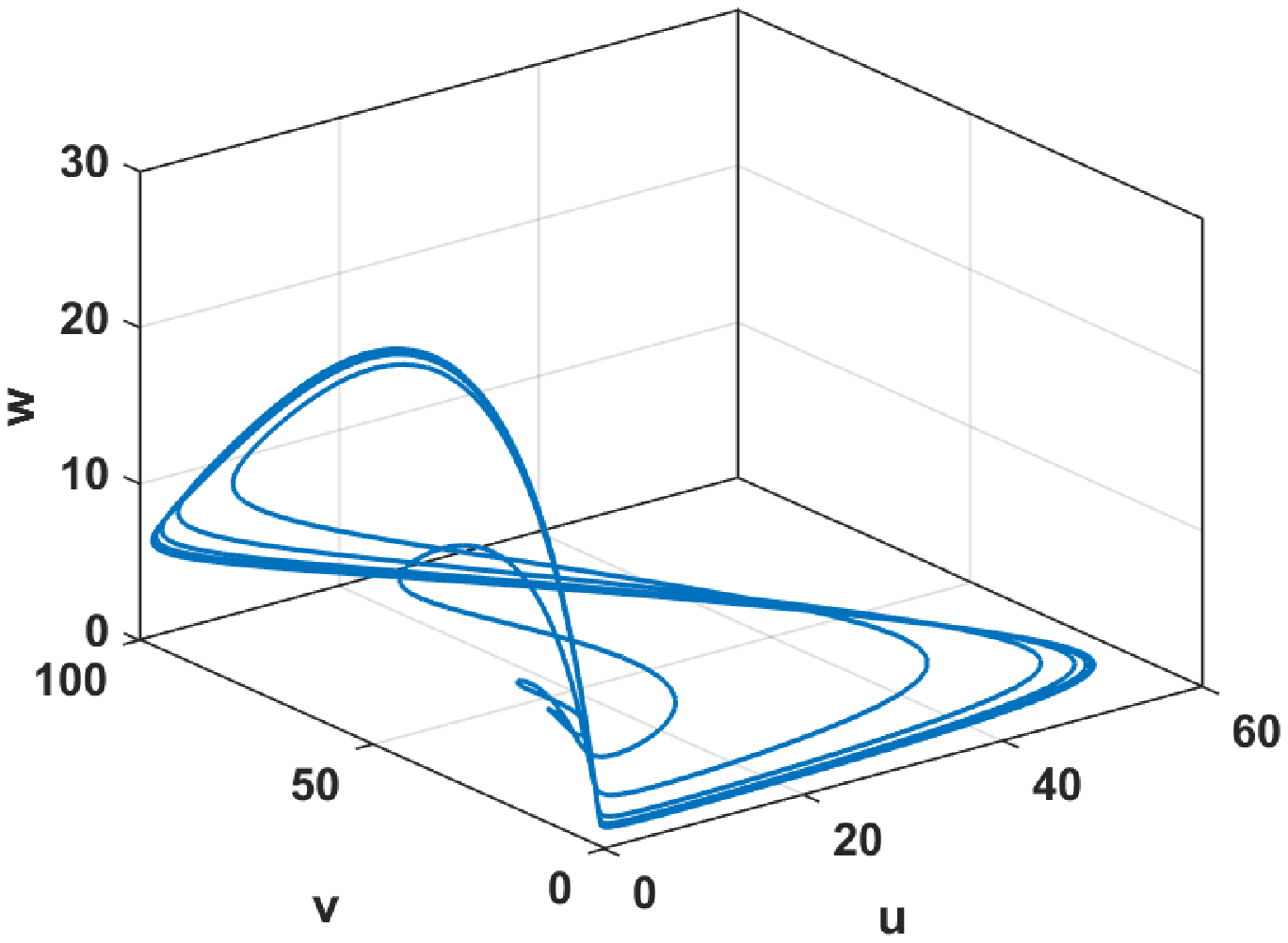}} \subfigure[]{\includegraphics[scale=0.26]{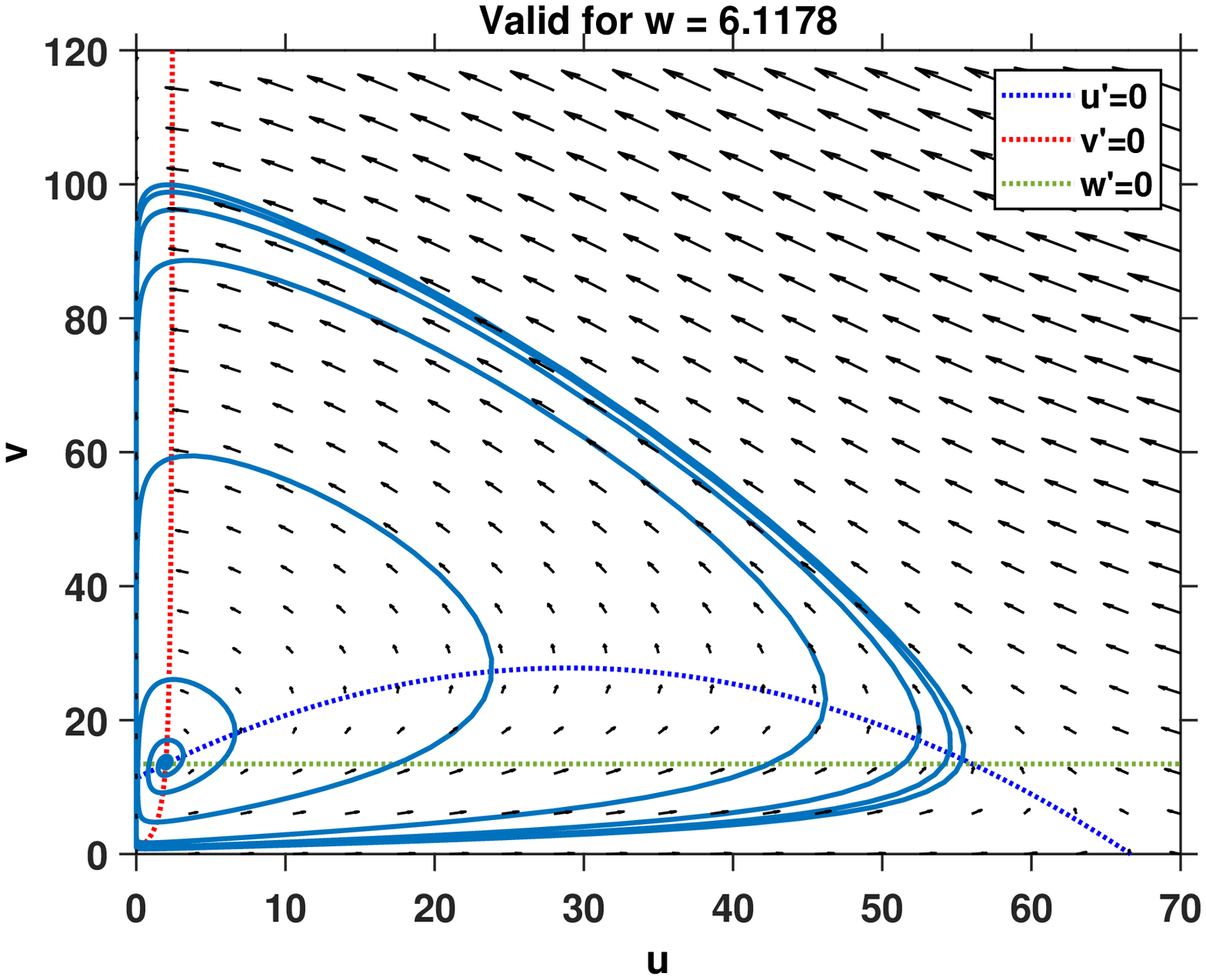}}\\
\subfigure[]{\includegraphics[scale=0.28]{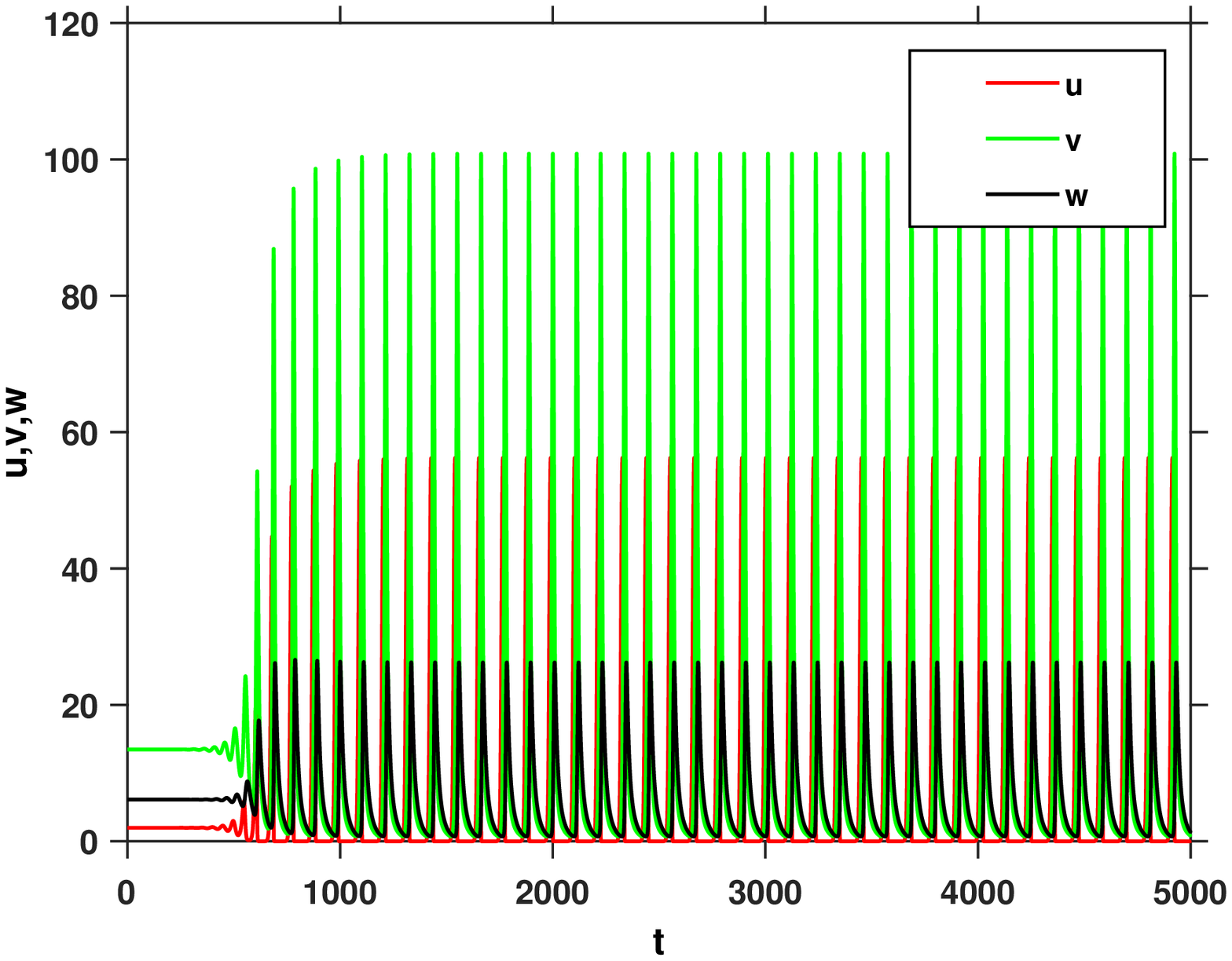}} 
\subfigure[]{\includegraphics[scale=0.39]{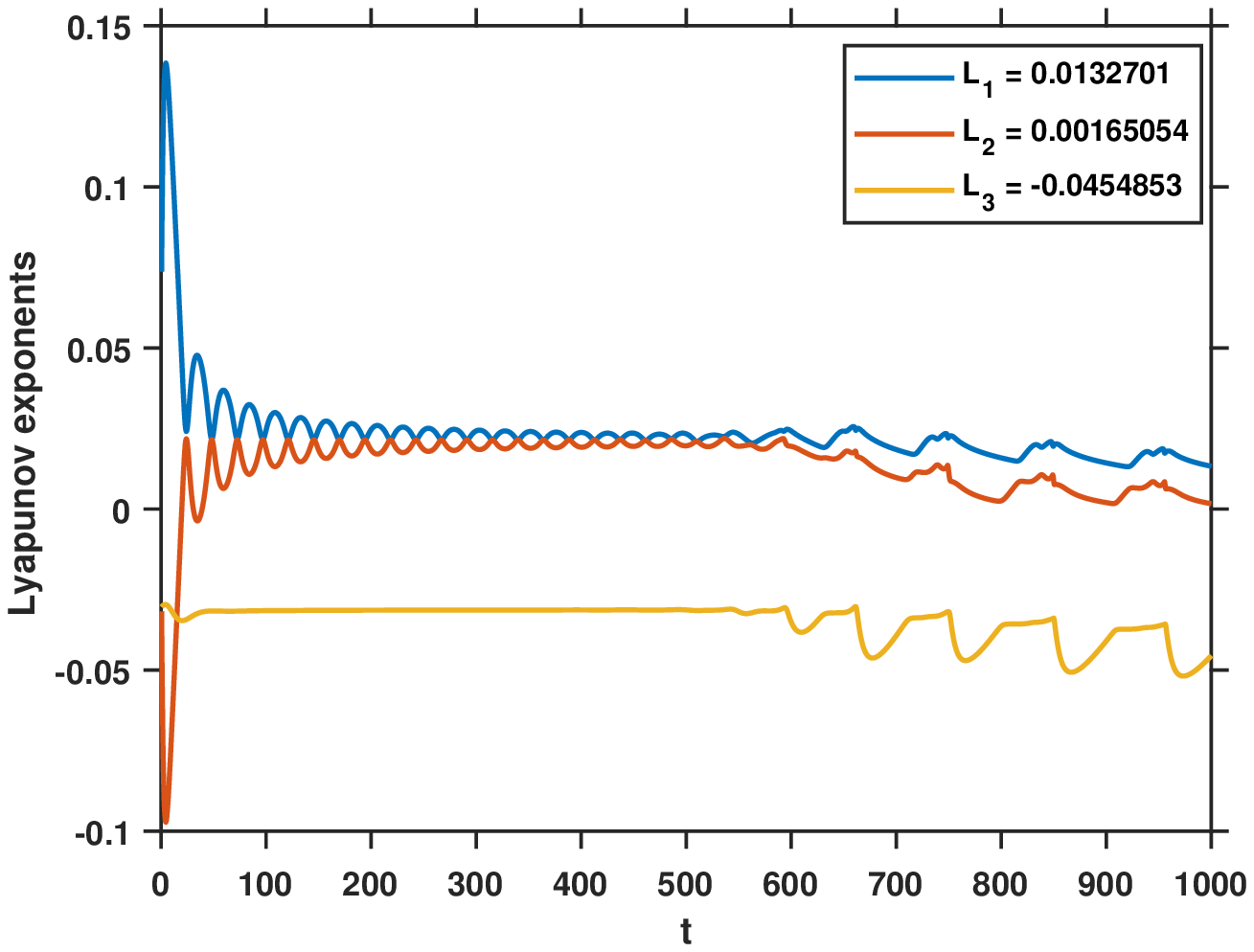}} 
\caption{Dynamics of model system (\ref{main}) at the parameter set, used for  the calculation of non-Turing patterns (given in (\ref{paraeq})). (a) Three-dimensional phase plot with direction field (b) Two-dimensional phase plot (c) Time series plot and (d) Lyapunov spectrum. }
\label{spatiodynamics}
\end{figure}
\renewcommand{\thefigure}{\arabic{figure}}
 \begin{figure} [!ht]
 \centering
\hspace{-0.5cm}   \includegraphics[scale=0.32]{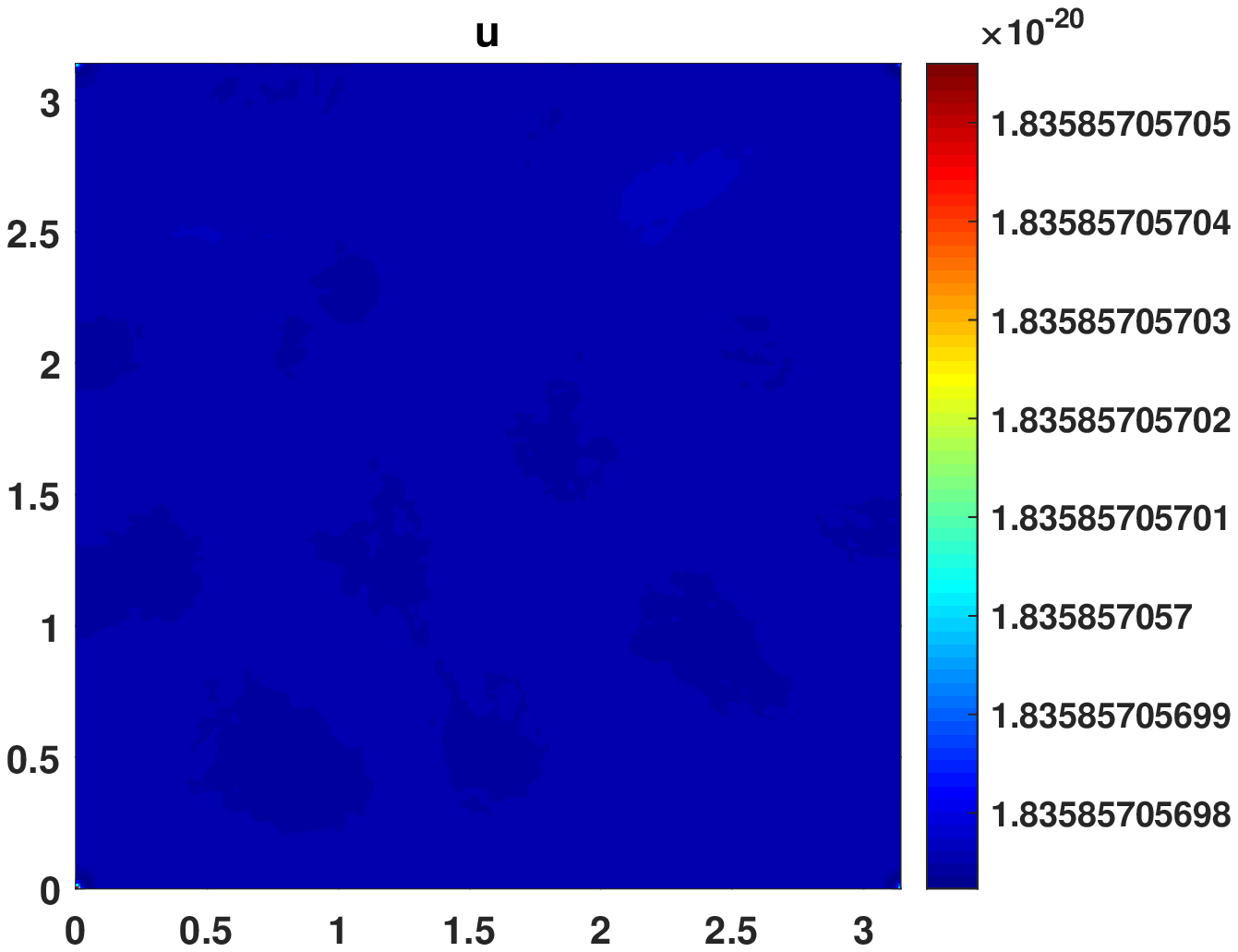}      \includegraphics[scale=0.32]{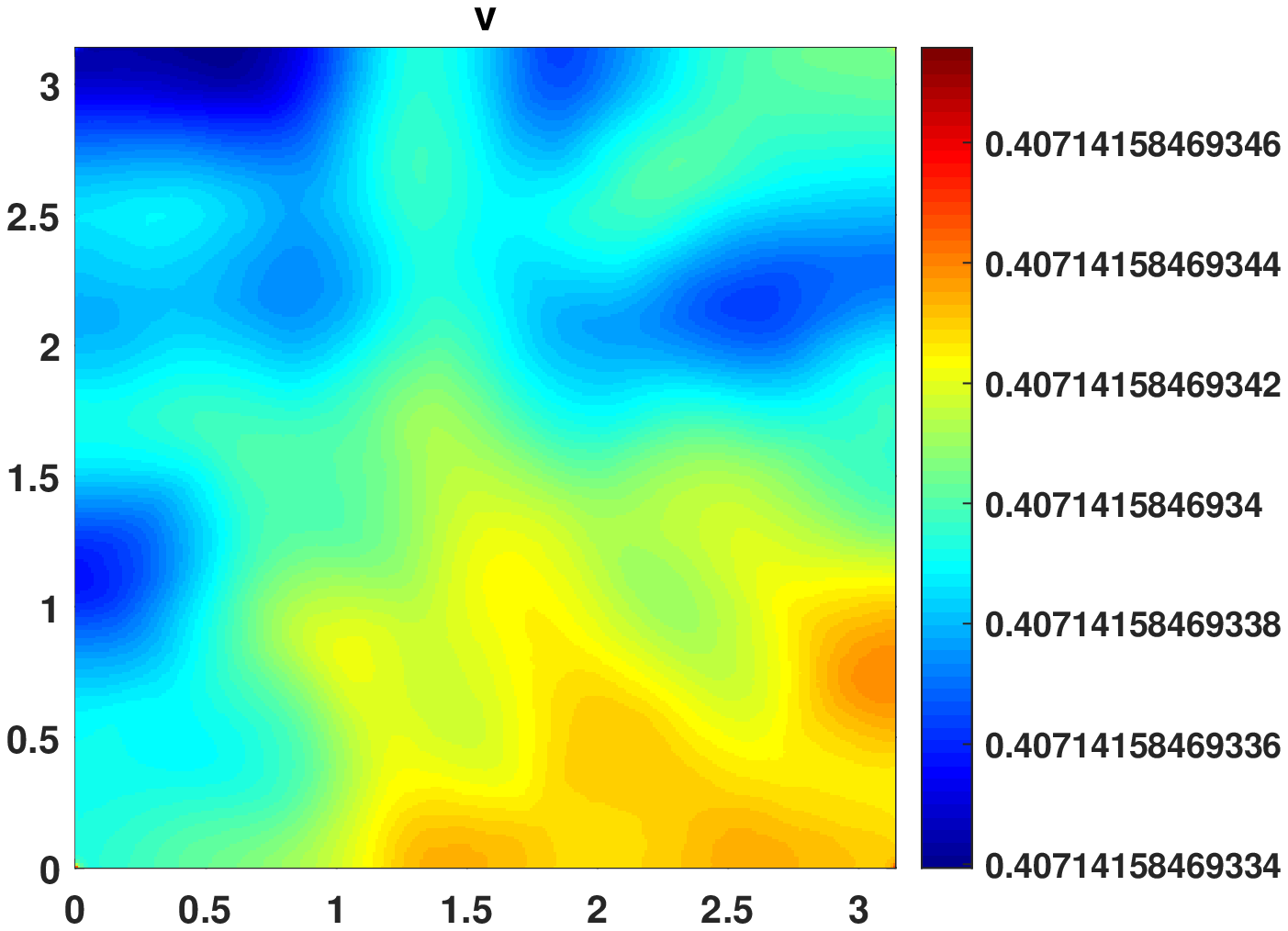}     \includegraphics[scale=0.32]{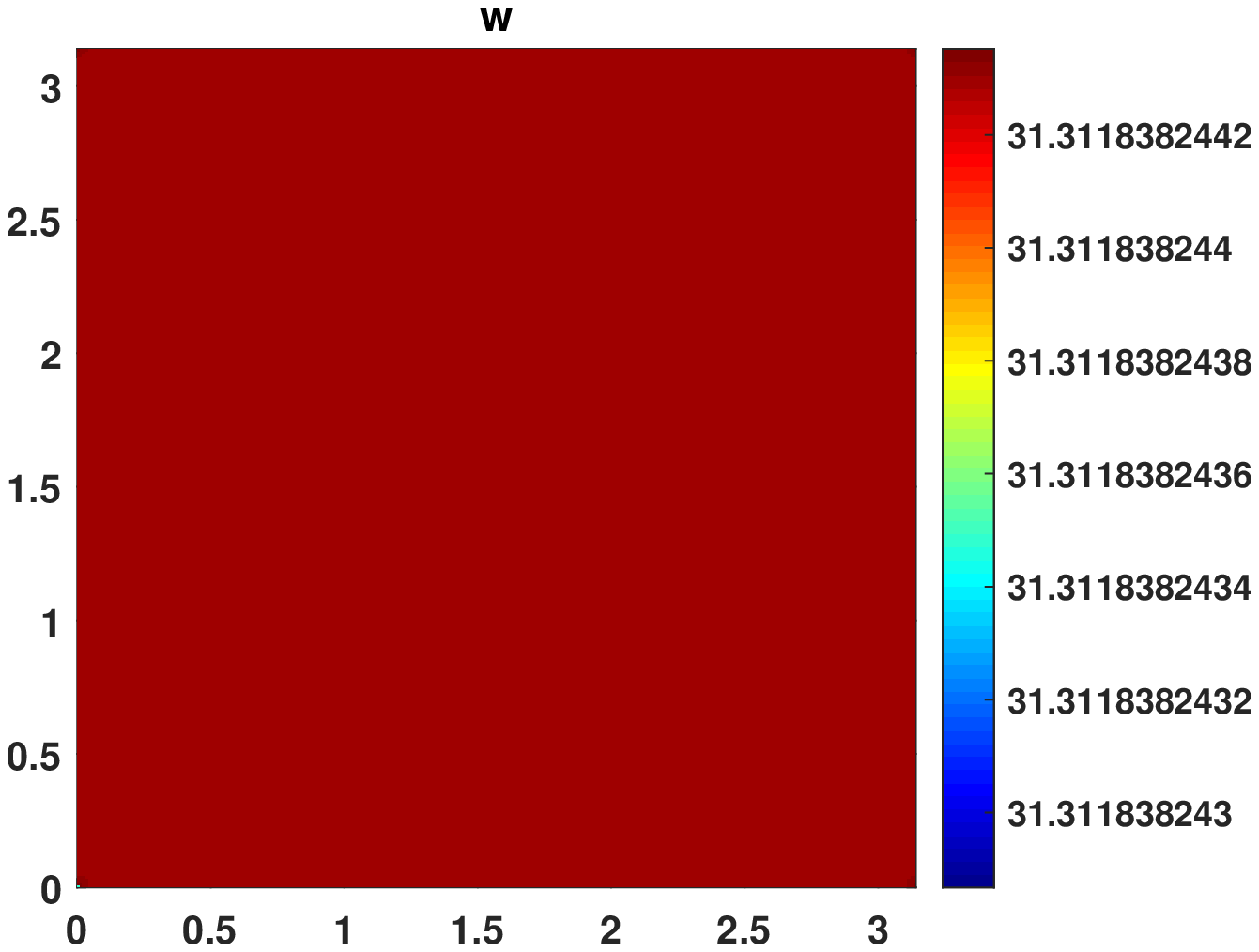} 
\caption{Contour plot of the population densities at $t=5000$, when $\sigma = 0$ and $\lambda=0.003$. Other parameters are fixed as given in (\ref{paraeq}).}
\label{nopatterns}
\end{figure}
\begin{figure}[!ht]
\centering
{\includegraphics[scale=0.36]{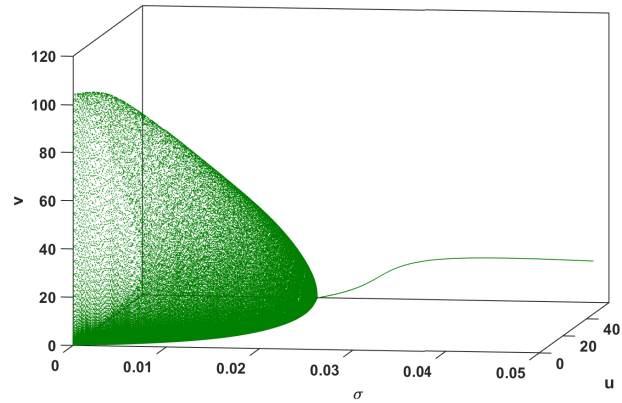}} 
\caption{Bifurcation diagram with bifurcation parameter as cannibalistic attack rate ($\sigma$). The range for $\sigma$ is $0\leqslant \sigma \leqslant0.05$ and $\lambda=0.003$. Other parameters are fixed as given in (\ref{paraeq}).}
\label{bifurcation_diagram}
\end{figure}
\begin{figure}[!ht]
\centering
\subfigure[]{\includegraphics[scale=0.38]{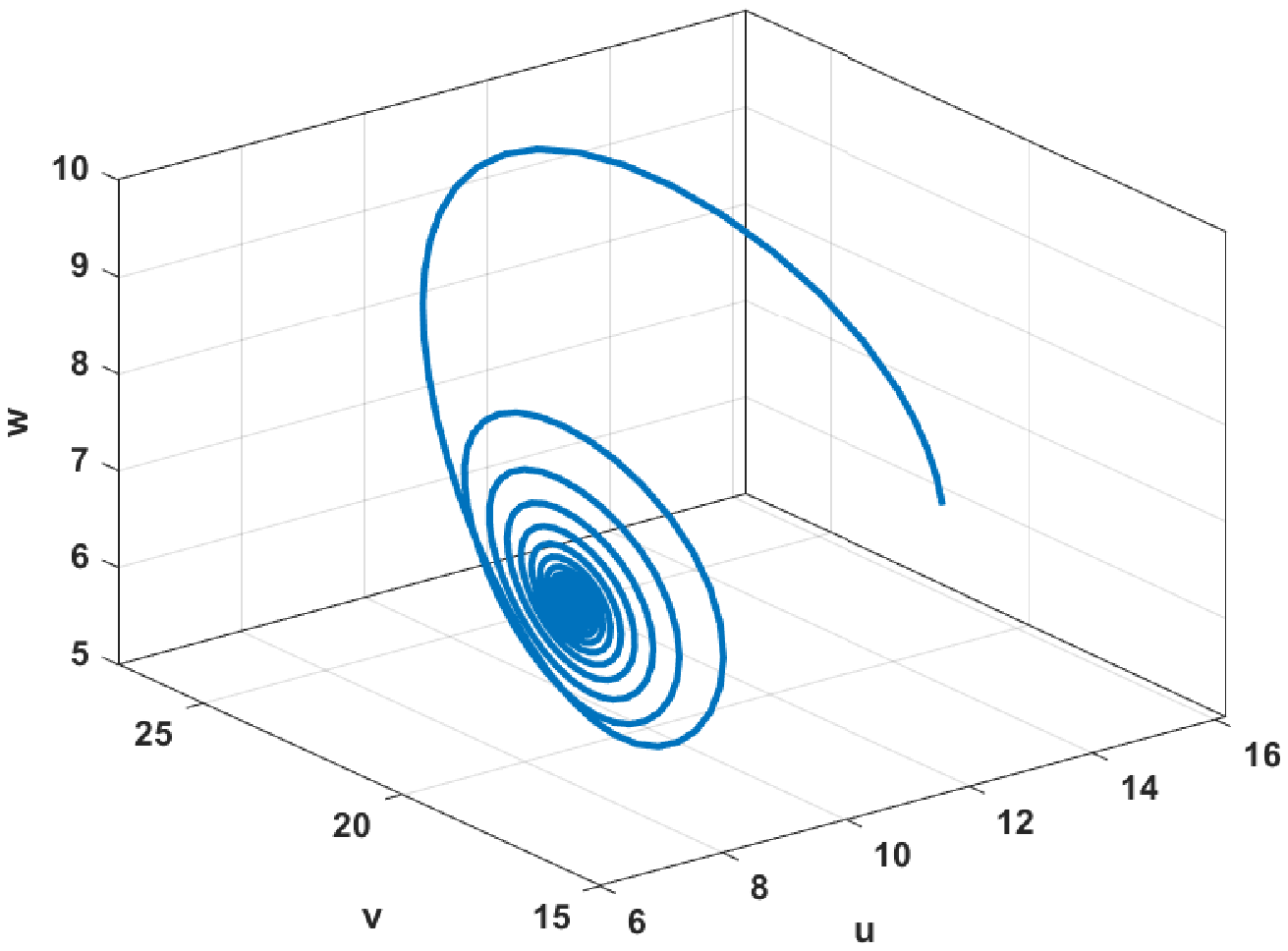}} \subfigure[]{\includegraphics[scale=0.38]{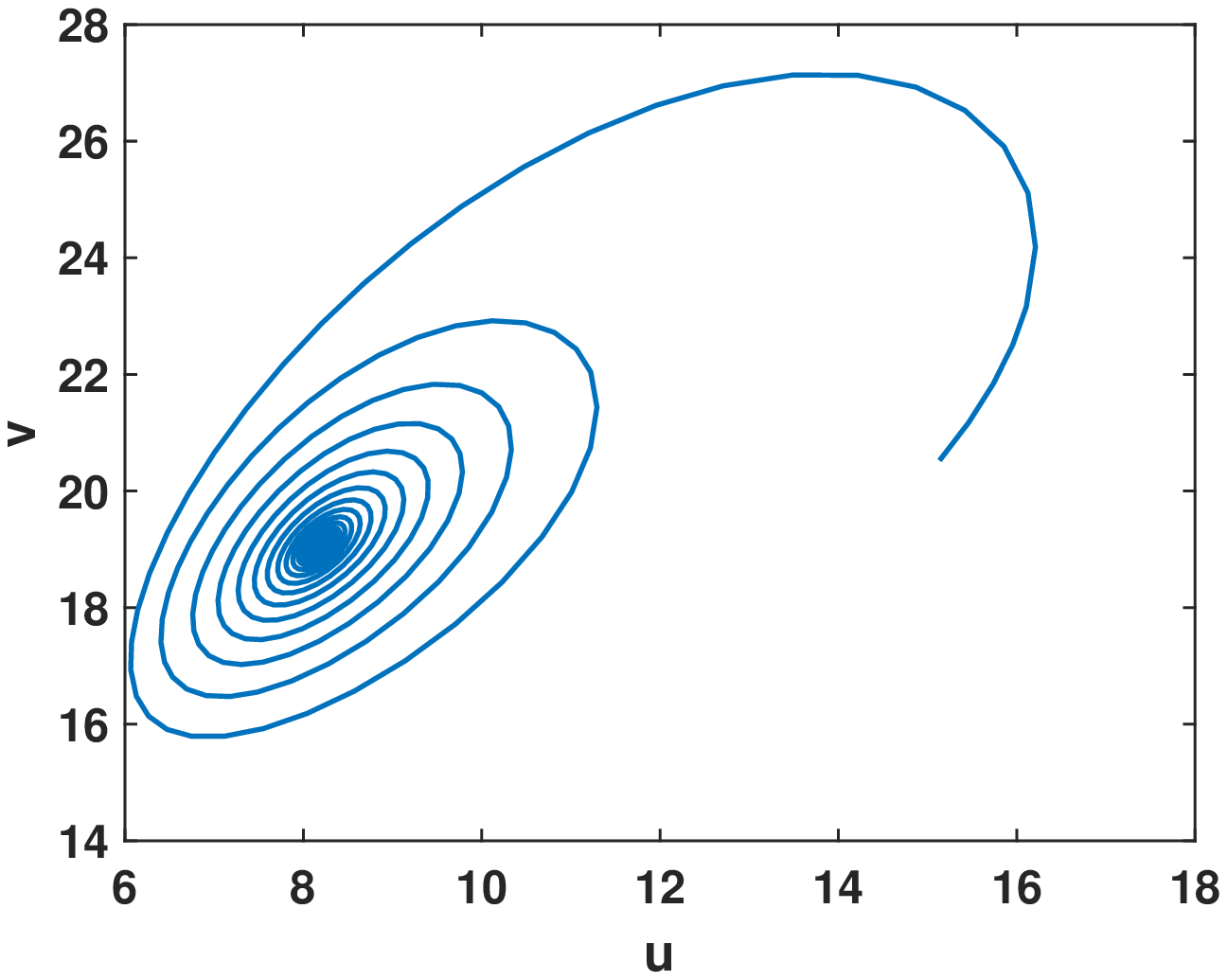}} \\
\subfigure[]{\includegraphics[scale=0.38]{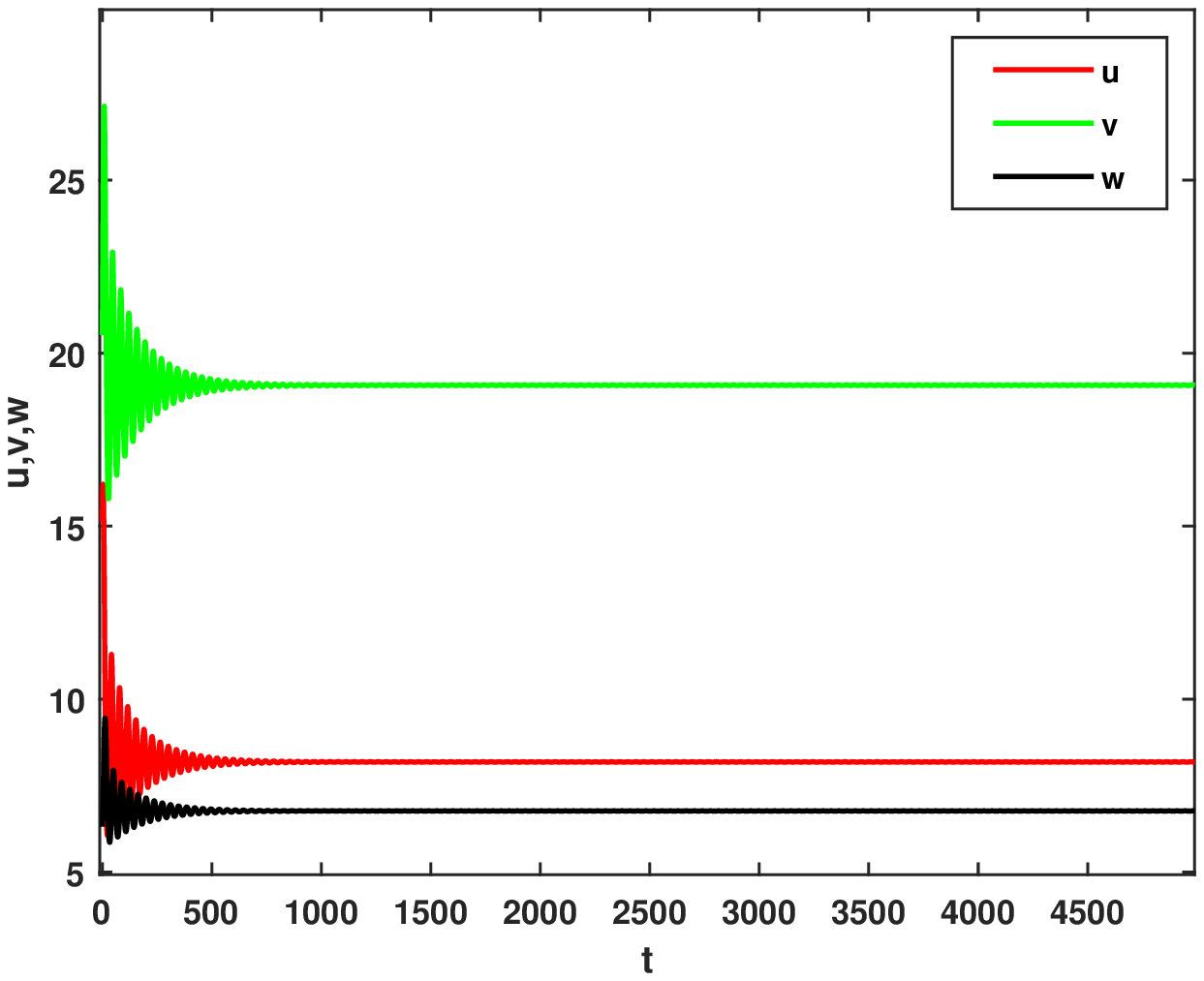}}
\subfigure[]{\includegraphics[scale=0.38]{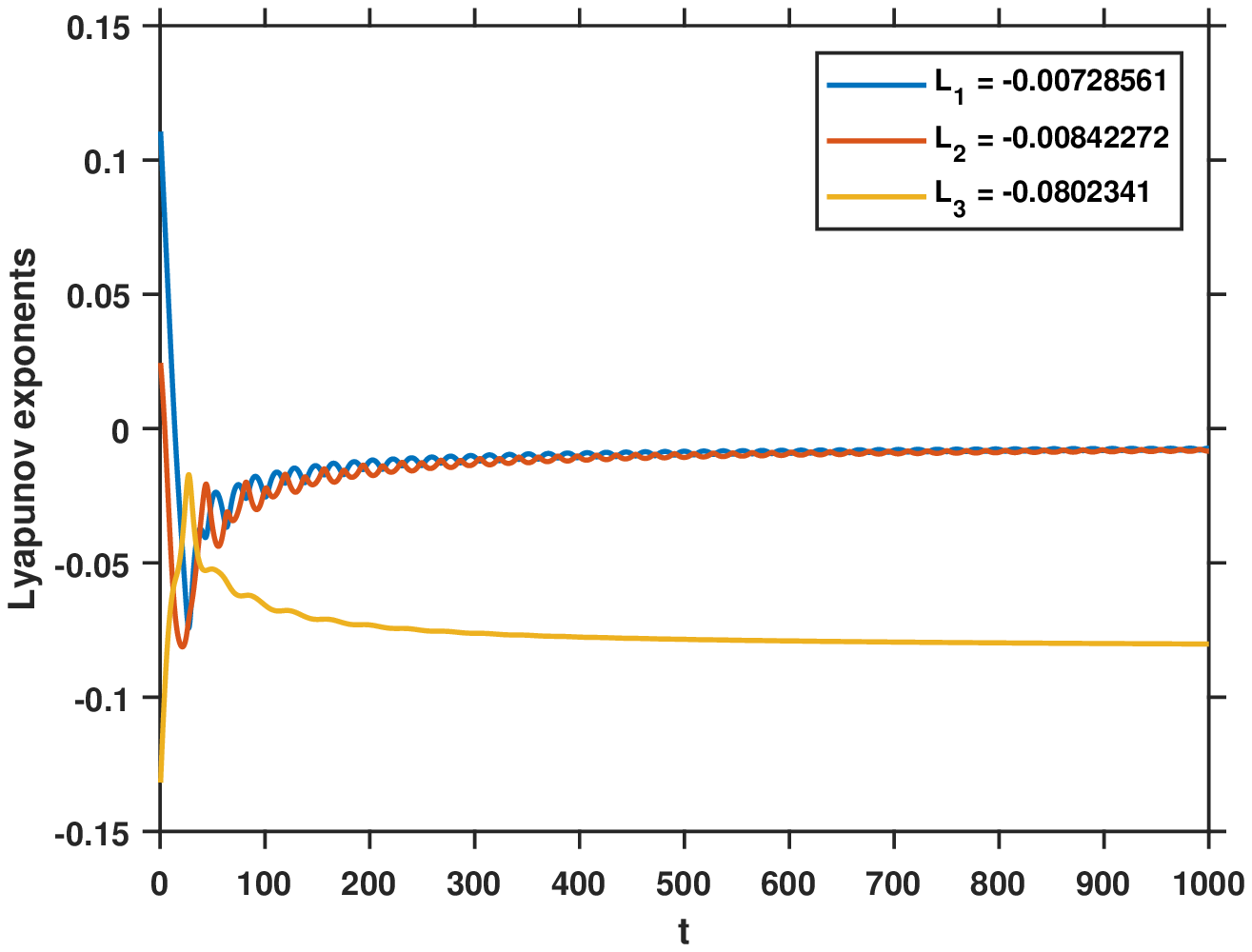}} 
\caption{ Dynamics of model system (\ref{main}) when $\sigma=0.026$ and other parameter values are fixed as given in (\ref{paraeq}). (a) Three-dimensional phase attractor (b) Two-dimensional phase plot (c) Time series plot and (d) Lyapunov spectrum. This is the parameter set at which Turing patterns are seen.}
\label{stablefocusplot}
\end{figure}
                      \begin{figure}[!ht]
   \centering\hspace{-0.3cm}
  \subfigure[]  {\includegraphics[width=5cm]{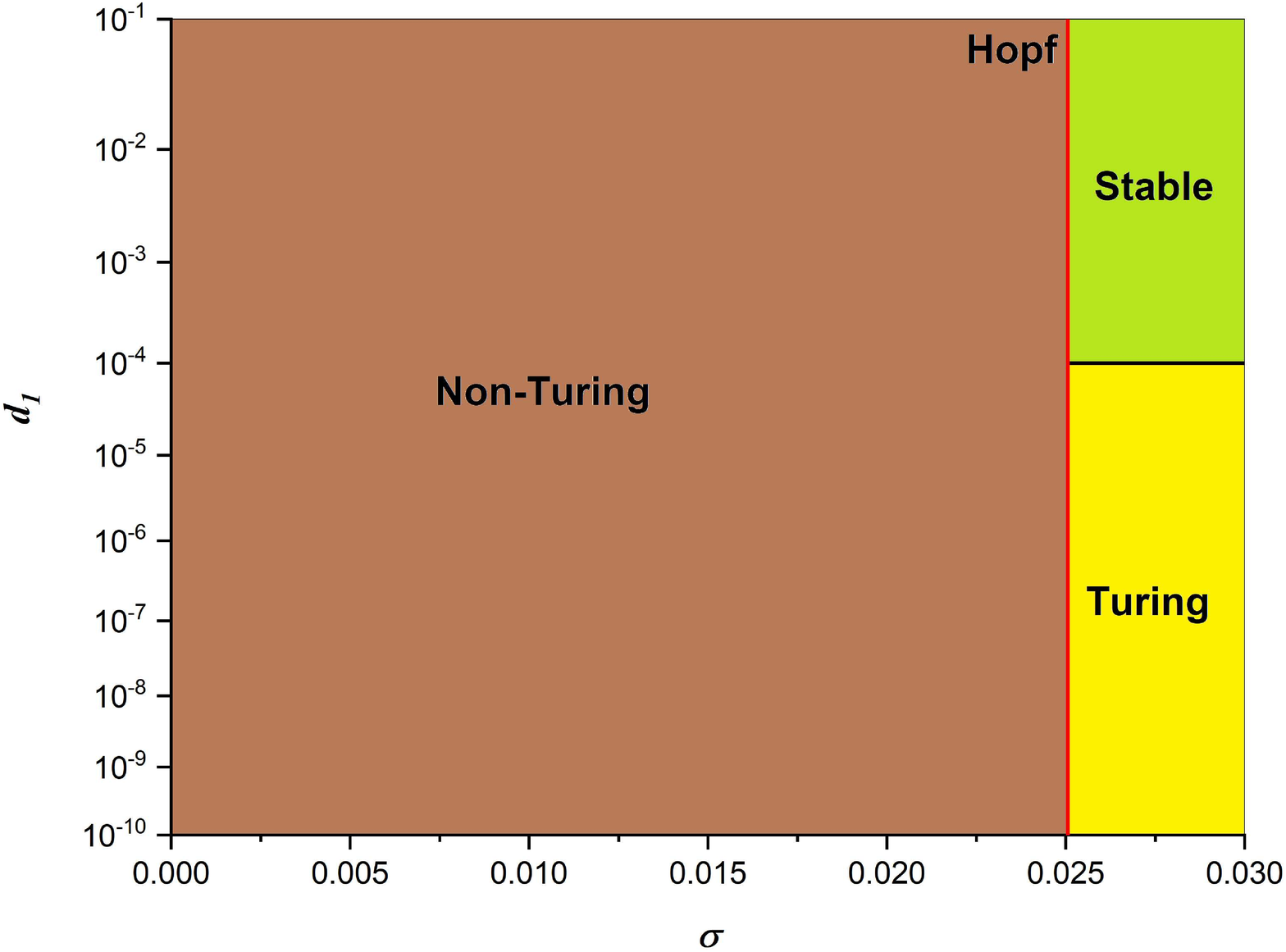}}   
   \subfigure[]  {\includegraphics[width=5cm]{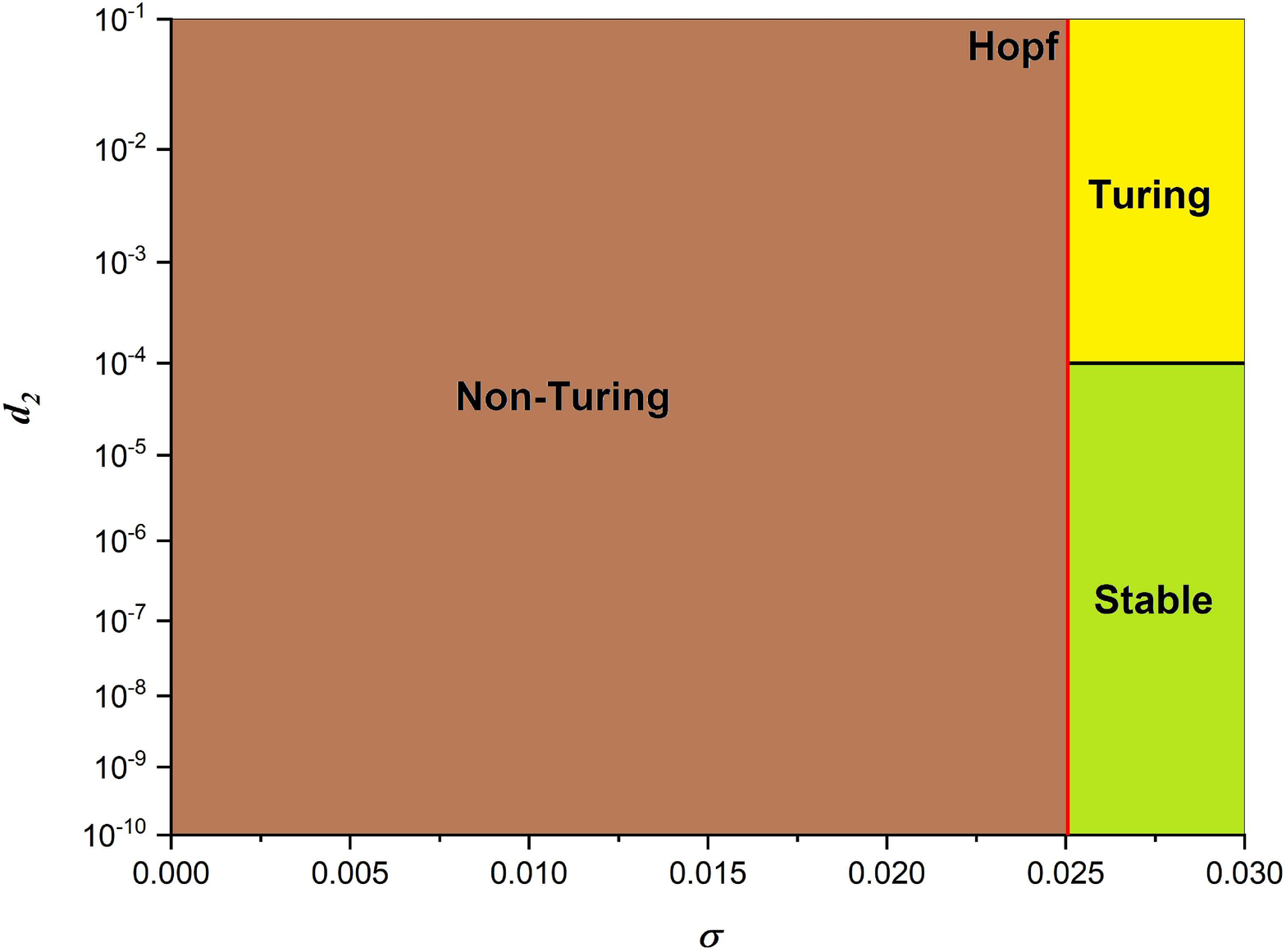}} \\ 
    \subfigure[]  {\includegraphics[width=6cm]{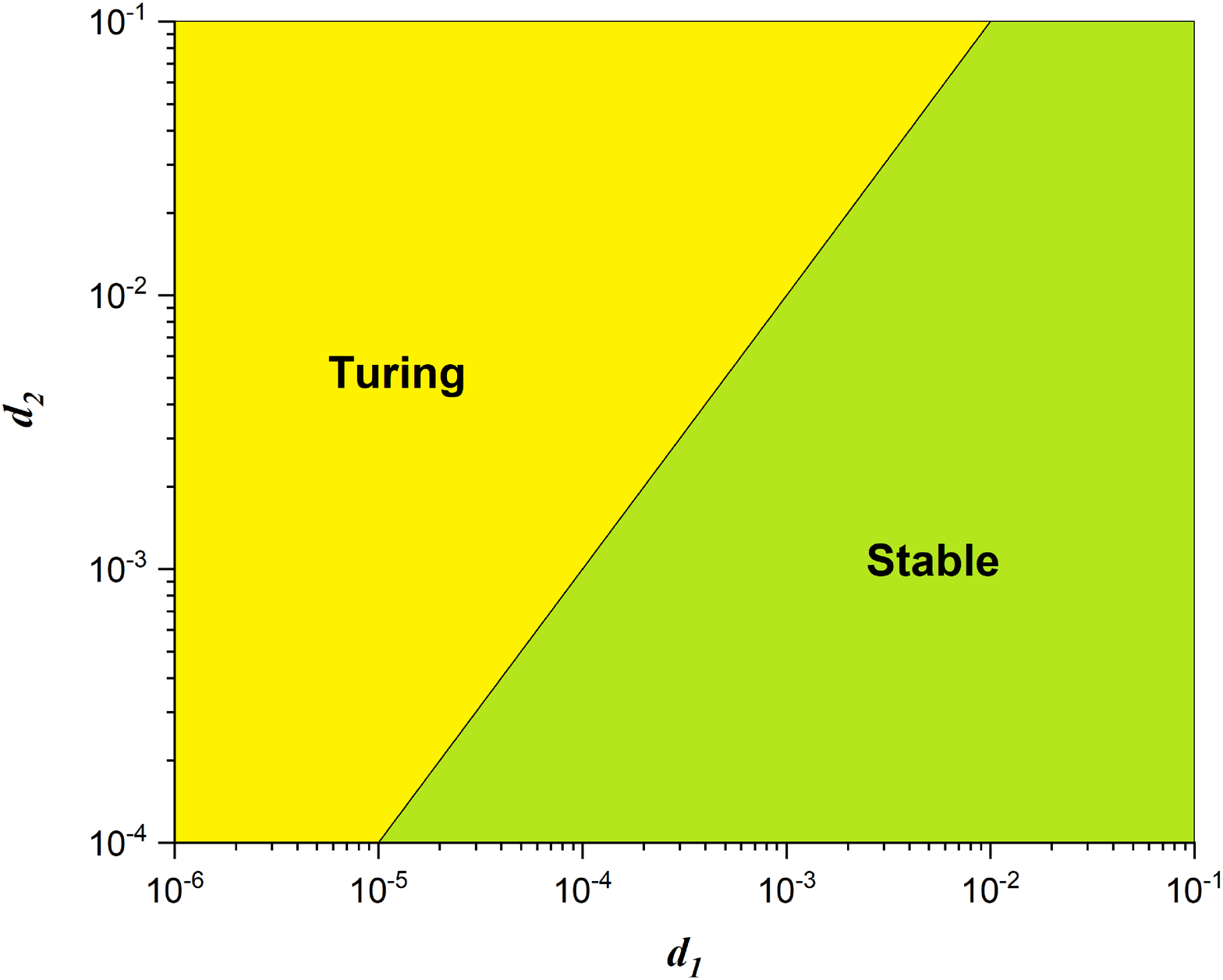}}
 \caption {Yellow region shows Turing instability, green region shows planar stablity and brown region shows non-Turing region. Red line represents Hopf bifurcation which separates the Turing and non-Turing regions. Here (a) $d_2=10^{-3},d_3=10^{-10}$ (b) $d_1=10^{-5},d_3=10^{-10}$ and (c) $\sigma=0.026,d_3=10^{-10}$. Other parameters are same as given in Table \ref{paratable}. }  \label{regiont}
 \end{figure}
 
  \renewcommand{\thefigure}{\arabic{figure}}
 \begin{figure} [!ht]
 \centering
\begin{tabular}{cccc} \vspace{-1cm}
\begin{tabular}[c]{@{}c@{}c@{}c@{}c@{}}A \\\\ \\\\\\ \\\\\\\\\end{tabular}\hspace{-0.3cm} &
 \includegraphics[scale=0.26]{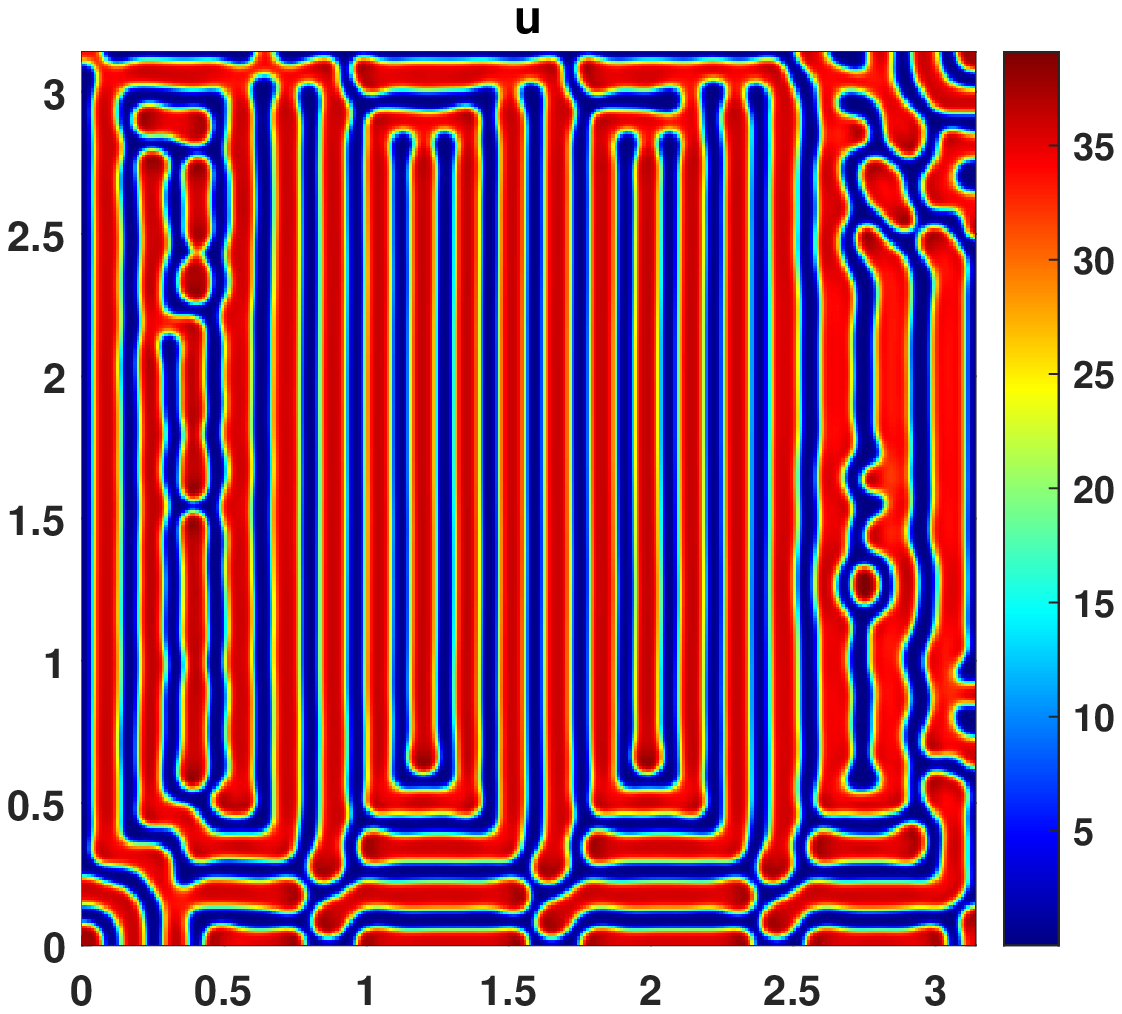} \hspace{-0.5cm} &    \includegraphics[scale=0.26]{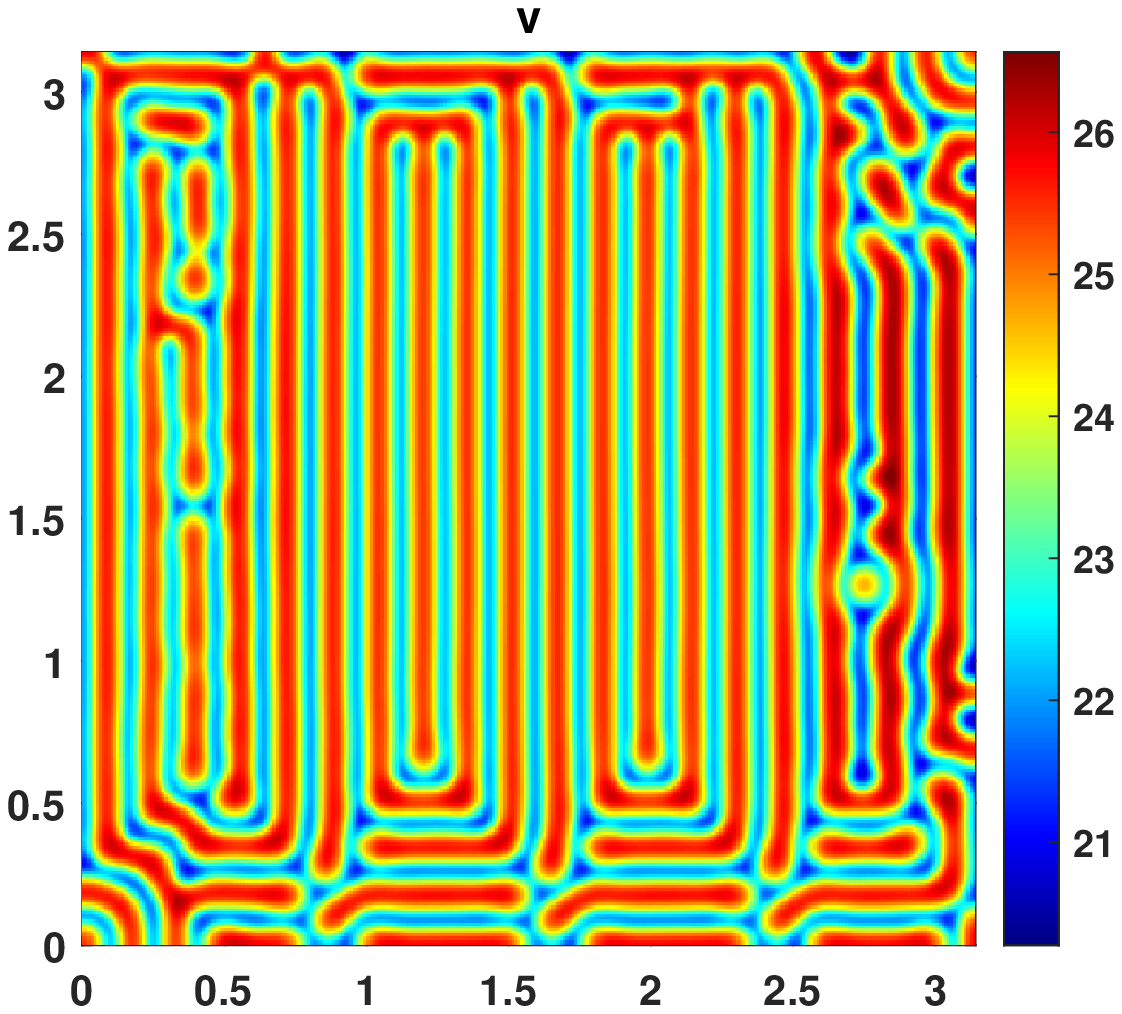} \hspace{-0.5cm} &  \includegraphics[scale=0.26]{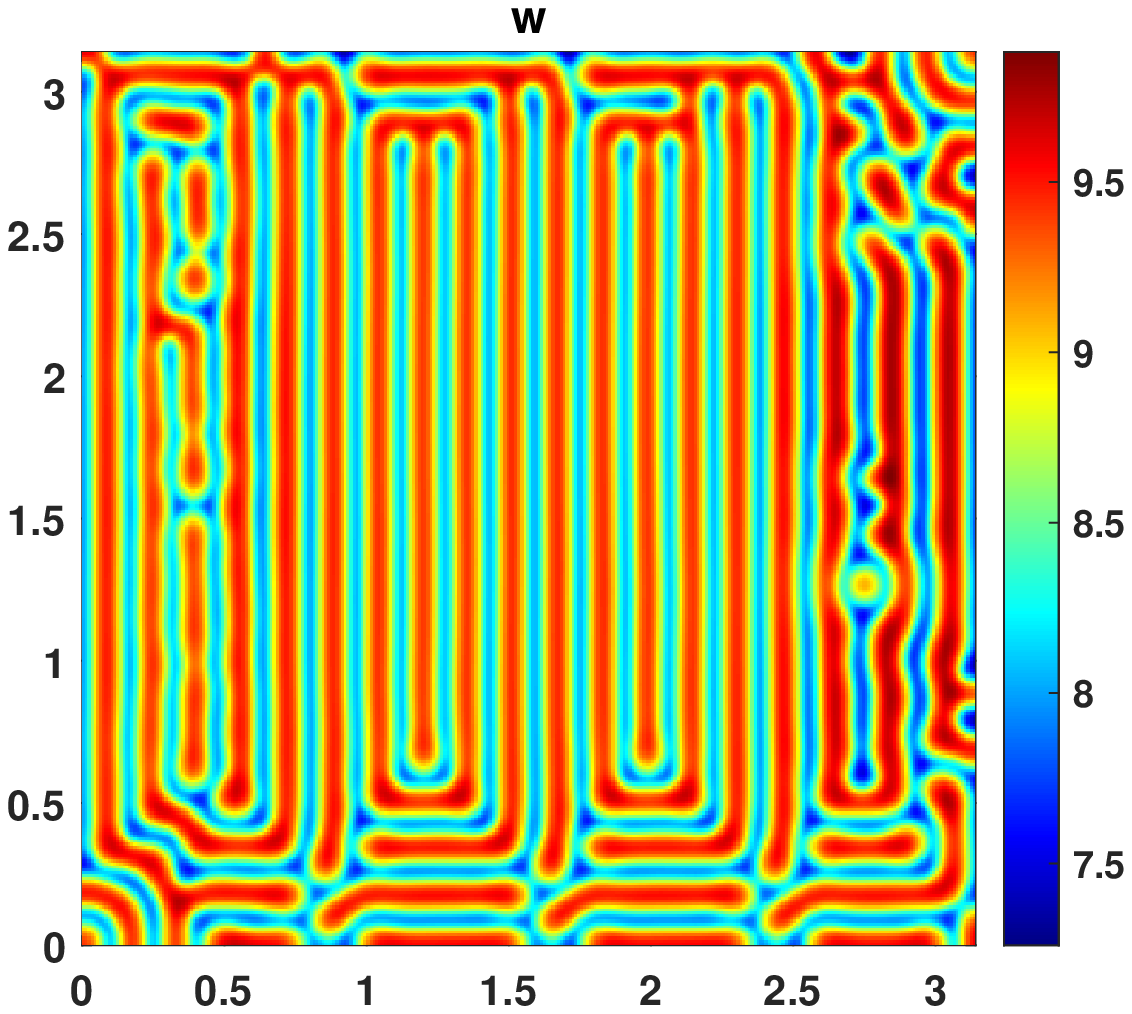}\\\vspace{-1cm}
\begin{tabular}[c]{@{}c@{}c@{}c@{}c@{}}B  \\\\\\\\\\  \\\\\\\\\end{tabular}\hspace{-0.3cm} & \includegraphics[scale=0.26]{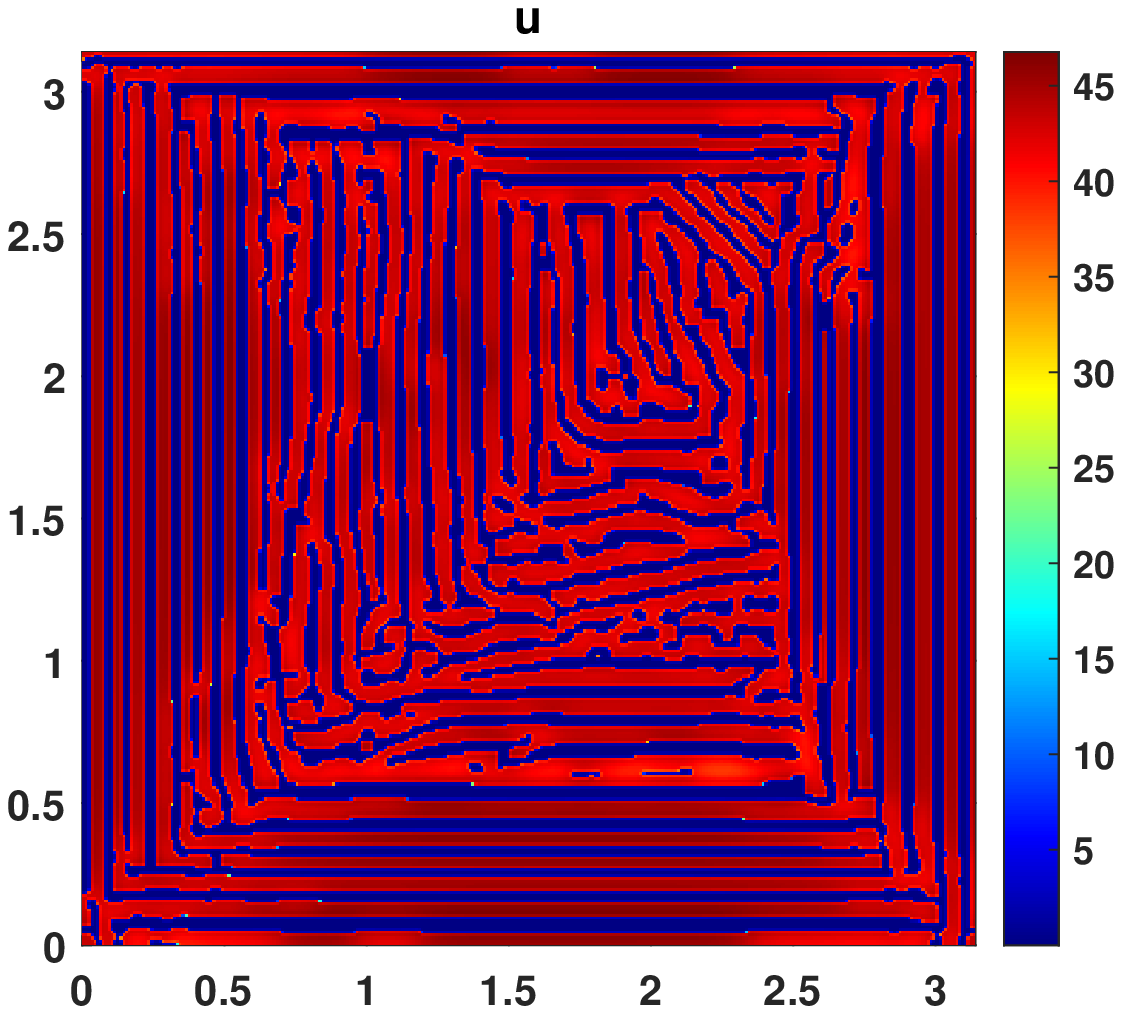}\hspace{-0.5cm}  &   \includegraphics[scale=0.26]{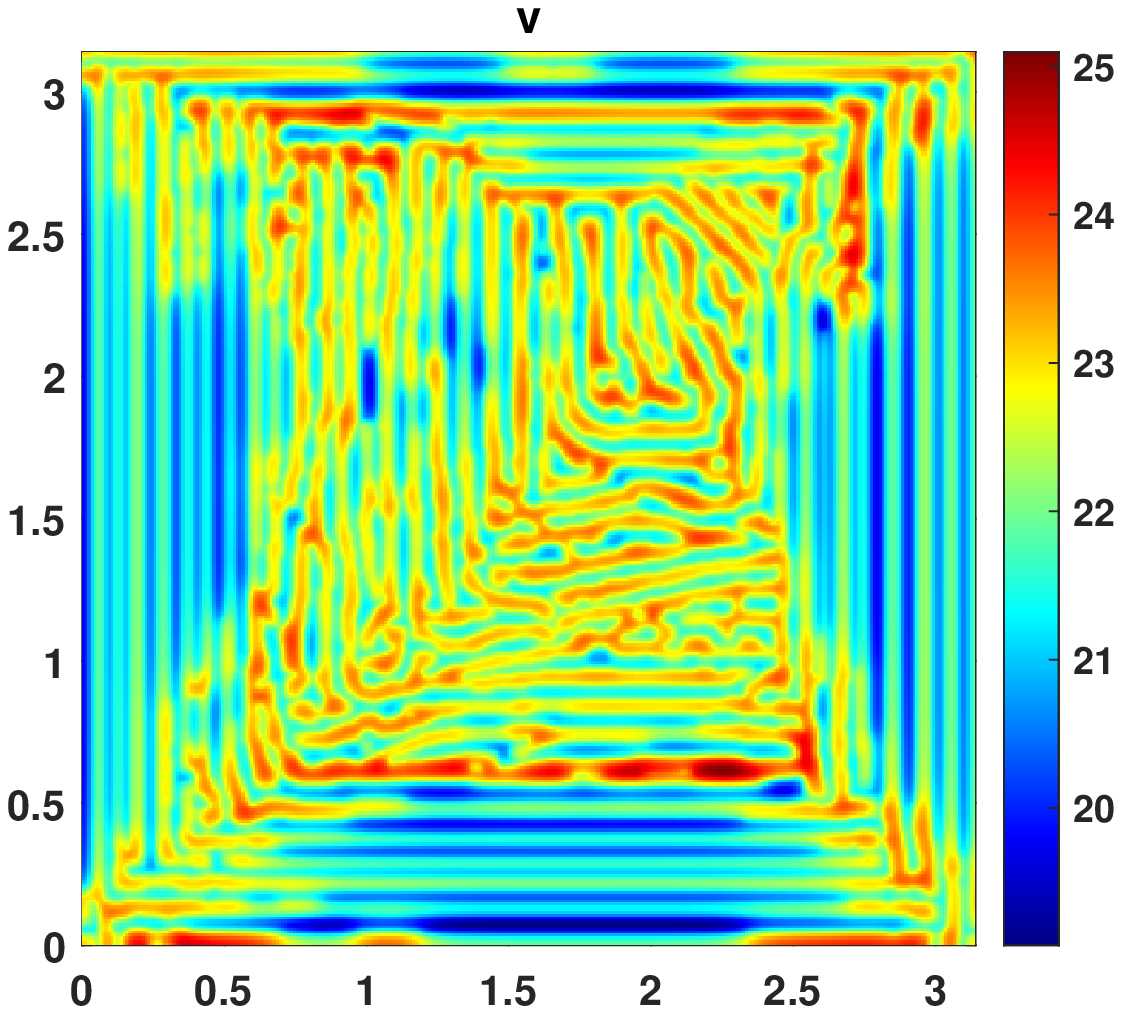} \hspace{-0.5cm} &  \includegraphics[scale=0.26]{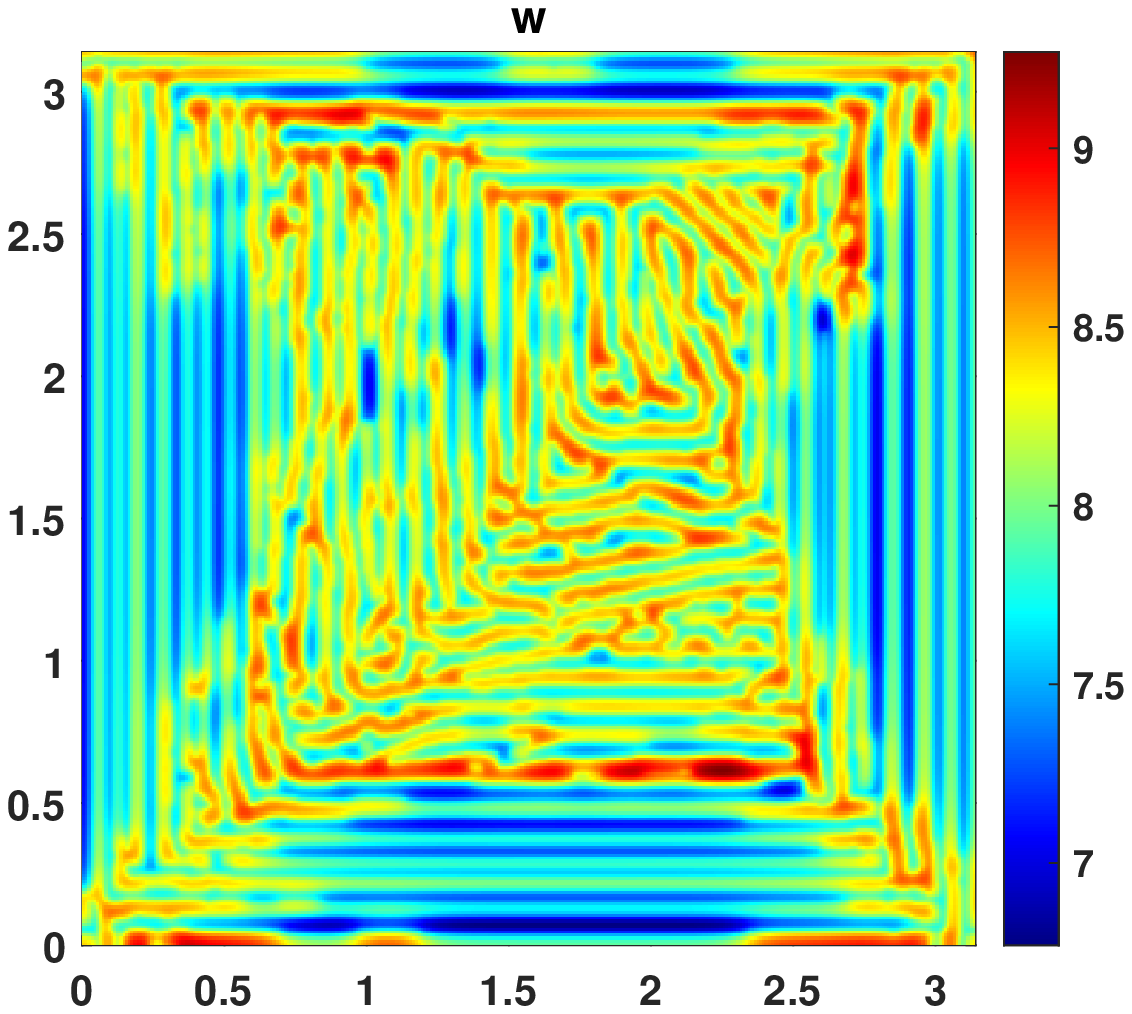} \end{tabular}
  \caption{Turing patterns seen in prey, susceptible predator and infected predator densities of model system (\ref{eq2}) at (A) $t =1000$, the diffusive rate of prey is $10^{-5}$.~(B) $t =1000$ and diffusive rate of prey is  $10^{-6}$. Other parameter values are given in Table \ref{paratable}. }
\label{table8}
\end{figure}  
 \renewcommand{\thefigure}{\arabic{figure}}
 \begin{figure} [!ht]
 \centering
\begin{tabular}{cccc} \vspace{-1cm}
\begin{tabular}[c]{@{}c@{}c@{}c@{}c@{}}A  \\\\ \\\\\\ \\\\\\\\\end{tabular}\hspace{-0.3cm} &
 \includegraphics[scale=0.26]{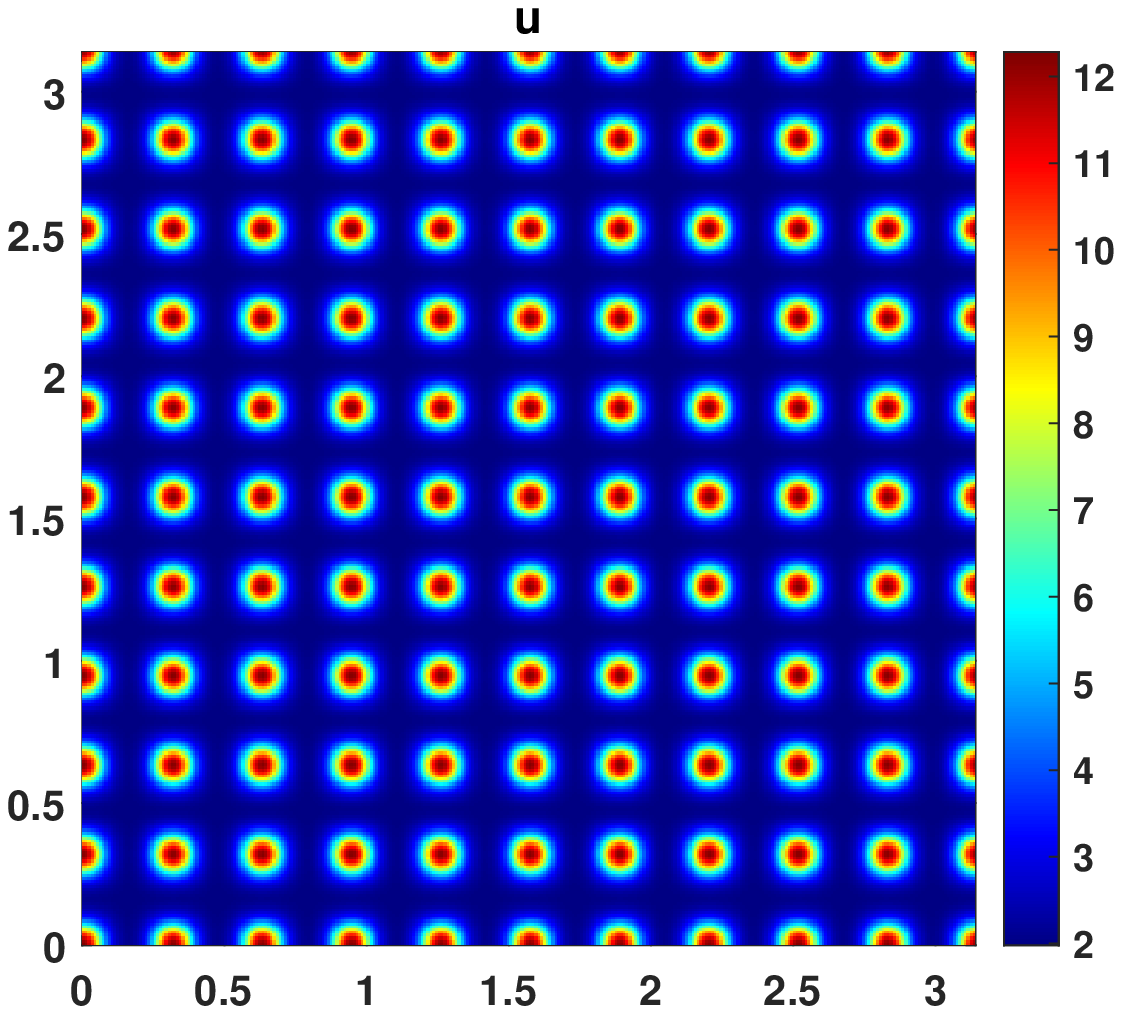} \hspace{-0.5cm} &    \includegraphics[scale=0.26]{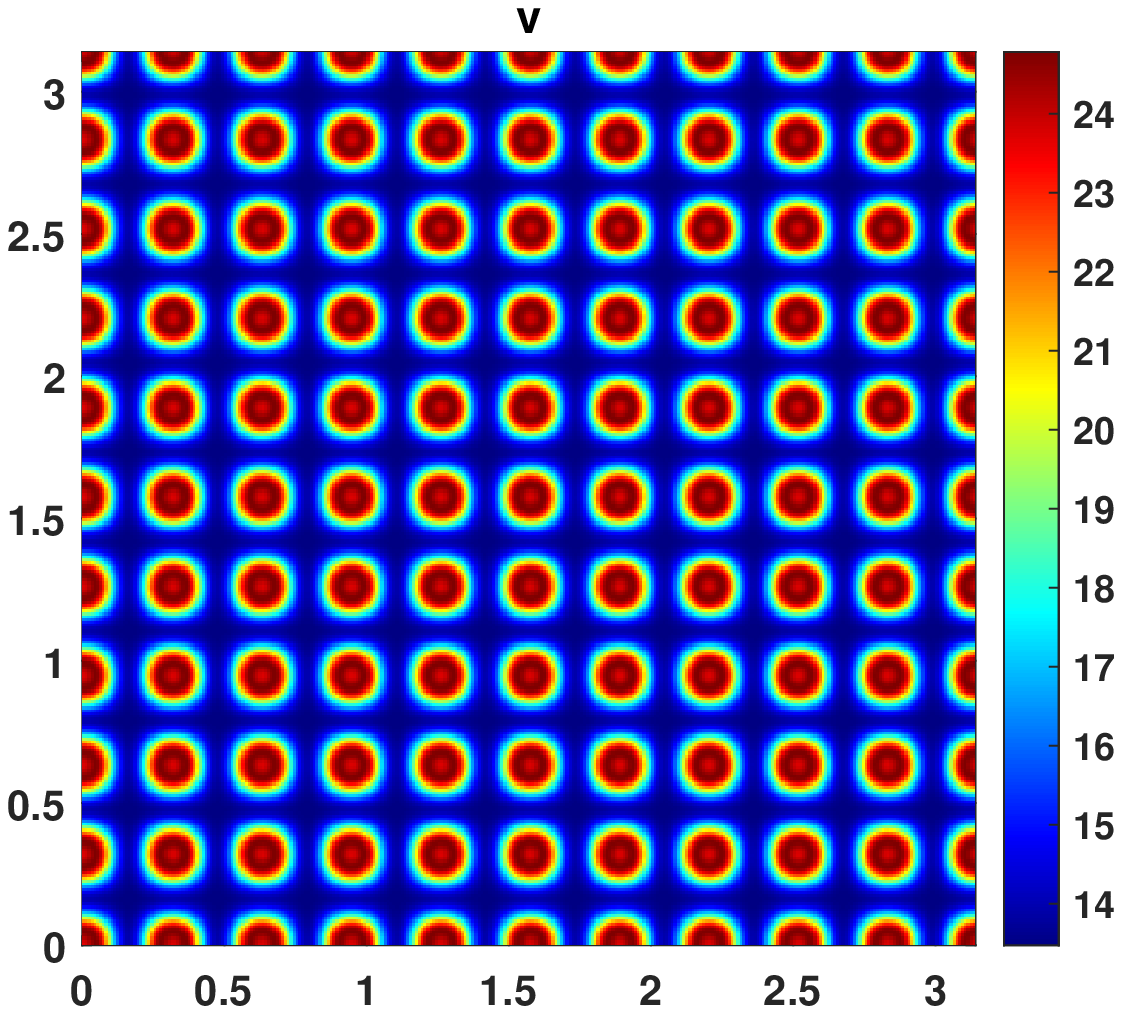} \hspace{-0.5cm} &  \includegraphics[scale=0.26]{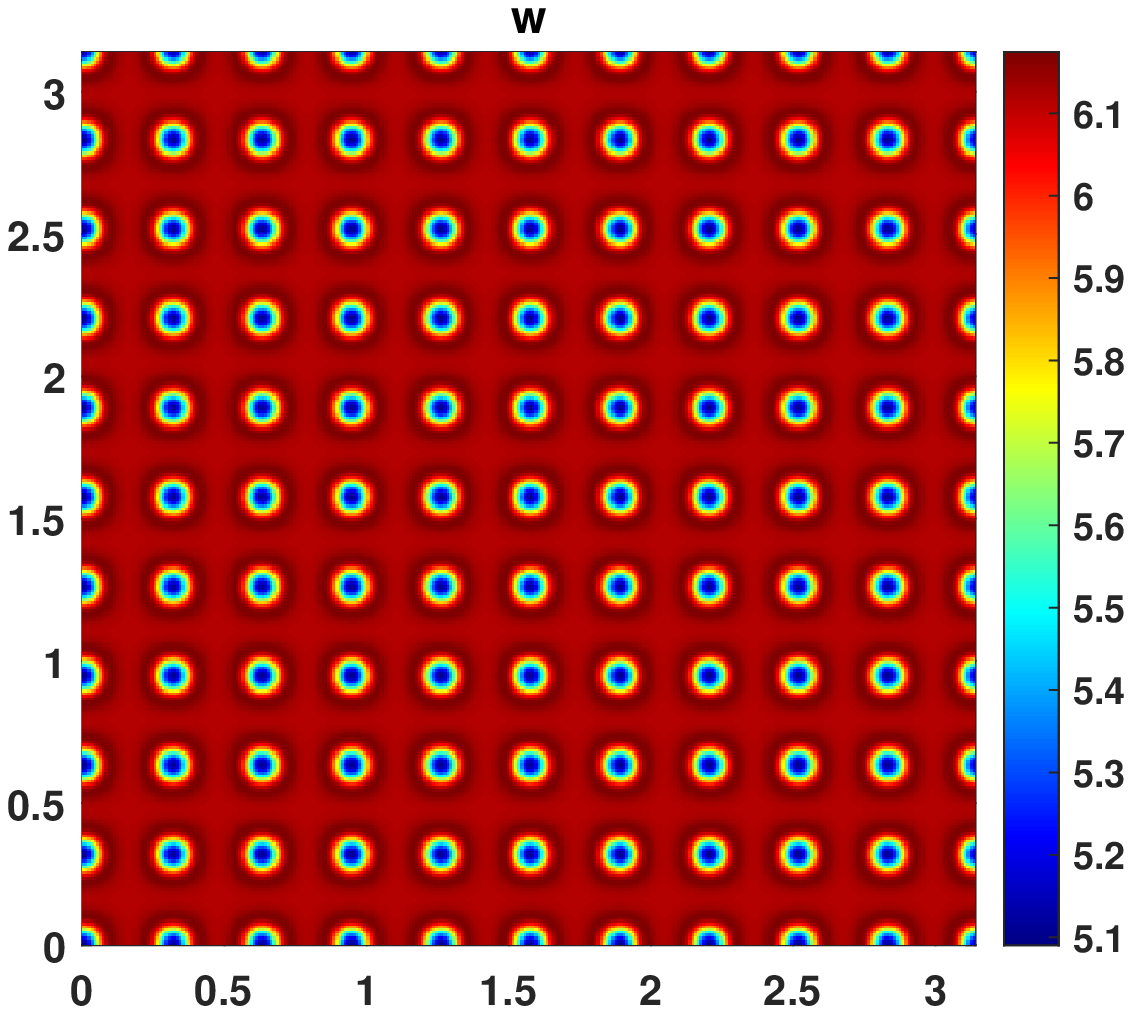}\\\vspace{-1cm}
\begin{tabular}[c]{@{}c@{}c@{}c@{}c@{}}B  \\\\\\\\ \\ \\\\\\\\\end{tabular}\hspace{-0.3cm} & \includegraphics[scale=0.26]{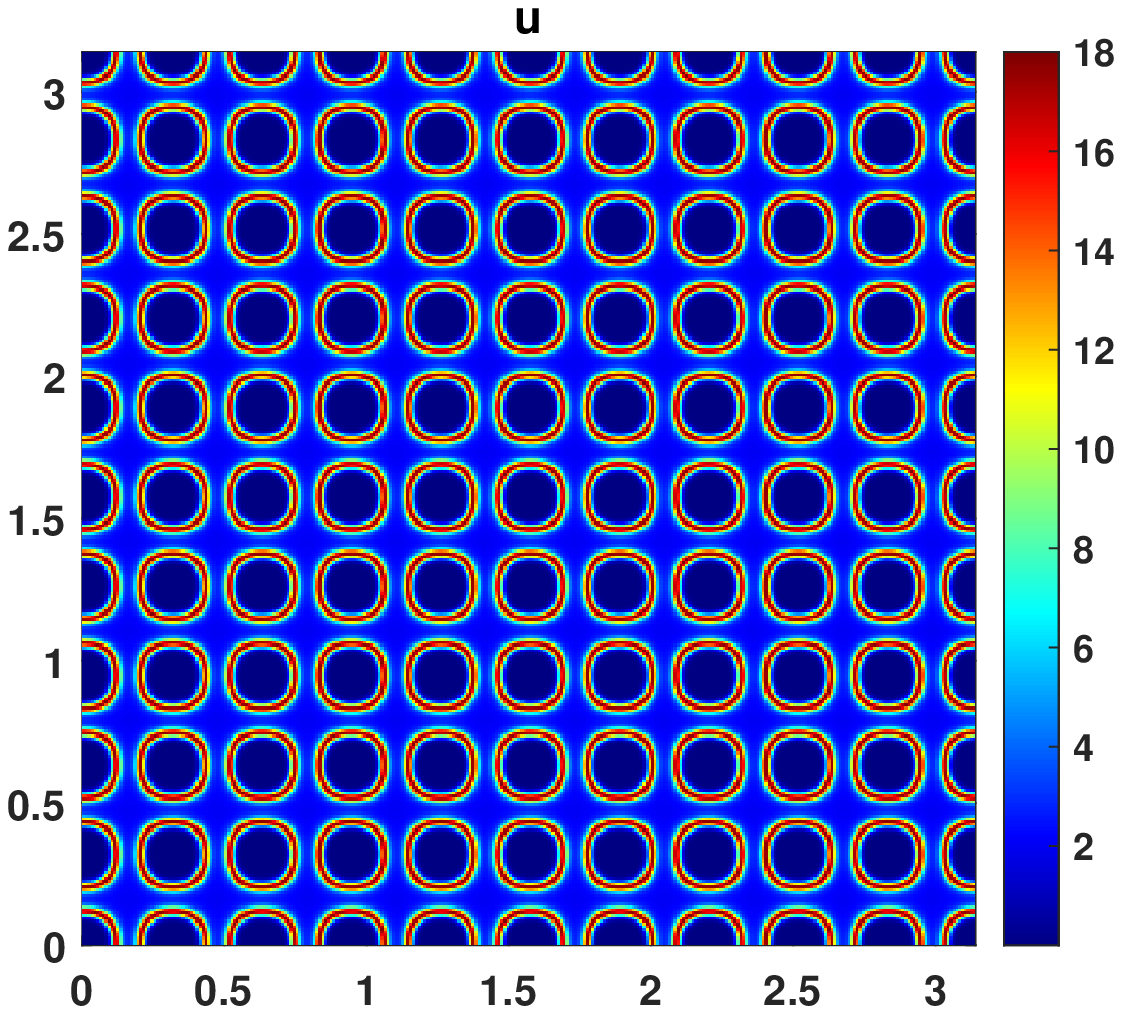}\hspace{-0.5cm}  &   \includegraphics[scale=0.26]{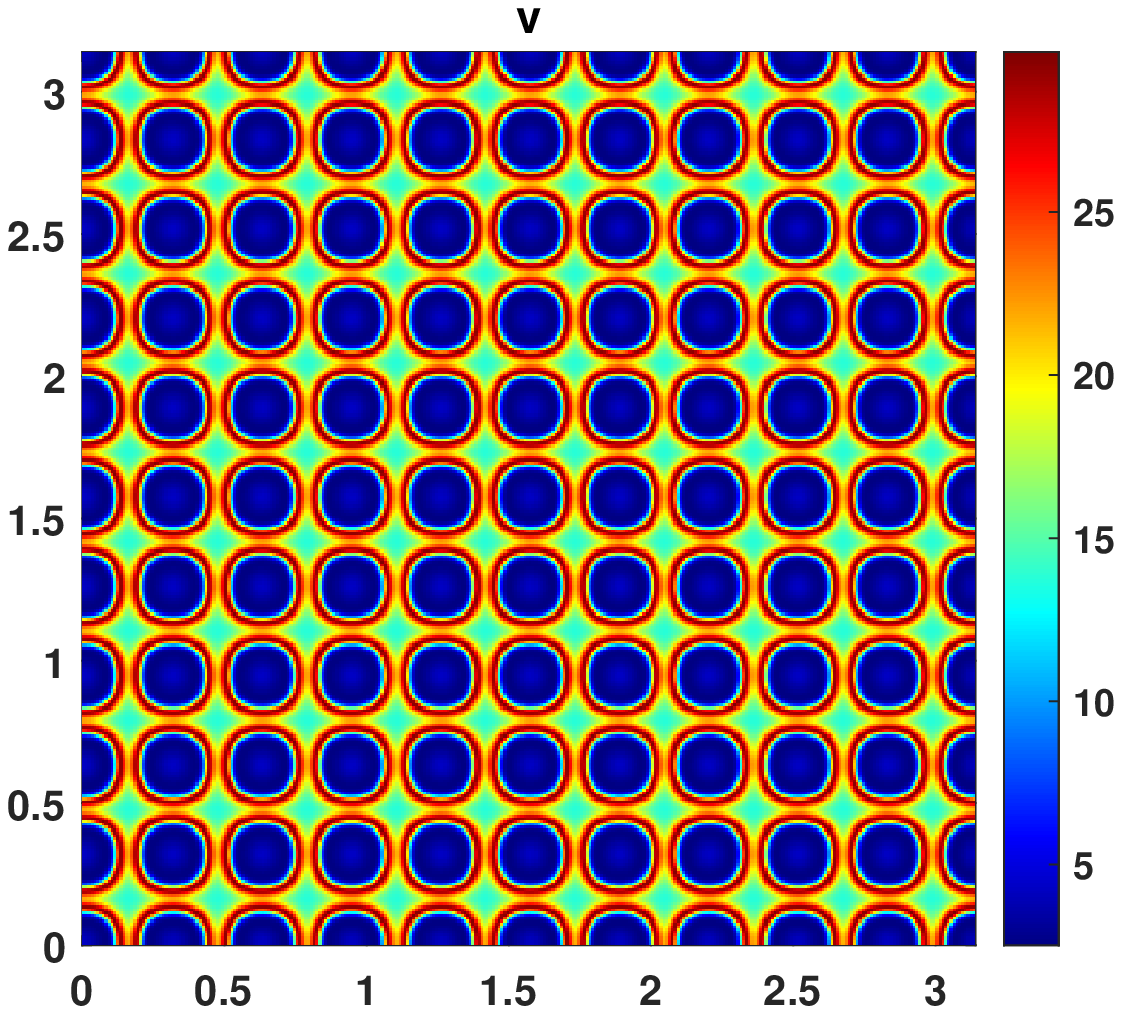} \hspace{-0.5cm} &  \includegraphics[scale=0.26]{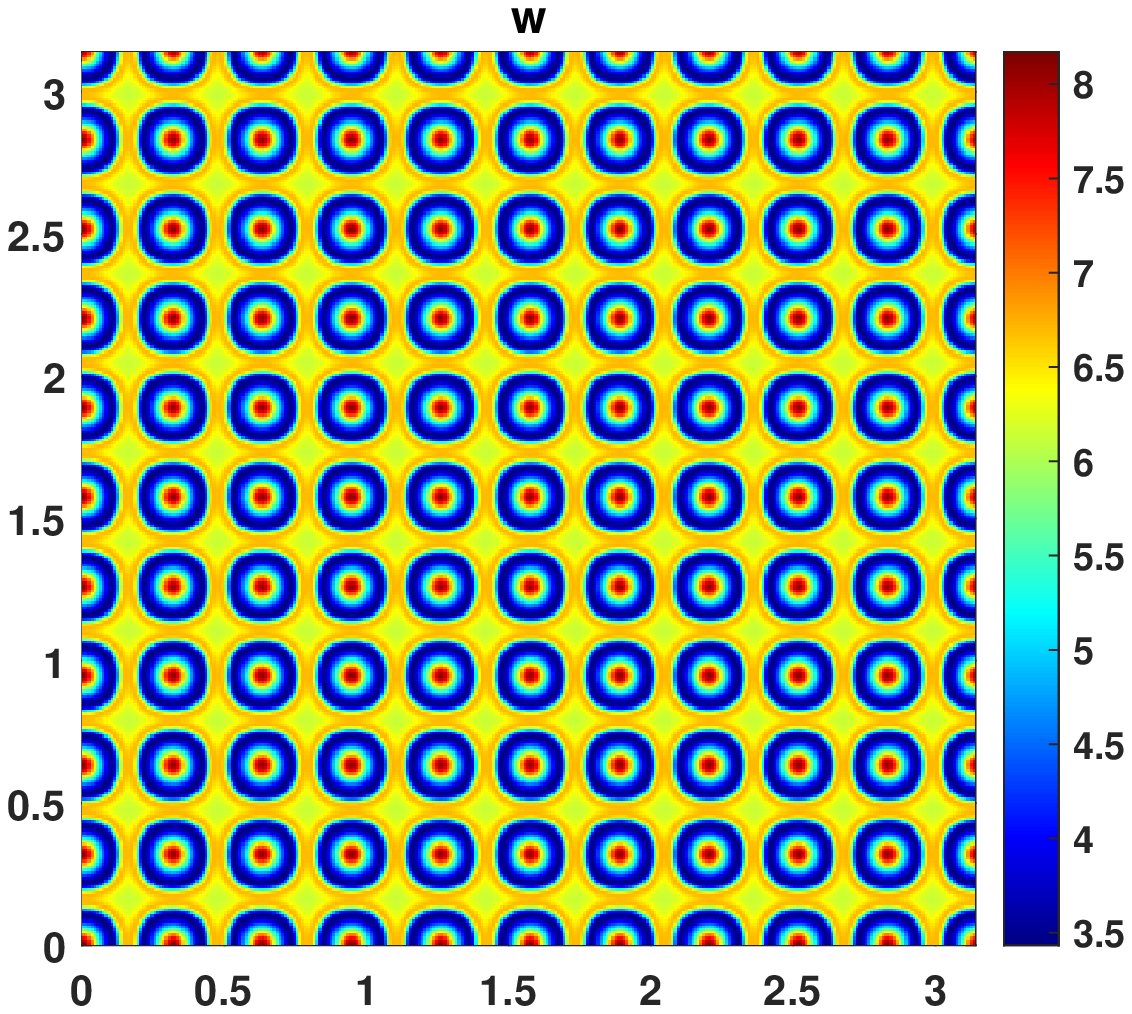}\\\vspace{-1cm} 
\begin{tabular}[c]{@{}c@{}c@{}c@{}c@{}}C \\\\\\\\ \\ \\\\\\\\\end{tabular}\hspace{-0.3cm} & \includegraphics[scale=0.26]{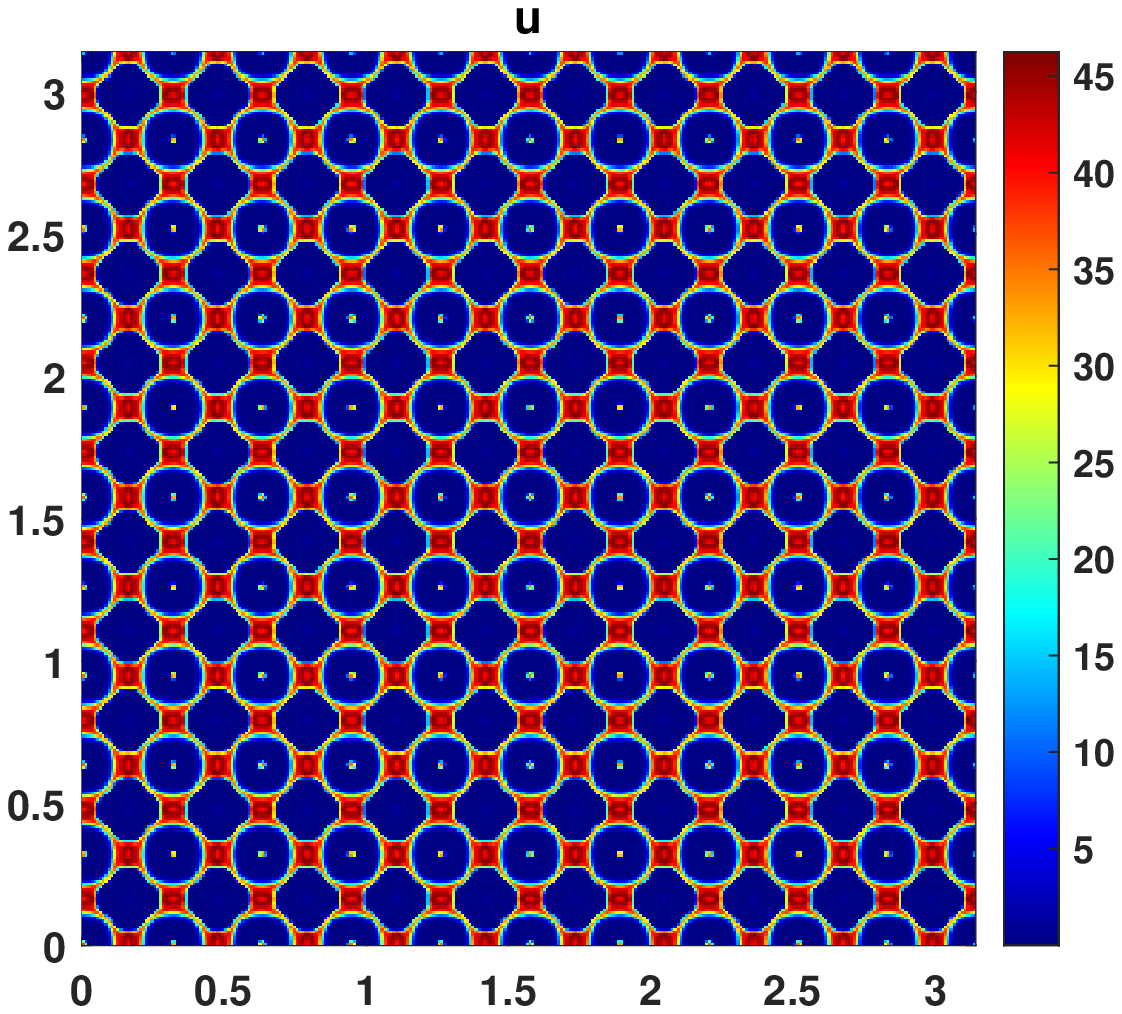}\hspace{-0.5cm}  &   \includegraphics[scale=0.26]{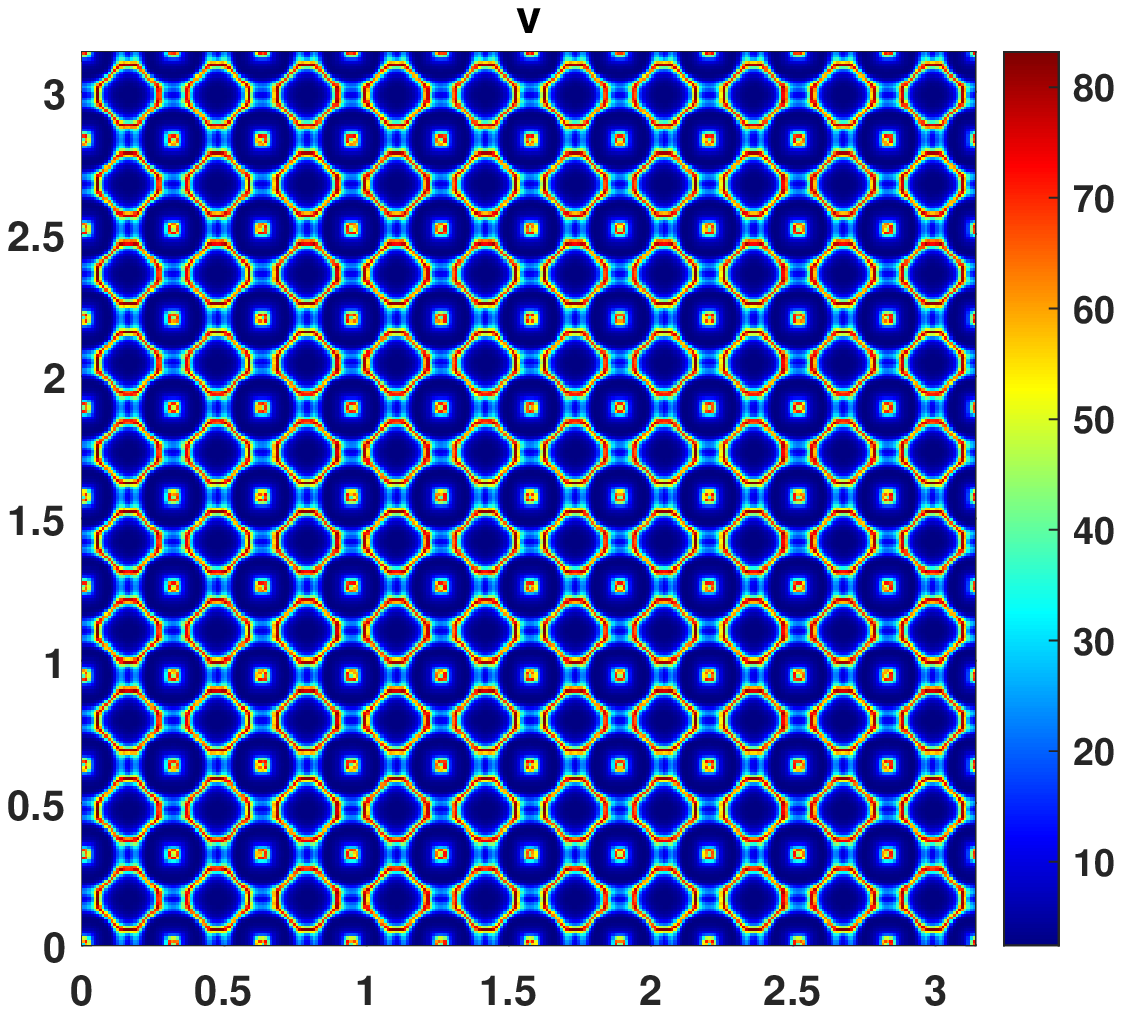} \hspace{-0.5cm} &  \includegraphics[scale=0.26]{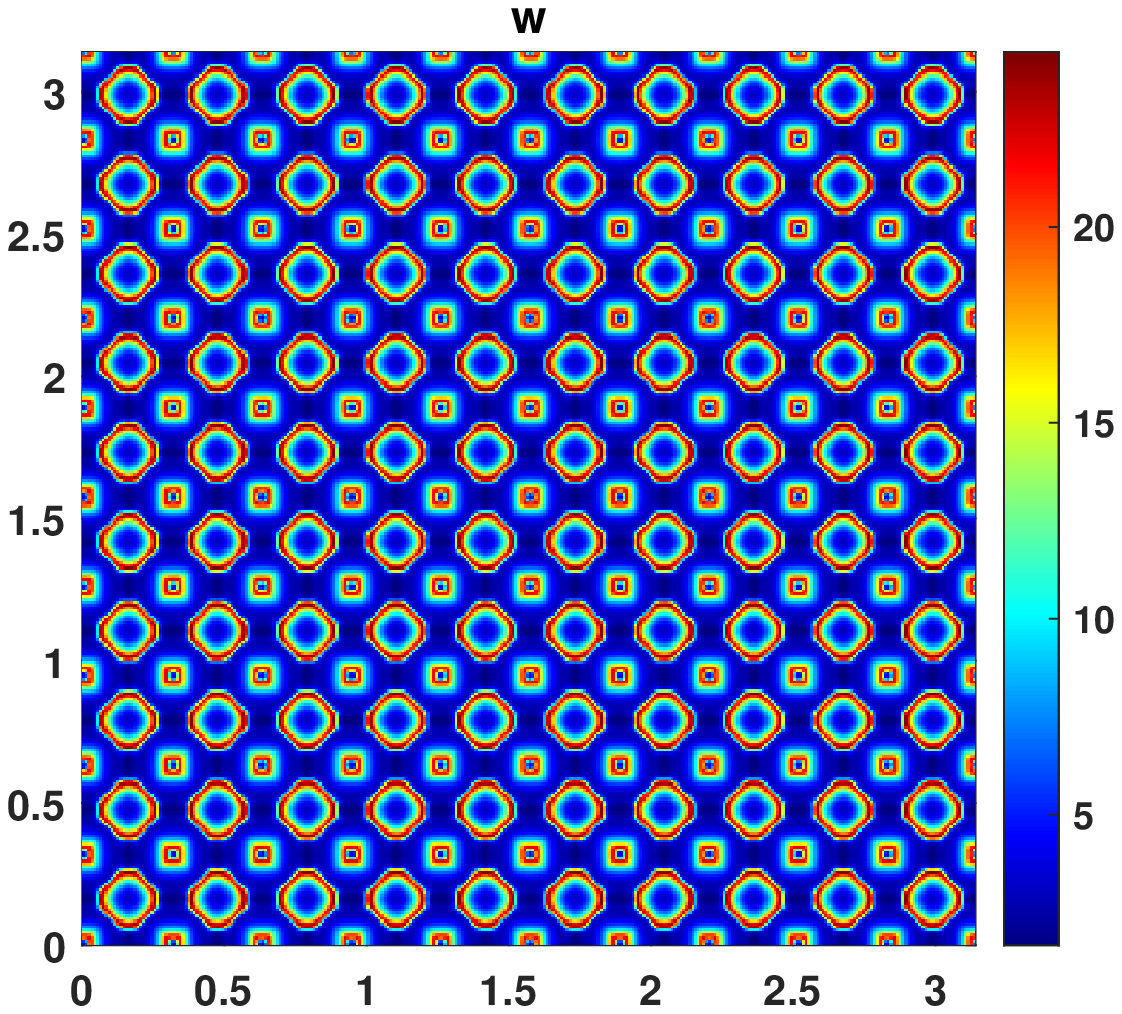} \\\vspace{-1cm} 
\begin{tabular}[c]{@{}c@{}c@{}c@{}c@{}}D \\\\\\ \\  \\\\\\\\\\\end{tabular}\hspace{-0.3cm} & \includegraphics[scale=0.26]{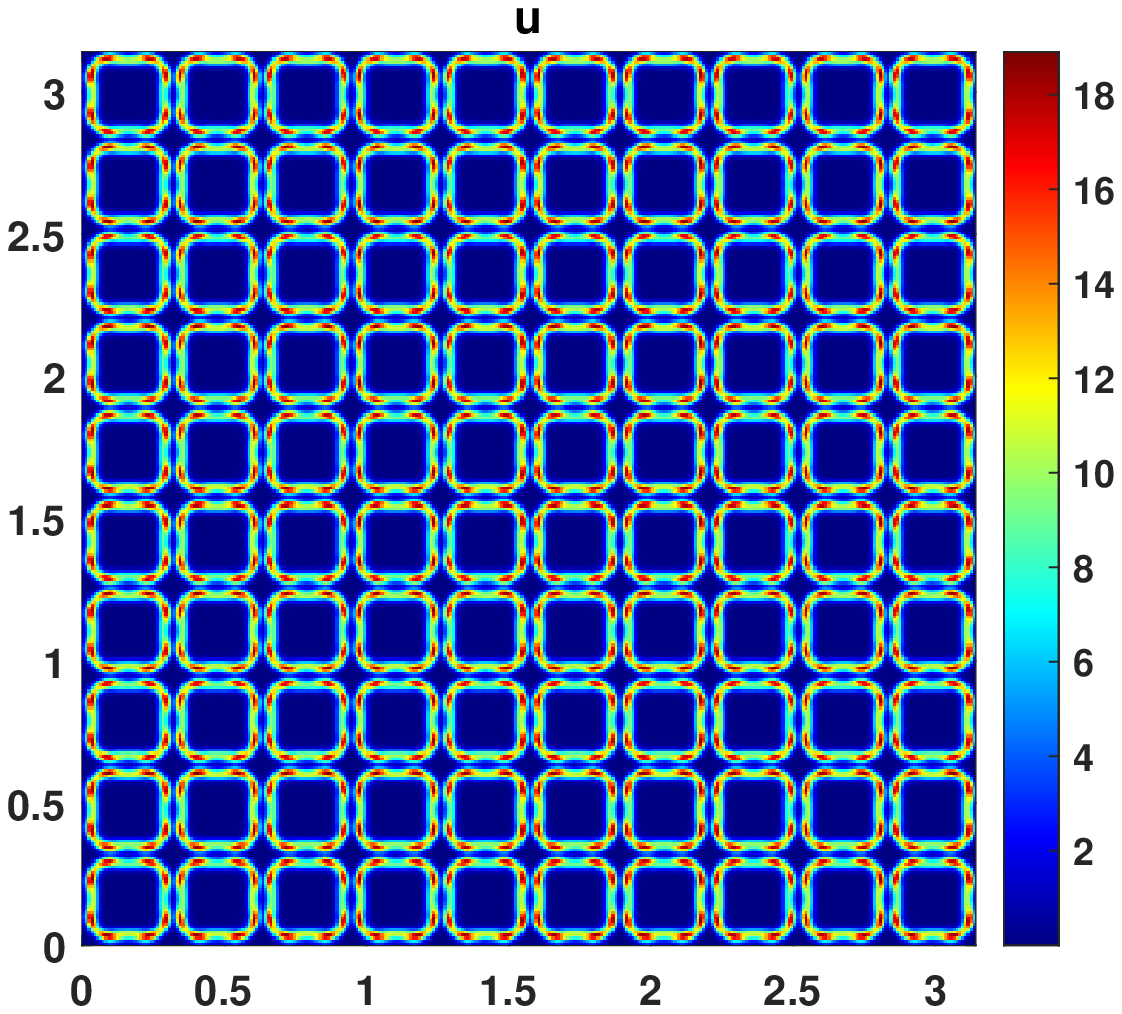}\hspace{-0.5cm}  &   \includegraphics[scale=0.26]{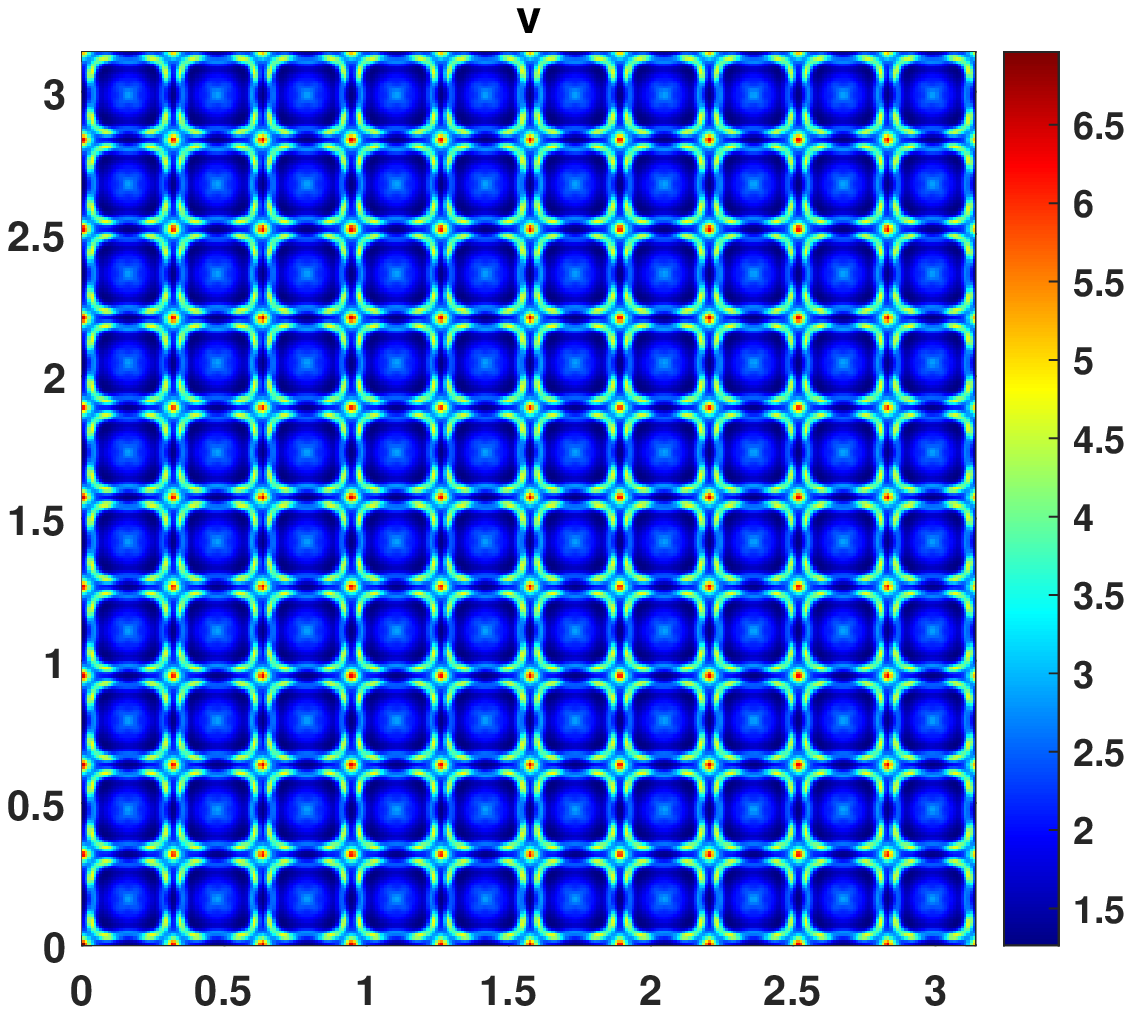} \hspace{-0.5cm} &  \includegraphics[scale=0.26]{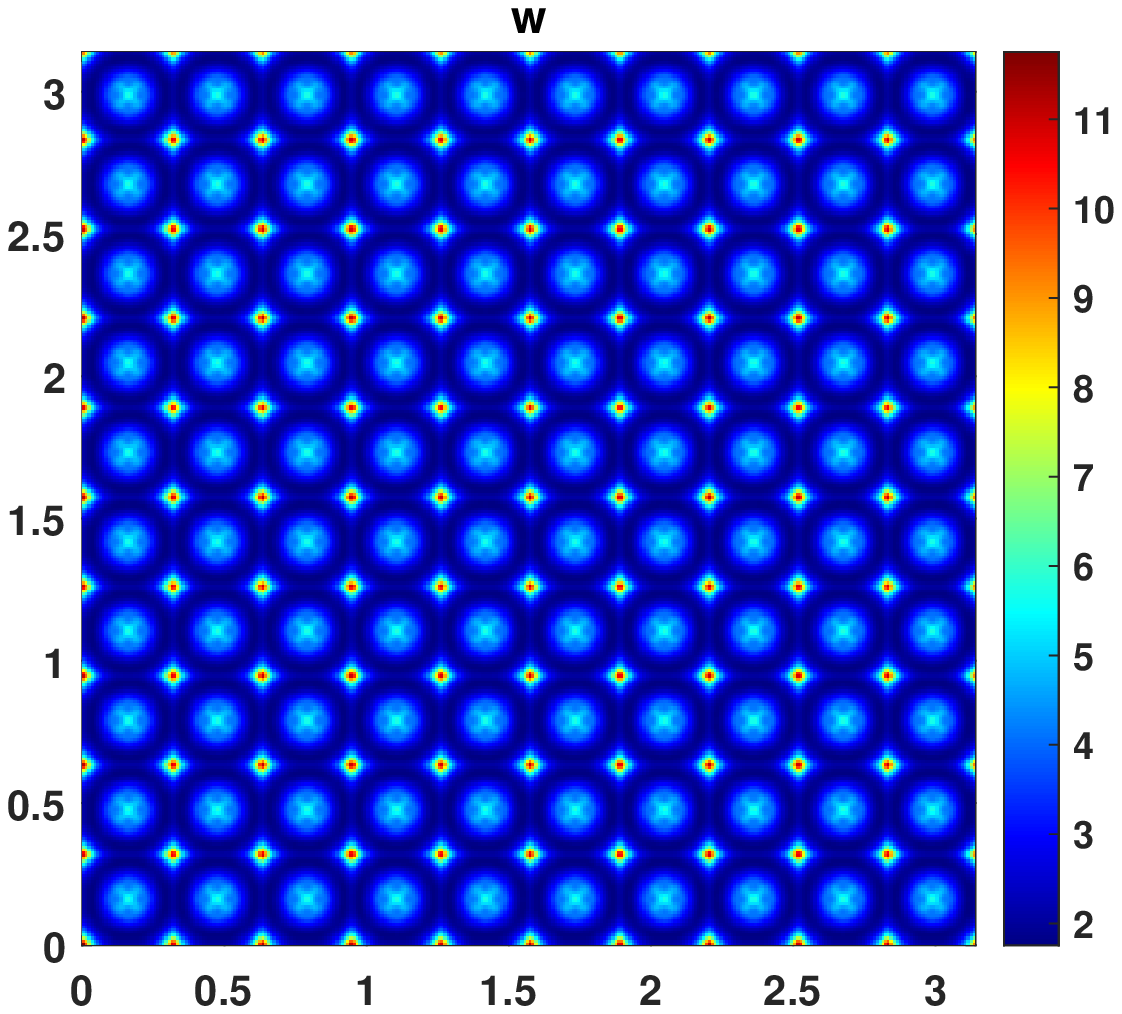} 
\end{tabular}
  \caption{Non-stationary non-Turing patterns seen in prey, susceptible predator and infected predator densities of the model system (\ref{eq2}), at (A) $t =200$,~(B)~ $t=300$,~(C)~ $t=500$, and ~(D)~ $t=1000$. The diffusive constants are $d_1=10^{-6}, d_2=10^{-6}$ and $d_3=10^{-10}$. Other parameter values are given in Table \ref{paratable}.  }
\label{table9}
\end{figure}

    \renewcommand{\thefigure}{\arabic{figure}}
 \begin{figure} [!ht]
 \centering
\begin{tabular}{cccc} \vspace{-1cm}
\begin{tabular}[c]{@{}c@{}c@{}c@{}c@{}}A\\\\\\  \\ \\ \\\\\\\\\end{tabular}\hspace{-0.3cm} &
 \includegraphics[scale=0.26]{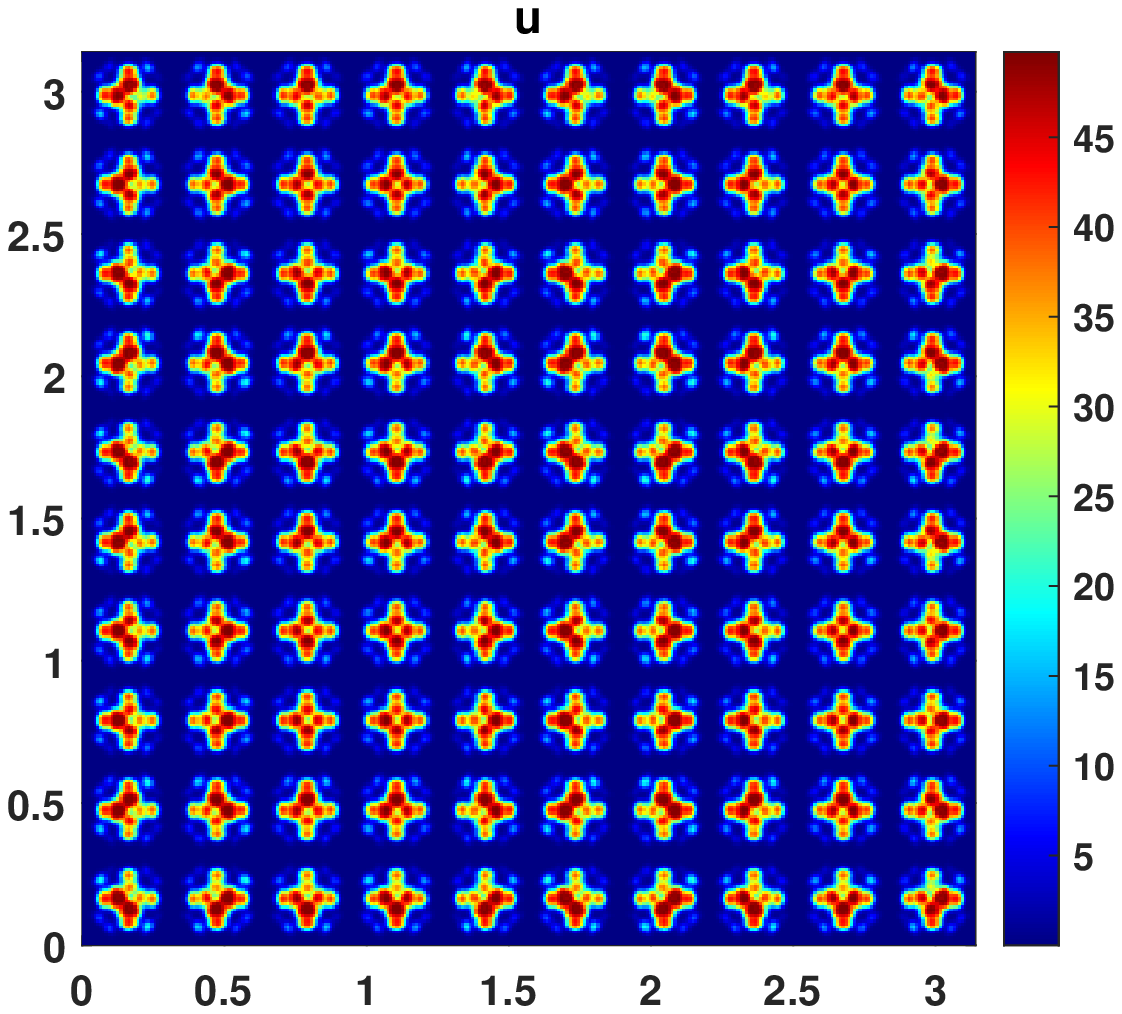} \hspace{-0.5cm} &    \includegraphics[scale=0.26]{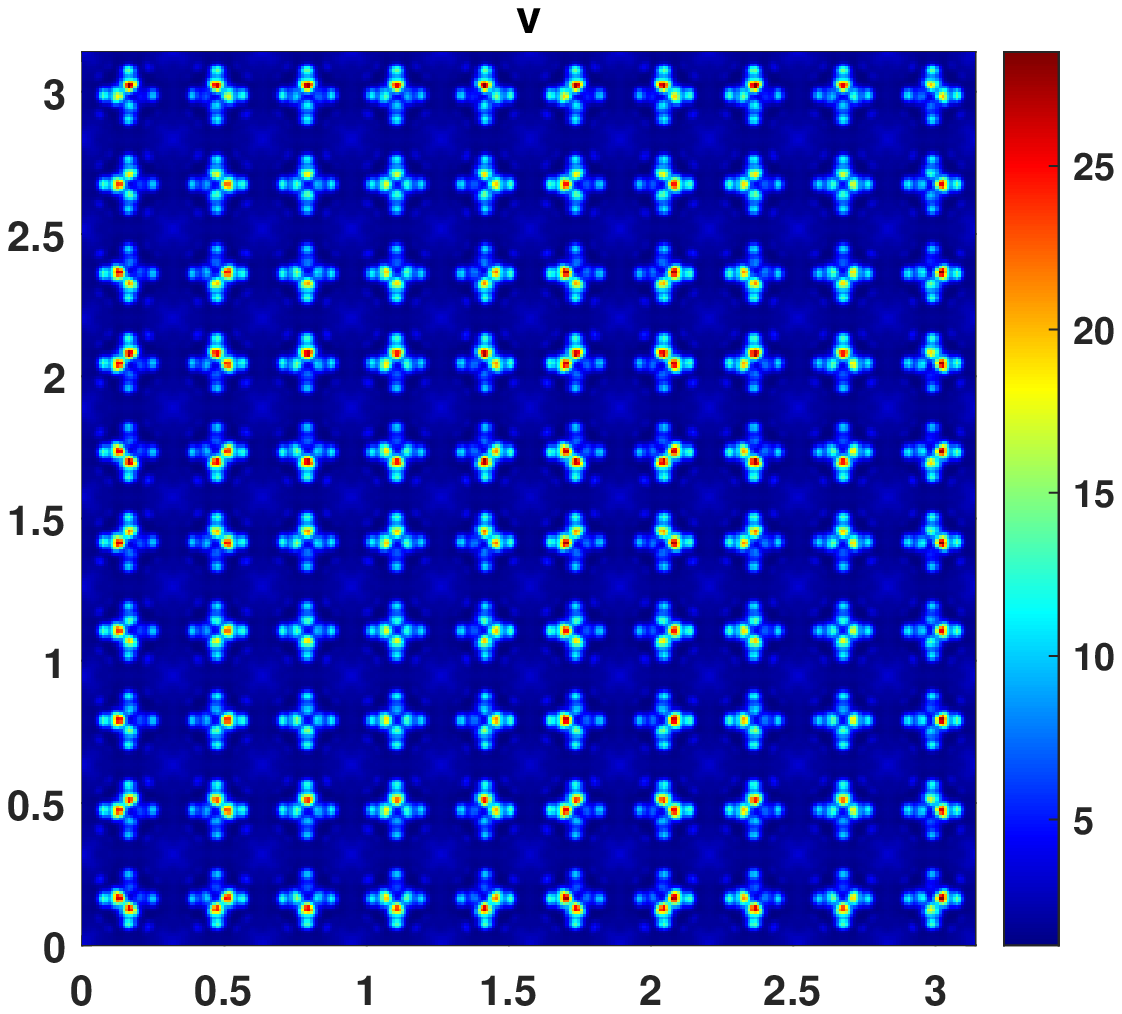} \hspace{-0.5cm} &  \includegraphics[scale=0.26]{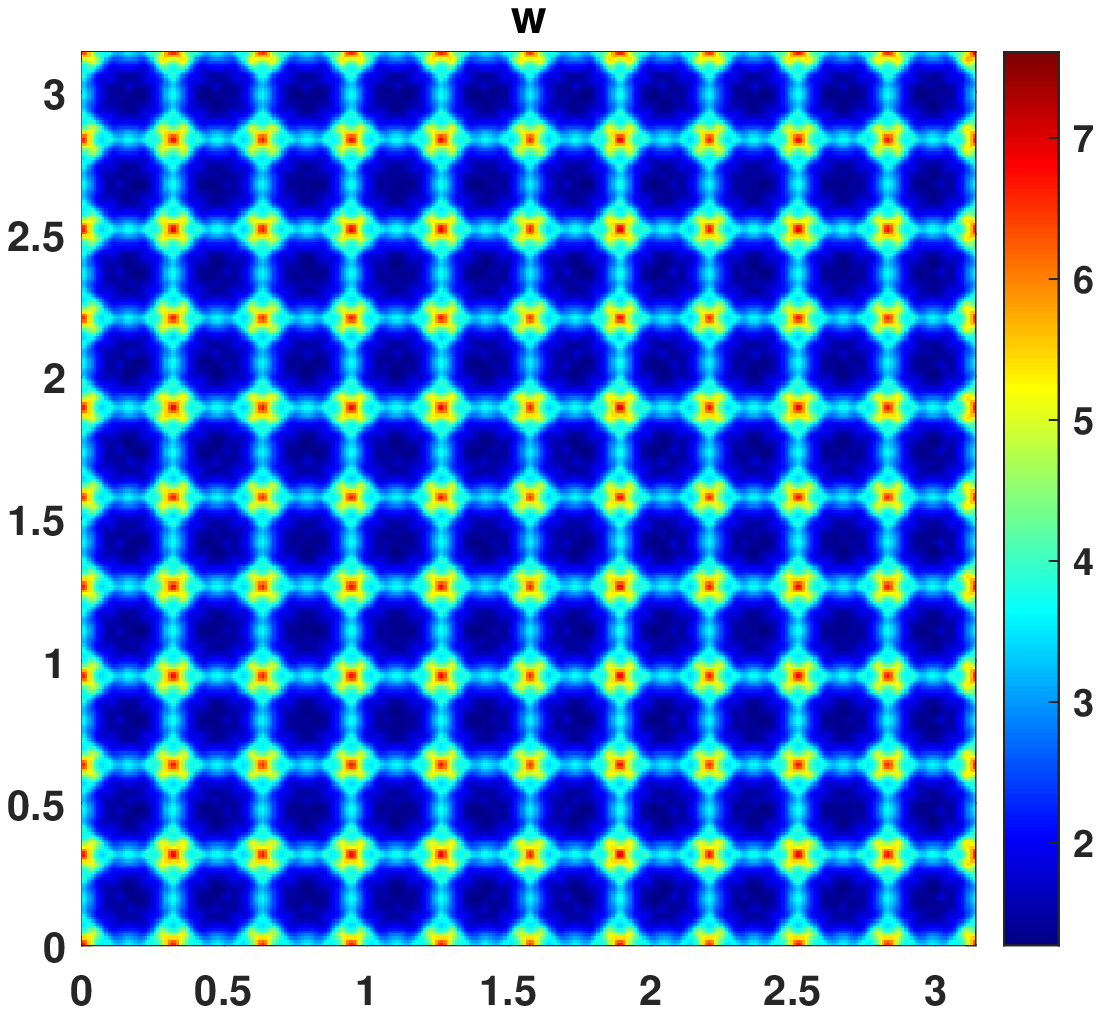}\\\vspace{-1cm}
\begin{tabular}[c]{@{}c@{}c@{}c@{}c@{}}B \\\\\\ \\ \\ \\\\\\\\\end{tabular}\hspace{-0.3cm} & \includegraphics[scale=0.26]{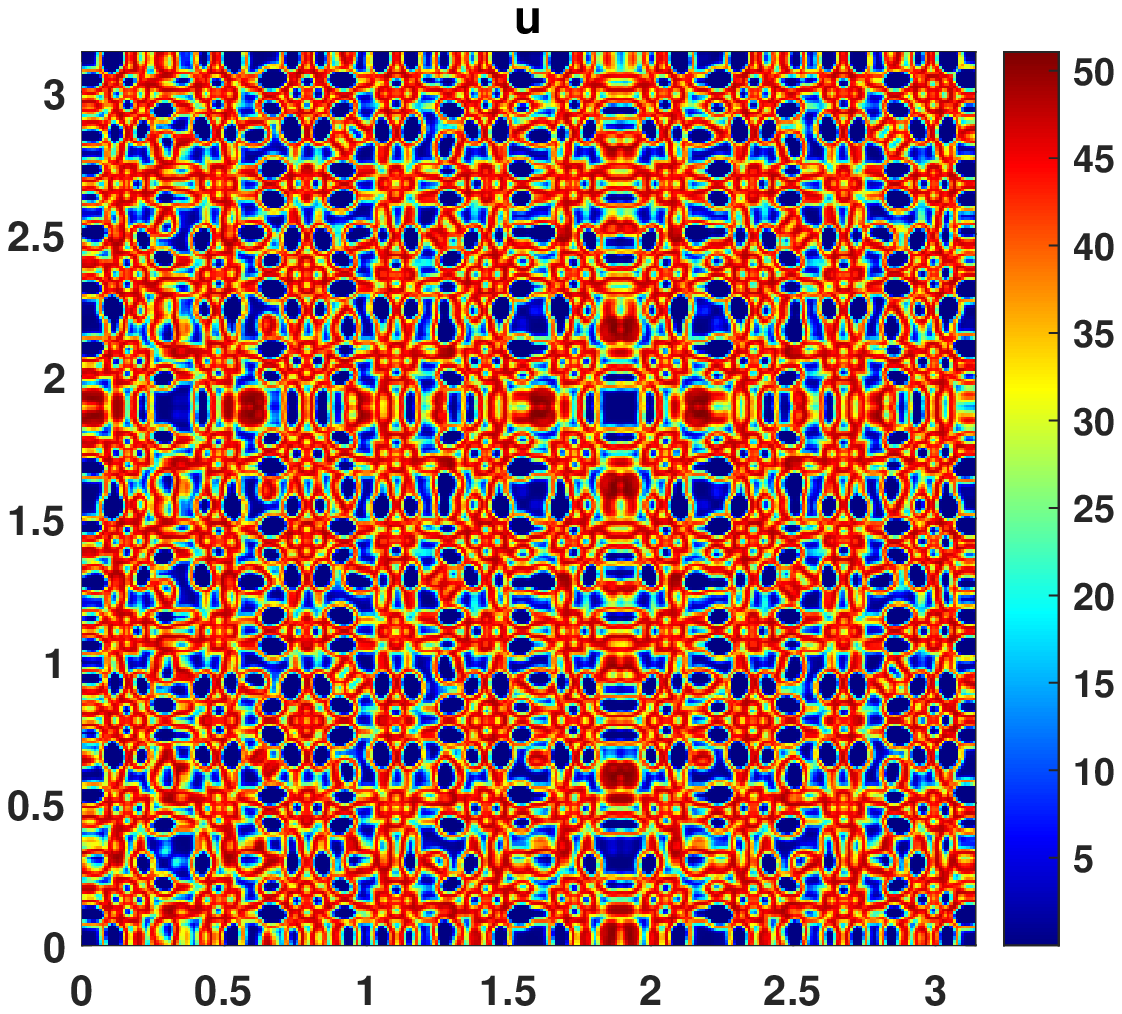}\hspace{-0.5cm}  &   \includegraphics[scale=0.26]{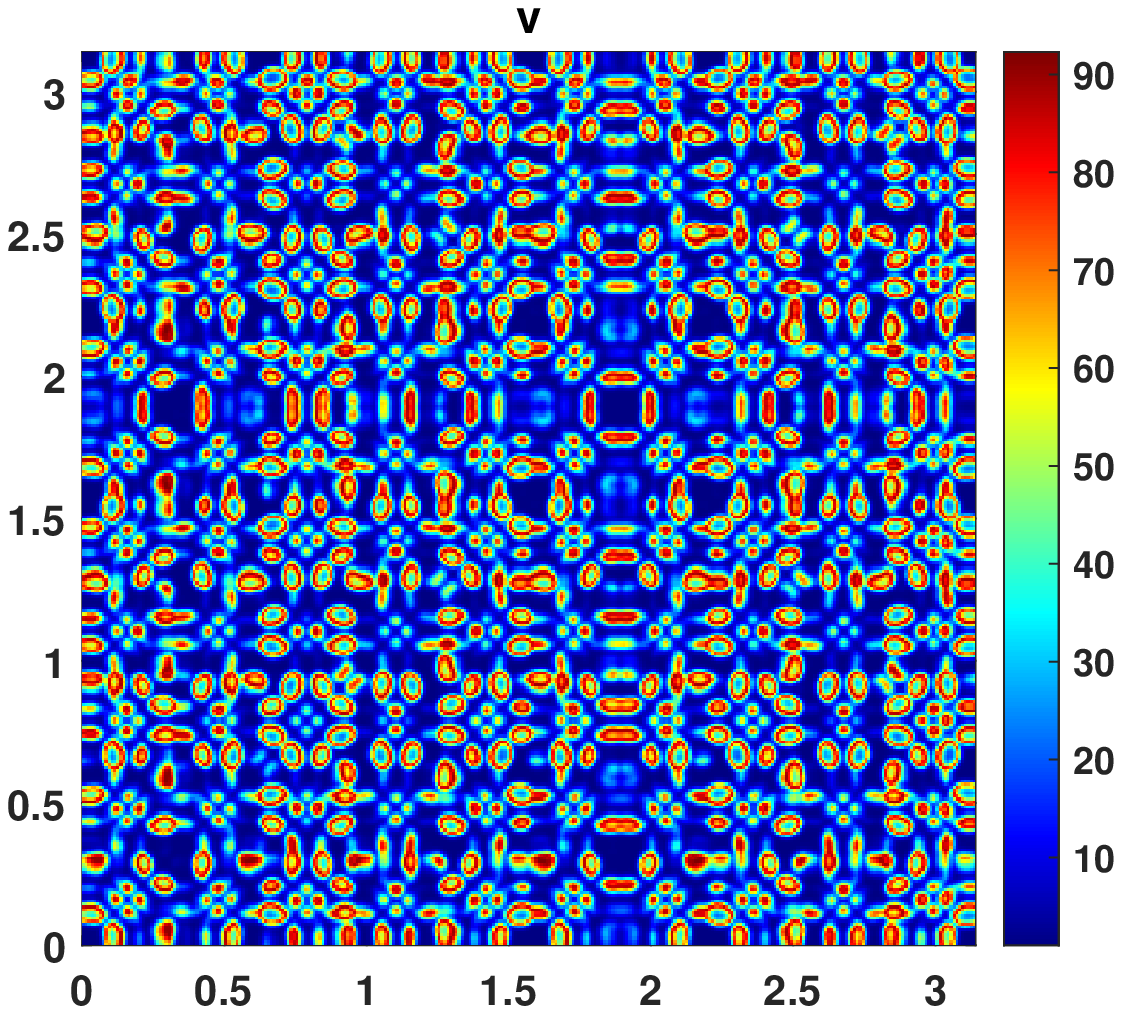} \hspace{-0.5cm} &  \includegraphics[scale=0.26]{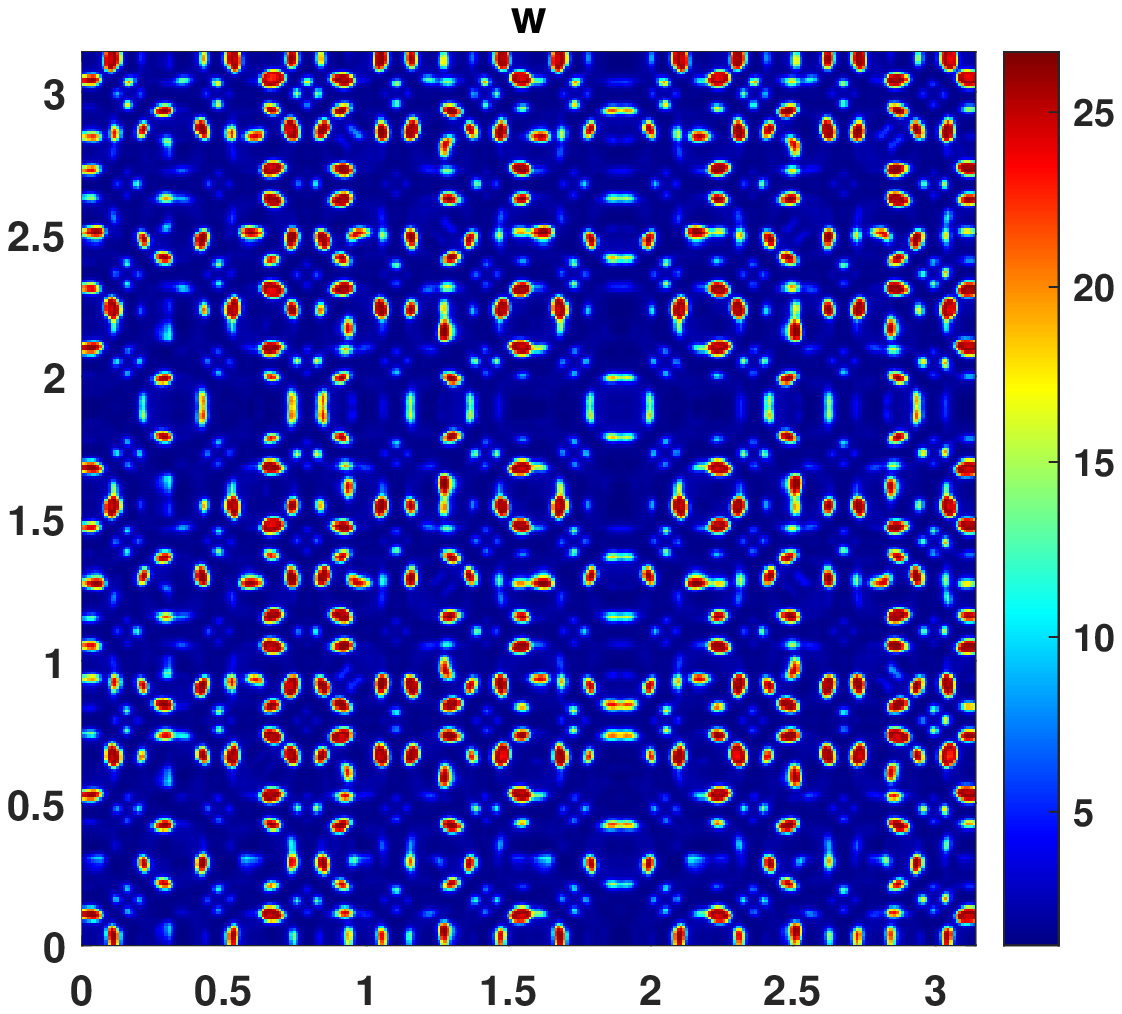}\\\vspace{-1cm} 
\begin{tabular}[c]{@{}c@{}c@{}c@{}c@{}}C \\\\\\ \\ \\ \\\\\\\\\end{tabular}\hspace{-0.3cm} & \includegraphics[scale=0.26]{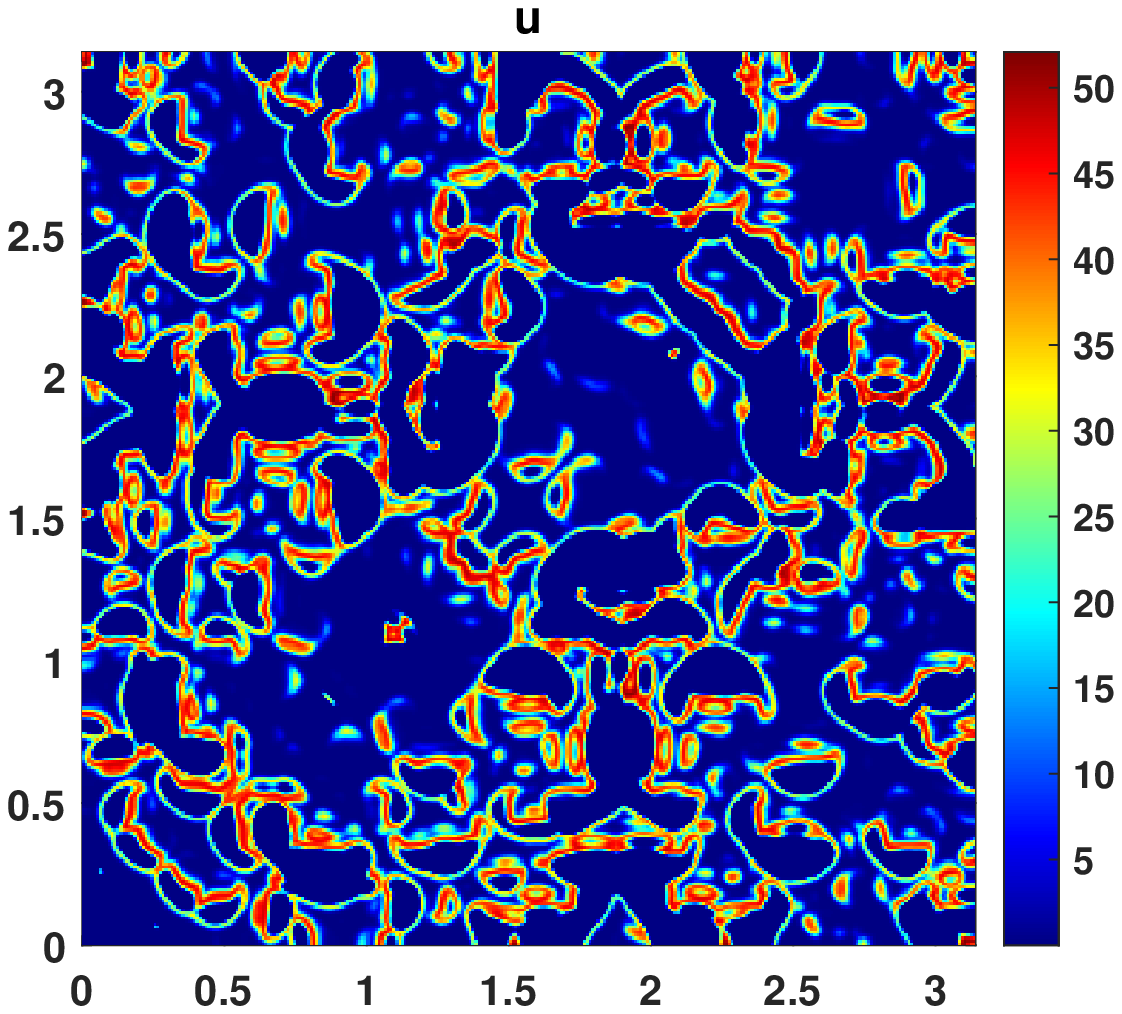}\hspace{-0.5cm}  &   \includegraphics[scale=0.26]{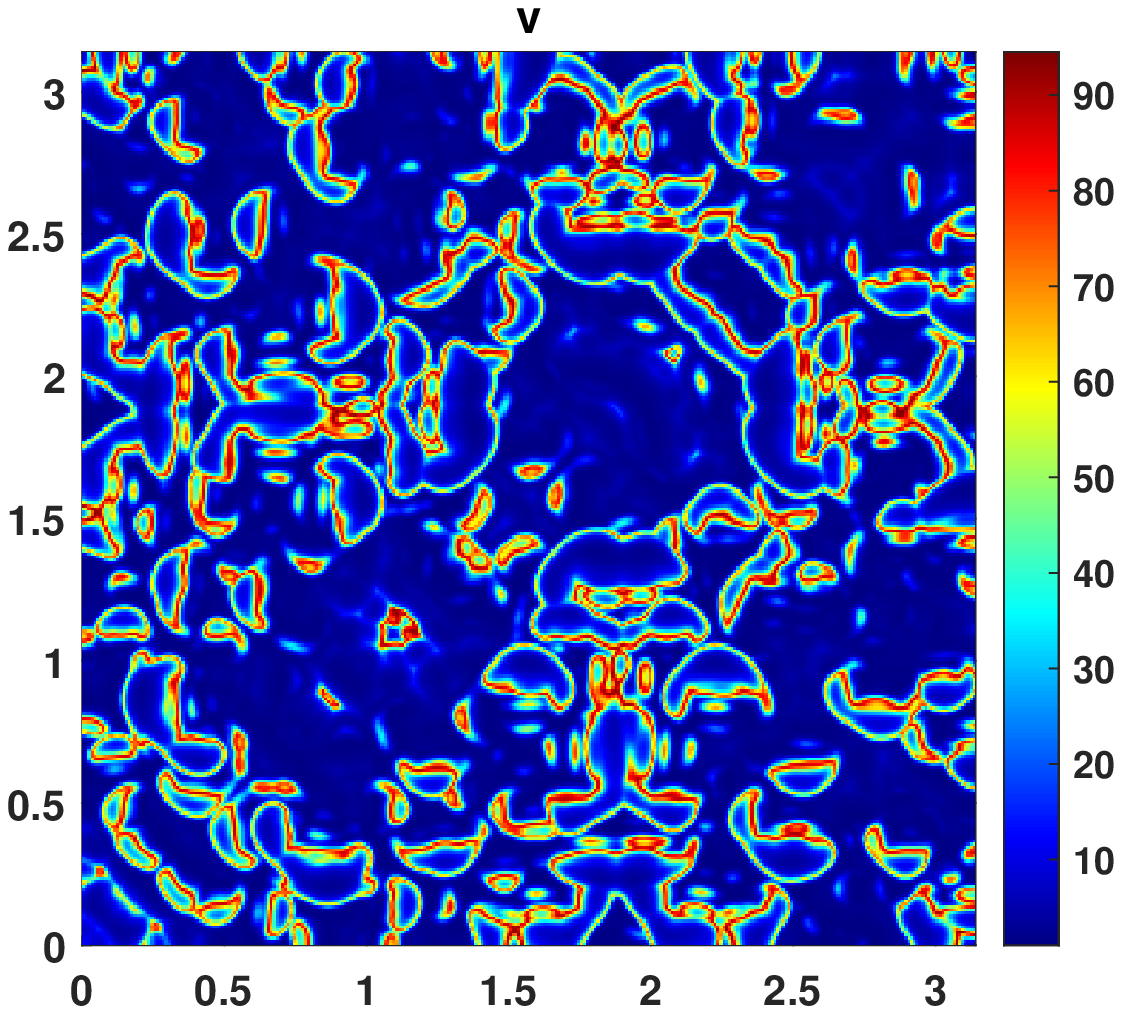} \hspace{-0.5cm} &  \includegraphics[scale=0.26]{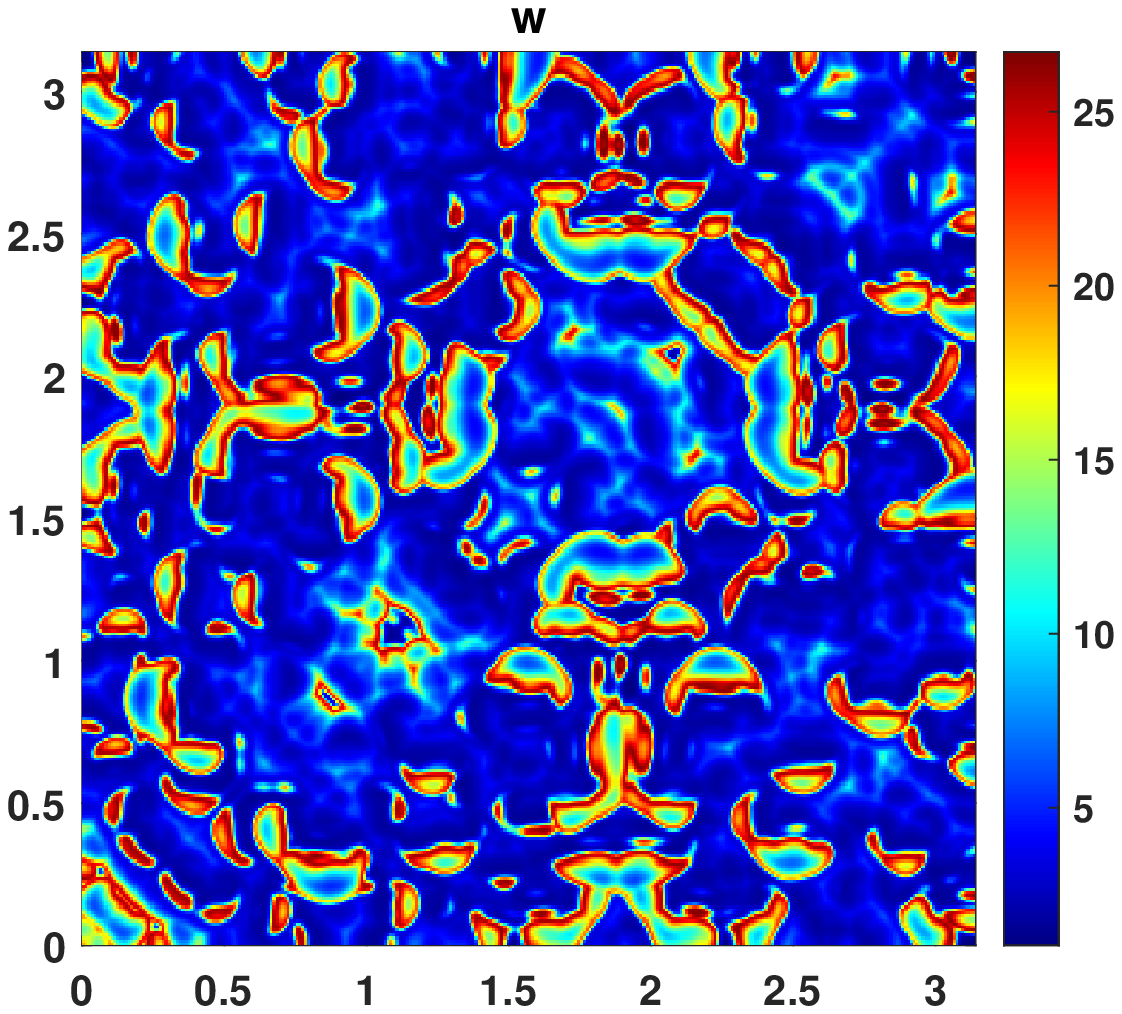} \\\vspace{-1cm} 
\begin{tabular}[c]{@{}c@{}c@{}c@{}c@{}}D\\\\\\  \\ \\ \\\\\\\\\end{tabular}\hspace{-0.3cm} & \includegraphics[scale=0.26]{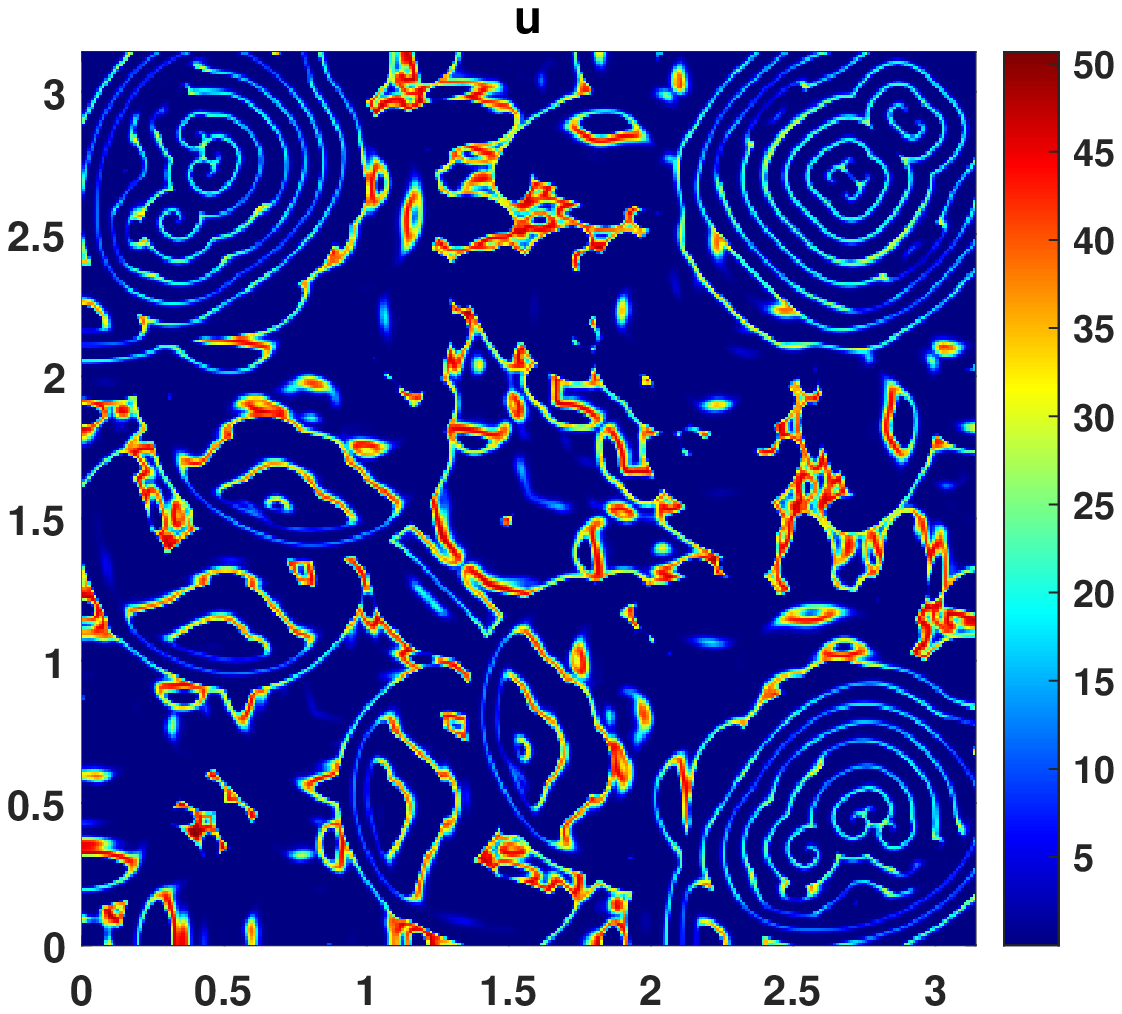}\hspace{-0.5cm}  &   \includegraphics[scale=0.26]{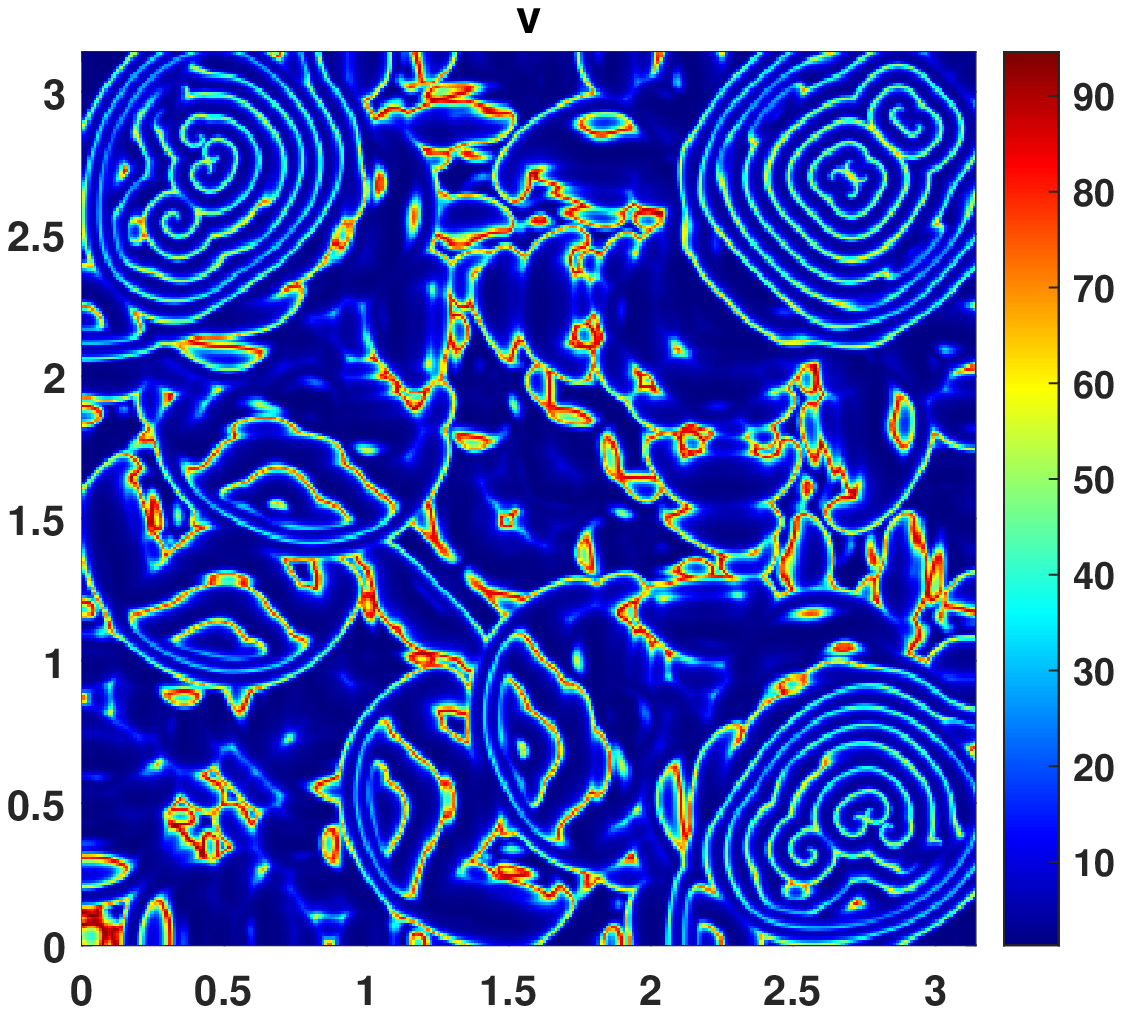} \hspace{-0.5cm} &  \includegraphics[scale=0.26]{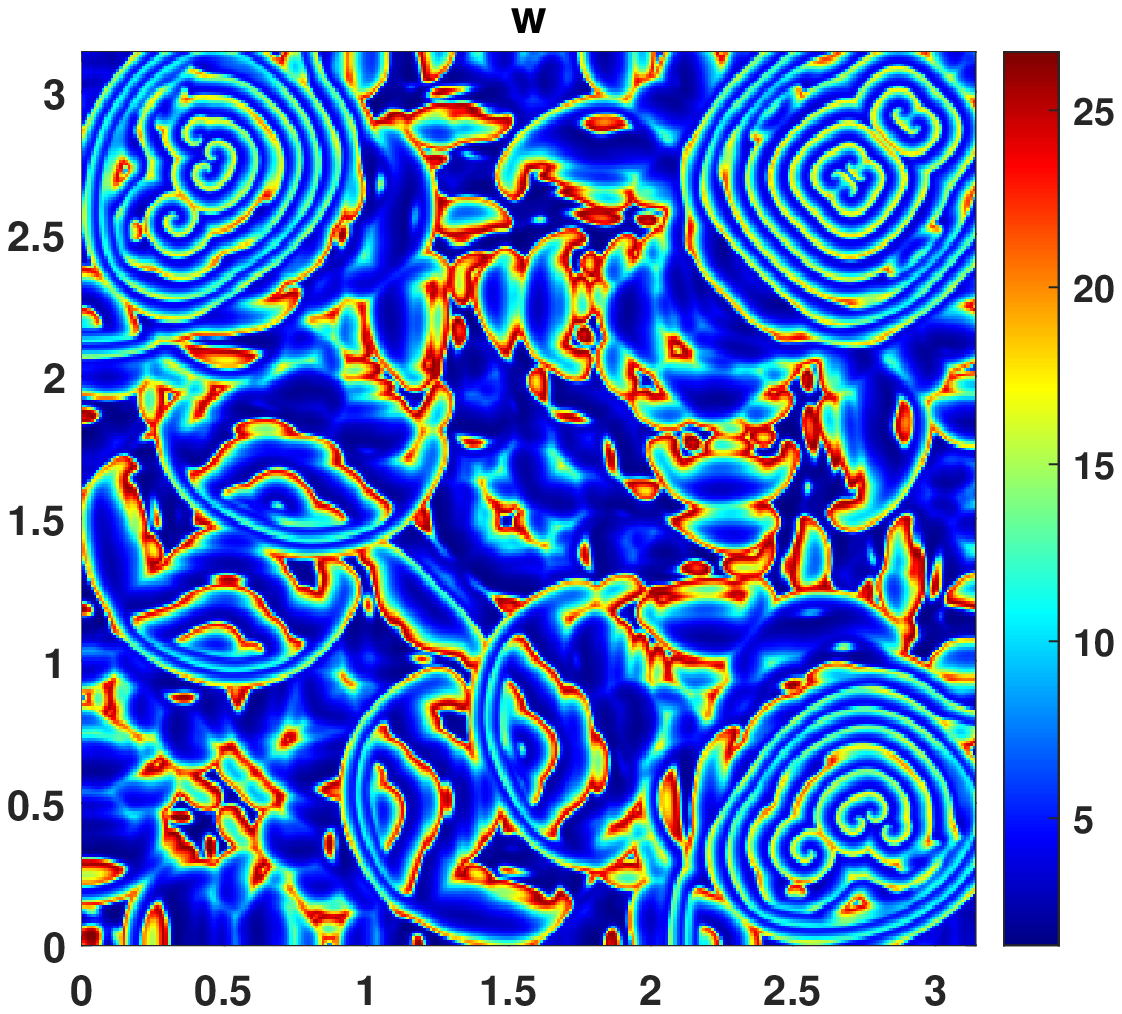} 
\end{tabular}
  \caption{Non-stationary non-Turing patterns seen in prey, susceptible predator and infected predator densities of the model system (\ref{eq2}), at (A)~ $t=1500$, ~(B)~$t=2000$,~(C)~$t=4000$, and ~(D)~ $t=10000.$ The diffusive constants are $d_1=10^{-6}, d_2=10^{-6}$ and $d_3=10^{-10}$. Other parameter values are given in Table \ref{paratable}.  }
\label{table91}
\end{figure} 

 \renewcommand{\thefigure}{\arabic{figure}}
 \begin{figure} [!ht]
 \centering
\begin{tabular}{cccc} \vspace{-1cm}
\begin{tabular}[c]{@{}c@{}c@{}c@{}c@{}}A  \\\\\\\\ \\ \\\\\\\\\end{tabular}\hspace{-0.3cm} &
 \includegraphics[scale=0.26]{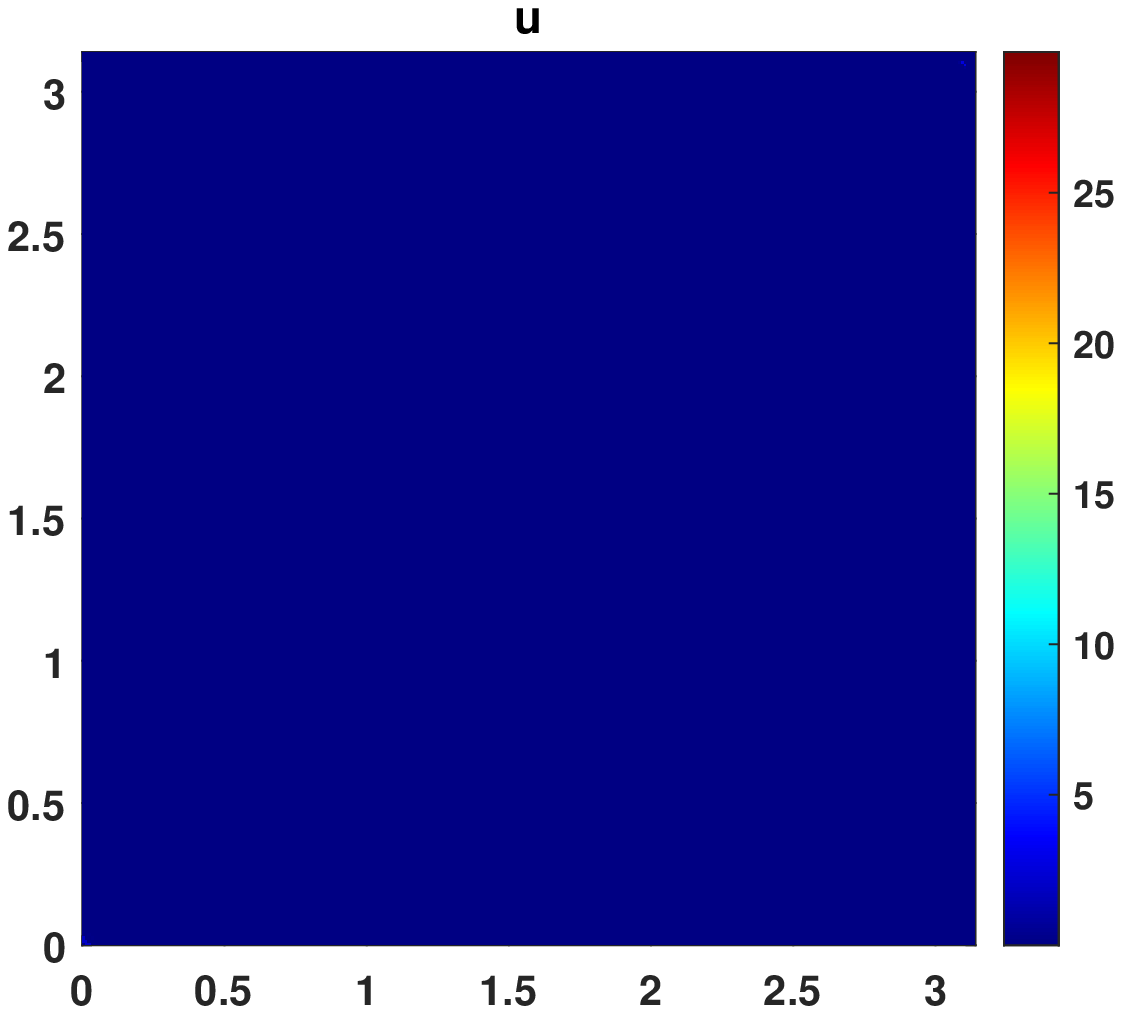} \hspace{-0.5cm} &    \includegraphics[scale=0.26]{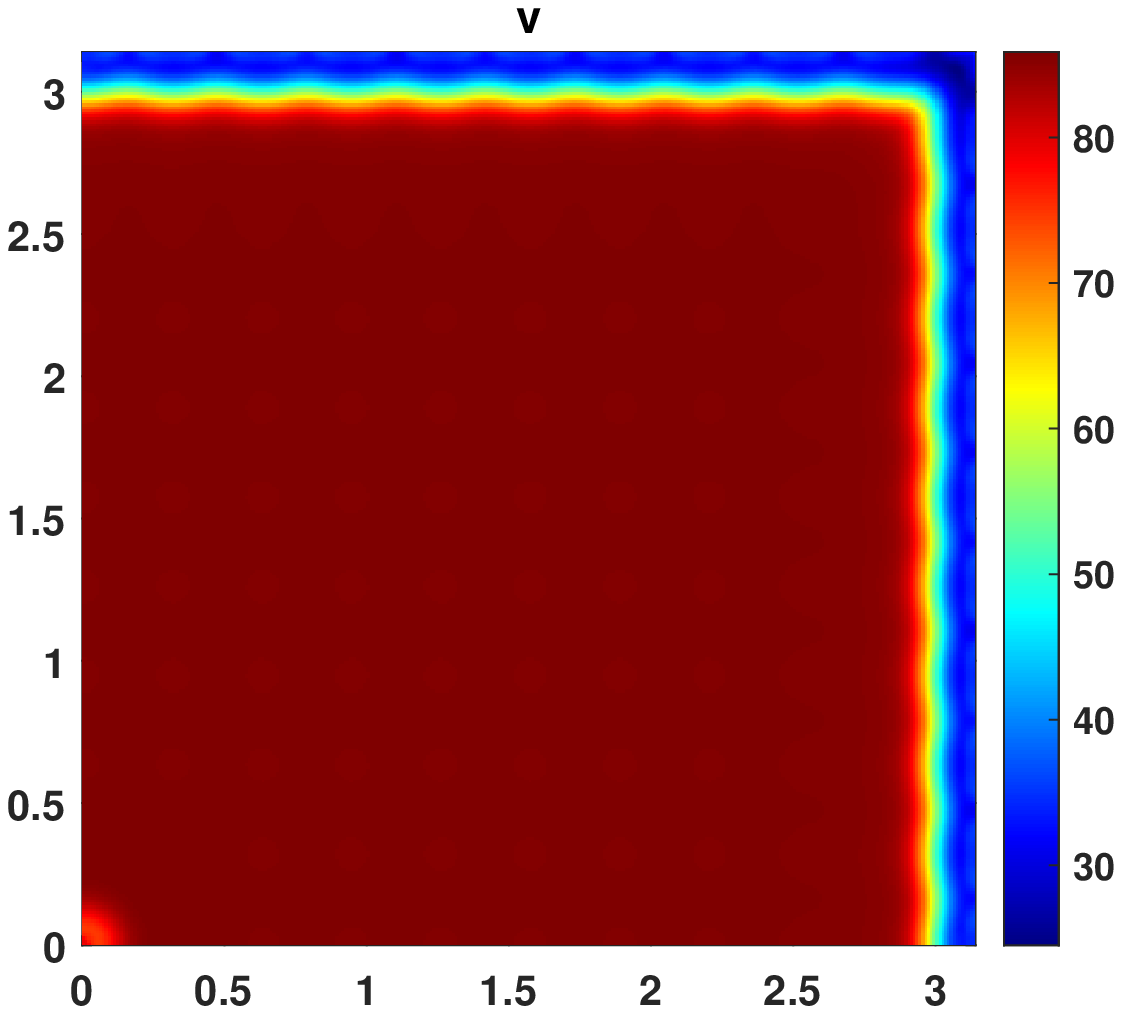} \hspace{-0.5cm} &  \includegraphics[scale=0.26]{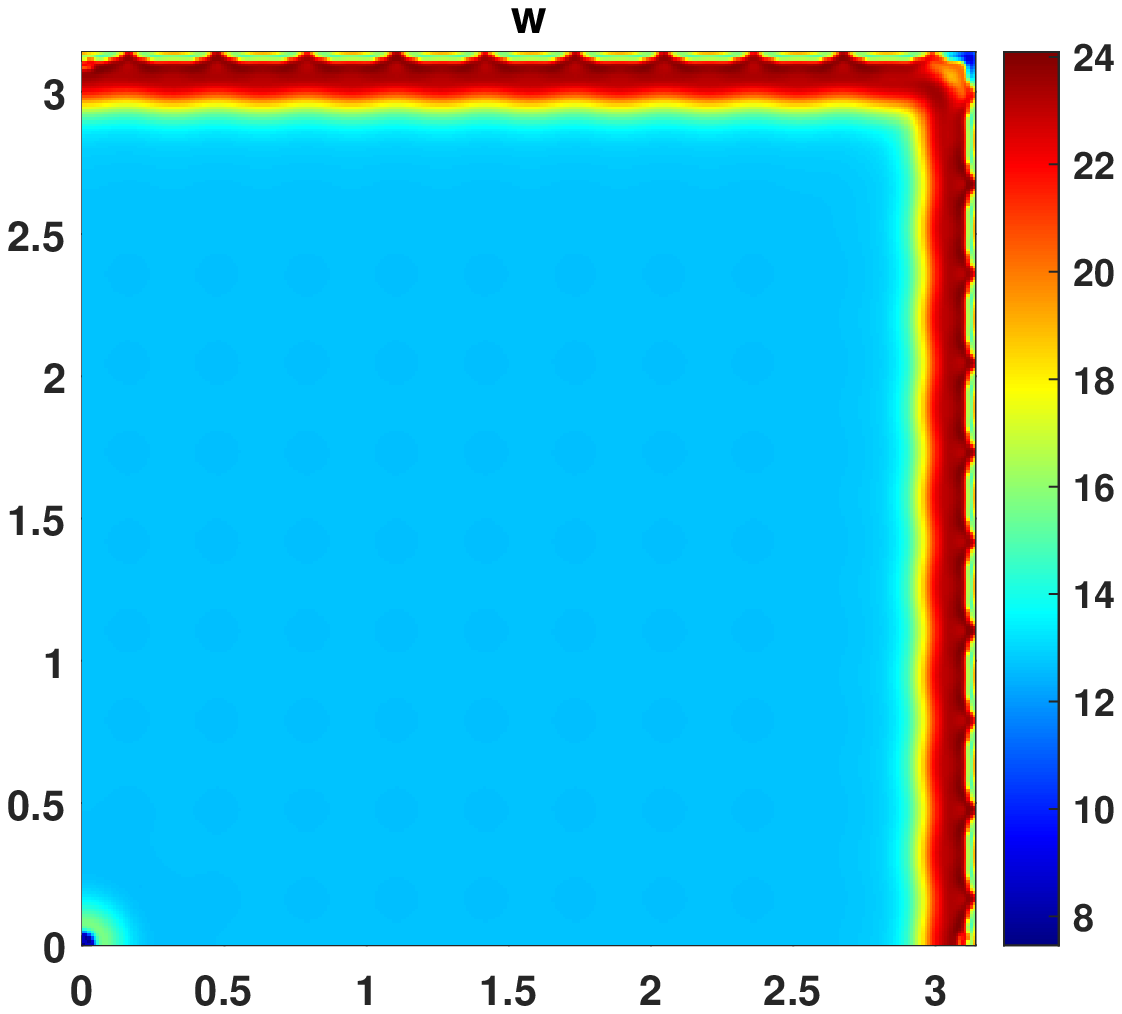}\\\vspace{-1cm}
 \begin{tabular}[c]{@{}c@{}c@{}c@{}c@{}}B  \\\\ \\\\\\ \\\\\\\\\end{tabular}\hspace{-0.3cm} &
 \includegraphics[scale=0.26]{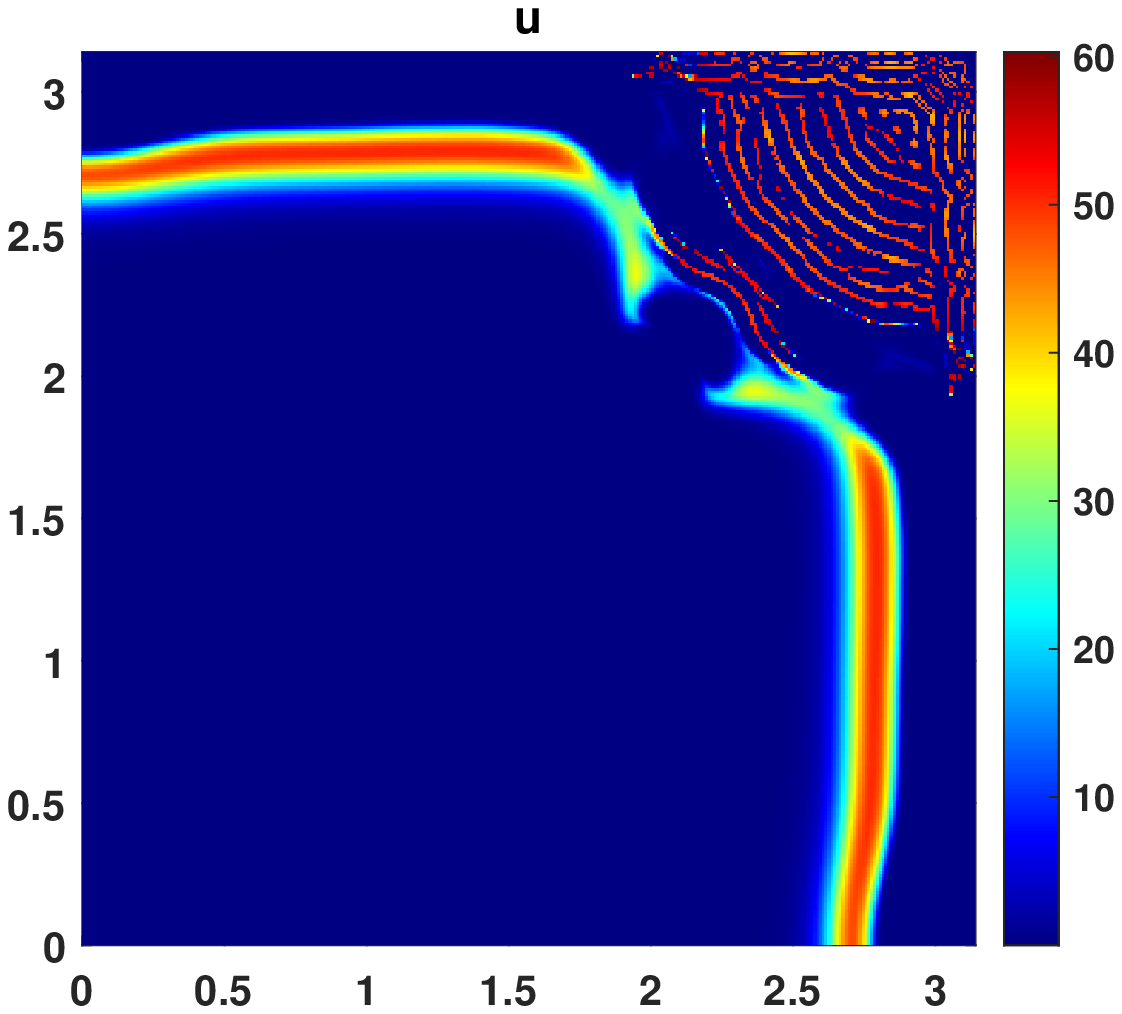} \hspace{-0.5cm} &    \includegraphics[scale=0.26]{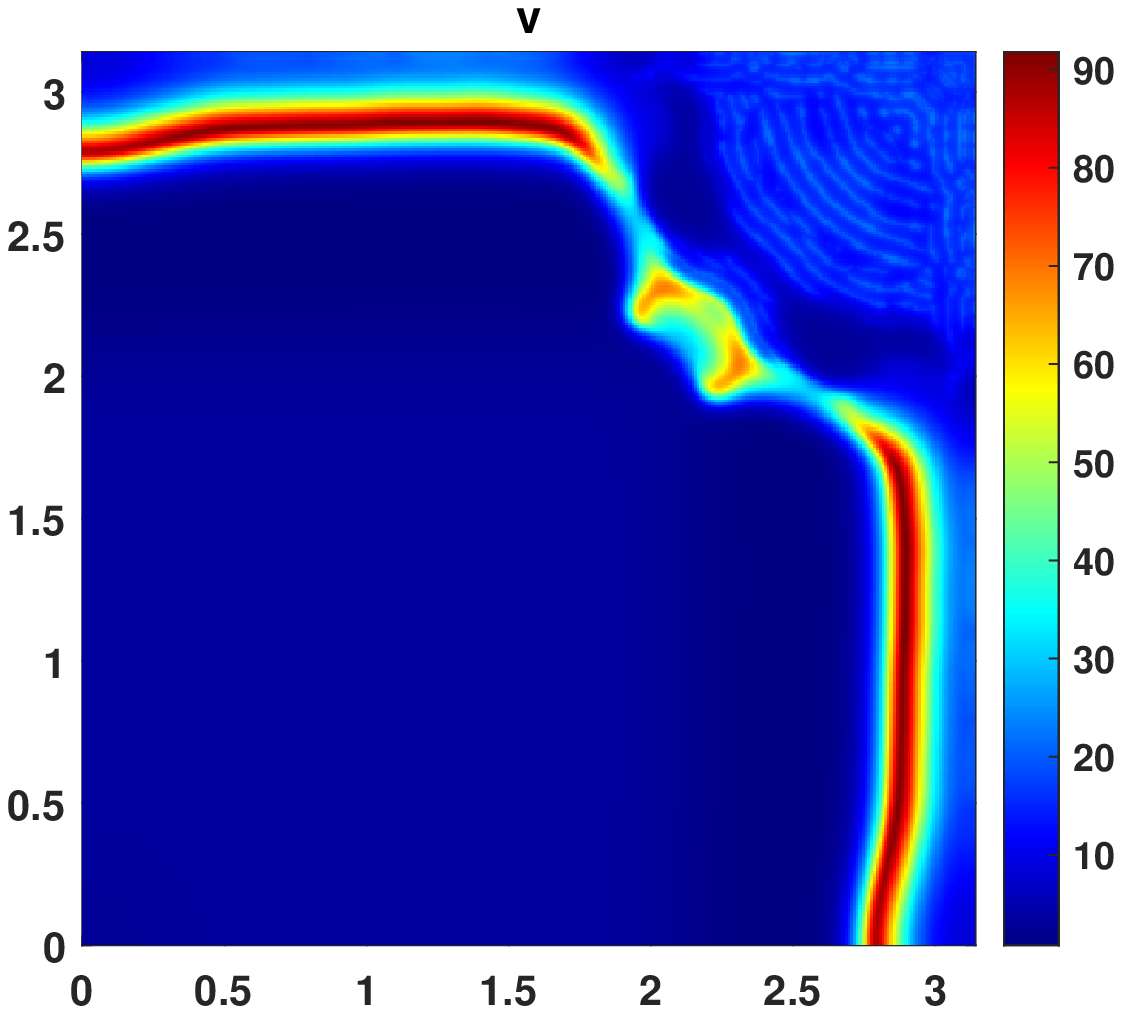} \hspace{-0.5cm} &  \includegraphics[scale=0.26]{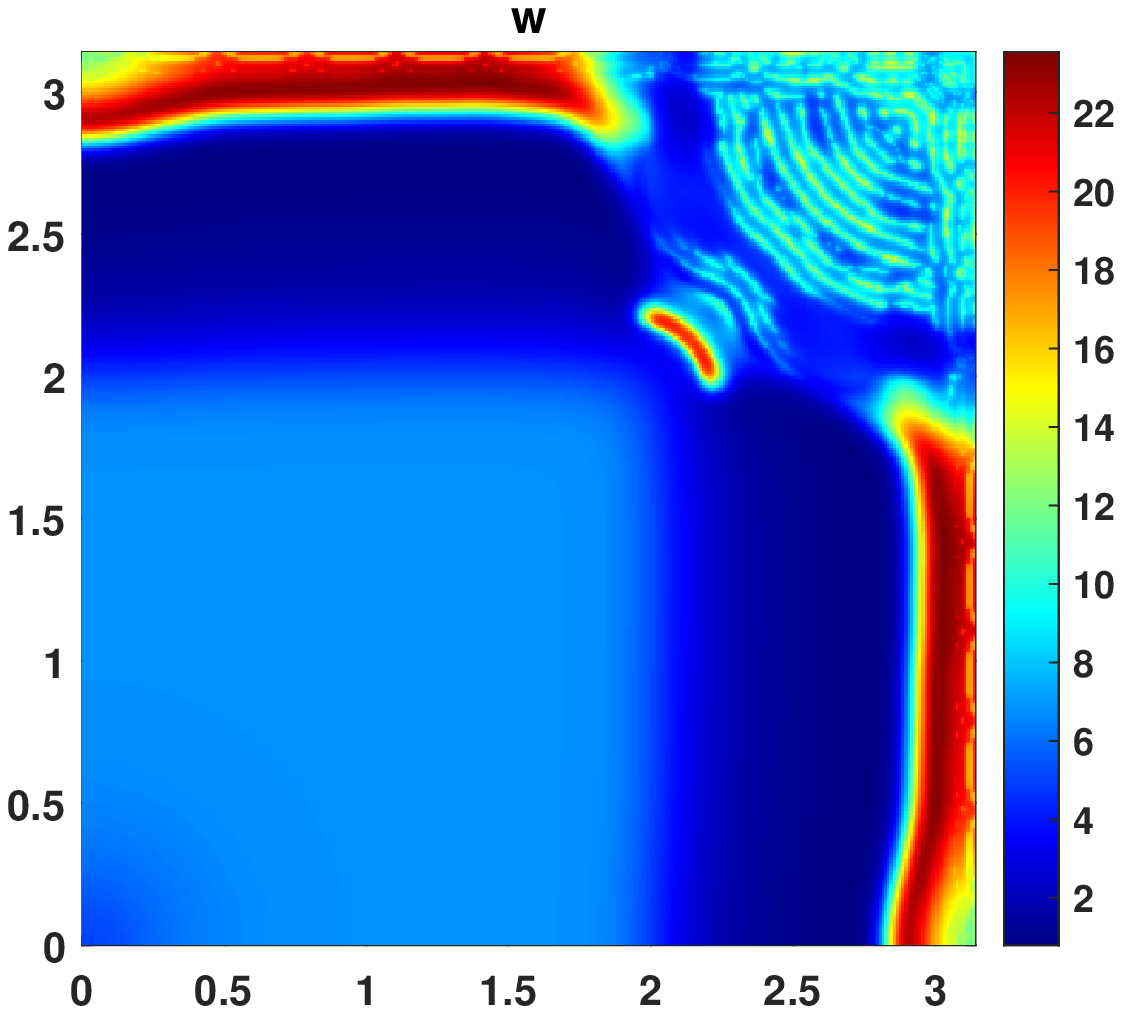}\\\vspace{-1cm}
\begin{tabular}[c]{@{}c@{}c@{}c@{}c@{}}C  \\\\  \\\\\\\\\\\\\\\end{tabular}\hspace{-0.3cm} & \includegraphics[scale=0.26]{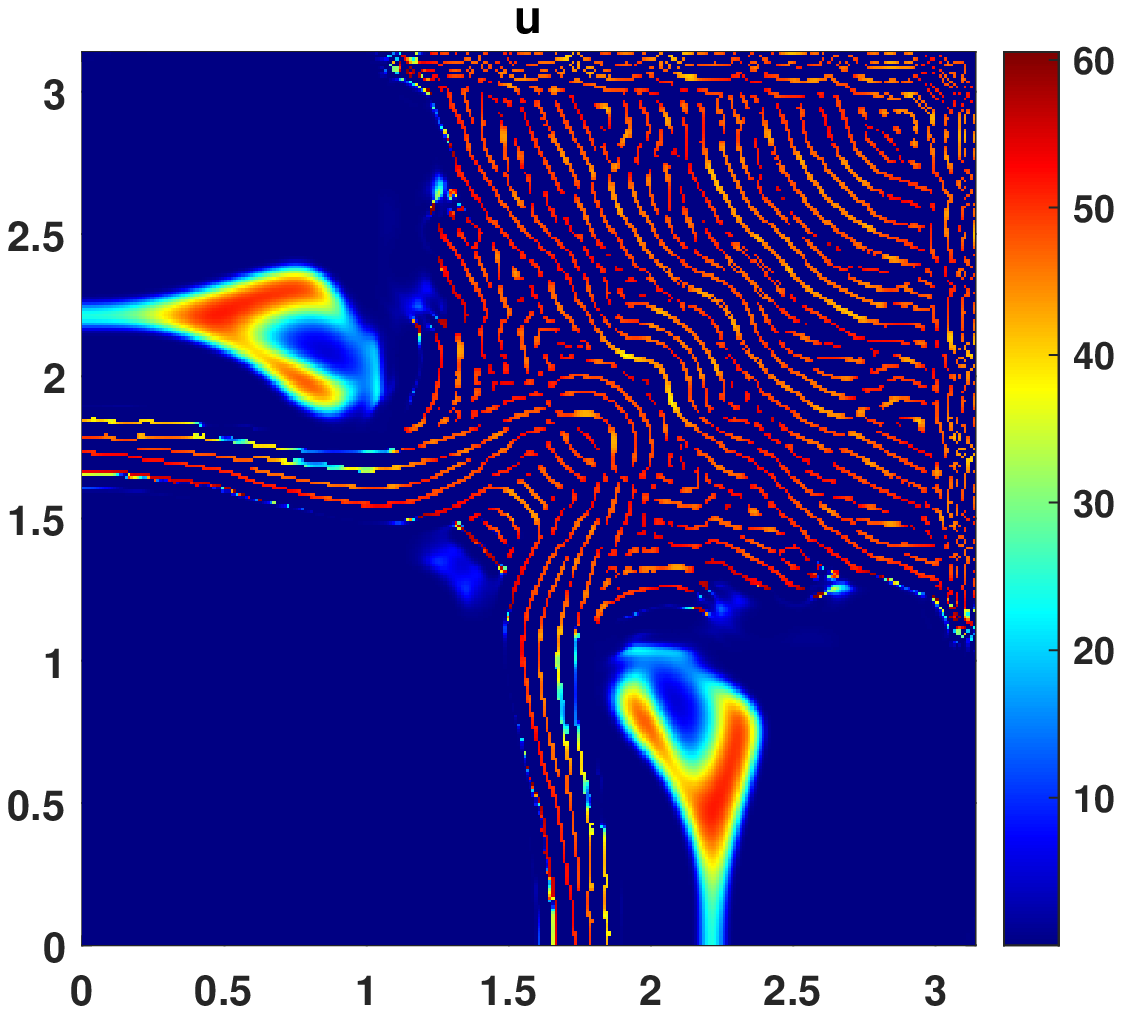}\hspace{-0.5cm}  &   \includegraphics[scale=0.26]{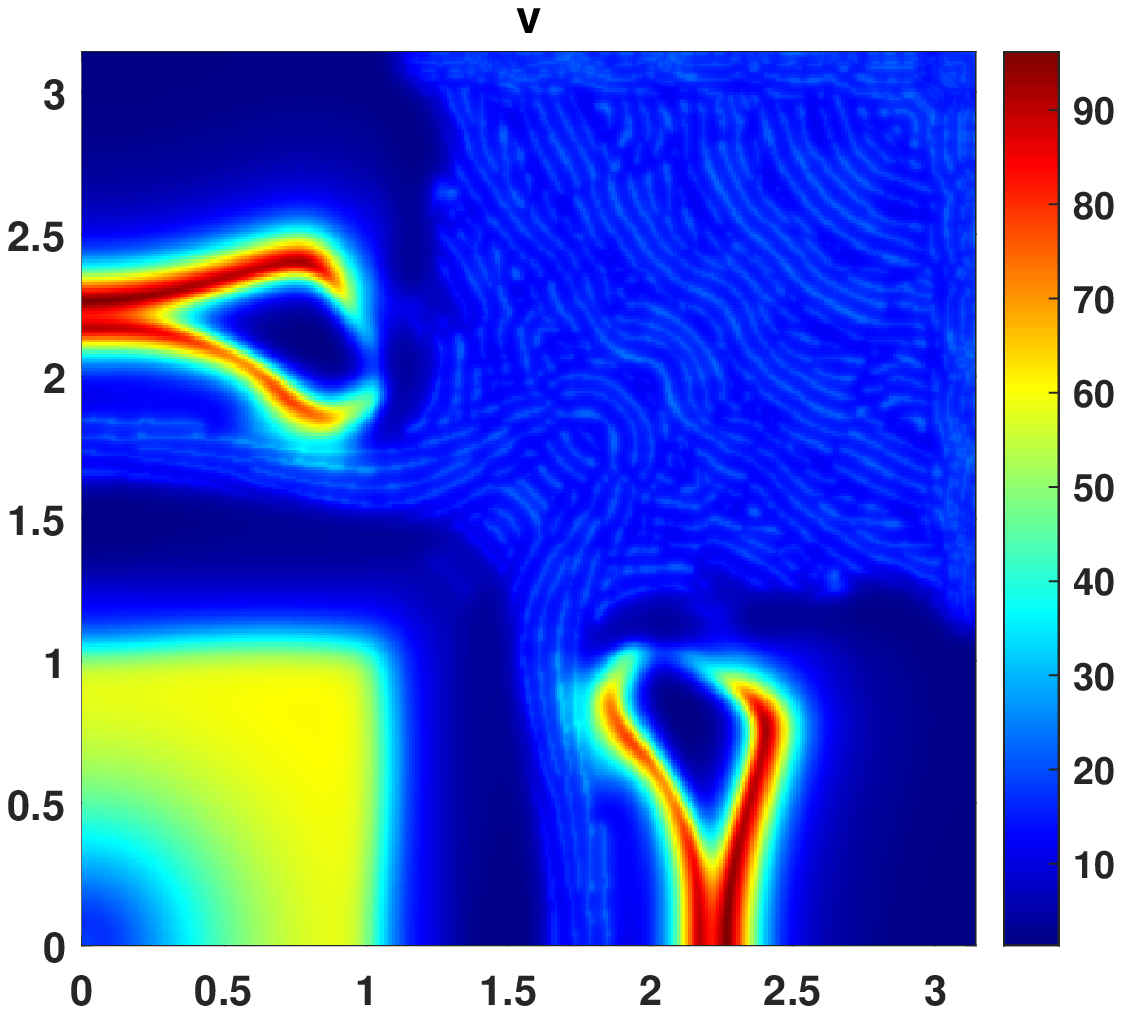} \hspace{-0.5cm} &  \includegraphics[scale=0.26]{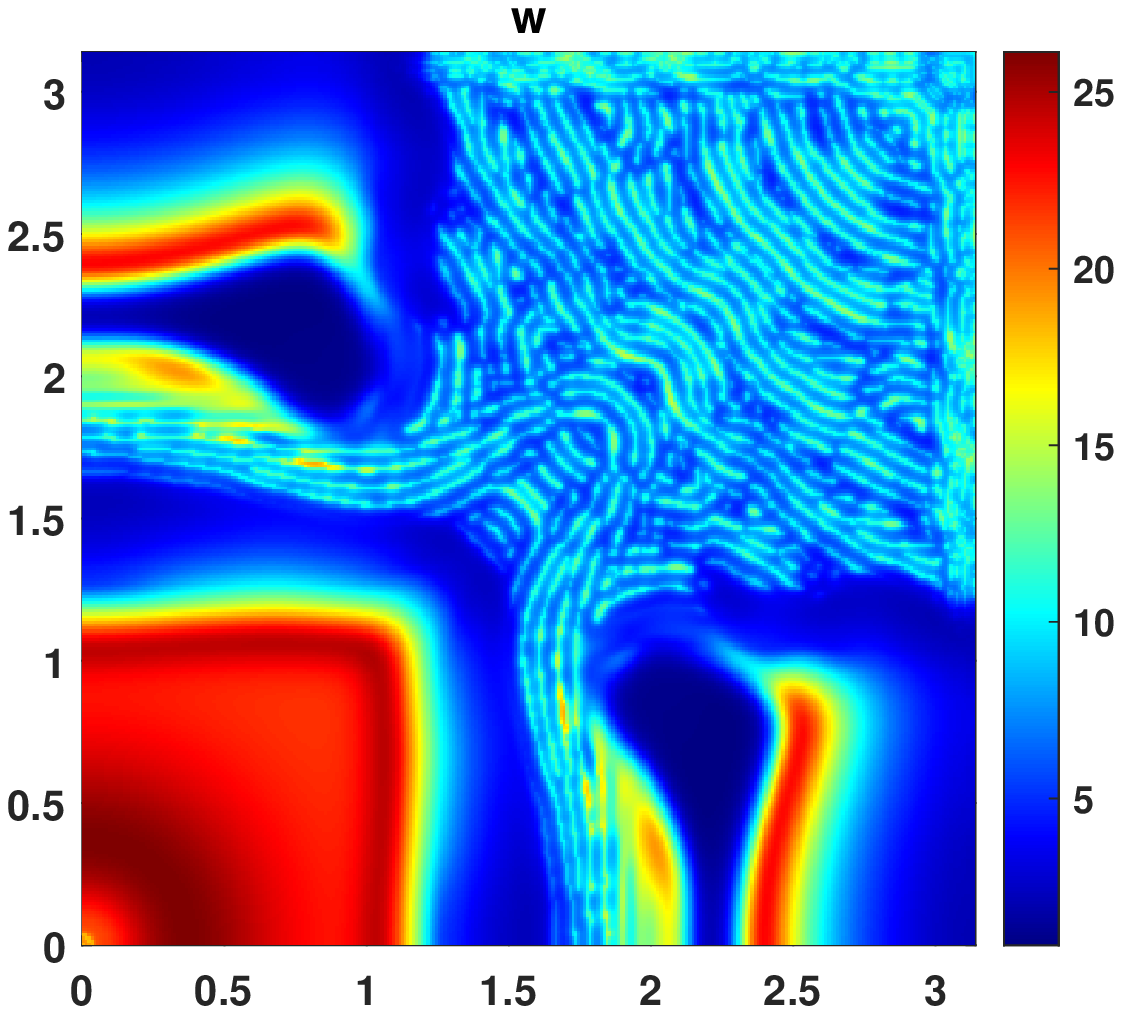}\\\vspace{-1cm} 
\begin{tabular}[c]{@{}c@{}c@{}c@{}c@{}}D  \\\\\\\\ \\ \\\\\\\\\end{tabular}\hspace{-0.3cm} & \includegraphics[scale=0.26]{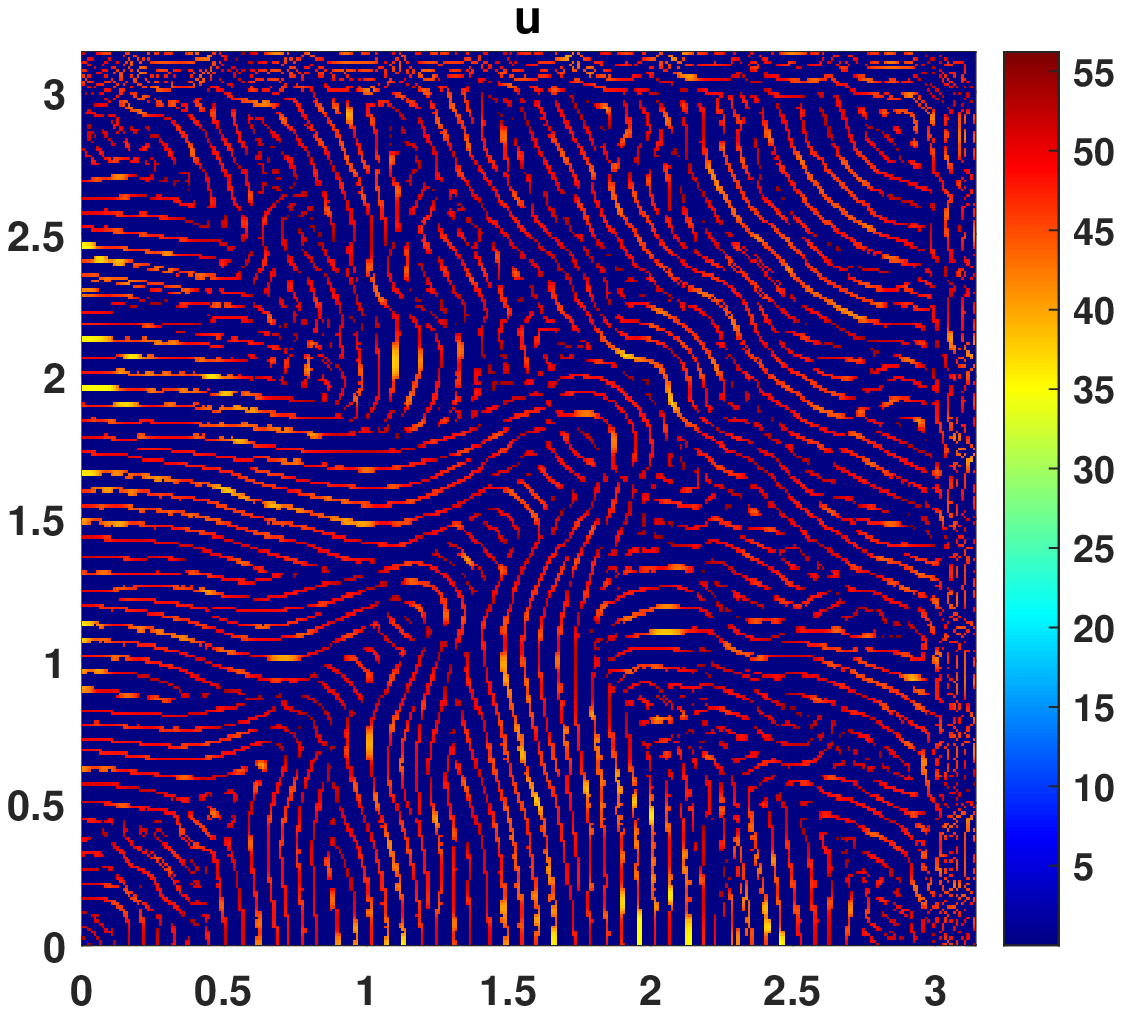}\hspace{-0.5cm}  &   \includegraphics[scale=0.26]{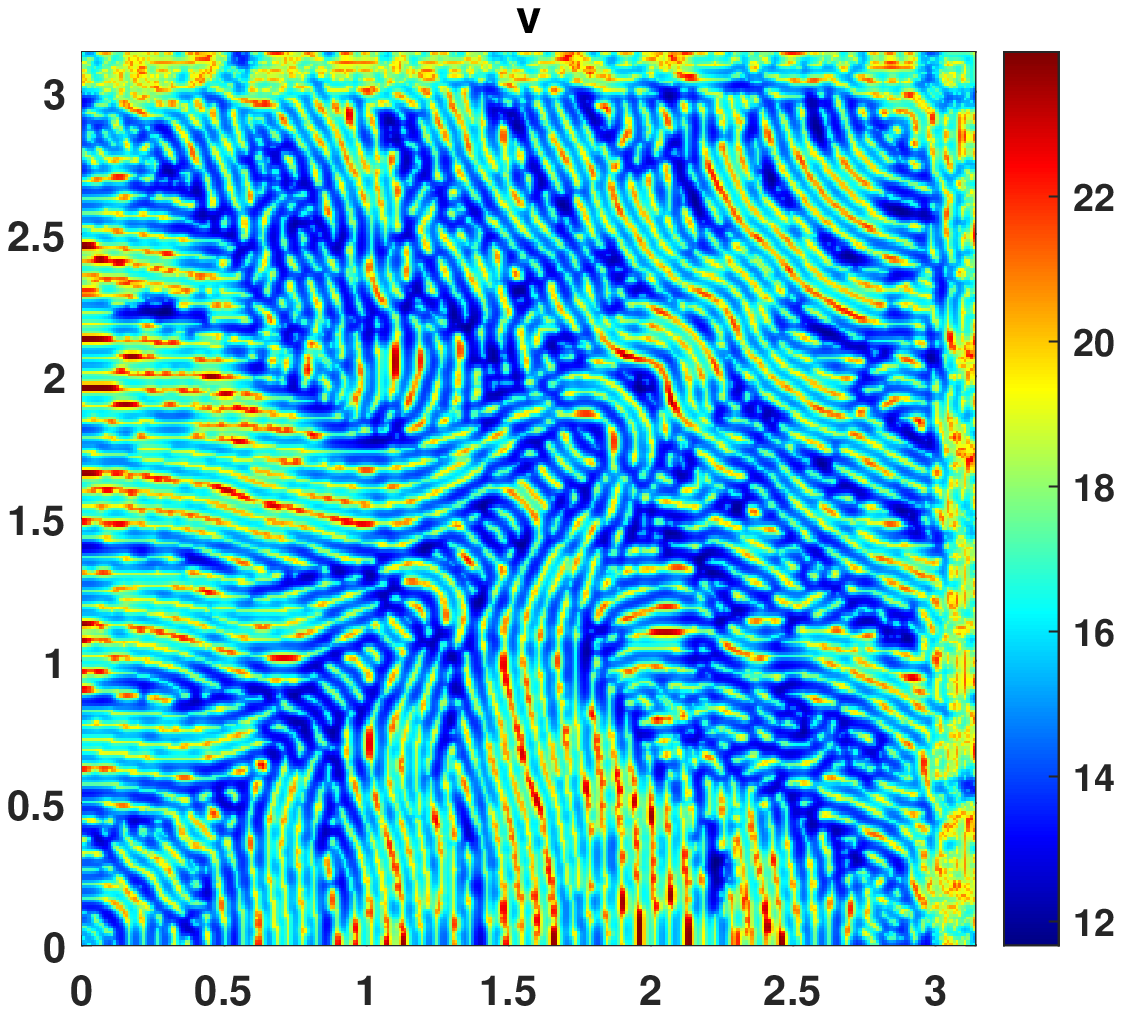} \hspace{-0.5cm} &  \includegraphics[scale=0.26]{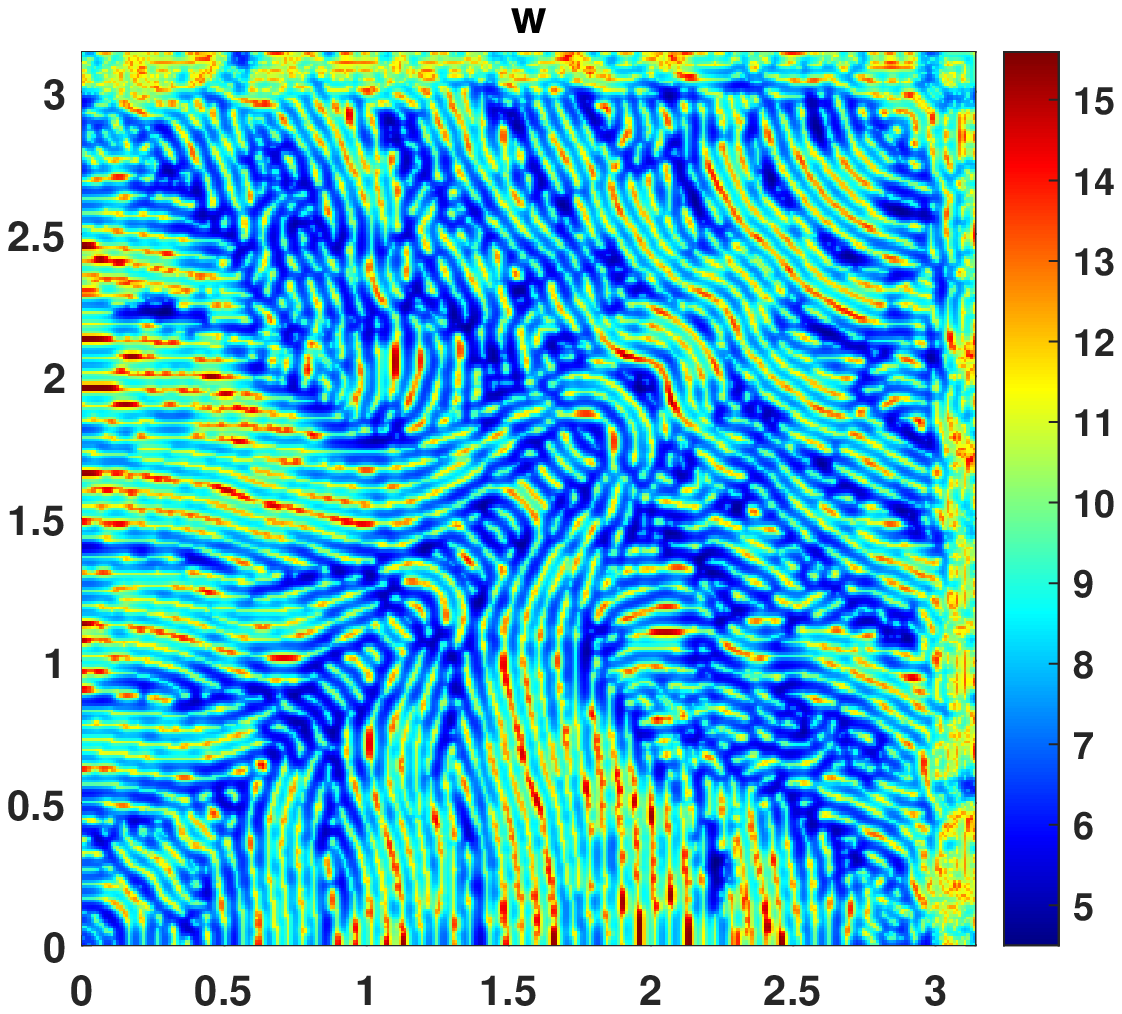} 
\end{tabular}
 \caption{Stationary non-Turing patterns seen in prey, susceptible predator and infected predator densities of the model system (\ref{eq2}), at (A) $t =500$~ (B) $t =2000$,~(C)~ $t=3000$, and (D)~ $t=5000$. The diffusive constants are $d_1=10^{-10}, d_2=10^{-4}$ and $d_3=10^{-10}$. Other parameter values are given in Table \ref{paratable}. }
\label{table10}
\end{figure}

 \renewcommand{\thefigure}{\arabic{figure}}
 \begin{figure} [!ht]
 \centering
\begin{tabular}{cccc} \vspace{-1cm}
\begin{tabular}[c]{@{}c@{}c@{}c@{}c@{}}A  \\ \\\\\\\\ \\\\\\\\\end{tabular}\hspace{-0.3cm} &
 \includegraphics[scale=0.26]{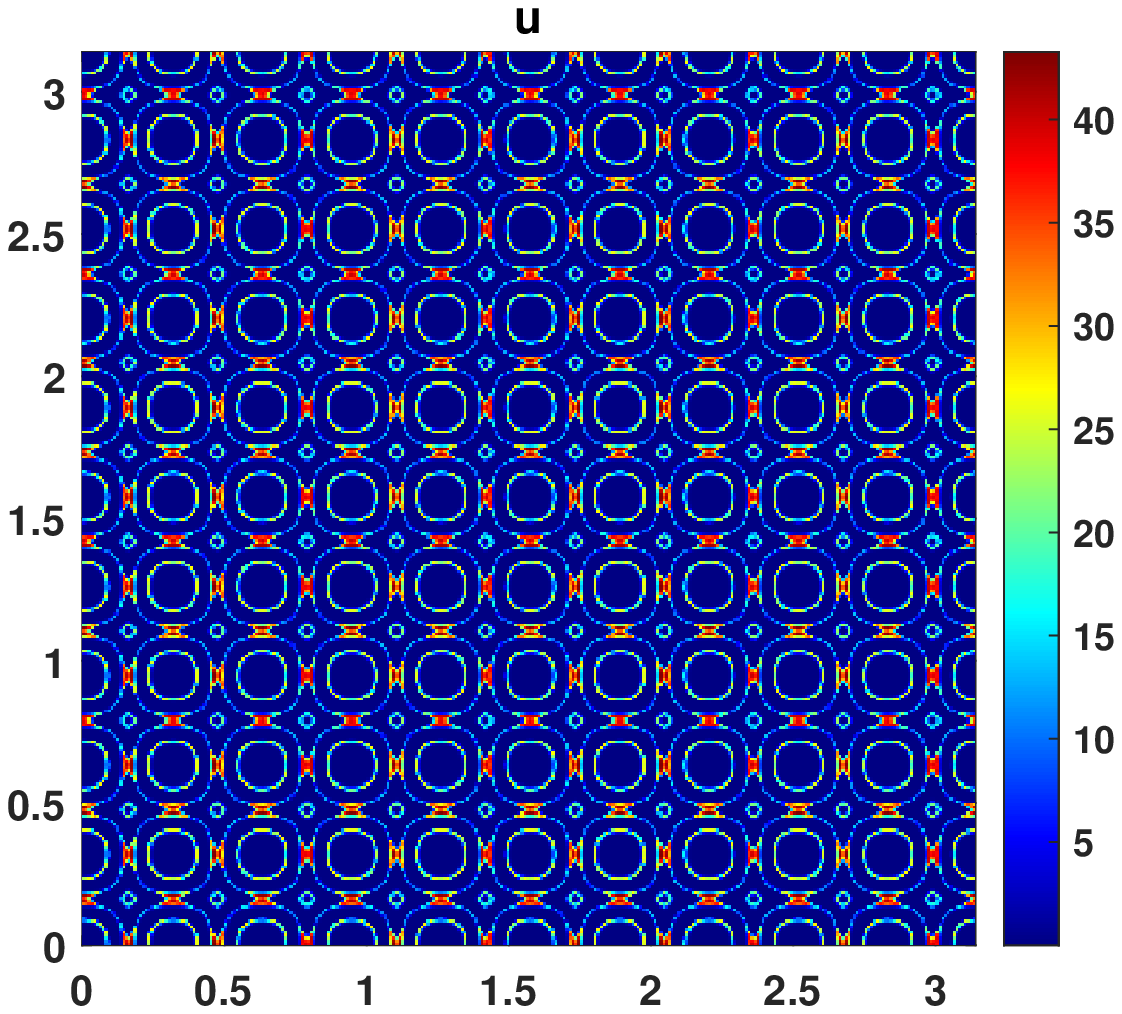} \hspace{-0.5cm} &    \includegraphics[scale=0.26]{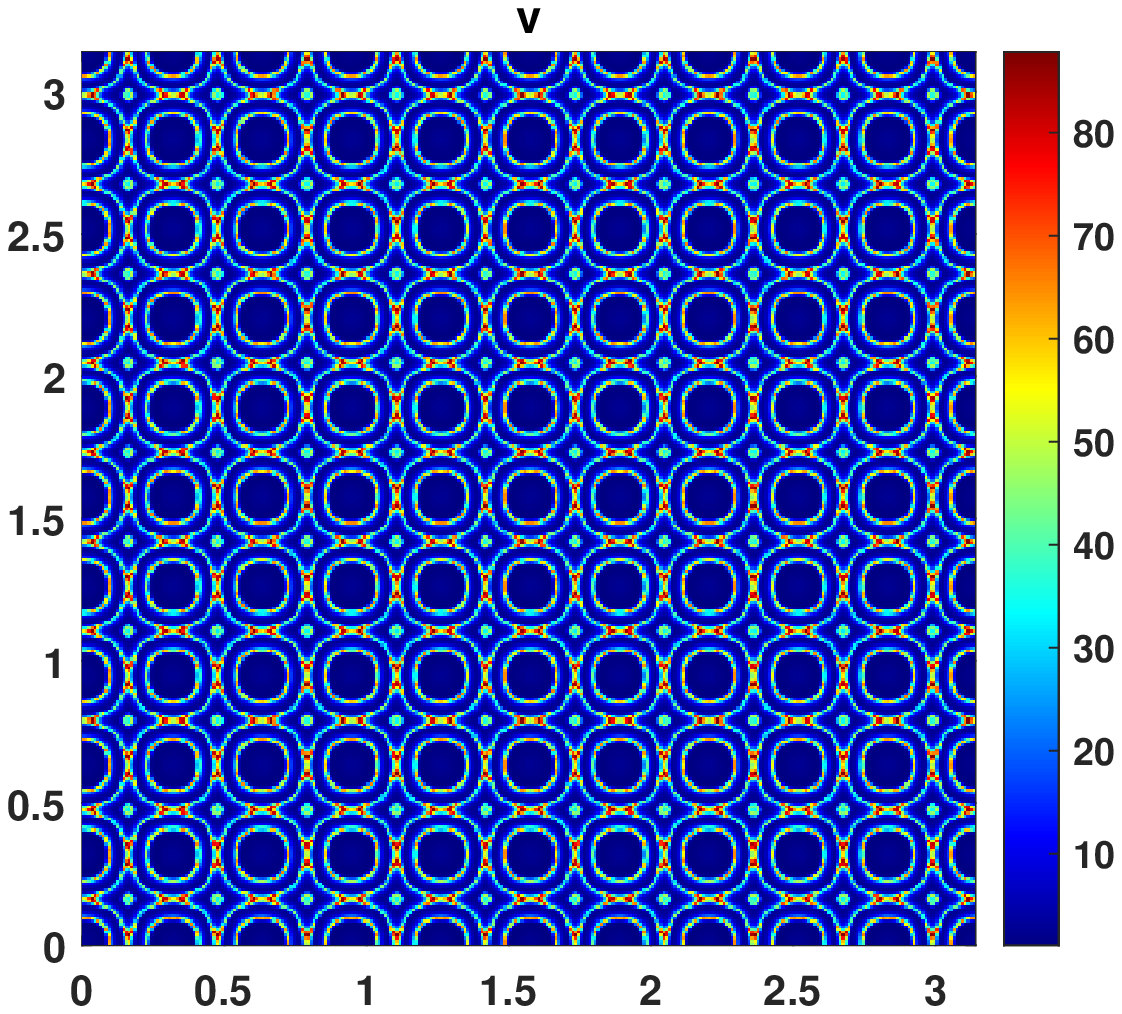} \hspace{-0.5cm} &  \includegraphics[scale=0.26]{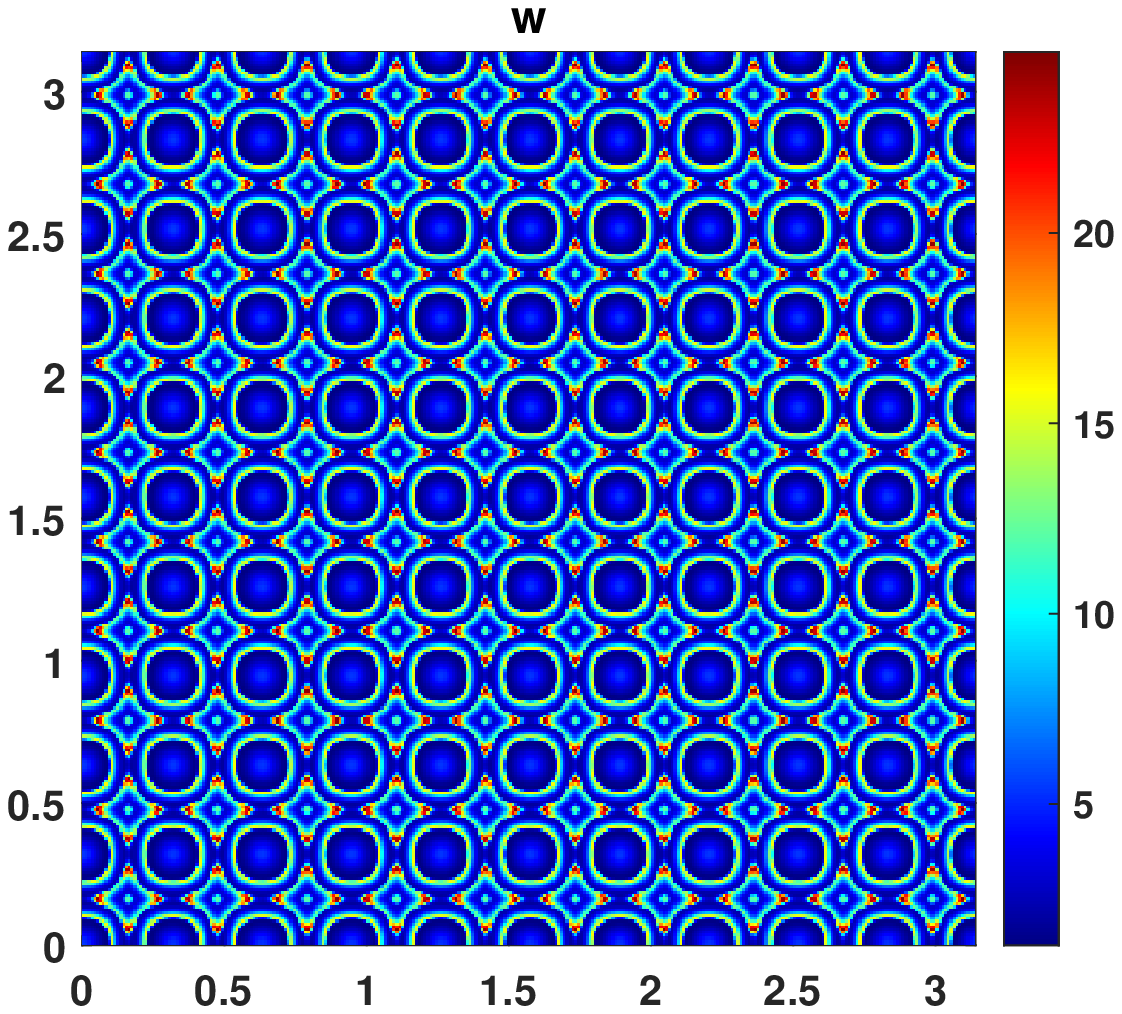}\\\vspace{-1cm}
\begin{tabular}[c]{@{}c@{}c@{}c@{}c@{}}B \\\\\\ \\\\  \\\\\\\\\end{tabular}\hspace{-0.3cm} & \includegraphics[scale=0.26]{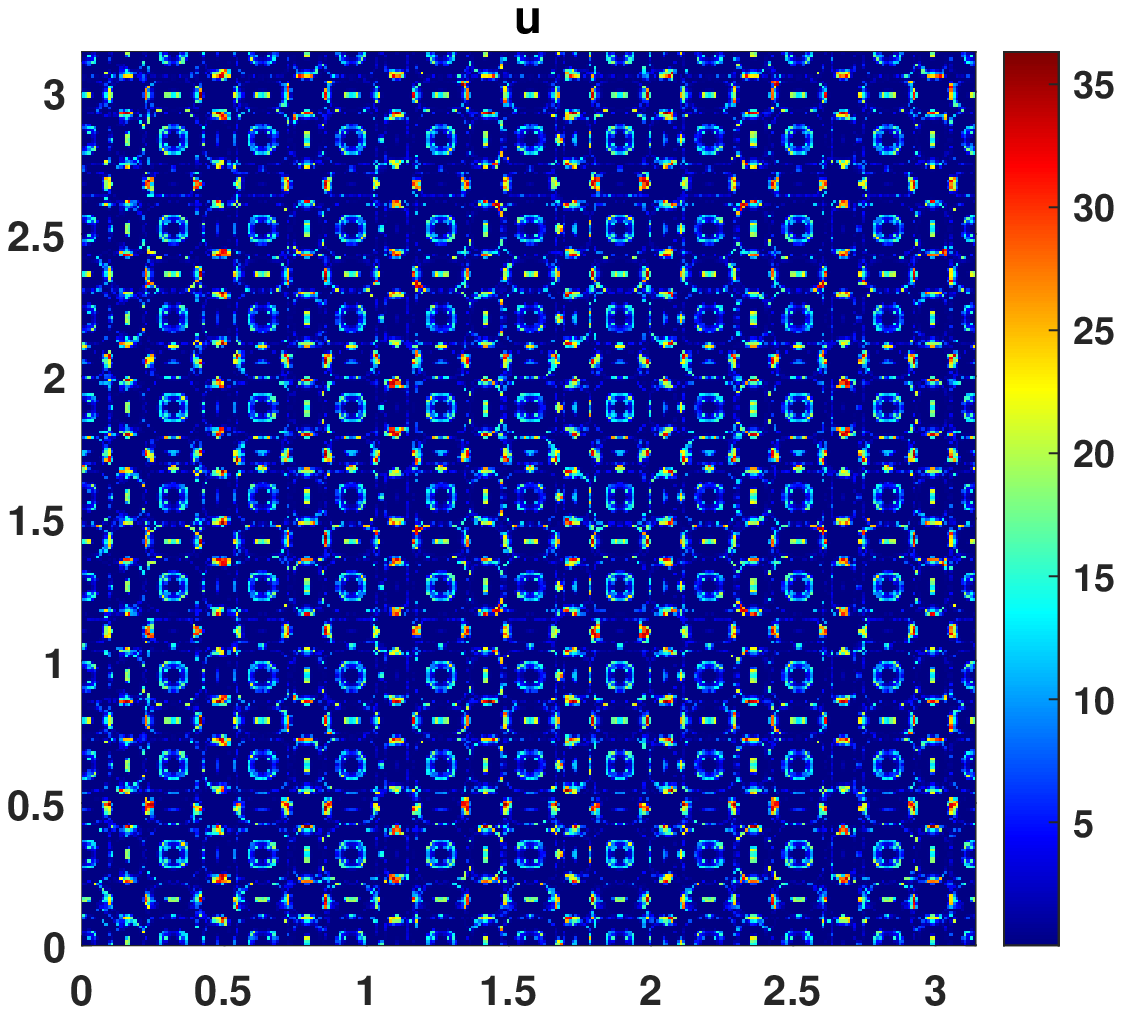}\hspace{-0.5cm}  &   \includegraphics[scale=0.26]{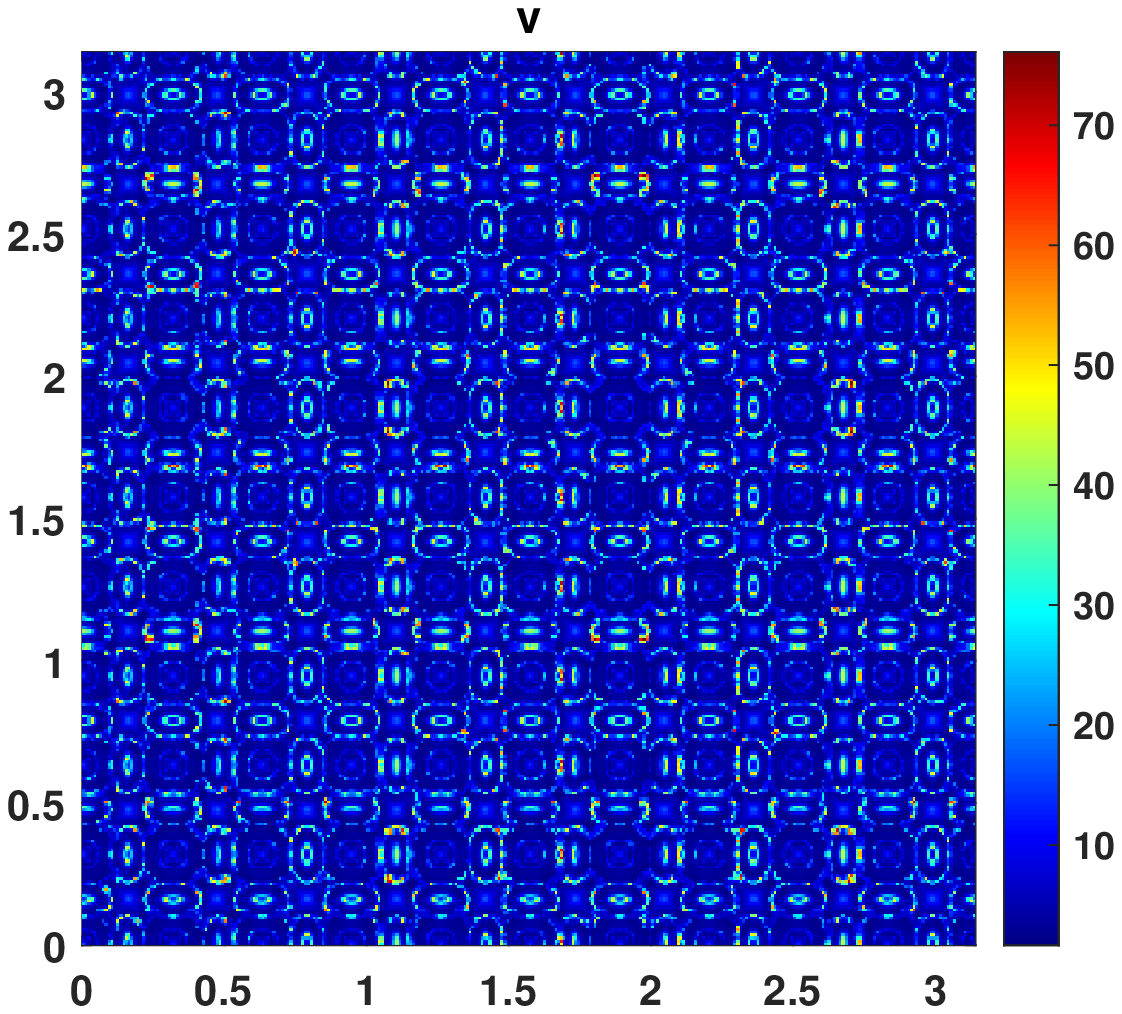} \hspace{-0.5cm} &  \includegraphics[scale=0.26]{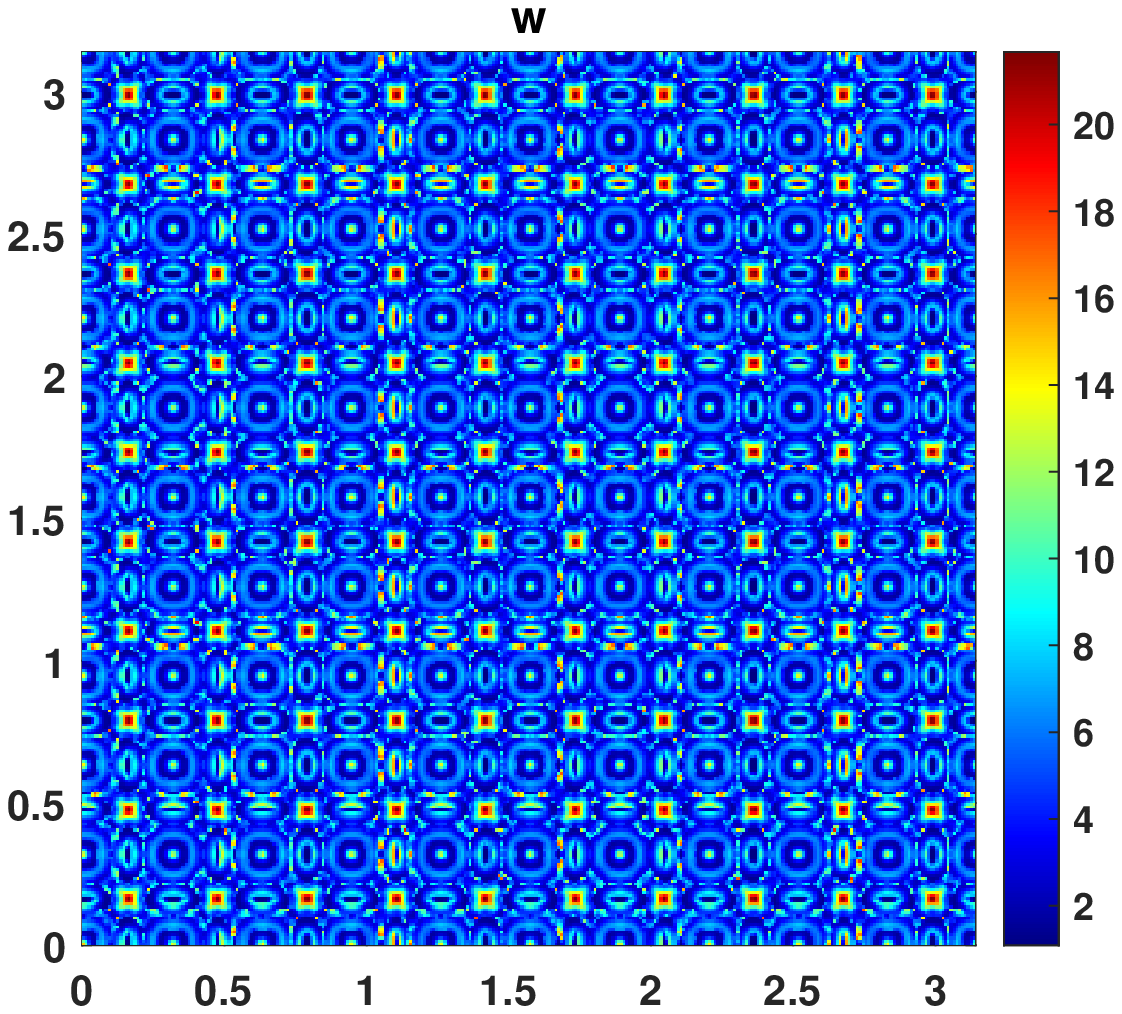}\\\vspace{-1cm} 
\begin{tabular}[c]{@{}c@{}c@{}c@{}c@{}}C \\\\\\ \\\\  \\\\\\\\\end{tabular}\hspace{-0.3cm} & \includegraphics[scale=0.26]{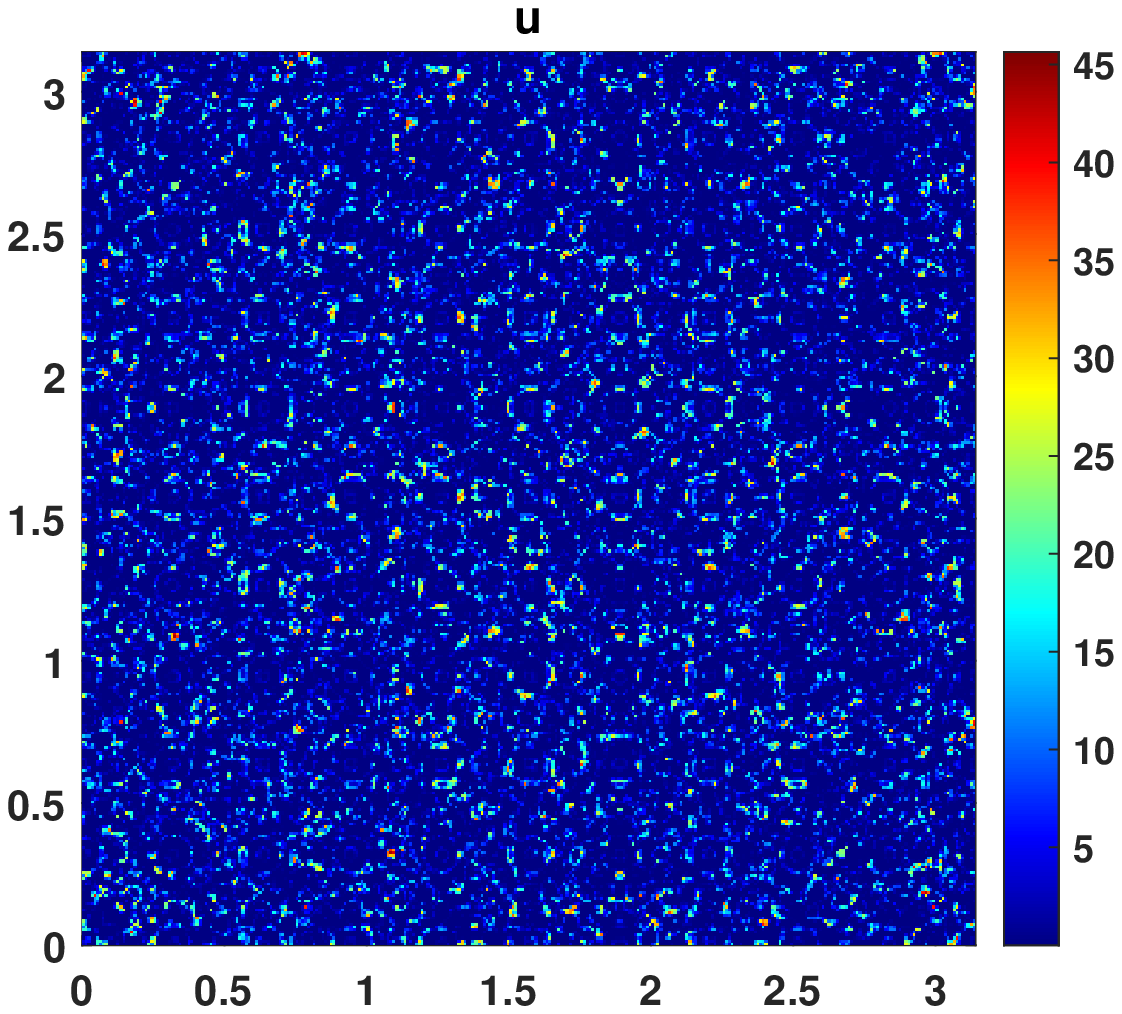}\hspace{-0.5cm}  &   \includegraphics[scale=0.26]{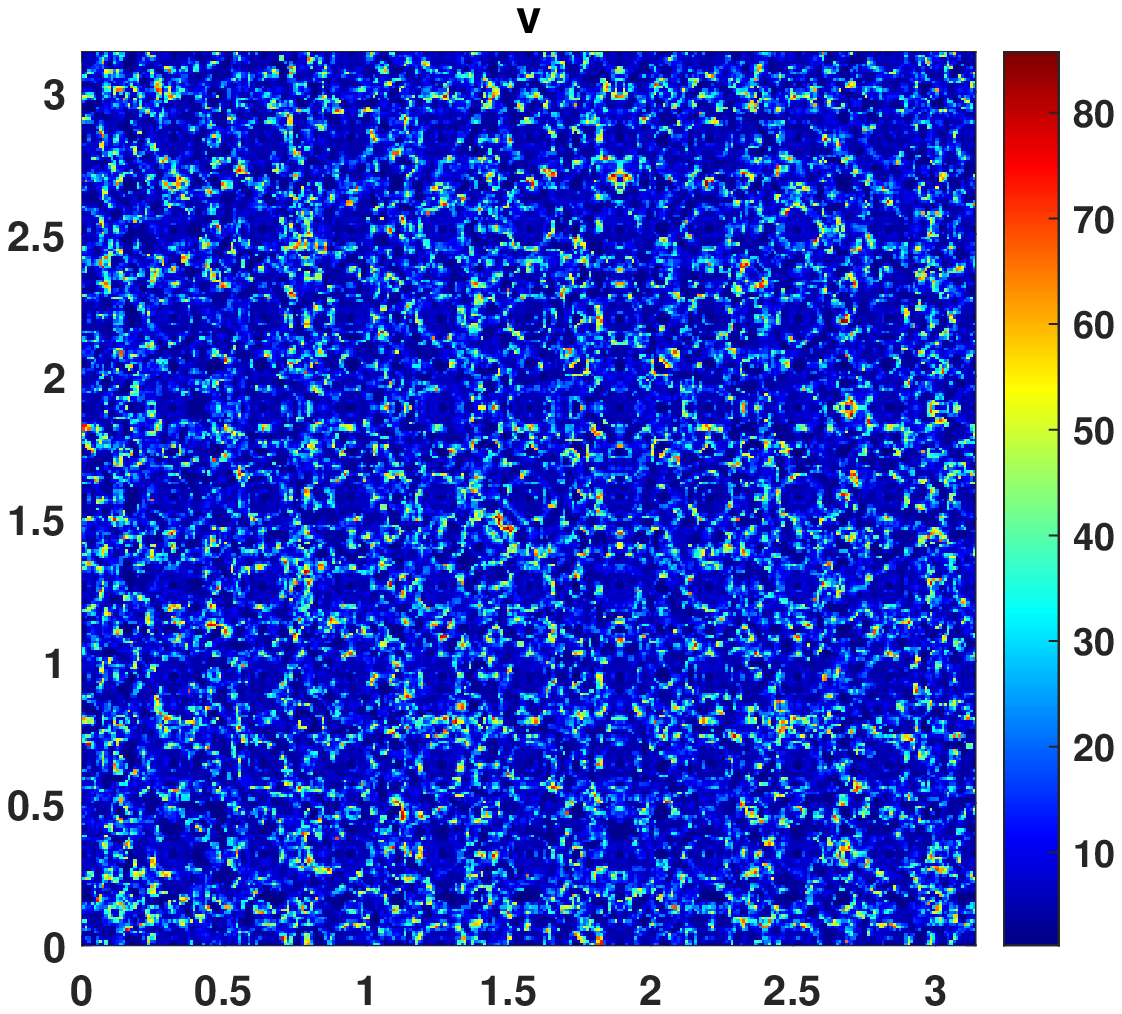} \hspace{-0.5cm} &  \includegraphics[scale=0.26]{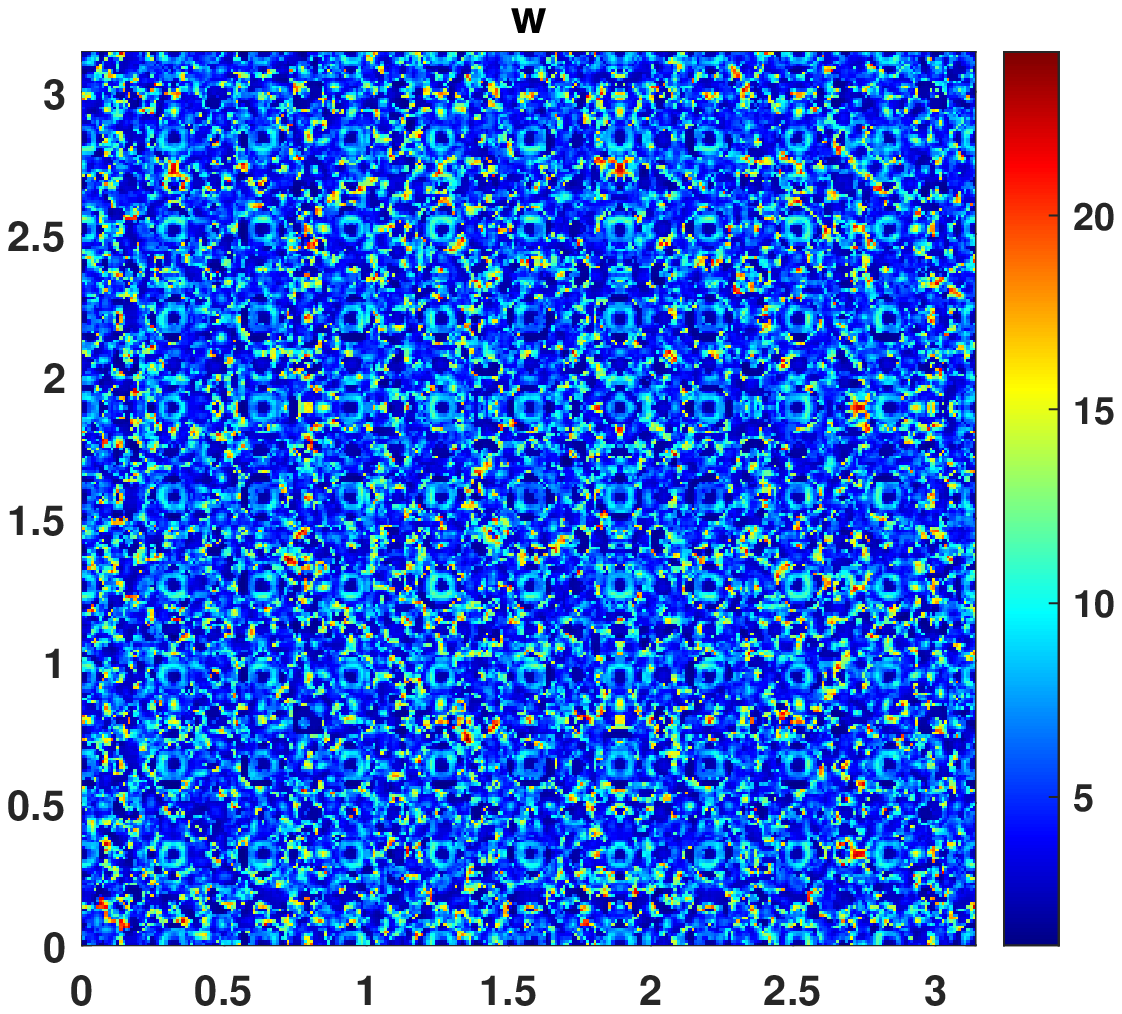} \\\vspace{-1cm}
\begin{tabular}[c]{@{}c@{}c@{}c@{}c@{}}D  \\\\\\\\ \\ \\\\\\\\\end{tabular}\hspace{-0.3cm} & \includegraphics[scale=0.26]{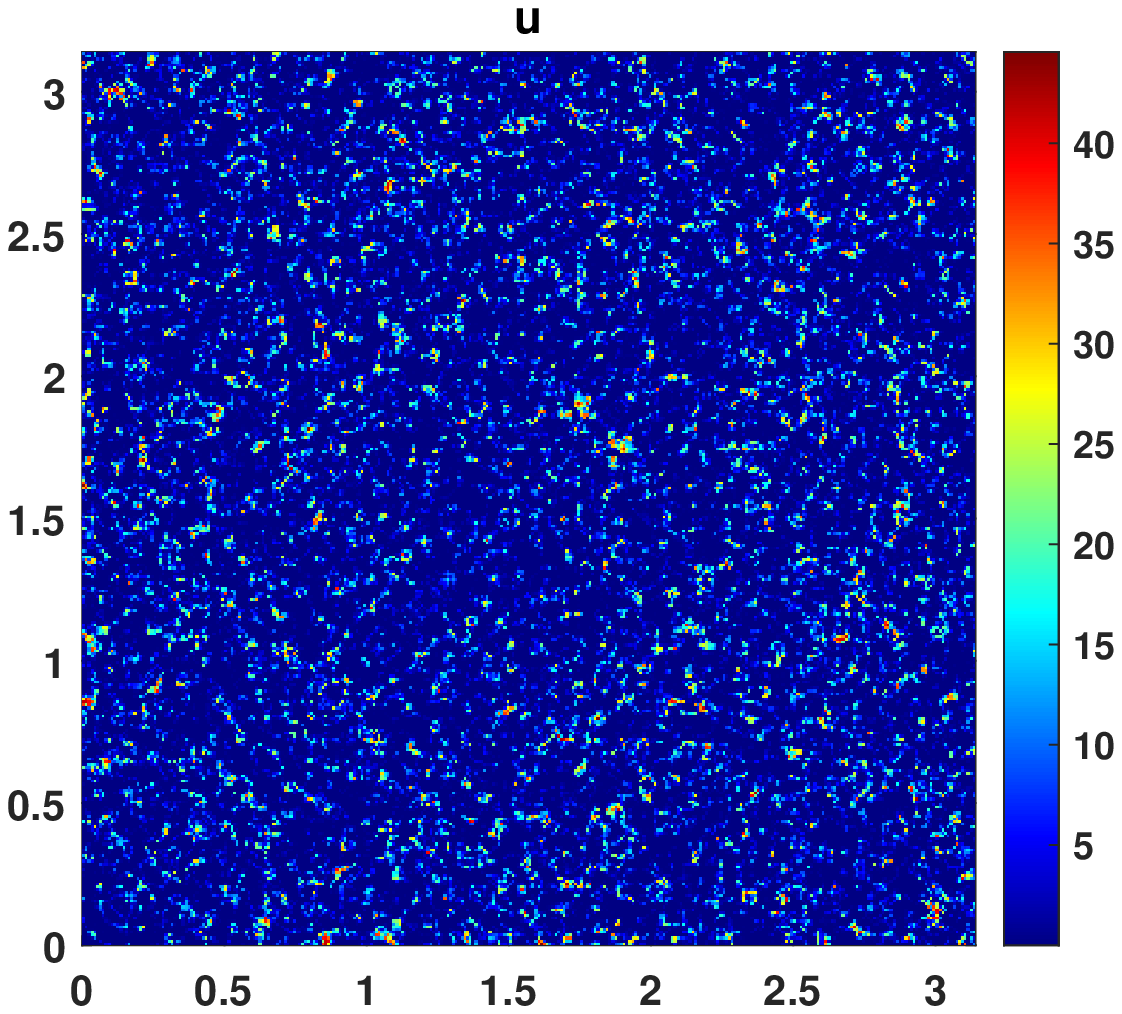}\hspace{-0.5cm}  &   \includegraphics[scale=0.26]{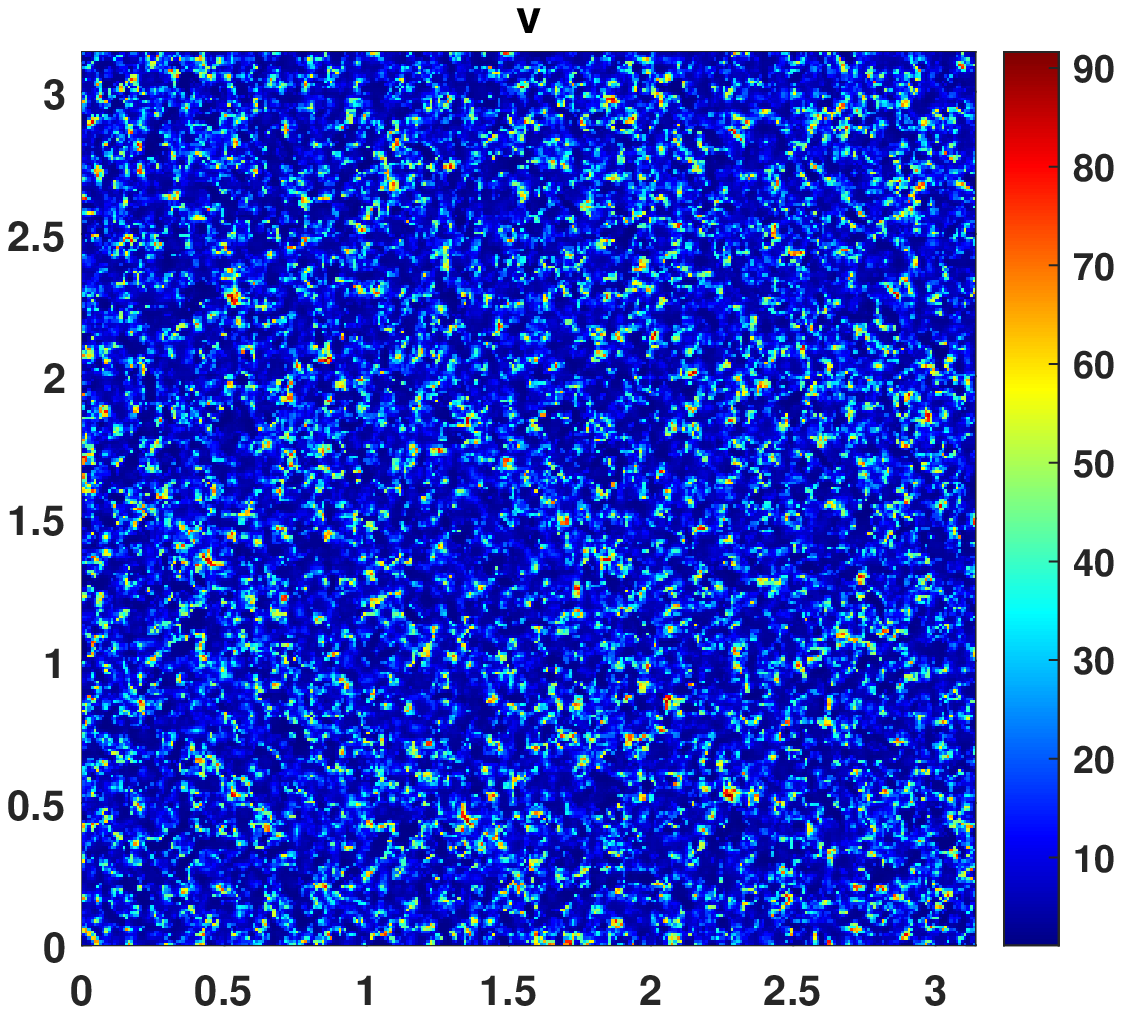} \hspace{-0.5cm} &  \includegraphics[scale=0.26]{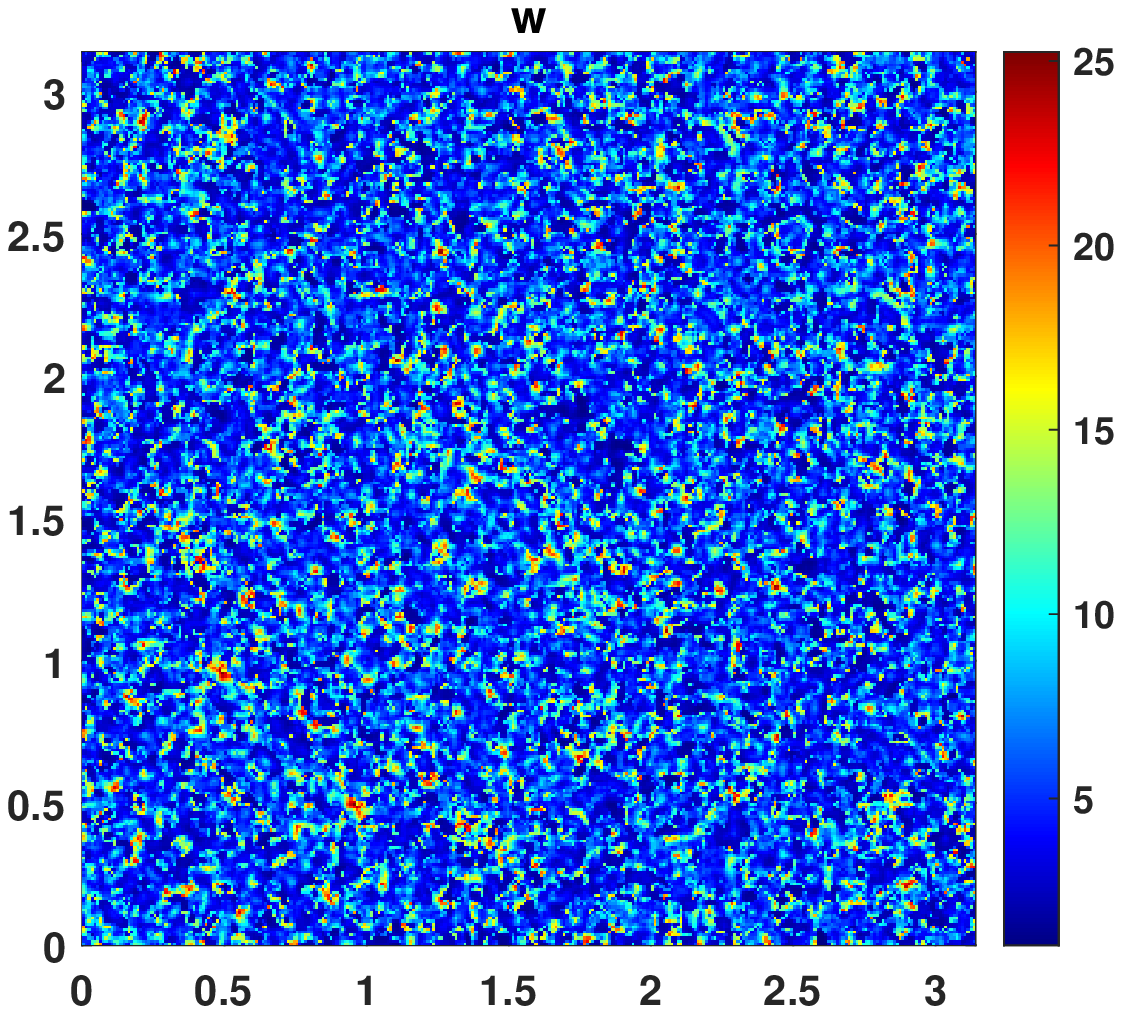}  
\end{tabular}
 \caption{Stationary non-Turing patterns seen in prey, susceptible predator and infected predator densities of the model system (\ref{eq2}), at (A) $t =500$,~(B)~ $t=1000$,~(C)~ $t=1500$,~and (D)~ $t=2000$. The diffusive constants are $d_1=10^{-10}, d_2=10^{-6}$ and $d_3=10^{-10}$. Other parameter values are given in Table \ref{paratable}. }
\label{table11}
\end{figure}

\end{document}